\documentclass{book}
 
\newcommand{\href}[1]{#1} 

\usepackage{ifthen}
\newboolean{PrintVersion}
\setboolean{PrintVersion}{false}

\usepackage{amsmath,amssymb,amsfonts} 
\usepackage[pdftex]{graphicx} 
\usepackage{dsfont}
\usepackage{mathrsfs}
\usepackage{yhmath}
\usepackage{geometry}
\usepackage{url}
\usepackage{stmaryrd}
\usepackage{amsthm}
\newtheorem{theorem}{Theorem}[section]
\newtheorem{lemma}[theorem]{Lemma}
\newtheorem{definition}[theorem]{Definition}
\newtheorem{proposition-definition}[theorem]{Proposition-Definition}
\newtheorem{Theorem-definition}[theorem]{Theorem-Definition}
\newtheorem{theorem-definition}[theorem]{Theorem-Definition}
\newtheorem{corollary}[theorem]{Corollary}
\newtheorem{proposition}[theorem]{Proposition}
\newtheorem{remark}[theorem]{Remark}
\usepackage[pdftex,pagebackref=false]{hyperref} 
\hypersetup{
    plainpages=false,       
    unicode=false,          
    pdftoolbar=true,        
    pdfmenubar=true,        
    pdffitwindow=false,     
    pdfstartview={FitH},    
    pdfnewwindow=true,      
    colorlinks=true,        
    linkcolor=blue,         
    citecolor=green,        
    filecolor=magenta,      
    urlcolor=cyan           
}
\ifthenelse{\boolean{PrintVersion}}{   
\hypersetup{	
    citecolor=black,%
    filecolor=black,%
    linkcolor=black,%
    urlcolor=black}
}{} 

\usepackage[toc,abbreviations]{glossaries-extra} 

\let\origdoublepage\cleardoublepage
\newcommand{\clearemptydoublepage}{%
  \clearpage{\pagestyle{empty}\origdoublepage}}
\let\cleardoublepage\clearemptydoublepage

\newglossary*{symbols}{Glossary}
\newglossaryentry{sto1}
{
name={$\xi$},
type=symbols,
description={}
}
\newglossaryentry{sto2}
{
name={$X_{r,1}$},
type=symbols,
description={}
}
\newglossaryentry{sto3}
{
name={$\widetilde{X}_{r,q,1}$},
type=symbols,
description={}
}
\newglossaryentry{sto4}
{
name={$\widetilde{X}_{r,2}$},
type=symbols,
description={}
}
\newglossaryentry{sto5}
{
name={$\widetilde{X}_{r,\epsilon,2}$},
type=symbols,
description={}
}
\newglossaryentry{sto6}
{
name={$\widetilde{X}_{r,3}$},
type=symbols,
description={}
}
\newglossaryentry{sto7}
{
name={$\widetilde{X}_{r,\epsilon,3}$},
type=symbols,
description={}
}
\newglossaryentry{sto8}
{
name={$\widetilde{\Xi}_{r,1}$},
type=symbols,
description={}
}
\newglossaryentry{sto9}
{
name={$\widetilde{\Xi}_{r,\epsilon,1}$},
type=symbols,
description={}
}
\newglossaryentry{sto10}
{
name={$\widetilde{\Xi}_{r,2}$},
type=symbols,
description={}
}
\newglossaryentry{sto11}
{
name={$\widetilde{\Xi}_{r,\epsilon,2}$},
type=symbols,
description={}
}
\newglossaryentry{sto12}
{
name={$\widetilde{\Xi}_{r,3}$},
type=symbols,
description={}
}
\newglossaryentry{sto13}
{
name={$\widetilde{\Xi}_{r,\epsilon,3}$},
type=symbols,
description={}
}
\newglossaryentry{sto14}
{
name={$\widetilde{Y}_{r,1}$},
type=symbols,
description={}
}
\newglossaryentry{sto15}
{
name={$\widetilde{Y}_{r,2}$},
type=symbols,
description={}
}
\newglossaryentry{sto16}
{
name={$\widetilde{Y}_{r,\epsilon,1}$},
type=symbols,
description={}
}
\newglossaryentry{sto17}
{
name={$\widetilde{Y}_{r,\epsilon,2}$},
type=symbols,
description={}
}
\newglossaryentry{pc1}
{
name={$\mathscr{S}'$},
type=symbols,
description={}
}
\newglossaryentry{pc2}
{
name={$\mathscr{S}$},
type=symbols,
description={}
}
\newglossaryentry{pc3}
{
name={$B_{r,\gamma}^\alpha$},
type=symbols,
description={}
}\newglossaryentry{pc4}
{
name={$\mathscr{C}^\alpha$},
type=symbols,
description={}
}\newglossaryentry{pc5}
{
name={$C_U\mathscr{C}^\alpha$},
type=symbols,
description={}
}\newglossaryentry{pc6}
{
name={$\mathfrak{Y}_U^\alpha$},
type=symbols,
description={}
}\newglossaryentry{f1}
{
name={$\delta_{q}f$},
type=symbols,
description={}
}
\newglossaryentry{f2}
{
name={$f_1\varodot f_2$},
type=symbols,
description={}
}
\newglossaryentry{f3}
{
name={$f_1\varolessthan f_2$},
type=symbols,
description={}
}
\newglossaryentry{f4}
{
name={$f_2\varogreaterthan f_1$},
type=symbols,
description={}
}
\newglossaryentry{f5}
{
name={$\Psi$},
type=symbols,
description={}
}
\newglossaryentry{f6}
{
name={$\eta_{r}$},
type=symbols,
description={}
}
\newglossaryentry{f7}
{
name={$G_rf$},
type=symbols,
description={}
}
\newglossaryentry{f8}
{
name={$V_m$},
type=symbols,
description={}
}

\makenoidxglossaries

\begin{document}

\pagestyle{empty}
\pagenumbering{roman}

\begin{titlepage}
        \begin{center}
        \vspace*{1.0cm}

        \Huge
        {\bf Paracontrolled approach to the stochastic Cahn-Hilliard equation }

        \vspace*{1.0cm}

        \normalsize
        by \\

        \vspace*{1.0cm}

        \Large
        Joe Ghafari \\

        \vspace*{3.0cm}

        \normalsize
        A thesis \\
        presented to the University of Ottawa \\ 
        in fulfillment of the \\
        requirement for the degree of \\
        Doctor of Philosophy \\
        in \\
        Mathematics and Statistics \\

        \vspace*{2.0cm}

        Ottawa, Ontario, Canada, 2026 \\

        \vspace*{1.0cm}

        \copyright\ Joe Ghafari 2026 \\
        \end{center}
\end{titlepage}

\pagestyle{plain}
\setcounter{page}{2}

\cleardoublepage 
\phantomsection    

\cleardoublepage
\phantomsection    

\addcontentsline{toc}{chapter}{Abstract}
\begin{center}\textbf{Abstract}\end{center}

We prove the existence and uniqueness of a local-in-time solution to the stochastic Cahn-Hilliard
equation in space dimensions 1, 2, 3, 4 and 5. Our method relies on the Da Prato-Debussche trick for $d=4$ and paracontrolled distributions for $d=5.$

\cleardoublepage
\phantomsection    

\addcontentsline{toc}{chapter}{Acknowledgements}
\begin{center}\textbf{Acknowledgements}\end{center}

I would also like to thank my supervisors for their guidance and supervision throughout the development of this research. Despite the differences in our areas of expertise, I truly appreciate the effort you made to guide me through this process. It has been a rewarding and challenging journey, and I am particularly grateful for the freedom you granted me, which allowed me to independently pursue the research project I was most passionate about.

Lastly, I would like to thank the Department of Mathematics and Statistics at the University of Ottawa for their financial support, which was essential to the completion of my studies.

\cleardoublepage
\phantomsection    

\addcontentsline{toc}{chapter}{Dedication}
\begin{center}\textbf{Dedication}\end{center}

Family always comes first, and I am deeply grateful for their unwavering presence throughout this journey. To my father, Khalil, thank you for always backing me up and for your constant support. To my mother, Olga, thank you for everything, it is from you that I inherited my intelligence. This achievement is also dedicated to the memory of Fahed and Ivette.
\cleardoublepage
\phantomsection    

\renewcommand\contentsname{Table of Contents}
\tableofcontents
\cleardoublepage
\phantomsection    




\printnoidxglossary[type=symbols]
\cleardoublepage
\phantomsection		

\pagenumbering{arabic}

\chapter{Introduction}

Stochastic partial differential equations (SPDEs) appear naturally in the mathematical description of systems subject to random influences, such as fluctuating interfaces, turbulent fluids, or fields in statistical mechanics and quantum field theory. These equations provide a rigorous framework for modeling dynamics where macroscopic deterministic forces continuously compete with microscopic, random fluctuations. Several of these equations are singular in the sense that their solutions are too irregular to be treated using classical analytic methods. Among the most fundamentally significant of these models is the stochastic Cahn-Hilliard equation, which serves as a cornerstone for describing complex phase separation dynamics in multicomponent mixtures.

The physical origins of this model trace back to the seminal work of Cahn and Hilliard \cite{10.1063/1.1744102}, who developed a continuum theory to describe the free energy of a nonuniform system. Their research primarily focused on interfacial free energy and the spontaneous phase separation, known as spinodal decomposition, that occurs when a homogeneous mixture, such as a binary alloy or a polymer blend, becomes thermodynamically unstable and separates into distinct, coexisting phases. The deterministic Cahn-Hilliard equation captures the gradient flow of this system, describing how regions of differing concentrations evolve, coarsen, and form distinct spatial domains over time to minimize the total free energy.

However, the deterministic model relies on macroscopic averages and inherently neglects the thermal fluctuations present at the microscopic level. Cook \cite{COOK1970297} demonstrated that incorporating a random source term into the spinodal decomposition model is necessary to accurately capture the Brownian motion and thermal fluctuations driving the early stages of phase separation. This integration of thermal noise transforms the deterministic physical model into the stochastic Cahn-Hilliard equation, which is formally expressed as:

$$\partial_th + \Delta^2h = \Delta(h^3 - h) + \xi$$

In this equation, $h$ denotes the concentration difference of the binary mixture at a given time $t$. The biharmonic operator, $\Delta^2$, represents the stabilizing force of surface tension that penalizes sharp gradients and the creation of interfaces between the separating phases. The non-linear term, $\Delta(h^3 - h)$, acts as the chemical potential derived from a double-well free energy landscape, which drives the system to separate into two distinct, stable pure states. Finally, $\xi$ represents the space-time white noise, mathematically formalizing the continuous, highly irregular thermal fluctuations introduced by Cook.

The transition from a deterministic partial differential equation to a stochastic one introduces profound mathematical complexities. The primary analytical obstacle in the study of the stochastic Cahn-Hilliard equation is its severe irregularity. Because the space-time white noise $\xi$ is a distribution rather than a classical function, it forces the solution $h$ into spaces of very low regularity. In standard functional analysis, multiplying distributions is an inherently ill-defined operation, consequently, the non-linear cubic term $h^3$ cannot be evaluated using classical analytic methods. This mathematical barrier is not a localized anomaly but rather mirrors the ultraviolet divergences frequently encountered in quantum field theory. In both physical domains, the models are rendered highly singular, mandating the use of advanced renormalization techniques to interpret the non-linear interactions and construct physically meaningful solutions.

The severity of these singularities, and the corresponding difficulty of mathematically defining the system, is fundamentally dependent on the spatial dimension, $d$. The behavior of the equation at varying spatial dimensions can be analyzed using a rigorous scaling argument. By scaling the equation to examine its dynamics at infinitesimally small scales, the physical model categorizes into three distinct mathematical regimes. If the spatial dimension is $d < 6$, the nonlinear term effectively vanishes at the small scale limit; this classifies the equation as subcritical. At exactly $d = 6$, the model becomes scaling invariant and is considered critical, whereas in dimensions $d > 6$, the small-scale behavior is entirely dominated by the nonlinear term, placing the system in the supercritical regime. Because the stochastic Cahn-Hilliard equation remains subcritical in dimensions strictly less than 6, it is possible to leverage spatial scaling to study the existence and uniqueness of local-in-time solutions up to space dimension 5.

Establishing local-in-time well-posedness is only the first step toward a
global theory. To formulate the global problem, one must first construct a
maximal solution and denote its lifetime by \(\tau_\ast\). Global-in-time
well-posedness then requires proving that
\[
\mathbb{P}(\tau_\ast=\infty)=1.
\]

In the singular regime considered here, a finite explosion time should not
necessarily be interpreted as the pointwise divergence of the concentration
field, since the solution may be distribution-valued. Rather, explosion means
that the solution can no longer be continued in the function or distribution
space in which the local theory is constructed, typically because a norm
governing the continuation criterion becomes unbounded. Accordingly, a proof
of global existence requires a priori estimates that keep this norm finite on
every bounded time interval.

For the equation considered in this thesis, one possible approach is to
exploit the dissipative structure of the Cahn--Hilliard drift and derive
suitable energy estimates for the regular components of the solution. Such
estimates would have to control the nonlinear interactions with the
renormalized stochastic objects and prevent the norm appearing in the
continuation criterion from becoming unbounded in finite time. Establishing
these bounds would rule out finite-time explosion and extend the local
solution globally.

\chapter{Stochastic Cahn-Hilliard equation}
\author{Joe Ghafari}

\section{Introduction}
\subsection{Stochastic Cahn-Hilliard equation}

In recent years, singular stochastic partial differential equations have been studied intensively. Da Prato and Debussche \cite{MR2016604} first proved the existence of a strong solution to the $\Phi^4_2$ equation. Hairer \cite{MR3071506} proved local-in-time well-posedness of the Kardar-Parisi-Zhang equation using rough path theory. Hairer \cite{MR3274562} employed his new theory of regularity structures to study the $\Phi^4_3$ equation. Catellier and Chouk \cite{MR3846835} proved the existence and uniqueness of a local-in-time solution to the $\Phi^4_3$ equation using paracontrolled distributions, recently developed by Gubinelli, Imkeller, and Perkowski \cite{MR3406823}. Gubinelli and Perkowski \cite{MR3592748} studied the Kardar-Parisi-Zhang equation using paracontrolled calculus.

Regularity structures and paracontrolled calculus were developed from ideas in rough paths theory and provide a pathwise description of solutions to singular stochastic partial differential equations. Compared to regularity structures, paracontrolled calculus relies on Littlewood-Paley decomposition, Besov spaces, and paraproducts.

In this thesis, we are interested in the stochastic Cahn-Hilliard equation $$\partial_th+\Delta^2h=\Delta\left(h^3-h\right)+\xi.$$ It is possible to study the existence and uniqueness of local-in-time solutions to the stochastic Cahn-Hilliard equation up to space dimension 5. Heuristically, this can be seen using a scaling argument: if $d \in \mathbb{N}^*$ is the space dimension, $u:\mathbb{R_+}\times\mathbb{R}^d$ such that $$\partial_ru+\Delta^2u=\Delta(u^3-u)+\xi,$$ $f_{\lambda}(r,x):=\lambda^{\frac{d}{2}-2}u(\lambda^4 r,\lambda x)$ and $\xi_\lambda(r,x)=\lambda^{\frac{d}{2}+2}\xi(\lambda^4r,\lambda x),$ then $\xi_\lambda$ is again a space-time white noise and $f_\lambda$ verifies the equation: $$\partial_rf_\lambda+\Delta^2f_\lambda=\lambda^{6-d}\Delta f_\lambda^3-\lambda^2\Delta f_\lambda+\xi_\lambda,$$ and since we are interested in small scales we take $\lambda\to 0.$ Therefore we have three different cases:
\begin{itemize}
\item if $d<6$ the nonlinear term vanishes when $\lambda\to 0,$ and the equation is called subcritical (as defined by Hairer \cite{MR3274562})
\item if $d=6$ the equation is scaling invariant and called critical. 
\item if $d>6$ the small scale is dominated by the nonlinear term and the equation is called supercritical.
\end{itemize}
Hence the equation is locally subcritical in dimensions $d\leq 5$, which suggests that these dimensions can be treated using singular SPDE techniques.

The $4$-dimensional stochastic Cahn-Hilliard equation involves a very irregular space-time white noise preventing the solution from being a smooth function and therefore the nonlinear term appearing in the equation is a priori ill-defined and can only be made well-defined with the help of a trick developed by Da Prato and Debussche \cite{MR1941997,MR2016604}. This trick identifies the most singular part of the solution to the $4$-dimensional stochastic Cahn-Hilliard equation and reinterprets the singular nonlinear term by renormalization techniques. 

The Da Prato and Debussche trick isn't enough to treat $d=5,$ since the noise is highly irregular, requiring commutator estimates to define the solution.

The purpose of this chapter is to prove the existence and uniqueness of a local-in-time solution to the stochastic Cahn-Hilliard equation in spatial dimensions $1,2,3,4$ and $5$ using paracontrolled calculus. To the best of our knowledge, Theorem \ref{theoremd=4} and Theorem \ref{theoremd=5} are the first results
in the literature providing details on the existence and uniqueness of a local-in-time solution to stochastic Cahn-Hilliard equation for $d=4,5$ using paracontrolled calculus. We note that regularity structures could be applied to obtain similar results, there now exists a black-box theory providing a local-in-time renormalization and existence-uniqueness theory for large classes of subcritical equations (See \cite{MR3274562,MR3935036,MR4210726,chandra2016analytic}).

\subsection{Procedure}
Let us describe the procedure for treating the equation in the full subcritical regime. The presentation is inspired by \cite{MR3719541}.

The treatment for $d\leq 5$ would be as follows:
\begin{itemize}
\item for $1 \leq d\leq 3,$ the equation is deterministic and has been treated before by Da Prato and Debussche \cite{MR1359472}. For the convenience of the reader, we have included the study of the local solution for this
case using a fixed point theorem in a suitable Besov space.

\item for $d=4$ we treat this equation using the Da Prato-Debussche trick, which was first applied successfully in \cite{MR2016604} to treat the 2-dimensional stochastic quantization equation.
\item for $d=5$ we will use paracontrolled distributions. 
\end{itemize}

$1\leq d\leq 3$ could be treated using classical methods, there is nothing new to add here. Let us explain $d \in \{4,5\}$ 

The main problem in finding a solution for our equation in $d=4,5$ is that the solution will be a distribution, therefore we are not sure how to define the non-linear term $f^3,$ given that the product of two distributions is not necessarily a distribution. To solve this problem, we will introduce the stochastic convolution (see Theorem \ref{theorem1}): $$(\partial_r+\Delta^2)X_1=\xi$$ and we denote by $X_{\epsilon,1}$ a mollified version of $X_1.$ We can prove, using probabilistic arguments, that there exists $c_{\epsilon,1}$ such that $X_{\epsilon,1},X_{\epsilon,2}:=X_{\epsilon,1}^2-c_{\epsilon,1}$ and $X_{\epsilon,3}:=X_{\epsilon,1}^3-3c_{\epsilon,1}X_{\epsilon,1}$ converge in probability, when $\epsilon \to0,$ to $X_1,X_2,X_3,$ respectively (see Theorem \ref{theorem1}, Theorem \ref{theorem2}, Theorem \ref{theorem3}).

We will also rely on the following rules:
\begin{itemize}
\item Every term has regularity. This regularity $\gamma$ is the index of the Besov space $\mathscr{C}^\gamma$ (see Definition \ref{b}) in which this term lives. For example, $X_1$ has a regularity $\left(2-\frac{d}{2}\right)^-$, which means that its regularity is $2-\frac{d}{2}-\epsilon$ for arbitrary $\epsilon>0.$ The regularity of $\xi$ is $\left(-2-\frac{d}{2}\right)^-.$
\item If $\alpha_1<0<\alpha_2$ and $u_1$ and $u_2$ have regularities $\alpha_1$ and $\alpha_2,$ respectively such that $\alpha_1+\alpha_2>0$, then the product $u_1u_2$ is well-defined and has regularity $\alpha_1$ (see Corollary \ref{product}) 
\item Convolution with the biharmonic heat kernel $\partial_r+\Delta^2$ increases the regularity by 4 (see Lemma \ref{lemma})
\item Certain products of stochastic distributions that are not classically well-defined can nevertheless be constructed as renormalized random distribution: the product of stochastic terms of regularity $\alpha_1$ and $\alpha_2$ has regularity $\min(\alpha_1,\alpha_2,\alpha_1+\alpha_2)$ (see Section \ref{section3}).
\item If $u$ has regularity $\theta,$ then $\Delta u$ has regularity $\theta-2$ (see Theorem \ref{derivative}).
\end{itemize}
\subsubsection{$d=4$}
We will briefly explain here the method for $d=4$ which goes back to Da Prato and Debussche \cite{MR1359472}. In this case, $X_1$ has regularity $0^-,$ therefore we can't define the product $f^3$ ($f$ has regularity $0^-$), but as indicated before, we can define $X_2$ and $X_3$ and they have regularity $0^-.$ The idea is to remove the term with worst regularity, more precisely, if we write $f=u+X_1,$ then $u$ solves: 
\begin{equation}
\label{eq2}
(\partial_r+\Delta^2)u=\Delta(u^3+3X_1u^2+3X_2u+X_3-u-X_1).
\end{equation}
In this case, $u$ is expected to have regularity $2^-.$ Hence, all the products in equation ($\ref{eq2}$) are well-defined. We can prove that equation (\ref{eq2}) has a local mild solution and the solution of equation (\ref{eq2}) is defined by $f:=u+X_1$.
\subsubsection{$d=5$}
\label{sec}
In this case, $X_1,X_2,$ and $X_3$ have regularities $(-\frac{1}{2})^-,(-1)^{-},$ and $(-\frac{3}{2})^-,$ respectively. We expect $u$ to have regularity $(\frac{1}{2})^-,$ which is not sufficient to define the products in equation (\ref{eq2}). Let $Y_{2}$ be such that $(\partial_r+\Delta^2)Y_2=\Delta X_3$ and we decompose: $u=h+Y_2.$ $h$ solves $$(\partial_r+\Delta^2)h=\Delta(h^3+3X_2(h+Y_2)+Q(h)),$$ where $$Q(h):=\Theta_1+\Theta_2h+\Theta_3h^2,$$ $$\Theta_1:=Y_2^3+3X_1Y^2_2-X_1-Y_2,$$ $$\Theta_2:=3Y_2^2+6X_1Y_2-1,$$ $$\Theta_3:=3X_1+3Y_2.$$
$\Theta_1,\Theta_2,\Theta_3$ formally have expected regularity $(-\frac{1}{2})^-$ and $h$ is expected to have regularity $1^-,$ therefore the product $h X_2$ is ill-defined.

To solve this problem, we decompose $(h+Y_2)X_2$ into paraproducts (see Corollary \ref{product}):
$$(h+Y_2)X_2=(h+Y_2)\varolessthan X_2+(h+Y_2)\varodot X_2+(h+Y_2)\varogreaterthan X_2$$ and decompose $h=v+w$ solving 
\begin{equation}
\label{eq3}
(\partial_r+\Delta^2)v=3\Delta((v+w+Y_2)\varolessthan X_2),
\end{equation}
\begin{equation}
\label{eq4}
(\partial_r+\Delta^2)w=\Delta((v+w)^3+3(v+w+Y_2)\varodot X_2 +3(v+w+Y_2)\varogreaterthan X_2+Q(v+w))
\end{equation}
In this case, we expect $v$ to have regularity $1^-$ and $w$ to have regularity $(\frac{3}{2})^-.$ All terms appearing in equations (\ref{eq3}) and (\ref{eq4}) are well-defined (see Theorem \ref{estimate}), except for the resonant product $(v+w+Y_2)\varodot X_2$ (the sum of regularities should be strictly positive). Since $w$ has regularity $(\frac{3}{2})^-,$ it follows that $w\varodot X_2$ is well-defined. Furthermore, we note that it's possible to define $\Xi_3:=Y_2 \varodot X_2$ (see Theorem \ref{theorem6}).

It remains to make sense of $v \varodot X_2.$ For this, we introduce $Y_1$ such that $$(\partial_r+\Delta^2)Y_1=\Delta X_2,$$ that is $$Y_1(r)=\int_0^r\mathrm{e^{-(r-q)\Delta^2}}(\Delta X_2(q))\,\mathrm{d}q$$

We write equation (\ref{eq3}) in the mild form (with initial condition $v_0=0$):

$$v(r)=3\int_0^r\mathrm{e}^{-(r-q)\Delta^2}(\Delta((v(q)+w(q)+Y_2(q))\varolessthan X_2(q)))\,\mathrm{d}q$$ and introduce $$\Lambda(v,w)(r):=3\int_0^r\mathrm{e}^{-(r-q)\Delta^2}(\Delta((v(q)+w(q)+Y_2(q))\varolessthan X_2(q)))\,\mathrm{d}q-3(v(r)+w(r)+Y_2(r))\varolessthan Y_1(r),$$ which has better regularity than $v$ (this follows from our commutator estimates, see Proposition \ref{co2}).
Therefore $$v \varodot X_2 =3((v+w+Y_2)\varolessthan Y_1)\varodot X_2+\Lambda(v,w)\varodot X_2.$$
We note that $\Lambda(v,w)\varodot X_2$ is well-defined. It remains to make sense of $((v+w+Y_2)\varolessthan Y_1)\varodot X_2.$ For this we consider the operator: 
$$\Psi(g_1,g_2,g_3)=(g_1 \varolessthan g_2)\varodot g_3-g_1(g_2\varodot g_3).$$ Again using our commutator estimates (see Proposition \ref{co1}), $\Psi$ is well-defined, hence $\Psi(v+w+Y_2,Y_1,X_2)$ makes sense. 

Supposing that we can define $\Xi_2:=Y_1\varodot X_2$ we define: 
$$v\varodot X_2:=3\Psi(v+w+Y_2,Y_1,X_2)+3(v+w+Y_2)\Xi_2+\Lambda(v,w)\varodot X_2.$$

Lastly, we are interested in proving the existence and uniqueness of:
\begin{equation}
\label{eq5}
(\partial_r+\Delta^2)v=3\Delta((v+w+Y_2)\varolessthan X_2),
\end{equation}
\begin{equation}
\label{eq6}
(\partial_r+\Delta^2)w=\Delta((v+w)^3+9\Psi(v+w+Y_2,Y_1,X_2)+3\Lambda(v,w)\varodot X_2+3 w\varodot X_2+3(v+w+Y_2)\varogreaterthan X_2+P(v+w)),
\end{equation}
where $$P(v+w):=\Theta_4+\Theta_5(v+w)+\Theta_6(v+w)^2,$$ $$\Theta_4:=\Theta_1+9Y_2\Xi_2+3\Xi_3,$$ $$\Theta_5:=\Theta_2+9\Xi_2,$$ $$\Theta_6:=\Theta_3.$$

We need to make sense of the terms appearing in $\Theta_1$ and $\Theta_2.$ To do so, we postulate that we can define $\Xi_1:=Y_2\varodot X_1$ (see Theorem \ref{theorem4}). Consequently, we let $$\Theta_4:=Y_2^3+3X_1\varolessthan Y_2^2+3X_1\varodot(Y_2\varodot Y_2)+6Y_2\Xi_1+6\Psi(Y_2,Y_2,X_1)+3X_1\varogreaterthan Y_2^2+9Y_2\Xi_2+3\Xi_3-X_1-Y_2,$$ $$\Theta_5:=3Y_2^2+6Y_2 \varolessthan X_1+6Y_2 \varogreaterthan X_1+6\Xi_1+9\Xi_2-1.$$

The paracontrolled solution of the 5-dimensional stochastic Cahn-Hilliard equation is defined by $f:=X_1+Y_2+v+w,$ where $(v,w)$ solves equations (\ref{eq5}) and (\ref{eq6}).

The rest of this chapter is organized as follows. In Section \ref{section2} we introduce Besov spaces and their properties. In Section \ref{section3} we provide a detailed analysis of the stochastic objects involved in our equation. In Section \ref{sec4} we treat $1 \leq d\leq 3$.  In Section \ref{sec5} we treat $d=4$. Finally,  in Section \ref{sec6} we treat $d=5$. 
\section{Besov spaces and paraproducts}
\label{section2}
In this section we recall the definitions and the basic results of paracontrolled calculus (see \cite{MR2768550,MR3406823,MR3445609} for the proofs).

Unless otherwise stated, throughout we fix $d\in\mathbb{N}^*$ and we denote by $\mathbb{T}^d$ the $d$-dimensional torus.
\subsection{Tempered distributions}
\begin{definition}
Let $\left(q,m\right) \in \left[1,+\infty\right[\times \mathbb{N}^*.$
\begin{enumerate}
\item We denote by $L^2\left(\left(\mathbb{R}_+\times \mathbb{T}^d\right)^m\right)$ the set of functions $g:\left(\mathbb{R}_+\times \mathbb{T}^d\right)^m\longrightarrow\mathbb{R}$ such that $g$ is a $\left(\mathcal{B}\left(\left(\mathbb{R}_+\times \mathbb{T}^d\right)^m\right),\mathcal{B}\left(\mathbb{R}\right)\right)$-measurable function and $$\int_{\left(\mathbb{R}_+\times \mathbb{T}^d\right)^m}\left|g\left(x\right)\right|^2\,\mathrm{d}x<+\infty.$$
\item We denote by $L^q\left(\mathbb{T}^d\right)$ the set of functions $g:\mathbb{T}^d\longrightarrow\mathbb{R}$ such that $g$ is a $\left(\mathcal{B}\left(\mathbb{T}^d\right),\mathcal{B}\left(\mathbb{R}\right)\right)$-measurable function and $$\int_{\mathbb{T}^d}\left|g\left(x\right)\right|^q\,\mathrm{d}x<+\infty.$$
We also define
\begin{align*} 
\left\Vert\cdot\right\Vert_{L^q\left(\mathbb{T}^d\right)}: L^q\left(\mathbb{T}^d\right) &\longrightarrow \mathbb{R}_+ \\ h & \longmapsto \left\Vert h\right\Vert_{L^q\left(\mathbb{T}^d\right)}=\left(\int_{\mathbb{T}^d}\left|h\left(x\right)\right|^q\,\mathrm{d}x\right)^{\frac{1}{q}}
\end{align*}
\item We denote by $L^\infty\left(\mathbb{T}^d\right)$ the set of functions $g:\mathbb{T}^d\longrightarrow\mathbb{R}$ such that $g$ is a $\left(\mathcal{B}\left(\mathbb{T}^d\right),\mathcal{B}\left(\mathbb{R}\right)\right)$-measurable function and there exists $\mu \in \mathbb{R}_+$ such that $\left|g\left(x\right)\right|\leq \mu$ $\mathrm{d}x$-almost everywhere on $\mathbb{T}^d.$
\\
We also define
\begin{align*} 
\left\Vert\cdot\right\Vert_{L^\infty\left(\mathbb{T}^d\right)}: L^\infty\left(\mathbb{T}^d\right) &\longrightarrow \mathbb{R}_+ \\ h & \longmapsto \left\Vert h\right\Vert_{L^\infty\left(\mathbb{T}^d\right)}=\mathrm{ess\,sup}_{x \in \mathbb{T}^d}|h\left(x\right)|
\end{align*}
\end{enumerate}
\end{definition}
\begin{definition}
We write $C^{\infty}\left(\mathbb{T}^d,\mathbb{R}\right)$ for the set of functions $h:\mathbb{T}^d\longrightarrow\mathbb{R}$ such that $h$ is infinitely differentiable on $\mathbb{T}^d$ and we denote by $C_c^{\infty}\left(\mathbb{R}^d,\mathbb{R}\right)$ the set of functions $g:\mathbb{R}^d\longrightarrow\mathbb{R}$ such that $g$ is infinitely differentiable on $\mathbb{R}^d$ and $\overline{\left\{x \in \mathbb{R}^d,g\left(x\right)\neq 0\right\}}$ is a compact set in $\mathbb{R}^d.$
\end{definition}
For every $q=(q_1,...,q_d) \in \mathbb{N}^d,$ we define $|q|:=\sum_{r=1}^dq_r.$

\begin{definition}
For all $j \in \mathbb{N},$ let
\begin{align*} 
\left\Vert\cdot\right\Vert_{j,\mathscr{S}}: C^{\infty}\left(\mathbb{T}^d,\mathbb{R}\right)&\longrightarrow \mathbb{R}_+ \\ \varphi & \longmapsto \left\Vert \varphi\right\Vert_{j,\mathscr{S}}=\max_{\substack{q \in \mathbb{N}^d\\ \left|q\right|\leq j}}\sup_{x \in \mathbb{T}^d}\left|\partial^q\varphi\left(x\right)\right|
\end{align*}
\begin{enumerate}
\item The Schwartz space is the vector space \gls{pc2}$:=C^\infty\left(\mathbb{T}^d,\mathbb{R}\right)$ equipped with the topology generated by the family of semi-norms $\left(\left\Vert\cdot\right\Vert_{n,\mathscr{S}}\right)_{n \in \mathbb{N}}.$
\item The space of tempered distributions is the vector space
\\
\gls{pc1}$:=\left\{f \in \mathbb{R}^{\mathscr{S}},\exists (\mu,r) \in \mathbb{R}_+\times \mathbb{N},\forall \varphi \in \mathscr{S},\left|f\left(\varphi\right)\right|\leq \mu\left\Vert\varphi\right\Vert_{r,\mathscr{S}}\right\}$
\\
$\cap\left\{f \in \mathbb{R}^{\mathscr{S}},\forall \left(\alpha,\varphi_1,\varphi_2\right) \in \mathbb{R}\times\mathscr{S}^2,f\left(\alpha\varphi_1+\varphi_2\right)=\alpha f\left(\varphi_1\right)+f\left(\varphi_2\right)\right\}$ equipped with the weak$^*$ topology $\sigma\left(\mathscr{S}',\mathscr{S}\right).$
\item Let $\left(f_n\right)_{n \in \mathbb{N}}$ be a sequence in $\mathscr{S}'.$ We say that $\left(f_n\right)_{n \in \mathbb{N}}$ converges in $\mathscr{S}'$ if and only if there exists $f \in \mathscr{S}'$ such that for all $\varphi \in \mathscr{S},\lim_{m\to+\infty}f_m\left(\varphi\right)=f\left(\varphi\right).$
\end{enumerate}
\end{definition}
For all $q \in \mathbb{Z}^d,$ let
\begin{align*}
\psi_{q,1}:\mathbb{T}^d &\longrightarrow \mathbb{R}\\
x&\longmapsto \psi_{q,1}\left(x\right)=\cos\left(2\pi\left\langle x,q\right\rangle\right)
\end{align*}
and 
\begin{align*}
\psi_{q,2}:\mathbb{T}^d&\longrightarrow\mathbb{R}\\
x&\longmapsto\psi_{q,2}\left(x\right)=\sin\left(2\pi \left\langle x,q\right\rangle\right)
\end{align*}
\begin{definition}
Let $f \in \mathscr{S}'$ and $h:\mathbb{Z}^d\longrightarrow\mathbb{C}$ be a function such that $$\exists \left(\mu,r\right) \in \mathbb{R}_+\times \mathbb{N},\forall x \in \mathbb{Z}^d,\left|h\left(x\right)\right|\leq \mu\left(1+\left|x\right|\right)^r.$$
\begin{enumerate}
\item The Fourier transform of $f$ is given by
\begin{align*}
\mathscr{F}f: \mathbb{Z}^d &\longrightarrow \mathbb{C} \\ 
m & \longmapsto \left(\mathscr{F}f\right)\left(m\right)=f\left(\psi_{m,1}\right)-\mathrm{i}f\left(\psi_{m,2}\right)
\end{align*}
\item The inverse Fourier transform of $h$ is given by
\begin{align*} 
\mathscr{F}^{-1}h: \mathscr{S} &\longrightarrow \mathbb{C} \\ \varphi & \longmapsto \left(\mathscr{F}^{-1}h\right)\left(\varphi\right)=\sum_{n \in \mathbb{Z}^d}h\left(n\right)\int_{\mathbb{T}^d}\varphi\left(x\right)\mathrm{e}^{2\pi \mathrm{i}\left\langle x,n\right\rangle}\,\mathrm{d}x
\end{align*}
\end{enumerate}
\end{definition}
\subsection{Besov spaces}
\begin{proposition-definition}[Proposition 2.10, \cite{MR2768550}]
There exist two non-negative radial functions $\rho_{-1}$ and $\rho_0$ such that $\left(\rho_{-1},\rho_0\right) \in \left(C_c^\infty\left(\mathbb{R}^d,\mathbb{R}\right)\right)^2,\overline{\left\{y \in \mathbb{R}^d,\rho_{-1}\left(y\right)\neq 0\right\}}\subset\left\{y \in \mathbb{R}^d,\left|y\right|\leq \frac{4}{3}\right\},\overline{\left\{y \in \mathbb{R}^d,\rho_0\left(y\right)\neq 0\right\}}\subset\left\{y \in \mathbb{R}^d,\frac{3}{4}\leq \left|y\right|\leq \frac{8}{3}\right\},$ and $$\forall x \in \mathbb{R}^d,\rho_{-1}\left(x\right)+\sum_{q \in \mathbb{N}}\rho_{0}\left(2^{-q}x\right)=1.$$ $\left(\rho_{-1},\rho_0\right)$ is called a dyadic partition of unity.
\end{proposition-definition}
For every $q \in \mathbb{N}^*,$ we define
\begin{align*} 
\rho_{q}: \mathbb{R}^d &\longrightarrow \mathbb{R} \\ x & \longmapsto \rho_{q}\left(x\right)=\rho_0\left(2^{-q}x\right)
\end{align*}

For all $j \in \mathbb{N}\cup\left\{-1\right\},$ let
\begin{align*} 
K_{j}: \mathbb{T}^d &\longrightarrow \mathbb{R} \\ 
x & \longmapsto K_{j}\left(x\right)=\sum_{m \in \mathbb{Z}^d}\rho_j\left(m\right)\cos\left(2\pi \left\langle x,m\right\rangle\right)
\end{align*}
\begin{definition}
\label{volu}
Let $f \in \mathscr{S}',\phi \in\mathscr{S}.$ We define the convolution of $f$ with $\phi$ to be the function $\mathbb{T}^d\longrightarrow\mathbb{R}$ defined by: $$(\phi*f)(x)=f(\phi(\cdot-x)),x \in \mathbb{T}^d.$$
\end{definition}
\begin{definition}
Let $f \in \mathscr{S}',\phi \in\mathscr{S}.$ We define the convolution of $f$ with $\phi$ to be the distribution $\mathscr{S}\longrightarrow\mathbb{R}$ defined by: $$(\phi*f)(\varphi)=f(\phi*\varphi),\varphi \in \mathscr{S},$$ where $\phi*\varphi$ is the usual convolution of two functions.
\end{definition}
\begin{remark}
We note that for $\phi \in \mathscr{S},f \in \mathscr{S}',\phi*f \in \mathscr{S}\cap\mathscr{S}',$ depending on the context. The two definition are related, we have: $$(\phi*f)(\varphi)=\int_{\mathbb{T}^d}\varphi(x)(\phi*f)(x)\,\mathrm{d}x.$$
We refer to sections 34 and 35 of \cite{lecture} for more details on convolution of distributions on the torus.
\end{remark} 
\begin{definition}
\label{def}
For every $\left(f,q\right) \in \mathscr{S}'\times \left(\mathbb{N}\cup\left\{-1\right\}\right),$ we define the Littlewood-Paley blocks of $f$ by \gls{f1}$:=K_q*f.$
\end{definition}
\begin{definition}
\label{b}
Let $\left(\alpha,\gamma,r\right)\in \mathbb{R}\times \left[1,+\infty\right[^2,$
\begin{align*} 
\left\Vert\cdot\right\Vert_{\mathscr{C}^\alpha}: \mathscr{S}' &\longrightarrow \overline{\mathbb{R}}_+ \\ f & \longmapsto \left\Vert f\right\Vert_{\mathscr{C}^\alpha}=\sup_{q \in \mathbb{N}}2^{\alpha\left(q-1\right)}\Vert \delta_{q-1}f\Vert_{L^{\infty}\left(\mathbb{T}^d\right)}
\end{align*}
and
\begin{align*} 
\left\Vert\cdot\right\Vert_{B^{\alpha}_{r,\gamma}}: \mathscr{S}' &\longrightarrow \overline{\mathbb{R}}_+ \\ f & \longmapsto \left\Vert f\right\Vert_{B^{\alpha}_{r,\gamma}}=\left(\sum_{q \in \mathbb{N}}2^{\alpha\gamma\left(q-1\right)}\left\Vert \delta_{q-1} f\right\Vert_{L^r\left(\mathbb{T}^d\right)}^\gamma\right)^{\frac{1}{\gamma}}
\end{align*}
We define $$\text{\gls{pc3}}:=\left\{f \in \mathscr{S}',\left\Vert f\right\Vert_{B_{r,\gamma}^\alpha}<+\infty\right\}$$ and $$\text{\gls{pc4}}:=\left\{f \in \mathscr{S}',\lim_{q\to+\infty}2^{\alpha q}\left\Vert\delta_q f\right\Vert_{L^{\infty}\left(\mathbb{T}^d\right)}=0\right\}.$$
The Banach spaces $\left(B^\alpha_{r,\gamma},\left\Vert\cdot\right\Vert_{B^\alpha_{r,\gamma}}\right)$ and $\left(\mathscr{C}^\alpha,\left\Vert\cdot\right\Vert_{\mathscr{C}^\alpha}\right)$ are called Besov spaces.
\end{definition}
\begin{definition}
Let $\left(\alpha,U\right) \in \mathbb{R}\times \mathbb{R}_+.$ We denote by \gls{pc5} the set of functions $h:\left[0,U\right]\longrightarrow\mathscr{C}^\alpha$ such that $h$ is continuous on $\left[0,U\right].$
We also define
\begin{align*} 
\left\Vert\cdot\right\Vert_{C_U\mathscr{C}^\alpha}: C_U\mathscr{C}^\alpha &\longrightarrow \mathbb{R}_+ \\ f & \longmapsto \left\Vert f\right\Vert_{C_U\mathscr{C}^\alpha}=\sup_{r \in \left[0,U\right]}\left\Vert f\left(r\right)\right\Vert_{\mathscr{C}^\alpha}
\end{align*}
\end{definition}
We note that for every $\left(\alpha,U\right) \in \mathbb{R}\times \mathbb{R}_+,\left(C_U\mathscr{C}^\alpha,\left\Vert\cdot\right\Vert_{C_U\mathscr{C}^\alpha}\right)$ is a Banach space.

The following Bernstein inequalities are very useful when dealing with functions with compactly supported Fourier transform.
\begin{theorem}[Bernstein inequalities, Lemma 2.1, \cite{MR2768550}]
\label{Bernstein}
Let $\left(n,\gamma_1,\gamma_2,\gamma_3\right) \in \mathbb{N}\times\mathbb{R}_+^*\times\mathbb{R}_+^*\times\mathbb{R}_+^*$ such that $\gamma_2<\gamma_3.$
\begin{enumerate}
\item There exists $\mu \in \mathbb{R}_+^*$ such that for any $\left(r,\beta\right) \in \left[1,+\infty\right[^2$ and all $\varphi \in \mathscr{S},$ if \\ $\left\{x \in \mathbb{Z}^d,\int_{\mathbb{T}^d}\varphi\left(y\right)\mathrm{e}^{-2\pi \mathrm{i}\left\langle x,y\right\rangle}\,\mathrm{d}y\neq 0\right\}\subset \left\{x \in \mathbb{R}^d,\left|x\right|\leq \beta\gamma_1\right\},$ then $$\max_{\substack{q\in\mathbb{N}^d \\ \left|q\right|=n}}\left\Vert \partial^q \varphi\right\Vert_{L^\infty\left(\mathbb{T}^d\right)}\leq \mu\beta^{n+\frac{d}{r}}\left\Vert \varphi\right\Vert_{L^r\left(\mathbb{T}^d\right)}.$$
\item There exists $\left(\mu_1,\mu_2\right) \in \left(\mathbb{R}_+^*\right)^2$ such that for any $\beta \in \left[1,+\infty\right[$ and all $\varphi \in \mathscr{S},$ if \\ $\left\{x \in \mathbb{Z}^d,\int_{\mathbb{T}^d}\varphi\left(y\right)\mathrm{e}^{-2\pi \mathrm{i}\left\langle x,y\right\rangle}\,\mathrm{d}y\neq 0\right\}\subset\left\{x \in \mathbb{R}^d,\beta\gamma_2\leq \left|x\right|\leq\beta\gamma_3\right\},$ then $$\mu_1\beta^{n}\left\Vert \varphi\right\Vert_{L^\infty\left(\mathbb{T}^d\right)}\leq \max_{\substack{q\in\mathbb{N}^d\\\left|q\right|=n}}\left\Vert \partial^{q}\varphi\right\Vert_{L^\infty\left(\mathbb{T}^d\right)}\leq \mu_2\beta^{n}\left\Vert \varphi\right\Vert_{L^{\infty}\left(\mathbb{T}^d\right)}.$$
\end{enumerate}
\end{theorem}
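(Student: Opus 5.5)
We prove the two Bernstein inequalities on the torus by writing $\varphi$ as a convolution with a fixed Schwartz kernel localized at frequency $\beta$, then applying Young's (or Hölder's) inequality together with a scaling computation on the kernel.

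\textbf{First inequality.} Fix $\chi\in C_c^\infty(\mathbb{R}^d,\mathbb{R})$ with $\chi\equiv 1$ on $\{|x|\le \gamma_1\}$, supported in $\{|x|\le 2\gamma_1\}$. For $\beta\ge 1$ set
$$\chi_\beta(x):=\sum_{m\in\mathbb{Z}^d}\chi(m/\beta)\,\mathrm{e}^{2\pi\mathrm{i}\langle x,m\rangle},$$
a trigonometric polynomial on $\mathbb{T}^d$. The spectral support hypothesis gives $\varphi=\chi_\beta*\varphi$, so for any multi-index $q$ with $|q|=n$ we have $\partial^q\varphi=(\partial^q\chi_\beta)*\varphi$. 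Hölder's inequality then yields
$$\Vert\partial^q\varphi\Vert_{L^\infty}\le \Vert\partial^q\chi_\beta\Vert_{L^{r'}}\Vert\varphi\Vert_{L^r},$$
with $r'$ the conjugate exponent of $r$. It remains to prove $\Vert\partial^q\chi_\beta\Vert_{L^{r'}(\mathbb{T}^d)}\le \mu\beta^{n+d/r}$ uniformly in $r$ and $\beta$.

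By Poisson summation, $\chi_\beta(x)=\sum_{k\in\mathbb{Z}^d}\beta^d\check\chi(\beta(x+k))$, where $\check\chi$ is a Schwartz function on $\mathbb{R}^d$. Hence $\partial^q\chi_\beta$ is the periodization of $\beta^{d+n}(\partial^q\check\chi)(\beta\,\cdot)$. Periodization does not increase the $L^{r'}$ norm on a fundamental domain beyond the $L^{r'}(\mathbb{R}^d)$ norm up to a constant; one can use $L^1$ and $L^\infty$ bounds and interpolate. A change of variables then gives $\beta^{d+n-d/r'}\Vert\partial^q\check\chi\Vert_{L^{r'}(\mathbb{R}^d)}=\beta^{n+d/r}\Vert\partial^q\check\chi\Vert_{L^{r'}}$. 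Since
$$\Vert g\Vert_{L^{r'}}\le\max\bigl(\Vert g\Vert_{L^1},\Vert g\Vert_{L^\infty}\bigr),$$
this is uniform in $r$.

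\textbf{Second inequality, upper bound.} The upper bound is the case $r=\infty$ of the first inequality with $\gamma_1=\gamma_3$. (Formally $r\in[1,\infty[$ in the statement, but the argument above with $r'=1$ works verbatim.)

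\textbf{Second inequality, lower bound.} This is the main step. Take $\tilde\chi\in C_c^\infty(\mathbb{R}^d\setminus\{0\})$ equal to $1$ on the annulus $\{\gamma_2\le|x|\le\gamma_3\}$. Define
$$h_j(x):=\tilde\chi(x)\,\frac{-\mathrm{i}\,x_j}{2\pi|x|^2},$$
which is smooth and compactly supported away from the origin. The identity $\sum_j(2\pi\mathrm{i}x_j)(-\mathrm{i}x_j)/(2\pi|x|^2)=1$ holds on the support of $\tilde\chi$, and iterating it $n$ times gives a finite family of multipliers $h_{q}$ with $|q|=n$ such that
$$\varphi=\sum_{|q|=n}\beta^{-n}\,(h_{q,\beta})^\vee*\partial^q\varphi,$$
where the kernels $(h_{q,\beta})^\vee$ are periodized rescalings of Schwartz functions with $L^1$ norms bounded independently of $\beta$, by the same Poisson summation argument as above. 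Young's inequality then gives
$$\Vert\varphi\Vert_{L^\infty}\le C\beta^{-n}\max_{|q|=n}\Vert\partial^q\varphi\Vert_{L^\infty},$$
which is the lower bound with $\mu_1=1/C$.

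The main technical obstacle is justifying that periodization preserves the $L^1$ and $L^{r'}$ bounds uniformly in $\beta\ge 1$. This follows from the rapid decay of Schwartz functions: for $\beta\ge1$, the sum $\sum_k\beta^d|g(\beta(x+k))|$ over the fundamental domain integrates to exactly $\Vert g\Vert_{L^1(\mathbb{R}^d)}$. Its sup is bounded by $C\beta^d$, using decay to control the terms with $k\ne0$.
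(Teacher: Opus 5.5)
Your proof is correct. The paper gives no argument of its own and simply cites Lemma 2.1 of Bahouri--Chemin--Danchin, whose proof is exactly your scheme: a rescaled smooth cutoff with H\"older/Young for the direct inequalities, and multipliers $-\mathrm{i}x_j/(2\pi|x|^2)$ localized on an annulus for the reverse one. Your Poisson summation step supplies the transfer from $\mathbb{R}^d$ to $\mathbb{T}^d$ that the citation leaves implicit, and it is exactly where $\beta\geq 1$ is needed.

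One sentence should be repaired: the claim that periodization does not increase $L^{r'}$ norms is false in general. What you need, and what your last paragraph actually proves, is $\Vert\partial^q\chi_\beta\Vert_{L^1(\mathbb{T}^d)}\leq\beta^n\Vert\partial^q\check\chi\Vert_{L^1(\mathbb{R}^d)}$ and $\Vert\partial^q\chi_\beta\Vert_{L^\infty(\mathbb{T}^d)}\leq C\beta^{n+d}$. Interpolating these two bounds gives $\Vert\partial^q\chi_\beta\Vert_{L^{r'}(\mathbb{T}^d)}\leq C'\beta^{n+d/r}$ with $C'$ independent of $r$.
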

The following two theorems are simple applications of Theorem \ref{Bernstein}.
\begin{theorem}[Besov embedding, Theorem 36.18, \cite{lecture}]
\label{embedding}
There exists $\mu \in \mathbb{R}_+^*$ such that $$\forall\left(f,\alpha,q\right) \in \mathscr{S}'\times \mathbb{R}\times \left[1,+\infty\right[,\left\Vert f\right\Vert_{\mathscr{C}^{\alpha-\frac{d}{q}}}\leq \mu\left\Vert f\right\Vert_{B^\alpha_{q,q}}.$$
\end{theorem}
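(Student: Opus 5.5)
The plan is to work block by block, applying Bernstein's inequality (Theorem \ref{Bernstein}, part 1 with $n=0$) to each Littlewood-Paley block, and then to compare the $\sup$ over blocks with the $\ell^q$ sum. Fix $f\in\mathscr{S}'$, $\alpha\in\mathbb{R}$ and $q\in[1,+\infty[$. If $\Vert f\Vert_{B^\alpha_{q,q}}=+\infty$ there is nothing to prove, so we assume it is finite.

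\textbf{Step 1 (Fourier support of the blocks).} For $j\in\mathbb{N}\cup\{-1\}$, the block $\delta_j f=K_j*f$ is a smooth function on $\mathbb{T}^d$ whose Fourier coefficients vanish outside the support of $\rho_j$. Indeed, $\mathscr{F}(K_j*f)(m)=\rho_j(m)\,\mathscr{F}f(m)$, and $\rho_j$ is radial and real. Hence the Fourier support of $\delta_j f$ lies in the ball $\{|x|\leq \tfrac{8}{3}2^{j}\}$ for $j\geq 0$, and in $\{|x|\leq \tfrac43\}$ for $j=-1$. In both cases it lies in $\{|x|\leq \beta\gamma_1\}$ with $\gamma_1:=\tfrac{8}{3}$ and $\beta:=2^{\max(j,0)}\geq 1$. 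Since $\delta_jf$ is a trigonometric polynomial (because $\rho_j$ has compact support), it belongs to $\mathscr{S}$, so Theorem \ref{Bernstein} applies.

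\textbf{Step 2 (Bernstein).} Applying part 1 of Theorem \ref{Bernstein} with $n=0$, $r=q$ and the $\beta$ above gives a constant $\mu_0$, independent of $q$, $j$ and $f$, such that
$$\Vert \delta_j f\Vert_{L^\infty(\mathbb{T}^d)}\leq \mu_0\, 2^{\frac{d}{q}\max(j,0)}\Vert \delta_j f\Vert_{L^q(\mathbb{T}^d)}.$$
Here we use that the constant in Theorem \ref{Bernstein} is uniform in $r$ and $\beta$. We multiply both sides by $2^{(\alpha-\frac dq)j}$. For $j\geq 0$ this yields $2^{(\alpha-\frac dq)j}\Vert\delta_jf\Vert_{L^\infty}\leq \mu_0 2^{\alpha j}\Vert \delta_jf\Vert_{L^q}$. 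For $j=-1$ it yields an extra factor $2^{\frac dq}\leq 2^d$, which we absorb into $\mu:=2^d\mu_0$.

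\textbf{Step 3 (sup vs.\ sum).} Finally, we bound each term of the supremum by the $\ell^q$ sum:
$$2^{\alpha j}\Vert\delta_jf\Vert_{L^q}\leq \Big(\sum_{k\geq -1}2^{\alpha q k}\Vert \delta_kf\Vert_{L^q}^q\Big)^{1/q}=\Vert f\Vert_{B^\alpha_{q,q}}.$$
Taking the supremum over $j\geq -1$ gives $\Vert f\Vert_{\mathscr{C}^{\alpha-\frac dq}}\leq \mu\Vert f\Vert_{B^\alpha_{q,q}}$.

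\textbf{Main obstacle.} The only delicate point is making sure that $\mu$ does not depend on $q$ (or on $\alpha$). This relies on the uniformity in $r$ stated in Theorem \ref{Bernstein}, together with the bound $2^{d/q}\leq 2^d$ for the low-frequency block.
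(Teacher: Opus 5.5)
Your proposal is correct and is exactly the ``simple application of Theorem \ref{Bernstein}'' that the paper alludes to; the paper gives no proof of its own and only cites Theorem 36.18 of the lecture notes. The steps are: blockwise Bernstein with $n=0$, $r=q$, $\beta=2^{\max(j,0)}$ and a constant uniform in $q$; absorbing the factor $2^{d/q}\leq 2^d$ at the block $j=-1$; and the trivial bound of each term by the $\ell^q$ sum.
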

\begin{theorem}[Theorem 21.23, \cite{lecture}]
\label{derivative}
For every $q \in \mathbb{N}^d,$ there exists $\mu \in \mathbb{R}_+^*$ such that $$\forall \left(f,\alpha\right) \in \mathscr{S}'\times \mathbb{R},\left\Vert\partial^q f\right\Vert_{\mathscr{C}^{\alpha-\left|q\right|}}\leq \mu\left\Vert f\right\Vert_{\mathscr{C}^\alpha}.$$
\end{theorem}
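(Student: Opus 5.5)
The plan is to compare Littlewood--Paley blocks directly. Fix a multi-index $q\in\mathbb{N}^d$ and $f\in\mathscr{C}^\alpha$ (if $\Vert f\Vert_{\mathscr{C}^\alpha}=+\infty$ there is nothing to prove). Since $\partial^q$ is a Fourier multiplier by $(2\pi \mathrm{i} m)^q$, it commutes with convolution by $K_j$:
\[
\delta_j(\partial^q f)=\partial^q(\delta_j f)\quad\text{for every } j\in\mathbb{N}\cup\{-1\}.
\]
We verify this on the Fourier side, using Definition \ref{def} and the definition of $\mathscr{F}$: both sides have Fourier coefficients $\rho_j(m)(2\pi\mathrm{i}m)^q(\mathscr{F}f)(m)$. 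It therefore suffices to bound $\Vert\partial^q\delta_j f\Vert_{L^\infty(\mathbb{T}^d)}$ by $\mu 2^{j|q|}\Vert\delta_j f\Vert_{L^\infty(\mathbb{T}^d)}$ with $\mu$ independent of $j$ and $f$.

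For $j\ge 0$, the Fourier support of $\delta_j f$ lies in the annulus $\{\tfrac34 2^j\le |x|\le \tfrac83 2^j\}$. Part 2 of Theorem \ref{Bernstein}, with $\beta=2^j$, $\gamma_2=\tfrac34$, $\gamma_3=\tfrac83$, $n=|q|$, gives
\[
\Vert\partial^q\delta_j f\Vert_{L^\infty}\le\mu_2 2^{j|q|}\Vert\delta_j f\Vert_{L^\infty}.
\]
For $j=-1$, the Fourier support lies in the ball $\{|x|\le\tfrac43\}$. Part 1 of Theorem \ref{Bernstein} (with $\beta=1$, $\gamma_1=\tfrac43$, and $r\to\infty$, or equivalently the trivial bound on finitely many Fourier modes) gives $\Vert\partial^q\delta_{-1}f\Vert_{L^\infty}\le\mu\Vert\delta_{-1}f\Vert_{L^\infty}$.

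A technical point is that $\delta_j f$ is a trigonometric polynomial, hence lies in $\mathscr{S}$, so Bernstein applies. Also, the $L^\infty$ version of part 1 should be extracted: if only $r<\infty$ is allowed, we can use $\Vert\cdot\Vert_{L^r}\le\Vert\cdot\Vert_{L^\infty}$ on the torus of volume one with, say, $r=1$, since $\beta=1$ is fixed.

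Multiplying by $2^{(\alpha-|q|)j}$ and taking the supremum over $j$ then yields
\[
\Vert\partial^q f\Vert_{\mathscr{C}^{\alpha-|q|}}\le \max\bigl(\mu_2,\mu 2^{|q|}\bigr)\Vert f\Vert_{\mathscr{C}^\alpha},
\]
where the factor $2^{|q|}$ absorbs the index shift at $j=-1$. The constant depends only on $q$ and $d$, not on $\alpha$ or $f$, as required.

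The main obstacle is justifying the commutation of $\partial^q$ with $\delta_j$ within the paper's framework for distributions on $\mathbb{T}^d$, which is done via Fourier coefficients. A secondary check is that the membership condition (the decay $2^{\alpha j}\Vert\delta_j f\Vert\to0$) transfers to $\partial^q f$; this follows from the same blockwise inequality.
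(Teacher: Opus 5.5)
Your proof is correct and follows the route the paper indicates. The paper gives no proof of its own: it cites the result and says it is a simple application of the Bernstein inequalities (Theorem \ref{Bernstein}), which is exactly your argument of commuting $\partial^q$ with $\delta_j$, applying the annulus bound for $j\geq 0$ and the ball bound for $j=-1$, and absorbing the index shift at the lowest block into the factor $2^{|q|}$.
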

\subsection{Paraproducts and resonant products}
The main difficulty in solving singular stochastic partial differential equations is to multiply distributions. Paraproducts are bilinear operations useful to decompose the multiplication into simpler problems. 
\begin{definition}
Let $\left(f_1,f_2\right) \in \left(\mathscr{S}'\right)^2.$
\begin{enumerate}
\item For all $m \in \mathbb{N}^*,$ let
\begin{align*}
g_m:\mathscr{S}&\longrightarrow\mathbb{R}\\
\varphi&\longmapsto g_m\left(\varphi\right)=\sum_{n=1}^{m}\int_{\mathbb{T}^d}\varphi\left(x\right)\left(\delta_{n-2}f_1\right)\left(x\right)\left(\delta_{m}f_2\right)\left(x\right)\,\mathrm{d}x
\end{align*}
If $\left(\sum_{j=1}^qg_j\right)_{q \in \mathbb{N}^*}$ converges in $\mathscr{S}',$ then the limit is denoted by \gls{f3}$=$\gls{f4}$=\sum_{q \in \mathbb{N}^*}g_q$ and it is called the paraproduct of $f_2$ by $f_1.$
\item For every $m \in \mathbb{N},$ let
\begin{align*}
h_m:\mathscr{S}&\longrightarrow\mathbb{R}\\
\varphi&\longmapsto h_m\left(\varphi\right)=\sum_{n=0}^{\min\left(m+1,2\right)}\int_{\mathbb{T}^d}\varphi\left(x\right)\left(\delta_{n+\max\left(m-2,-1\right)}f_1\right)\left(x\right)\left(\delta_{m-1}f_2\right)\left(x\right)\,\mathrm{d}x
\end{align*}
If $\left(\sum_{j=0}^qh_j\right)_{q \in \mathbb{N}}$ converges in $\mathscr{S}',$ then the limit is denoted by \gls{f2}$=\sum_{q \in \mathbb{N}}h_q$ and it is called the resonant product of $f_1$ and $f_2.$
\item For all $\left(m_1,m_2\right) \in \mathbb{N}^2,$ let
\begin{align*}
\chi_{m_1,m_2}:\mathscr{S}&\longrightarrow\mathbb{R}\\
\varphi&\longmapsto \chi_{m_1,m_2}\left(\varphi\right)=\int_{\mathbb{T}^d}\varphi\left(x\right)\left(\delta_{m_1-1}f_1\right)\left(x\right)\left(\delta_{m_2-1}f_2\right)\left(x\right)\,\mathrm{d}x
\end{align*}
If $\left(\sum_{j_1=0}^q\sum_{j_2=0}^q\chi_{j_1,j_2}\right)_{q \in \mathbb{N}}$ converges in $\mathscr{S}',$ then the limit is denoted by $f_1f_2=\sum_{\left(q_1,q_2\right) \in \mathbb{N}^2}\chi_{q_1,q_2}$ and it is called the product of $f_1$ and $f_2.$
\end{enumerate}
\end{definition}
Bony \cite{MR0631751} noticed that paraproducts are always well-defined distributions. The only problem in constructing the product of distributions is the resonant term. The following estimates give the basic result about these bilinear operations.
\begin{theorem}[Lemma 2.1, \cite{MR3406823}]
\label{estimate}
\begin{enumerate}
\item For all $(\alpha,\beta) \in \mathbb{R}^*\times\mathscr{R},$ there exists $\mu \in \mathbb{R}_+^*$ such that for every $\left(f,h\right) \in \mathscr{C}^\alpha\times \mathscr{C}^\beta,f\varolessthan h \in \mathscr{C}^{\min\left(\alpha,0\right)+\beta}$ and $$\left\Vert f\varolessthan h\right\Vert_{\mathscr{C}^{\min\left(\alpha,0\right)+\beta}}\leq \mu\left\Vert f\right\Vert_{\mathscr{C}^\alpha}\left\Vert h\right\Vert_{\mathscr{C}^\beta}.$$
\item For every $\left(\alpha,\beta\right) \in \mathbb{R}^2,$ if $\alpha+\beta>0,$ then there exists $\mu \in \mathbb{R}_+^*$ such that for all $\left(f,h\right) \in \mathscr{C}^\alpha\times \mathscr{C}^\beta,f\varodot h \in \mathscr{C}^{\alpha+\beta}$ and $$\left\Vert f\varodot h\right\Vert_{\mathscr{C}^{\alpha+\beta}}\leq \mu\left\Vert f\right\Vert_{\mathscr{C}^\alpha}\left\Vert h\right\Vert_{\mathscr{C}^\beta}.$$
\end{enumerate}
\end{theorem}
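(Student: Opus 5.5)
The plan is to prove both estimates block by block, using the spectral localization of the Littlewood--Paley pieces and the first Bernstein inequality (Theorem \ref{Bernstein}) to go from block estimates back to $\mathscr{C}^\gamma$ norms.

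\textbf{Auxiliary lemma.} I first isolate a summation lemma. Let $(u_j)_j$ be smooth functions whose Fourier transforms are supported either in annuli $2^j\mathcal{A}$ or in balls $2^j\mathcal{B}$, with $\Vert u_j\Vert_{L^\infty}\leq C2^{-j\gamma}$.
\begin{itemize}
\item In the annulus case, $\sum_j u_j\in\mathscr{C}^\gamma$ for every $\gamma\in\mathbb{R}$, with norm at most a constant times $C$.
\item In the ball case, the same conclusion holds provided $\gamma>0$.
\end{itemize}
To prove it, apply $\delta_q$ to the series. In the annulus case only boundedly many indices $j$ with $|j-q|\leq N$ survive. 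In the ball case only the indices $j\geq q-N$ survive, and the geometric tail $\sum_{j\geq q-N}2^{-j\gamma}$ is comparable to $2^{-q\gamma}$ exactly because $\gamma>0$. Throughout I use that $\Vert\delta_q\Vert_{L^\infty\to L^\infty}$ is bounded uniformly in $q$, since the kernels $K_q$ are dilates of a fixed Schwartz function with uniformly bounded $L^1$ norm.

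\textbf{Paraproduct estimate.} Write $f\varolessthan h=\sum_m S_{m-1}f\,\delta_m h$, where $S_{m-1}f=\sum_{n\leq m-2}\delta_n f$ is the low-frequency part. By the support properties of $\rho_{-1}$ and $\rho_0$, each term has Fourier support in an annulus $2^m\mathcal{A}$; this is where the gap of two indices in the definition is used. The low-frequency factor is bounded by $\Vert S_{m-1}f\Vert_{L^\infty}\leq\sum_{n\leq m-2}2^{-n\alpha}\Vert f\Vert_{\mathscr{C}^\alpha}$. This sum is $\lesssim 2^{-m\alpha}\Vert f\Vert_{\mathscr{C}^\alpha}$ when $\alpha<0$ and $\lesssim\Vert f\Vert_{\mathscr{C}^\alpha}$ when $\alpha>0$. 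The excluded case $\alpha=0$ is precisely where this geometric sum produces a logarithmic loss, which explains the hypothesis $\alpha\in\mathbb{R}^*$. Multiplying by $\Vert\delta_m h\Vert_{L^\infty}\leq 2^{-m\beta}\Vert h\Vert_{\mathscr{C}^\beta}$ gives terms of size $2^{-m(\min(\alpha,0)+\beta)}$, and the annulus case of the lemma concludes. Convergence of the partial sums in $\mathscr{S}'$ follows from the same bounds, since they are polynomially controlled in $m$.

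\textbf{Resonant estimate.} Write $f\varodot h=\sum_{|i-j|\leq1}\delta_i f\,\delta_j h$. Each term has Fourier support only in a ball $2^j\mathcal{B}$, because high frequencies can cancel, and its size is $\lesssim 2^{-j(\alpha+\beta)}\Vert f\Vert_{\mathscr{C}^\alpha}\Vert h\Vert_{\mathscr{C}^\beta}$. The ball case of the lemma then applies because $\alpha+\beta>0$.

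\textbf{Main obstacle.} The delicate points are bookkeeping rather than conceptual. First, the paper's index shifts (the $\delta_{q-1}$ in the norms and the $\delta_{n-2}$ and $\max(m-2,-1)$ in the definitions) must be checked against the supports of $\rho_{-1}$ and $\rho_0$ to confirm the annulus and ball localizations. Second, the torus setting must be handled: Fourier supports are sets of lattice points, and the convergence of the defining series in $\mathscr{S}'$ has to be established before the limit is identified with an element of $\mathscr{C}^\gamma$. The remaining technical hurdle is the small-ratio convention for the annulus, ensuring $\frac{4}{3}\cdot 2^{m-2}$-type bounds stay below $\frac{3}{4}\cdot 2^{m}$; I expect this to require the most care.
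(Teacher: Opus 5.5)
Your approach is the standard proof behind Lemma 2.1 of \cite{MR3406823}, which the paper quotes without proof. You bound each block, localize $S_{m-1}f\,\delta_m h$ in annuli and $\delta_i f\,\delta_j h$ in balls, and then apply a summation lemma. Your support bookkeeping is also right. The function $\sum_{k\le m-2}\rho_k$ is supported in $|y|\le\frac{2}{3}2^m<\frac{3}{4}2^m$, so the paraproduct terms live in $2^m\{\frac{1}{12}\le|y|\le\frac{10}{3}\}$ and the resonant terms in balls of radius $8\cdot 2^j$. As written, however, the argument only proves the norm inequalities, that is, membership in the full space $B^{\gamma}_{\infty,\infty}$.

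That is where the gap is. In Definition \ref{b} the paper takes $\mathscr{C}^\gamma$ to be the \emph{little} space $\{f:\lim_{q\to\infty}2^{\gamma q}\Vert\delta_q f\Vert_{L^\infty}=0\}$; the continuity of $r\mapsto G_r f$ in Lemma \ref{lemma} and the fixed-point arguments rely on this choice. So the conclusion $f\varolessthan h\in\mathscr{C}^{\min(\alpha,0)+\beta}$ includes a decay statement you never prove. Your auxiliary lemma deduces $\sum_j u_j\in\mathscr{C}^\gamma$ from $\Vert u_j\Vert_{L^\infty}\le C2^{-j\gamma}$ alone, and this is false for the little space. For example, $u_j(x)=2^{-j\gamma}\cos(2\pi 2^j x_1)$ satisfies both your annulus and ball hypotheses, yet $2^{q\gamma}\Vert\delta_q\sum_j u_j\Vert_{L^\infty}$ stays bounded below, since $\rho_0(e_1)+\rho_0(2e_1)=1$.

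The repair is to add the hypothesis $2^{j\gamma}\Vert u_j\Vert_{L^\infty}\to 0$ to the lemma.
\begin{itemize}
\item In the annulus case, the finite window $|j-q|\le N$ then gives the decay immediately.
\item In the ball case, $2^{q\gamma}\Vert\delta_q\sum_j u_j\Vert_{L^\infty}\lesssim\sum_{k\ge -N}2^{-k\gamma}\,2^{(q+k)\gamma}\Vert u_{q+k}\Vert_{L^\infty}\to 0$ by dominated convergence, using $\gamma>0$.
\end{itemize}
The added hypothesis holds in both of your applications. For the paraproduct, $2^{m\min(\alpha,0)}\Vert S_{m-1}f\Vert_{L^\infty}$ stays bounded while $2^{m\beta}\Vert\delta_m h\Vert_{L^\infty}\to0$ because $h\in\mathscr{C}^\beta$. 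For the resonant product, $2^{j(\alpha+\beta)}\Vert\delta_i f\Vert_{L^\infty}\Vert\delta_j h\Vert_{L^\infty}\to 0$ for $|i-j|\le 1$.

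A minor point: you never actually need Bernstein's inequality, only $\sup_q\Vert K_q\Vert_{L^1(\mathbb{T}^d)}<\infty$. This holds because $K_q$ is the periodization of $2^{qd}(\mathscr{F}^{-1}\rho_0)(2^q\cdot)$, not literally a dilate of a fixed function.
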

We deduce from Theorem \ref{estimate} the following simple corollary.
\begin{corollary}[Theorem 27.11, \cite{lecture}]
\label{product}
For all $\left(\alpha,\beta\right) \in \left(\mathbb{R}^*\right)^2,$ if $\alpha+\beta>0,$ then there exists $\mu \in \mathbb{R}_+^*$ such that for every $\left(f,h\right) \in \mathscr{C}^\alpha\times \mathscr{C}^\beta,fh=f\varolessthan h+f\varodot h+f\varogreaterthan h \in \mathscr{C}^{\min\left(\alpha,\beta\right)}$ and $$\left\Vert fh\right\Vert_{\mathscr{C}^{\min\left(\alpha,\beta\right)}}\leq \mu\left\Vert f\right\Vert_{\mathscr{C}^\alpha}\left\Vert h\right\Vert_{\mathscr{C}^\beta}.$$
\end{corollary}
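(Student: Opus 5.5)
We derive Corollary \ref{product} directly from the two estimates of Theorem \ref{estimate} and Bony's decomposition $fh=f\varolessthan h+f\varodot h+f\varogreaterthan h$. By symmetry we may assume $\alpha\le\beta$, so $\min(\alpha,\beta)=\alpha$, and we bound each of the three terms in $\mathscr{C}^{\alpha}$.

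\textbf{Step 1: the decomposition.} Write the partial sums $\sum_{j_1,j_2\le q}\chi_{j_1,j_2}$ and sort the pairs $(j_1,j_2)$ into three groups: $j_1\le j_2-2$ (the paraproduct $f\varolessthan h$), $|j_1-j_2|\le1$ (the resonant product $f\varodot h$), and $j_2\le j_1-2$ (the paraproduct $f\varogreaterthan h=h\varolessthan f$). This indexing matches the definitions of $g_m$ and $h_m$. Once each of the three series is shown to converge in $\mathscr{S}'$, the product series converges and $fh$ equals their sum.

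\textbf{Step 2: the paraproducts.} By Theorem \ref{estimate}(1), $f\varolessthan h\in\mathscr{C}^{\min(\alpha,0)+\beta}$ and $h\varolessthan f\in\mathscr{C}^{\min(\beta,0)+\alpha}$, each with norm at most $\mu\Vert f\Vert_{\mathscr{C}^\alpha}\Vert h\Vert_{\mathscr{C}^\beta}$. We then compare each exponent with $\alpha$, using the continuous embedding $\mathscr{C}^{\gamma}\hookrightarrow\mathscr{C}^{\gamma'}$ for $\gamma'\le\gamma$. This embedding follows immediately from the definition of the norm, since $2^{\gamma'(q-1)}\le 2^{\gamma(q-1)}$ for $q\ge1$, with only $q=0$ contributing a constant factor.

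\begin{itemize}
\item For $f\varolessthan h$ we have $\min(\alpha,0)+\beta\ge\alpha$. Indeed, if $\alpha\le0$ the exponent is $\alpha+\beta>\alpha$ because $\beta>0$, which is forced by $\alpha+\beta>0$ together with $\alpha\le\beta$. If $\alpha>0$ the exponent is $\beta\ge\alpha$.
\item For $h\varolessthan f$ we have $\min(\beta,0)+\alpha=\alpha$, because $\beta>0$.
\end{itemize}

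\textbf{Step 3: the resonant term.} Since $\alpha+\beta>0$, Theorem \ref{estimate}(2) gives $f\varodot h\in\mathscr{C}^{\alpha+\beta}$. Because $\beta>0$, this space embeds into $\mathscr{C}^\alpha$.

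\textbf{Step 4: conclusion.} Summing the three bounds by the triangle inequality gives $\Vert fh\Vert_{\mathscr{C}^{\min(\alpha,\beta)}}\le\mu\Vert f\Vert_{\mathscr{C}^\alpha}\Vert h\Vert_{\mathscr{C}^\beta}$.

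\textbf{Main obstacle.} The only delicate points are bookkeeping. First, one must check that the convergence in $\mathscr{S}'$ of the three partial sums is exactly what Theorem \ref{estimate} provides, so that $fh$ is well defined as the product series. Second, one must use the hypothesis $\alpha,\beta\neq0$, which makes Theorem \ref{estimate}(1) applicable with exponents in $\mathbb{R}^*$. Finally, the membership $fh\in\mathscr{C}^{\alpha}$ in the little-o sense of the definition of $\mathscr{C}^\alpha$ follows from the same blockwise estimates.
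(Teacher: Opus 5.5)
Your proposal is correct and is essentially the paper's argument. The paper gives no separate proof: it deduces the corollary from Theorem \ref{estimate} via $fh=f\varolessthan h+f\varodot h+f\varogreaterthan h$, and your exponent bookkeeping (reducing to $\alpha\le\beta$, which forces $\beta>0$, and using $\alpha\neq0$ for the paraproduct bound) is exactly what that deduction needs. The only detail to tighten in Step 1 is that the square partial sums $\sum_{j_1,j_2\le q}\chi_{j_1,j_2}$ differ from the sum of the three partial sums by one boundary block $\delta_q f\,\delta_{q-1}h$, whose $L^\infty$ norm is $\lesssim 2^{-q(\alpha+\beta)}\Vert f\Vert_{\mathscr{C}^\alpha}\Vert h\Vert_{\mathscr{C}^\beta}\to0$ because $\alpha+\beta>0$; with that observation the identification of $fh$ with the sum of the three limits is immediate.
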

For every $(\phi,f) \in \mathscr{S}^2,$ let $J_{\phi}(f)=\int_{\mathbb{T}^d}\phi(x)f(x)\,\mathrm{d}x.$ 

The following proposition provides commutator estimates for paraproducts and resonant products. 
\begin{proposition}[Lemma 2.4, \cite{MR3406823}]
\label{co1}
For every $(\alpha,\beta,\gamma) \in ]0,1[\times \mathbb{R}^2,$ if $-\alpha<\beta+\gamma<0,$ then there exists \gls{f5}$:\mathscr{C}^\alpha\times\mathscr{C}^\beta\times\mathscr{C}^\gamma\longrightarrow\mathscr{C}^{\alpha+\beta+\gamma}$ such that $$\exists \mu \in \mathbb{R}_+^*,\forall(f_1,f_2,f_3)\in\mathscr{C}^\alpha\times\mathscr{C}^\beta\times\mathscr{C}^\gamma,\Vert\Psi(f_1,f_2,f_3)\Vert_{\mathscr{C}^{\alpha+\beta+\gamma}}\leq \mu\Vert f_1\Vert_{\mathscr{C}^\alpha}\Vert f_2\Vert_{\mathscr{C}^\beta}\Vert f_3\Vert_{\mathscr{C}^\gamma}$$ and $$\forall(g_1,g_2,g_3)\in\mathscr{S}^3,\Psi(J_{g_1},J_{g_2},J_{g_3})=(J_{g_1}\varolessthan J_{g_2})\varodot J_{g_3}-J_{g_1}(J_{g_2}\varodot J_{g_3}).$$ 
\end{proposition}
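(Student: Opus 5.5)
We prove Proposition \ref{co1} by first defining $\Psi$ on smooth functions through the formula in the statement, then proving the trilinear bound there, and finally extending $\Psi$ by density and continuity. One caveat: smooth functions are not dense in $\mathscr{C}^\alpha$ for the plain sup-norm, but with the little-Hölder definition of $\mathscr{C}^\alpha$ used in Definition \ref{b} (the condition $2^{\alpha q}\Vert\delta_q f\Vert_{L^\infty}\to0$), the Littlewood--Paley truncations $S_N f=\sum_{q\le N}\delta_q f$ do converge to $f$ in $\mathscr{C}^\alpha$. This is what makes the extension step work, and it also gives uniqueness of the extension.

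\textbf{Step 1: algebraic decomposition.} For smooth $f_1,f_2,f_3$ we write
$$(f_1\varolessthan f_2)\varodot f_3=\sum_{|i-j|\le1}\delta_i\Big(\sum_{k}S_{k-2}f_1\,\delta_k f_2\Big)\delta_j f_3,$$
where $S_{k-2}f_1=\sum_{l\le k-2}\delta_l f_1$. Because of the support of $\rho_i$, only indices with $|k-i|\le N_0$ contribute, for some fixed $N_0$. We likewise expand $f_1(f_2\varodot f_3)=\sum_{|k-j|\le1}f_1\,\delta_k f_2\,\delta_j f_3$. Subtracting the two expansions, we split the difference into two kinds of terms:
\begin{enumerate}
\item commutator terms $\big(\delta_i(S_{k-2}f_1\,\delta_k f_2)-S_{k-2}f_1\,\delta_i\delta_k f_2\big)\delta_j f_3$;
\item remainder terms of the form $(S_{k-2}f_1-f_1)\,\delta_i\delta_k f_2\,\delta_j f_3$, together with the mismatch in the summation ranges, which only involves indices within a bounded distance of each other.
\end{enumerate}

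\textbf{Step 2: estimating each family.} The main estimate is for the commutator $[\delta_i,g]h$ with $g=S_{k-2}f_1$ and $h=\delta_kf_2$. Writing $\delta_i$ as convolution with $K_i$, we have
$$[\delta_i,g]h(x)=\int K_i(x-y)\big(g(y)-g(x)\big)h(y)\,\mathrm{d}y.$$
Since $\alpha\in]0,1[$, the function $g$ is $\alpha$-Hölder with $\Vert g\Vert_{C^\alpha}\lesssim\Vert f_1\Vert_{\mathscr{C}^\alpha}$. Moreover $\int|K_i(z)||z|^\alpha\,\mathrm{d}z\lesssim 2^{-i\alpha}$, by scaling together with a periodisation argument on the torus. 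This gives
$$\Vert[\delta_i,g]h\Vert_{L^\infty}\lesssim 2^{-i\alpha}\Vert f_1\Vert_{\mathscr{C}^\alpha}\,2^{-k\beta}\Vert f_2\Vert_{\mathscr{C}^\beta}.$$
For the remainder terms we use $\Vert f_1-S_{k-2}f_1\Vert_{L^\infty}\lesssim 2^{-k\alpha}\Vert f_1\Vert_{\mathscr{C}^\alpha}$, which holds because $\alpha>0$ makes the tail sum geometric. In both cases each block term is bounded in $L^\infty$ by $2^{-k(\alpha+\beta+\gamma)}$ times the product of the three norms, with $i$ and $j$ comparable to $k$.

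\textbf{Step 3: summation, which is the main obstacle.} Each product $[\cdots]\,\delta_j f_3$ with $j\sim k$ has Fourier support in a ball of radius $\lesssim 2^k$, not in an annulus. A block $\delta_m$ therefore receives contributions from all $k\gtrsim m$, and
$$\Vert\delta_m\Psi\Vert_{L^\infty}\lesssim\sum_{k\gtrsim m}2^{-k(\alpha+\beta+\gamma)}\,\Vert f_1\Vert_{\mathscr{C}^\alpha}\Vert f_2\Vert_{\mathscr{C}^\beta}\Vert f_3\Vert_{\mathscr{C}^\gamma}.$$
This series converges, and is bounded by a multiple of $2^{-m(\alpha+\beta+\gamma)}$, exactly because $\alpha+\beta+\gamma>0$. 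That gives the $\mathscr{C}^{\alpha+\beta+\gamma}$ bound.

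Two points in this step need care. First, the condition $\beta+\gamma<0$ is what makes the separate resonant product $f_2\varodot f_3$ irrelevant to the constant, since we only ever use the combined expression. Second, the remainder terms carry a factor $f_1-S_{k-2}f_1$ that is not frequency-localised; to handle them we expand $f_1-S_{k-2}f_1=\sum_{l>k-2}\delta_l f_1$, and then each piece has Fourier support of radius $\lesssim 2^l$, so the same geometric summation applies with $l$ in place of $k$.

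Finally, by trilinearity and the bound from Step 3, the differences $\Psi(S_Nf_1,S_Nf_2,S_Nf_3)$ form a Cauchy sequence in $\mathscr{C}^{\alpha+\beta+\gamma}$. Their limit defines $\Psi$ on $\mathscr{C}^\alpha\times\mathscr{C}^\beta\times\mathscr{C}^\gamma$, and it satisfies the same estimate with the same constant $\mu$.
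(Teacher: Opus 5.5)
Your architecture is the standard one behind Lemma 2.4 of Gubinelli--Imkeller--Perkowski, which the paper cites rather than proves. It splits $\Psi$ into commutator terms $[\delta_i,S_{k-2}f_1]\delta_kf_2\,\delta_jf_3$ and remainder terms $(S_{k-2}f_1-f_1)\delta_i\delta_kf_2\,\delta_jf_3$, uses ball-support summation, and extends by density in the little H\"older--Zygmund space. Your treatment of the commutator terms and of the extension step is sound.

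The gap is in the remainder terms. This is exactly where the hypothesis $\beta+\gamma<0$ has to be used, and your proposal never uses it: the remark that it makes $f_2\varodot f_3$ ``irrelevant to the constant'' corresponds to no estimate. After expanding $f_1-S_{k-2}f_1=\sum_{l\ge k-1}\delta_lf_1$, the piece $\delta_lf_1\,\delta_i\delta_kf_2\,\delta_jf_3$ has Fourier support in a ball of radius $\lesssim 2^l$. Its $L^\infty$ norm is $\lesssim 2^{-l\alpha}2^{-k(\beta+\gamma)}$, and $l$ is not comparable to $k$: for fixed $l$, every $k\le l+1$ contributes. So the same geometric summation does not apply ``with $l$ in place of $k$''. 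You have to group by $l$ and estimate
\begin{align*}
\Bigl\Vert \delta_l f_1 \sum_{k\le l+1}\ \sum_{|i-k|\le 1}\ \sum_{|j-i|\le 1}\delta_i\delta_k f_2\,\delta_j f_3\Bigr\Vert_{L^\infty}
&\lesssim 2^{-l\alpha}\Bigl(\sum_{k\le l+1}2^{-k(\beta+\gamma)}\Bigr)\Vert f_1\Vert_{\mathscr{C}^\alpha}\Vert f_2\Vert_{\mathscr{C}^\beta}\Vert f_3\Vert_{\mathscr{C}^\gamma}\\
&\lesssim 2^{-l(\alpha+\beta+\gamma)}\Vert f_1\Vert_{\mathscr{C}^\alpha}\Vert f_2\Vert_{\mathscr{C}^\beta}\Vert f_3\Vert_{\mathscr{C}^\gamma}.
\end{align*}
The last step holds only because $\beta+\gamma<0$ makes the $k$-sum dominated by its top term. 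For $\beta+\gamma=0$ you lose a factor $l$, and for $\beta+\gamma>0$ you only get $2^{-l\alpha}$. Once you have this grouped bound, the ball-support summation with $\alpha+\beta+\gamma>0$ closes the argument as in your Step 3.

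The hypothesis cannot be bypassed, so your summation step as written cannot be right. Take $f_2=f_3\equiv 1$. All Littlewood--Paley blocks of $1$ vanish except $\delta_{-1}1=1$. Hence $f_1\varolessthan 1=0$, since the only nonzero block is paired with the empty partial sum $S_{-3}f_1$, and $1\varodot 1=1$. Therefore $\Psi(f_1,1,1)=-f_1$.

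Now take $f_1=\cos(2\pi\langle x,m\rangle)$ and let $|m|\to\infty$. The claimed bound would give $|m|^{\alpha+\beta+\gamma}\lesssim |m|^{\alpha}$, which fails whenever $\beta+\gamma>0$. In your decomposition this example is entirely a remainder term with $k=-1$ and $2^l\approx |m|$. That is exactly the configuration ($k$ fixed, $l\to\infty$) where the $k$-sum at fixed $l$ must be controlled by $2^{-l(\beta+\gamma)}$, which only $\beta+\gamma<0$ provides.
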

\subsection{Schauder estimates}
For all $\left(r,f\right)\in\mathbb{R}_+^*\times \mathscr{S}',$ let $G_0f:=f,$
\begin{align*} 
\text{\gls{f6}}: \mathbb{T}^d &\longrightarrow \mathbb{R} \\ 
x & \longmapsto \eta_{r}\left(x\right)=\sum_{q \in \mathbb{Z}^d}\cos\left(2\pi \left\langle x,q\right\rangle\right)\mathrm{e}^{-r\left|2\pi  q\right|^4}
\end{align*}
and \gls{f7}$:=\eta_r*f.$

We study here the regularizing effect of the semi-group $\left(G_q\right)_{q \in \mathbb{R}_+}.$

We begin with the following fundamental lemma.
\begin{lemma}[Lemma A.5, \cite{MR3406823}]
\label{lemma}
\begin{enumerate}
\item For every $\beta \in \mathbb{R}_+,$ there exists $\mu \in \mathbb{R}_+^*$ such that for all $\alpha \in \mathbb{R},$ $$\forall \left(r,f\right) \in \mathbb{R}_+^*\times \mathscr{C}^\alpha,\left\Vert G_rf\right\Vert_{\mathscr{C}^{\alpha+\beta}}\leq \mu\max\left(r^{-\frac{\beta}{4}},1\right)\left\Vert f\right\Vert_{\mathscr{C}^\alpha}.$$
\item There exists $\mu \in \mathbb{R}_+^*$ such that for any $\gamma \in \mathbb{R}$ and all $\left(U,f\right) \in \mathbb{R}_+\times\mathscr{C}^\gamma,$ the function $\begin{aligned}[t]
\left[0,U\right]&\longrightarrow \mathscr{C}^\gamma\\
m &\longmapsto G_mf
\end{aligned}$ is continuous on $\left[0,U\right]$ and $\sup_{r\in\left[0,U\right]}\left\Vert G_rf\right\Vert_{\mathscr{C}^\gamma}\leq \mu\left\Vert f\right\Vert_{\mathscr{C}^\gamma}.$
\item For every $\beta \in \left[0,4\right],$ there exists $\mu \in \mathbb{R}_+^*$ such that for all $\alpha \in \mathbb{R},$ $$\forall \left(r,f\right) \in \mathbb{R}_+^*\times \mathscr{C}^\alpha,\left\Vert G_rf-f\right\Vert_{\mathscr{C}^{\alpha-\beta}}\leq \mu\min\left(r^{\frac{\beta}{4}},1\right)\left\Vert f\right\Vert_{\mathscr{C}^\alpha}.$$
\end{enumerate}
\end{lemma}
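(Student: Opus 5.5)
The three estimates in Lemma \ref{lemma} all follow from a single frequency-localized bound on the biharmonic semigroup, so I would prove that bound first. Since $G_r$ is a Fourier multiplier with symbol $\mathrm{e}^{-r|2\pi m|^4}$, it commutes with the blocks: $\delta_q G_r f = G_r\delta_q f$. For $q\ge 0$, $\delta_q f$ has spectrum in the annulus $2^q\{3/4\le|x|\le 8/3\}$.

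The core claim is the following. There exist $c,\mu>0$ such that for every $q\ge 0$, every $r\ge 0$, and every $\varphi$ with Fourier support in $2^q\mathcal{A}$,
$$\|G_r\varphi\|_{L^\infty}\le \mu\,\mathrm{e}^{-c r2^{4q}}\|\varphi\|_{L^\infty}.$$
For $q=-1$ the bound $\|G_r\delta_{-1}f\|_{L^\infty}\le\mu\|\delta_{-1}f\|_{L^\infty}$ is enough.

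To prove it, I would write $G_r\varphi=(\tilde\rho(2^{-q}\cdot)\mathrm{e}^{-r|2\pi\cdot|^4})^\vee*\varphi$, where $\tilde\rho$ is a smooth bump equal to $1$ on $\mathcal{A}$ and supported away from $0$. It then suffices to bound the $L^1(\mathbb{T}^d)$ norm of the periodized kernel by the $L^1(\mathbb{R}^d)$ norm of the kernel on $\mathbb{R}^d$. The Poisson summation/periodization identity handles this step. Rescaling $x\mapsto 2^{-q}x$ reduces everything to the kernel $k_\lambda=(\tilde\rho\,\mathrm{e}^{-\lambda|2\pi\cdot|^4})^\vee$ with $\lambda=r2^{4q}$. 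I would bound $\|k_\lambda\|_{L^1}$ using $(1+|x|^2)^{d}|k_\lambda|\lesssim \|(1-\Delta_\xi)^d(\tilde\rho\,\mathrm{e}^{-\lambda|2\pi\xi|^4})\|_{L^1}$. On the support of $\tilde\rho$ we have $|\xi|\ge 3/4$, so every $\xi$-derivative of $\mathrm{e}^{-\lambda|2\pi\xi|^4}$ is bounded by $C(1+\lambda)^{2d}\mathrm{e}^{-\lambda(3\pi/2)^4}\le C'\mathrm{e}^{-c\lambda}$. This gives the claim.

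The three items then follow.

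\textbf{Item (1).} We have $2^{(\alpha+\beta)q}\|\delta_qG_rf\|\le \mu\, 2^{\beta q}\mathrm{e}^{-cr2^{4q}}\,2^{\alpha q}\|\delta_q f\|$. Moreover $\sup_{q\ge0}2^{\beta q}\mathrm{e}^{-cr2^{4q}}\le C\max(r^{-\beta/4},1)$, using $\sup_{t>0}t^{\beta/4}\mathrm{e}^{-ct}<\infty$ for $r\le1$ and the bound $2^{\beta q}\mathrm{e}^{-cr 2^{4q}}\le C$ for $r\ge1$. The block $q=-1$ is trivial.

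\textbf{Item (3).} I would write $G_r\varphi-\varphi=-\int_0^r\Delta^2G_s\varphi\,\mathrm{d}s$. Bernstein (Theorem \ref{Bernstein}) together with the core claim gives $\|(G_r-1)\delta_qf\|\lesssim\min(r2^{4q},1)\|\delta_qf\|$, where the branch $1$ comes directly from the claim. Then $2^{-\beta q}\min(r2^{4q},1)\le \min(r^{\beta/4},1)$ for $\beta\in[0,4]$, by interpolating the two branches. The $q=-1$ block is handled the same way, with the low-frequency bound $\|\Delta^2G_s\delta_{-1}f\|\lesssim\|\delta_{-1}f\|$.

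\textbf{Item (2).} Uniform boundedness is the case $\beta=0$ of the core claim. For continuity, the semigroup property reduces it to continuity at $s\to 0$ of $G_{m+s}f - G_mf = G_m(G_sf-f)$. Using item (3) with small $\beta>0$ only gives convergence in $\mathscr{C}^{\gamma-\beta}$. To upgrade, I would use the definition of $\mathscr{C}^\gamma$ in the paper, which requires $2^{\gamma q}\|\delta_qf\|\to0$, i.e.\ the small-ball closure of smooth functions. I would split $f$ into low frequencies $q\le N$, where convergence is clear, and a high-frequency tail, which is uniformly small by the uniform bound.

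The main obstacle is the core kernel estimate, specifically obtaining the decay $\mathrm{e}^{-c\lambda}$ uniformly with a torus-to-$\mathbb{R}^d$ transfer. The continuity in item (2) also needs care, because it relies on the little-o definition of $\mathscr{C}^\gamma$.
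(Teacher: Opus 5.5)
Your proof is correct. The paper gives no argument of its own, only a citation to the analogous heat-semigroup lemma of Gubinelli--Imkeller--Perkowski, and your route is the standard proof of such estimates adapted to the symbol $|2\pi m|^4$: the blockwise decay $\|G_r\delta_q f\|_{L^\infty}\lesssim \mathrm{e}^{-cr2^{4q}}\|\delta_q f\|_{L^\infty}$, obtained from an $L^1$ kernel bound plus periodization, combined with Bernstein. The continuity at $r=0$ in item 2 fails in the full space $B^\gamma_{\infty,\infty}$ and genuinely needs the paper's little-H\"older definition of $\mathscr{C}^\gamma$, and your low/high-frequency splitting handles this correctly.
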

We derive next Schauder estimates for the semi-group $\left(G_r\right)_{r \in \mathbb{R}_+}.$
\begin{theorem}[Schauder estimates]
\label{Schauder}
For every $\beta \in \left[0,4\right[,$ there exists $\mu \in \mathbb{R}_+^*$ such that for all $\left(\alpha,U\right) \in \mathbb{R}\times \mathbb{R}_+,$ $$\forall g \in C_{U}\mathscr{C}^\alpha,\sup_{r \in \left[0,U\right]}\left\Vert \int_0^rG_{r-q}\left(\Delta g\left(q\right)\right)\,\mathrm{d}q\right\Vert_{\mathscr{C}^{\alpha+\beta-2}}\leq \mu\max\left(U,U^{1-\frac{\beta}{4}}\right)\left\Vert g\right\Vert_{C_U\mathscr{C}^\alpha}.$$
\end{theorem}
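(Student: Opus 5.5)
We bound the integrand pointwise in $q$ using Lemma \ref{lemma} and Theorem \ref{derivative}, and then integrate the resulting time singularity. Fix $\beta\in[0,4[$, $U\ge 0$, $g\in C_U\mathscr{C}^\alpha$ and $r\in[0,U]$.

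First, since $\Delta=\sum_{i=1}^d\partial_i^2$, Theorem \ref{derivative} (applied with multi-indices $2e_i$ and summed) gives a constant $\mu_1$, independent of $\alpha$, such that
$$\Vert \Delta g(q)\Vert_{\mathscr{C}^{\alpha-2}}\leq \mu_1\Vert g(q)\Vert_{\mathscr{C}^\alpha}\leq \mu_1\Vert g\Vert_{C_U\mathscr{C}^\alpha}.$$
Next, apply Lemma \ref{lemma}(1) with regularity $\alpha-2$ in place of $\alpha$ and gain $\beta$. For $0\le q<r$ this gives
$$\Vert G_{r-q}(\Delta g(q))\Vert_{\mathscr{C}^{\alpha+\beta-2}}\leq \mu_2\max\left((r-q)^{-\frac{\beta}{4}},1\right)\mu_1\Vert g\Vert_{C_U\mathscr{C}^\alpha}.$$
The constant $\mu_2$ depends only on $\beta$, not on $\alpha$, which is what allows $\mu$ to be uniform in $\alpha$.

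The second step is to pass the norm inside the time integral. I would justify this by working blockwise: for each $j$, $\delta_j\int_0^r G_{r-q}(\Delta g(q))\,\mathrm{d}q=\int_0^r \delta_j G_{r-q}(\Delta g(q))\,\mathrm{d}q$. The integrand is a smooth function, continuous in $q$ on $[0,r[$ because $g$ is continuous with values in $\mathscr{C}^\alpha$ and by Lemma \ref{lemma}(2). Hence
$$\Big\Vert \delta_j\int_0^r\cdots\Big\Vert_{L^\infty}\leq \int_0^r\Vert\delta_jG_{r-q}(\Delta g(q))\Vert_{L^\infty}\,\mathrm{d}q.$$
Multiplying by $2^{j(\alpha+\beta-2)}$ and taking the supremum over $j$ yields
$$\Big\Vert\int_0^r G_{r-q}(\Delta g(q))\,\mathrm{d}q\Big\Vert_{\mathscr{C}^{\alpha+\beta-2}}\le\int_0^r\Vert G_{r-q}(\Delta g(q))\Vert_{\mathscr{C}^{\alpha+\beta-2}}\,\mathrm{d}q.$$
This interchange is the only point requiring care, and I expect it to be the main (mild) obstacle. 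The integrand is integrable because $\beta/4<1$, so the singularity $(r-q)^{-\beta/4}$ at $q=r$ is harmless.

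Finally, compute the time integral. Bound $\max(s^{-\beta/4},1)\le s^{-\beta/4}+1$, which gives
$$\int_0^r\max\left((r-q)^{-\frac{\beta}{4}},1\right)\mathrm{d}q\le \frac{r^{1-\beta/4}}{1-\beta/4}+r\le \Big(\frac{1}{1-\beta/4}+1\Big)\max\left(U,U^{1-\frac{\beta}{4}}\right).$$
This uses $r\le U$ and $r^{1-\beta/4}\le U^{1-\beta/4}$. Taking the supremum over $r\in[0,U]$ gives the claim with
$$\mu=\mu_1\mu_2\Big(\frac{4}{4-\beta}+1\Big),$$
which depends only on $\beta$. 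The restriction $\beta<4$ is exactly what makes the time integral converge.
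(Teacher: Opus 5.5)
Your proof is correct and follows essentially the same route as the paper: move the norm inside the time integral, convert $\Delta$ into a loss of two derivatives via Theorem \ref{derivative}, gain $\beta$ via Lemma \ref{lemma}(1), and integrate the singularity $(r-q)^{-\beta/4}$ using $\beta<4$. The differences are cosmetic. You apply Theorem \ref{derivative} to $g(q)$ before the semigroup, whereas the paper commutes $\Delta$ past $G_{r-q}$. You also justify the norm--integral interchange blockwise and track the $\alpha$-independence of the constants explicitly; the paper leaves both points implicit.
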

\begin{proof}
Applying Lemma \ref{lemma} and Theorem \ref{derivative} we deduce that for every $\beta \in \left[0,4\right[,$ such that for all $\left(\alpha,U\right)\in\mathbb{R}\times \mathbb{R}_+,$
\begin{align*}
\forall g \in C_U\mathscr{C}^\alpha,\sup_{r \in \left[0,U\right]}\left\Vert \int_0^rG_{r-q}\left(\Delta g\left(q\right)\right)\,\mathrm{d}q\right\Vert_{\mathscr{C}^{\alpha+\beta-2}}&\leq\sup_{r \in \left[0,U\right]}\int_{0}^r\left\Vert G_{r-q}\left(g\left(q\right)\right)\right\Vert_{\mathscr{C}^{\alpha+\beta}}\,\mathrm{d}q\\
&\lesssim \sup_{r \in \left[0,U\right]}\int_{0}^r\max\left(\left|r-q\right|^{-\frac{\beta}{4}},1\right)\left\Vert g\right\Vert_{C_U\mathscr{C}^\alpha}\,\mathrm{d}q\\
&\lesssim \max\left(U,U^{1-\frac{\beta}{4}}\right)\left\Vert g\right\Vert_{C_U\mathscr{C}^\alpha}.
\end{align*}
\end{proof}
The following proposition provides commutator estimates for paraproducts and $(G_r)_{r \in \mathbb{R}_+}$. 
\begin{proposition}[Lemma A.1, \cite{MR3846835}]
\label{co2}
For every $\left(\alpha,\beta,\gamma\right) \in \mathbb{R}^2\times\mathbb{R}_+,$ there exists $\mu \in \mathbb{R}_+^*$ such that $$\forall \left(r,f,g\right) \in \mathbb{R}_+^*\times \mathscr{C}^\alpha\times\mathscr{C}^\beta,\left\Vert G_r\left(f\varolessthan g\right)-f\varolessthan\left(G_rg\right)\right\Vert_{\mathscr{C}^{\alpha+\beta+\gamma}}\leq \mu r^{-\frac{\gamma}{4}}\left\Vert f\right\Vert_{\mathscr{C}^\alpha}\left\Vert g\right\Vert_{\mathscr{C}^\beta}.$$
\end{proposition}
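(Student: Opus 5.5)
The plan is to prove the estimate of Proposition \ref{co2} block by block in the Littlewood--Paley decomposition, following the scheme of Lemma A.1 in \cite{MR3846835}. Write $f\varolessthan g=\sum_{j}S_{j-1}f\,\delta_j g$ with $S_{j-1}f:=\sum_{i\leq j-2}\delta_i f$, so that
$$G_r(f\varolessthan g)-f\varolessthan(G_rg)=\sum_j\big[G_r,S_{j-1}f\big]\delta_jg ,$$
where we use that $G_r$ commutes with $\delta_j$. Each summand has Fourier support in an annulus of size $2^j$. Hence, by the standard summation lemma for annulus-supported series (a consequence of Theorem \ref{Bernstein}), it suffices to show
$$\big\Vert [G_r,S_{j-1}f]\delta_jg\big\Vert_{L^\infty}\lesssim r^{-\gamma/4}2^{-j(\alpha+\beta+\gamma)}\Vert f\Vert_{\mathscr{C}^\alpha}\Vert g\Vert_{\mathscr{C}^\beta}$$
uniformly in $j$ and $r>0$.

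For the block estimate, write $G_r$ as convolution with $\eta_r$ and localise the kernel to frequency $\sim 2^j$ as $\tilde\eta_{r,j}:=\tilde K_j*\eta_r$. The commutator then becomes
$$\int\tilde\eta_{r,j}(x-y)\big(S_{j-1}f(y)-S_{j-1}f(x)\big)\delta_jg(y)\,\mathrm{d}y .$$
I would Taylor-expand $S_{j-1}f(y)-S_{j-1}f(x)$ to order $N\geq\lceil\alpha+\gamma\rceil$:
\begin{itemize}
\item the terms of order $1\leq n<N$ produce $\partial^nS_{j-1}f(x)\,(\partial^n\hat\eta_r)(D)\delta_jg$;
\item the remainder is bounded by $\Vert\nabla^NS_{j-1}f\Vert_{L^\infty}\,|x-y|^N$.
\end{itemize}
Two scaling facts drive the bounds. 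First, $\Vert\nabla^nS_{j-1}f\Vert_{L^\infty}\lesssim 2^{j\max(n-\alpha,0)}\Vert f\Vert_{\mathscr{C}^\alpha}$, by Theorem \ref{Bernstein} and Theorem \ref{derivative}. Second, $\int|z|^n|\tilde\eta_{r,j}(z)|\,\mathrm{d}z\lesssim 2^{-jn}\,\phi(r2^{4j})$, where $\phi(s)=(1+s)^{n}\mathrm{e}^{-cs}$; this follows by scaling $z\mapsto2^{-j}z$. Combining these with $\sup_{s}s^{\gamma/4}\phi(s)<\infty$ gives the factor $r^{-\gamma/4}2^{-4j\gamma/4}$. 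Bounding $\Vert\delta_jg\Vert_{L^\infty}\leq2^{-j\beta}\Vert g\Vert_{\mathscr{C}^\beta}$ then yields, for each term,
$$\Big(r^{-\gamma/4}2^{-j\gamma}\Big)\Big(2^{j\max(n-\alpha,0)}2^{-jn}\Big)\Big(2^{-j\beta}\Big)\Vert f\Vert_{\mathscr{C}^\alpha}\Vert g\Vert_{\mathscr{C}^\beta}.$$

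When $n\geq\alpha$, the middle factor equals $2^{-j\alpha}$, which is exactly the claimed bound. This handles every $\alpha<1$ (take $N=1$). It also handles the remainder for every $\alpha$, by choosing $N\geq\alpha$.

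The main obstacle is the Taylor terms with $1\leq n<\alpha$, which occur only when $\alpha\geq1$. There $\partial^nS_{j-1}f$ is merely bounded, so the term only gains $2^{-jn}$ rather than $2^{-j\alpha}$. The test case $f=\cos(2\pi x_1)$, $r=2^{-4j}$ suggests that no regrouping can fix this, because these terms are genuinely present. For this range I would first try to absorb them by treating $\sum_j\partial^nS_{j-1}f\,(\partial^n\hat\eta_r)(D)\delta_jg$ as a paraproduct of $\partial^nf\in\mathscr{C}^{\alpha-n}$ with $g$, and then check whether the time weight $r^{-\gamma/4}$ can pay for the missing $2^{-j(\alpha-n)}$. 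If that check fails, I would record that the argument establishes the estimate with $\alpha$ replaced by $\min(\alpha,1)$ on the right-hand side. That weaker form is the only case used later, since $v+w+Y_2$ has regularity below $1$.
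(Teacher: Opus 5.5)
For $\alpha<1$ your argument is correct and complete. Take $N=1$, so there are no Taylor terms and $\Vert\nabla S_{j-1}f\Vert_{L^\infty}\lesssim 2^{j(1-\alpha)}\Vert f\Vert_{\mathscr{C}^\alpha}$; then combine this with the scaled moment bound on the frequency-localized kernel and the annulus summation lemma. This is the standard proof of this commutator lemma. The paper gives no argument of its own, only the citation to \cite{MR3846835}. One technical point: do the Taylor expansion and the moment bound with the non-periodized kernel on $\mathbb{R}^d$.

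The range $\alpha\geq1$ is not a gap you can close, because for $\alpha>1$ the proposition as written is false. Your test case, once carried out, is a counterexample. Let $f=\cos(2\pi x_1)$ and $g_N=N^{-\beta}\cos(2\pi Nx_1)$, with $N\pm1\in\left]\frac43 2^j,\frac32 2^j\right[$, where $\rho_j\equiv1$. Then $\Vert g_N\Vert_{\mathscr{C}^\beta}\asymp1$, $S_{j-1}f=f$ and $f\varolessthan g_N=fg_N$. For $r=(2\pi N)^{-4}$,
\[
G_r(f\varolessthan g_N)-f\varolessthan(G_rg_N)=\frac{N^{-\beta}}{2}\sum_{\pm}\left(\mathrm{e}^{-(1\pm N^{-1})^4}-\mathrm{e}^{-1}\right)\cos\left(2\pi(N\pm1)x_1\right).
\]
Both coefficients are $\asymp N^{-1-\beta}$, and this function is its own $j$-th Littlewood--Paley block. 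Bounding $L^\infty$ from below by $L^2$, its $\mathscr{C}^{\alpha+\beta+\gamma}$ norm is $\gtrsim N^{\alpha+\gamma-1}$. The right-hand side is only $\asymp r^{-\gamma/4}\asymp N^{\gamma}$, so the ratio $N^{\alpha-1}$ is unbounded for $\alpha>1$.

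So the first-order Taylor term really has size $2^{-j}\Vert\delta_jg\Vert_{L^\infty}$ at $r\asymp 2^{-4j}$. At that scale the time weight is exactly used up, and no paraproduct regrouping can absorb it. Drop the absorption attempt and repair the statement instead by adding the hypothesis $\alpha<1$. This costs nothing downstream: the only use of Proposition \ref{co2}, in Proposition \ref{proposition3}, has $\alpha=\frac12-\epsilon$.

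Two endpoint corrections are needed.
\begin{enumerate}
\item When $n=\alpha$, your first scaling fact loses a factor $j$, because $\sum_{i\leq j}2^{i(n-\alpha)}=j$. The remainder therefore needs $N>\alpha$, not $N\geq\alpha$. The $+\gamma$ in $N\geq\lceil\alpha+\gamma\rceil$ is unnecessary.
\item For the same reason, your fallback should replace $\alpha$ by any $\alpha'<1$ with $\alpha'\leq\alpha$, not by $\min(\alpha,1)$. At $\alpha=1$ one only has $\Vert\nabla S_{j-1}f\Vert_{L^\infty}\lesssim j\,\Vert f\Vert_{\mathscr{C}^1}$, and this is sharp for lacunary $f$.
\end{enumerate}
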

\section{Construction of the stochastic objects}
\label{section3}
In this section we provide the necessary probabilistic tools to solve the stochastic Cahn-Hilliard equation.

For the rest of this chapter we fix a complete probability space $\left(\Omega,\mathcal{H},\mathds{P}\right)$ and we denote by $L^2\left(\Omega\right)$ the set of random variables $W:\Omega\longrightarrow\mathbb{R}$ such that $\mathds{E}\left[W^2\right]<+\infty.$

We start by recalling the definition of a space-time white noise.
\begin{definition}
A space-time white noise is a family $($\gls{sto1}$_\varphi)_{\varphi \in L^2\left(\mathbb{R}_+\times \mathbb{T}^d\right)}$ of real random variables such that for every $h \in L^2\left(\mathbb{R}_+\times \mathbb{T}^d\right),\xi_h$ is normally distributed, $\mathds{E}\left[\xi_h\right]=0,$ and $\mathds{E}\left[\xi_h^2\right]=\int_{\mathbb{R}_+\times \mathbb{T}^d}\left|h\left(q,x\right)\right|^2\,\mathrm{d}q\,\mathrm{d}x.$
\end{definition}
For all $r \in \mathbb{R}_+,$ let $$\mathcal{A}_r:=\left\{\varphi \in L^2\left(\mathbb{R}_+\times \mathbb{T}^d\right),\forall \left(q,x\right) \in \right]r,+\infty\left[\times \mathbb{T}^d,\varphi\left(q,x\right)=0\right\}$$ and $$\mathcal{H}_r:=\bigcap_{q \in \left]r,+\infty\right[}\sigma\left(\left\{M \in \mathcal{H},\mathds{P}\left(M\right)=0\right\}\cup \sigma\left(\bigcup_{\varphi \in \mathcal{A}_q}\sigma\left(\xi_\varphi\right)\right)\right).$$

Unless otherwise stated, all stochastic processes throughout are defined on the filtered probability space $\left(\Omega,\mathcal{H},\left(\mathcal{H}_r\right)_{r \in \mathbb{R}_+},\mathds{P}\right).$

For all $r \in \mathbb{R}_+,$ we define the stochastic convolution by:
\begin{align*}
\text{\gls{sto2}}:L^2\left(\mathbb{T}^d\right)&\longrightarrow L^2\left(\Omega\right)\\
\varphi&\longmapsto X_{r,1}\left(\varphi\right)=\int_{\left[0,r\right]\times \mathbb{T}^d}\left(\eta_{r-q}*\varphi\right)\left(x\right)\,\xi\left(\mathrm{d}q,\mathrm{d}x\right)
\end{align*}

Let $\zeta \in C_c^\infty(\mathbb{R}^d,\mathbb{R})$ such that $\zeta(0)=1$ and $$\forall x \in \mathbb{R}^d,\zeta(-x)=\zeta(x).$$

For all $\epsilon \in \mathbb{R}_+^*,$ 
\begin{align*}
\zeta_\epsilon:\mathbb{T}^d&\longrightarrow\mathbb{R}\\
x&\longmapsto\zeta_\epsilon\left(x\right)=\sum_{q \in \mathbb{Z}^d}\zeta\left(\epsilon q\right)\cos\left(2\pi \left\langle x,q\right\rangle\right)
\end{align*}
and 
\begin{align*}
\psi_{\epsilon,3}:\mathbb{R}_+&\longrightarrow\mathbb{R}\\
r&\longmapsto\psi_{\epsilon,3}\left(r\right)=r+\sum_{q \in \mathbb{Z}^d-\left\{0\right\}}\frac{1}{2\left|2\pi q\right|^4}\left|\zeta\left(\epsilon q\right)\right|^2\left(1-\mathrm{e}^{-2r\left|2\pi q\right|^4}\right)
\end{align*}

We provide next the optimal regularity properties of the stochastic objects involved in our equation.
\begin{theorem}
\label{theorem1}
Let $\theta \in \left]-\infty,2-\frac{d}{2}\right[.$
\begin{enumerate}
\item
There exists a stochastic process $\left(\widetilde{X}_{r,1}\right)_{r \in \mathbb{R}_+}$ with state space $\left(\mathscr{C}^\theta,\mathcal{B}\left(\mathscr{C}^\theta\right)\right)$ such that the sample paths of $\left(\widetilde{X}_{q,1}\right)_{q \in \mathbb{R}_+}$ are continuous and for $\mathds{P}$-almost every $\omega \in \Omega,\left(\widetilde{X}_{r,1}\left(\omega\right)\right)\left(\varphi\right)=\left(X_{r,1}\left(\varphi\right)\right)\left(\omega\right).$
\item For all $\left(r,q\right) \in \mathbb{R}_+\times \mathbb{R}_+^*,$ let \gls{sto3}$:=\zeta_q*\widetilde{X}_{r,1}.$ For every $\left(U,\varsigma\right) \in \mathbb{R}_+\times \mathbb{R}_+^*,$ $$\lim_{\epsilon\downarrow 0}\mathds{P}\left(\sup_{r \in \left[0,U\right]}\left\Vert \widetilde{X}_{r,\epsilon,1}-\widetilde{X}_{r,1}\right\Vert_{\mathscr{C}^\theta}>\varsigma\right)=0.$$
\end{enumerate}
\end{theorem}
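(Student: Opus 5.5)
We plan to use the standard Kolmogorov-type argument for Gaussian fields, working through the Littlewood--Paley blocks $\delta_j X_{r,1}$. First, for every $x\in\mathbb{T}^d$ and $j\geq -1$, the random variable $\delta_jX_{r,1}(x):=X_{r,1}(K_j(\cdot-x))$ is centered Gaussian. By Itô's isometry, expanding in Fourier modes,
\begin{align*}
\mathds{E}\left[\left|\delta_jX_{r,1}(x)\right|^2\right]\lesssim\sum_{m\in\mathbb{Z}^d}\rho_j(m)^2\int_0^r\mathrm{e}^{-2(r-q)|2\pi m|^4}\,\mathrm{d}q\lesssim \sum_{m}\rho_j(m)^2\min\left(r,|m|^{-4}\right)\lesssim 2^{j(d-4)}
\end{align*}
(for $j=-1$ the zero mode contributes $r$, which is fine locally in time).

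For the time increments we write, for $0\leq q<r$, $X_{r,1}-X_{q,1}=\int_q^r\ldots+\int_0^q(\eta_{r-s}-\eta_{q-s})*\ldots$. Using $|1-\mathrm{e}^{-a}|\leq a^{\kappa}$ we get
$$\mathds{E}\left[\left|\delta_j(X_{r,1}-X_{q,1})(x)\right|^2\right]\lesssim |r-q|^{\kappa}2^{j(d-4+4\kappa)},\qquad \kappa\in[0,1].$$
Gaussian hypercontractivity upgrades both bounds to $p$-th moments. We then apply the Besov embedding (Theorem \ref{embedding}) with $q=p$ large: summing over $j$, $\mathds{E}\|X_{r,1}-X_{q,1}\|^p_{B^{\theta'}_{p,p}}\lesssim |r-q|^{\kappa p/2}$ for $\theta'<2-\frac d2-2\kappa$. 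Consequently $\mathds{E}\|X_{r,1}-X_{q,1}\|^p_{\mathscr{C}^{\theta'-d/p}}\lesssim |r-q|^{\kappa p/2}$. Choosing $\kappa$ small and $p$ large so that $\theta<\theta'-d/p$ and $\kappa p/2>1$, Kolmogorov's continuity theorem for processes with values in the Banach space $\mathscr{C}^{\theta}$ (on $[0,U]$, $U$ arbitrary, then patched over $U\in\mathbb{N}$) gives a continuous modification $\widetilde{X}_{r,1}$.

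The main obstacle is \emph{identification}: $X_{r,1}$ is only defined as a map $L^2(\mathbb{T}^d)\to L^2(\Omega)$, so each $X_{r,1}(\varphi)$ is defined up to a null set depending on $\varphi$. To handle this, we first build the $\mathscr{S}'$-valued random variable through its Fourier coefficients. We take versions of $X_{r,1}(\psi_{m,i})$ for countably many $m$, and define $\widetilde{X}_{r,1}$ as the series $\sum_m \widehat{X}_r(m)\mathrm{e}^{2\pi\mathrm{i}\langle m,\cdot\rangle}$. This series converges almost surely in $\mathscr{C}^\theta$ by the moment bounds above. For $\varphi\in\mathscr{S}$, linearity and $L^2(\Omega)$-continuity then give $\widetilde{X}_{r,1}(\varphi)=X_{r,1}(\varphi)$ almost surely, and the continuous modification preserves this. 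One subtlety is that $\mathscr{C}^\theta$ as defined (the little-Besov closure) is separable, which makes Borel measurability and Kolmogorov applicable. This is exactly why we work with a strictly smaller exponent $\theta<\theta'$.

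For part 2, the same computation with the multiplier $1-\zeta(\epsilon m)$ inserted applies. Since $|1-\zeta(\epsilon m)|\lesssim \min(1,(\epsilon|m|)^{\lambda})$ for small $\lambda>0$, the $p$-th moments of both $\|\widetilde X_{r,\epsilon,1}-\widetilde X_{r,1}\|$ and its time increments acquire a factor $\epsilon^{\lambda p}$ at the cost of $2^{j\lambda}$ regularity. The Kolmogorov argument then yields $\mathds{E}\sup_{r\le U}\|\widetilde{X}_{r,\epsilon,1}-\widetilde{X}_{r,1}\|^p_{\mathscr{C}^\theta}\lesssim\epsilon^{\lambda p}$, and convergence in probability follows by Markov's inequality.
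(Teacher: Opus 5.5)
Your proposal is correct and follows the paper's proof essentially step for step. Both use Itô isometry/Parseval on the Littlewood--Paley blocks, with $1-\mathrm{e}^{-x}\lesssim x^{\kappa}$ for the time increments, then Nelson's hypercontractivity, the embedding $B^{\theta'}_{p,p}\hookrightarrow\mathscr{C}^{\theta'-d/p}$ and Kolmogorov. Part 2 likewise inserts the multiplier bound $|1-\zeta(\epsilon m)|^2\lesssim(\epsilon|m|)^{2\lambda}$ and controls $\sup_{r\le U}$ by a Garsia--Rodemich--Rumsey-type quantitative Kolmogorov estimate, using that the process vanishes at $r=0$. Your identification step also matches the paper, which builds the $\mathscr{S}'$-valued version from Fourier coefficients of polynomial growth. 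There only convergence of the Fourier series in $\mathscr{S}'$ is needed, since each Littlewood--Paley block is a finite sum, so you need not claim convergence of the raw series in $\mathscr{C}^\theta$.
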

\begin{proof}
See Appendix \ref{ppe1}
\end{proof}
For all $\left(
r,\epsilon\right)
\in \mathbb{R}_+\times \mathbb{R}_+^*,$ let
\begin{align*}
X_{r,\epsilon,2}:L^2\left(\mathbb{T}^d\right)&\longrightarrow L^2\left(\Omega\right)\\
\varphi&\longmapsto X_{r,\epsilon,2}\left(\varphi\right)=\int_{\left(\left[0,r\right]\times \mathbb{T}^d\right)^2}\left(\int_{\mathbb{T}^d}\varphi\left(x\right)\prod_{m=1}^2\left(\zeta_\epsilon*\eta_{r-q_m}\right)\left(x-y_m\right)\,\mathrm{d}x\right)\,\xi\left(\mathrm{d}q_1,\mathrm{d}y_1\right)\,\xi\left(\mathrm{d}q_2,\mathrm{d}y_2\right)
\end{align*}
For every $(r,\epsilon) \in\mathbb{R}_+\times \mathbb{R}_+^*,$ let \gls{sto5}$:\Omega\longrightarrow \mathscr{S}'$ be such that for every $\omega \in \Omega,\varphi \in \mathscr{S},$
$$\left(\widetilde{X}_{r,\epsilon,2}\left(\varphi\right)\right)\left(\omega\right)=\int_{\mathbb{T}^d}\varphi\left(x\right)\left(\left(\left(\widetilde{X}_{r,\epsilon,1}\left(\omega\right)\right)\left(x\right)\right)^2-\psi_{\epsilon,3}\left(r\right)\right)\,\mathrm{d}x.$$

\begin{theorem}
\label{theorem2}
Let $d\in \left\{4,5,6,7\right\}$ and $\theta \in \left]-\infty,4-d\right[.$
There exists a stochastic process $\left(\text{\gls{sto4}}\right)_{r \in \mathbb{R}_+}$ with state space $\left(\mathscr{C}^\theta,\mathcal{B}\left(\mathscr{C}^\theta\right)\right)$ such that the sample paths of $\left(\widetilde{X}_{q,2}\right)_{q \in \mathbb{R}_+}$ are continuous and for all $\left(U,\varsigma\right) \in \mathbb{R}_+\times \mathbb{R}_+^*,$ $$\lim_{\epsilon\downarrow 0}\mathds{P}\left(\sup_{r \in \left[0,U\right]}\left\Vert\widetilde{X}_{r,\epsilon,2}-\widetilde{X}_{r,2}\right\Vert_{\mathscr{C}^\theta}>\varsigma\right)=0.$$
\end{theorem}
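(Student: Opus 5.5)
We plan to follow the standard second-chaos argument: Littlewood--Paley moment bounds, Gaussian hypercontractivity, and a Kolmogorov-type continuity criterion in time.

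\textbf{Chaos expansion and Fourier kernels.} First we identify $\widetilde{X}_{r,\epsilon,2}$ with an element of the second Wiener chaos. Write $\widetilde{X}_{r,\epsilon,1}(x)=\sum_{k}\zeta(\epsilon k)\hat X_r(k)e^{2\pi\mathrm{i}\langle k,x\rangle}$. Here $\hat X_r(k)=\int_0^r e^{-(r-q)|2\pi k|^4}\hat\xi(\mathrm{d}q,k)$ are complex Gaussians with $\hat X_r(-k)=\overline{\hat X_r(k)}$. Their covariance is
\[
\mathds{E}[\hat X_r(k)\overline{\hat X_s(k)}]=\frac{e^{-|r-s||2\pi k|^4}-e^{-(r+s)|2\pi k|^4}}{2|2\pi k|^4}
\]
for $k\neq0$, and $\min(r,s)$ for $k=0$. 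Hence $\psi_{\epsilon,3}(r)=\mathds{E}[\widetilde{X}_{r,\epsilon,1}(x)^2]$. By Wick's formula, $\widetilde{X}_{r,\epsilon,2}$ is exactly the Wick square $:\!\widetilde{X}_{r,\epsilon,1}^2\!:$, which coincides with $X_{r,\epsilon,2}$ (a double Itô integral). Its Fourier coefficients are $\sum_{k_1+k_2=k}\zeta(\epsilon k_1)\zeta(\epsilon k_2):\!\hat X_r(k_1)\hat X_r(k_2)\!:$. The candidate limit $\widetilde{X}_{r,2}$ is the same series with $\zeta\equiv1$, interpreted in $L^2(\Omega)$.

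\textbf{Block moment estimates.} For each block $\delta_j$, compute the second moment of $\delta_j\widetilde{X}_{r,\epsilon,2}(x)$ and of the time and $\epsilon$ increments. By Wick's theorem,
\[
\mathds{E}|\delta_j \widetilde{X}_{r,\epsilon,2}(x)|^2\lesssim\sum_{k}\rho_j(k)^2\sum_{k_1+k_2=k}\frac{1}{(1+|k_1|)^4(1+|k_2|)^4}.
\]
The discrete convolution $\sum_{k_1+k_2=k}(1+|k_1|)^{-4}(1+|k_2|)^{-4}$ is $\lesssim(1+|k|)^{d-8}$ when $d<8$; it is finite precisely because $d\le7$, and the logarithmic case appears only at $d=8$. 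Summing over the annulus $|k|\sim2^j$ gives $2^{j(2d-8)}$, i.e. regularity $4-d$. For the increments, interpolate using
\[
|e^{-a}-e^{-b}|\le|a-b|^{\lambda},\qquad |\zeta(\epsilon k)-1|\lesssim(\epsilon|k|)^{\lambda}
\]
for small $\lambda>0$. This yields
\[
\mathds{E}|\delta_j(\widetilde{X}_{r,\epsilon,2}-\widetilde{X}_{s,\epsilon,2})(x)|^2\lesssim|r-s|^{\lambda}2^{j(2d-8+8\lambda)},
\]
and analogously
\[
\mathds{E}|\delta_j(\widetilde{X}_{r,\epsilon,2}-\widetilde{X}_{r,\epsilon',2})(x)|^2\lesssim\max(\epsilon,\epsilon')^{2\lambda}2^{j(2d-8+2\lambda)}.
\]
The same bounds hold with $\widetilde{X}_{r,2}$ in place of the mollified objects. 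Obtaining these uniform bounds on the discrete convolution sums, including the interpolated kernels and the time factors, is the main technical step. It is where the restriction $d\in\{4,\dots,7\}$ is used.

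\textbf{Passage to Besov norms and continuity.} Gaussian hypercontractivity in the second chaos upgrades the second moments to $p$-th moments with constant $(p-1)^{p}$. Integrating over $\mathbb{T}^d$ and summing gives
\[
\mathds{E}\|\cdot\|_{B^{\theta'}_{p,p}}^p\lesssim\sum_j2^{j p(\theta'-(4-d)+4\lambda)}(\cdots)
\]
for $\theta'<4-d-4\lambda$. Theorem \ref{embedding} with $p$ large, so that $\theta=\theta'-d/p$, transfers this to $\mathscr{C}^\theta$. We obtain
\[
\mathds{E}\|\widetilde{X}_{r,2}-\widetilde{X}_{s,2}\|^p_{\mathscr{C}^\theta}\lesssim|r-s|^{\lambda p/2},
\]
and Kolmogorov's continuity theorem (with $\lambda p/2>1$) gives a continuous modification in $C_U\mathscr{C}^\theta$, which defines $\widetilde{X}_{r,2}$. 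Applying Kolmogorov's theorem to the difference process gives
\[
\mathds{E}\sup_{r\le U}\|\widetilde{X}_{r,\epsilon,2}-\widetilde{X}_{r,2}\|^p_{\mathscr{C}^\theta}\lesssim\epsilon^{\lambda p}.
\]
Here we use that $\widetilde{X}_{\cdot,\epsilon,2}$ is already continuous, being a smooth function of the continuous $\widetilde{X}_{\cdot,\epsilon,1}$ from Theorem \ref{theorem1}, together with the continuity of $\psi_{\epsilon,3}$. Markov's inequality then gives the stated convergence in probability. Since $\mathscr{C}^\theta$ is defined through the vanishing-limit condition, we finally check that the limit lies in the closure space by running the argument with a slightly larger $\theta$.
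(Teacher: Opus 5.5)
Your proposal is correct and is essentially the paper's argument. Nelson's hypercontractivity estimate reduces everything to second moments of Littlewood--Paley blocks; these are computed via Parseval, controlled by interpolating the time and mollifier increments, and summed with the discrete convolution bound of Lemma~\ref{lemma2} (exponents just below $4$, which is where $d<8$ enters). Besov embedding and a Kolmogorov/Garsia--Rodemich--Rumsey argument then finish the proof. The only structural difference is that the paper bounds the joint increment $\delta_{j-1}\bigl(\widetilde{X}_{r_2,\epsilon_2,2}-\widetilde{X}_{r_2,\epsilon_1,2}-\widetilde{X}_{r_1,\epsilon_2,2}+\widetilde{X}_{r_1,\epsilon_1,2}\bigr)$ in one computation and concludes from the Cauchy property in $\epsilon$, whereas you construct the $\zeta\equiv 1$ limit directly.

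Two small points to tidy up. First, applying Kolmogorov to the difference process needs a mixed time/$\epsilon$ increment bound; you can get it from your two separate bounds via $\min(A,B)\le\sqrt{AB}$. Second, at $d=4$ the convolution with both weights $(1+|k_i|)^{-4}$ actually carries a factor $\log(2+|k|)$, since Lemma~\ref{lemma2} requires $\max(\alpha,\beta)<d$. This is harmless because $\theta<4-d$ is strict.
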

\begin{proof}
See Appendix \ref{ppe2}
\end{proof}
For every $(r,\epsilon) \in\mathbb{R}_+\times \mathbb{R}_+^*,$ let \gls{sto7}$:\Omega\longrightarrow \mathscr{S}'$ be such that for every $\omega \in \Omega,\varphi \in \mathscr{S},$  

$$\left(\widetilde{X}_{r,\epsilon,3}\left(\varphi\right)\right)\left(\omega\right)=\int_{\mathbb{T}^d}\varphi\left(x\right)\left(\left(\left(\widetilde{X}_{r,\epsilon,1}\left(\omega\right)\right)\left(x\right)\right)^3-3\psi_{\epsilon,3}\left(r\right)\left(\widetilde{X}_{r,\epsilon,1}\left(\omega\right)\right)\left(x\right)\right)\,\mathrm{d}x.$$

\begin{theorem}
\label{theorem3}
Let $d\in \left\{4,5\right\}$ and $\theta \in \left]-\infty,6-\frac{3d}{2}\right[.$
There exists a stochastic process $\left(\text{\gls{sto6}}\right)_{r \in \mathbb{R}_+}$ with state space $\left(\mathscr{C}^\theta,\mathcal{B}\left(\mathscr{C}^\theta\right)\right)$ such that the sample paths of $\left(\widetilde{X}_{q,3}\right)_{q \in \mathbb{R}_+}$ are continuous and for every $\left(U,\varsigma\right) \in \mathbb{R}_+\times \mathbb{R}_+^*,$ $$\lim_{\epsilon\downarrow 0}\mathds{P}\left(\sup_{r \in \left[0,U\right]}\left\Vert\widetilde{X}_{r,\epsilon,3}-\widetilde{X}_{r,3}\right\Vert_{\mathscr{C}^\theta}>\varsigma\right)=0.$$
\end{theorem}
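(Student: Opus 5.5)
The approach is the standard Wick-power argument based on Gaussian hypercontractivity and a Kolmogorov-type continuity criterion in Besov spaces, mirroring the proofs of Theorem \ref{theorem1} and Theorem \ref{theorem2}. First I will identify $\widetilde{X}_{r,\epsilon,3}$ with the third Wiener chaos element. By Itô's formula for multiple Wiener integrals, $\left(\widetilde{X}_{r,\epsilon,1}(x)\right)^3-3\psi_{\epsilon,3}(r)\widetilde{X}_{r,\epsilon,1}(x)$ is the triple Wiener integral of the kernel $\prod_{m=1}^3(\zeta_\epsilon*\eta_{r-q_m})(x-y_m)$. For this I need $\mathds{E}[\widetilde{X}_{r,\epsilon,1}(x)^2]$ to equal the renormalisation constant, up to a harmless deterministic correction coming from the zero mode; this is the role of the term $r$ in $\psi_{\epsilon,3}$. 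Hence the Fourier coefficient $\mathscr{F}\widetilde{X}_{r,\epsilon,3}(k)$ is a sum over $k_1+k_2+k_3=k$ of triple stochastic integrals. By the isometry,
$$\mathds{E}\big|\delta_j\widetilde{X}_{r,\epsilon,3}(x)\big|^2\lesssim\sum_{k}\rho_j(k)^2\sum_{k_1+k_2+k_3=k}\prod_{m=1}^3\frac{|\zeta(\epsilon k_m)|^2}{1+|k_m|^4}.$$

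Second, I will bound the iterated convolution using the elementary estimate $\sum_{k_1+k_2=k}(1+|k_1|)^{-a}(1+|k_2|)^{-b}\lesssim(1+|k|)^{d-a-b}$, valid when $a,b<d$ and $a+b>d$. Applying it twice with $a=b=4$ gives $(1+|k|)^{12-2d}$; this needs $8>d$ and then $4+(8-d)>d$, i.e. $d<6$, which holds for $d\in\{4,5\}$, with a logarithmic factor possibly appearing when $d=4$. Summing over $|k|\sim2^j$ then gives $\mathds{E}|\delta_j\widetilde{X}_{r,\epsilon,3}(x)|^2\lesssim2^{j(12-2d)}$ uniformly in $\epsilon$ (up to $j$ factors), which is precisely regularity $6-\tfrac{3d}{2}$. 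For the difference $\widetilde{X}_{r,\epsilon,3}-\widetilde{X}_{r,\epsilon',3}$, I will use $|\zeta(\epsilon k)-\zeta(\epsilon'k)|\lesssim(|\epsilon-\epsilon'||k|)^{\kappa}$ to gain a factor $|\epsilon-\epsilon'|^{2\kappa}$ at the cost of $2^{2j\kappa}$. For time increments, I will use $|e^{-r|k|^4}-e^{-s|k|^4}|\lesssim(|r-s||k|^4)^{\kappa}$ in the same way, which gives Hölder continuity in time.

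Third, by Gaussian hypercontractivity, all $L^p(\Omega)$ moments of elements of the third chaos are controlled by the $L^2$ moment. Combining this with the embedding of Theorem \ref{embedding} ($B^{\theta+d/p}_{p,p}\hookrightarrow\mathscr{C}^\theta$) gives
$$\mathds{E}\|\widetilde{X}_{r,\epsilon,3}-\widetilde{X}_{s,\epsilon',3}\|^p_{\mathscr{C}^\theta}\lesssim\left(|r-s|^{\kappa}+|\epsilon-\epsilon'|^{\kappa}\right)^{p/2}$$
for every $\theta<6-\tfrac{3d}{2}$, once $p$ is large and $\kappa$ small. The Kolmogorov continuity theorem then yields, first, that $(\widetilde{X}_{\cdot,\epsilon,3})_\epsilon$ is Cauchy in $L^p(\Omega;C_U\mathscr{C}^\theta)$ as $\epsilon\downarrow0$. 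Its limit $\widetilde{X}_{\cdot,3}$ has continuous sample paths, and the convergence in probability stated in the theorem follows from Markov's inequality.

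The main obstacle is the first step: verifying precisely that the given $\psi_{\epsilon,3}(r)$ is exactly the variance that cancels the contraction term. The zero Fourier mode contributes $r$ rather than $\frac{1}{2|2\pi q|^4}(1-e^{-2r|2\pi q|^4})$, and the normalisation of the cosine series must match. If the constant is off by a bounded deterministic function of $r$, I would absorb the discrepancy as $c(r)\widetilde{X}_{r,\epsilon,1}$, which converges by Theorem \ref{theorem1}. The remaining convolution bounds are routine, the only care being the logarithm at $d=4$.
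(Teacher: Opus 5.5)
Your proposal is correct and follows essentially the paper's proof: Nelson's estimate (Theorem~\ref{nelson}), Parseval, two applications of Lemma~\ref{lemma2}, dyadic counting, and Kolmogorov/Garsia--Rodemich--Rumsey to get the $\epsilon$-Cauchy property. The paper avoids your $d=4$ logarithm by first weakening every weight to $(1+|m_q|^4)^{\gamma-1}$, and it bounds the joint increment in $(r,\epsilon)$ directly. Your ``main obstacle'' is only a one-line check: by Parseval, $\psi_{\epsilon,3}(r)=\mathds{E}[\widetilde{X}_{r,\epsilon,1}(x)^2]$ exactly, with the zero mode giving $r$ since $\zeta(0)=1$. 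One exponent slip needs fixing: the second convolution gives $(1+|k|)^{2d-12}$, not $(1+|k|)^{12-2d}$, and summing over the $\asymp 2^{jd}$ frequencies with $|k|\asymp 2^j$ gives $\mathds{E}|\delta_j\widetilde{X}_{r,\epsilon,3}(x)|^2\lesssim 2^{j(3d-12)}=2^{-2j(6-\frac{3d}{2})}$, which is the bound that actually corresponds to regularity $6-\frac{3d}{2}$.
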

\begin{proof}
See Appendix \ref{ppe3}
\end{proof}
For every $(
r,\epsilon) 
\in\mathbb{R}_+\times 
\mathbb{R}_+^*,$ let $$\text{\gls{sto14}}:=\int_0^rG_{r-q}(\Delta\widetilde{X}_{q,2})\,\mathrm{d}q,$$ 
$$\text{\gls{sto16}}:=\int_0^rG_{r-q}(\Delta\widetilde{X}_{q,\epsilon,2})\,\mathrm{d}q,$$
$$\text{\gls{sto15}}:=\int_0^rG_{r-q}(\Delta\widetilde{X}_{q,3})\,\mathrm{d}q,$$
$$\text{\gls{sto17}}:=\int_0^rG_{r-q}(\Delta\widetilde{X}_{q,\epsilon,3})\,\mathrm{d}q,$$

For every $(r,\epsilon) \in\mathbb{R}_+\times \mathbb{R}_+^*,$ let \gls{sto9}$:\Omega\longrightarrow \mathscr{S}'$ be such that for every $\omega \in \Omega,\varphi \in \mathscr{S},$   
$$\left(\widetilde{\Xi}_{r,\epsilon,1}\left(\omega\right)\right)\left(\varphi\right)=(\widetilde{Y}_{r,\epsilon,2}\left(\omega\right)\varodot\widetilde{X}_{r,\epsilon,1}\left(\omega\right))(\varphi).$$
\begin{theorem}
\label{theorem4}
Let $d=5$ and $\theta \in \mathbb{R}_-^*.$
There exists a stochastic process $\left(\text{\gls{sto8}}\right)_{r \in \mathbb{R}_+}$ with state space $\left(\mathscr{C}^\theta,\mathcal{B}\left(\mathscr{C}^\theta\right)\right)$ such that the sample paths of $\left(\widetilde{\Xi}_{q,1}\right)_{q \in \mathbb{R}_+}$ are continuous and for every $\left(U,\varsigma\right) \in \mathbb{R}_+\times \mathbb{R}_+^*,$ $$\lim_{\epsilon\downarrow 0}\mathds{P}\left(\sup_{r \in \left[0,U\right]}\left\Vert\widetilde{\Xi}_{r,\epsilon,1}-\widetilde{\Xi}_{r,1}\right\Vert_{\mathscr{C}^\theta}>\varsigma\right)=0.$$
\end{theorem}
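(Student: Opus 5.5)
We will use the standard Wiener-chaos strategy for stochastic objects, as in the constructions of Theorems \ref{theorem1}--\ref{theorem3}. Write $\widetilde{\Xi}_{r,\epsilon,1}=\widetilde{Y}_{r,\epsilon,2}\varodot\widetilde{X}_{r,\epsilon,1}$. Since $\widetilde{Y}_{r,\epsilon,2}$ lies in the third Wiener chaos and $\widetilde{X}_{r,\epsilon,1}$ in the first, their product decomposes into a fourth-chaos component and a second-chaos component. The second-chaos component comes from one contraction between the two factors. There is no zeroth-chaos term, because an odd total number of noises cannot be fully contracted. Hence no renormalization constant is needed, which is consistent with the definition of $\widetilde{\Xi}_{r,\epsilon,1}$ without subtraction.

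\textbf{Fourier representation and block estimates.} We write $\widetilde{Y}_{r,\epsilon,2}$ in Fourier variables. Its $k$-th mode is
\[
\int_0^r e^{-(r-s)|2\pi k|^4}\bigl(-|2\pi k|^2\bigr)\widehat{X}_{s,\epsilon,3}(k)\,ds,
\]
where $\widehat{X}_{s,\epsilon,3}(k)$ is a triple stochastic integral over $k_1+k_2+k_3=k$ with kernels $\zeta(\epsilon k_i)e^{-(s-q_i)|2\pi k_i|^4}$. For each Littlewood--Paley block $\delta_j$ and each fixed $x$, we compute $\mathds{E}|\delta_j\widetilde{\Xi}_{r,\epsilon,1}(x)|^2$ chaos by chaos, using It\^o isometry and the explicit time integrals.

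\emph{Fourth chaos.} We bound the sum over $k_1,\dots,k_4$, with the resonant restriction $|k_1+k_2+k_3|\sim|k_4|$, by
\[
\sum \frac{|k_{123}|^4}{(|k_{123}|^4+\dots)^2}\prod_{i}\frac{1}{|k_i|^4}.
\]
Power counting in $d=5$ gives homogeneity: $Y_2$ has regularity $(\tfrac12)^-$ and $X_1$ has regularity $(-\tfrac12)^-$, so the product has regularity $0^-$. This yields $\mathds{E}|\delta_j\cdot|^2\lesssim 2^{j\kappa}$ for every $\kappa>0$.

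\emph{Second chaos.} The contraction of $k_4$ with one of $k_1,k_2,k_3$ produces the loop sum
\[
\sum_{\ell}\frac{1}{|\ell|^4}\cdot\frac{|k'+\ell|^2}{|k'+\ell|^4+|\ell|^4}\,\mathbb{1}_{|k'+\ell|\sim|\ell|},
\]
which behaves like $\sum_\ell |\ell|^{-6}$ in $d=5$. This is only borderline after accounting for the time integrals, so this step must be checked carefully. The resonant restriction forces $|k'+\ell|\sim|\ell|$, and together with the time-integral factor $\int_0^{s} e^{-(r-s)|\cdot|^4}e^{-(r-q)|\ell|^4}\cdots$ it yields, after integrating in time, a logarithmic rather than power divergence. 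We expect the convergence of this loop, uniformly in $\epsilon$, to be the main obstacle. We would try to bound the loop by $|k'|^{\kappa}$ for every small $\kappa$, exploiting the exact cancellation of time exponentials and the fact that the leading divergent part has zero mean. We conclude $\mathds{E}|\delta_j\widetilde{\Xi}_{r,\epsilon,1}(x)|^2\lesssim 2^{j\kappa}$, uniformly in $\epsilon$.

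\textbf{Increments and passage to Besov norms.} By the same computations, with the differences $\zeta(\epsilon k)-\zeta(\epsilon'k)$ and $e^{-r\cdot}-e^{-r'\cdot}$, we obtain bounds of the form $\epsilon^{\lambda}2^{j(\kappa+2\lambda)}$ and $|r-r'|^{\lambda}2^{j(\kappa+4\lambda)}$ for small $\lambda>0$. Gaussian hypercontractivity (equivalence of moments in a fixed chaos) upgrades these second-moment bounds to $p$-th moments. The Besov embedding (Theorem \ref{embedding}), with $p$ large, then gives $\mathscr{C}^\theta$ bounds for every $\theta<0$. Kolmogorov's continuity criterion in time produces a continuous version and shows that $(\widetilde{\Xi}_{\cdot,\epsilon,1})$ is Cauchy in $L^p(\Omega;C_U\mathscr{C}^\theta)$. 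We define $\widetilde{\Xi}_{r,1}$ as the limit. Convergence in $L^p$ implies the claimed convergence in probability, exactly as in the proofs of Theorems \ref{theorem2} and \ref{theorem3}.
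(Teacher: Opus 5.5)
Your architecture matches the paper's. You split $\delta_{m-1}\widetilde{\Xi}_{r,\epsilon,1}(x)$ via Lemma~\ref{l3} into a fourth-chaos term plus three times a second-chaos term. You then bound second moments by Parseval and explicit time integrals, sum with Lemma~\ref{lemma2}, and upgrade using Nelson's estimate and Kolmogorov/Garsia--Rodemich--Rumsey.

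\textbf{The gap is the second-chaos step.} You leave it as an expected ``main obstacle'' based on a wrong diagnosis, and the bound you aim for would not give the theorem.

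\emph{There is no divergence there.} $\sum_{\ell\in\mathbb{Z}^5}|\ell|^{-6}$ converges, since the borderline exponent in $d=5$ is $5$, not $6$. The resonant constraint $|k'+\ell|\sim|\ell|$ forces $|\ell|\gtrsim|k'|$, so your loop satisfies $L(k')\lesssim\sum_{|\ell|\gtrsim|k'|}|\ell|^{-6}\lesssim|k'|^{-1}$. There is no logarithm, no cancellation and no renormalization.

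\emph{This decay is exactly what is needed.} The two uncontracted $X_1$ factors contribute $\sum_{m_4}(1+|m_4|^4)^{-1}(1+|k'-m_4|^4)^{-1}\lesssim(1+|k'|^4)^{-3/4}$. Each external mode therefore carries $|L(k')|^2|k'|^{-3}\lesssim|k'|^{-5}$. Summing over the $\sim 2^{5j}$ lattice points of the annulus gives $2^{j\kappa}$.

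\emph{Your target is too weak.} With only $|L(k')|\lesssim|k'|^{\kappa}$ you would get $\mathds{E}|\delta_j\cdot|^2\lesssim 2^{2j(1+\kappa)}$, i.e.\ only $\mathscr{C}^{\theta}$ for $\theta<-1$. So the conclusion $\lesssim 2^{j\kappa}$ you state does not follow from your plan.

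The paper handles this directly. The $v$- and $\alpha$-integrals give $(1+|m_2|^4)^{-1}(1+|m_1-m_2|^4)^{-1/2}$ per copy. Lemma~\ref{lemma2} with exponents $4(1-\gamma)$ and $2(1-\gamma)$, whose sum $6(1-\gamma)$ exceeds $5$, then gives $(1+|m_1|^4)^{(6\gamma-1)/4}$ for each loop.

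\textbf{Two smaller points.}

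First, the absence of a zeroth chaos is not a parity fact: $3+1=4$ is even. It holds because $X_3$ is Wick ordered, so only contractions between the two factors occur, and there are at most $\min(3,1)=1$ of them ($r\le 1$ in Lemma~\ref{l3}).

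Second, in the fourth chaos the exact time integration gives $A\iint_{\mathbb{R}_+^2}e^{-A(u+v)-cS|u-v|}\,du\,dv\lesssim(A+S)^{-1}$. Hence the weight is $\prod_{k\le 3}(1+a_k)^{-1}(1+b)^{-1}(1+A+S)^{-1}$. If your displayed factor means $A/(A+S)^2$, it is not an upper bound when $S\gg A$. Power counting then gives total homogeneity zero, but you still have to rule out subdivergences. The paper does this by splitting $(1+A+S)^{-(1-5\gamma)}$ among the four frequencies and iterating Lemma~\ref{lemma2}.
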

\begin{proof}
See Appendix \ref{ppe4}
\end{proof}

For every $(r,\epsilon) \in\mathbb{R}_+\times \mathbb{R}_+^*,$ let

$$\psi_{\epsilon,4}\left(r\right)=\sum_{\left(m_1,m_2\right) \in \left(\mathbb{Z}^d\right)^2}\left|2\pi\left(m_1+m_2\right)\right|^2\left|\zeta\left(\epsilon m_1\right)\right|^2\left|\zeta\left(\epsilon m_2\right)\right|^2\int_{0}^r\mathrm{e}^{-\left|2\pi\left(m_1+m_2\right)\right|^4\left(r-q\right)}$$$$\left(\int_{\left[0,q\right]^2}\mathrm{e}^{-\left|2\pi m_1\right|^4\left(r+q-2v_1\right)-\left|2\pi m_2\right|^4 \left(r+q-2v_2\right)}\,\mathrm{d}v_1\,\mathrm{d}v_2\right)\,\mathrm{d}q$$.

For every $(r,\epsilon) \in\mathbb{R}_+\times \mathbb{R}_+^*,$ let \gls{sto11}$:\Omega\longrightarrow \mathscr{S}'$ 
be such that for every $\omega \in \Omega,\varphi \in \mathscr{S},$ 
$$\left(\widetilde{\Xi}_{r,\epsilon,2}\left(\omega\right)\right)\left(\varphi\right)=(\widetilde{Y}_{r,\epsilon,1}\left(\omega\right)\varodot\widetilde{X}_{r,\epsilon,2}\left(\omega\right))(\varphi)-2\psi_{\epsilon,4}\left(r\right)\int_{\mathbb{T}^d}\varphi\left(x\right)\,\mathrm{d}x$$
\begin{theorem}
\label{theorem5}
Let $d=5$ and $\theta \in \mathbb{R}_-^*.$
There exists a stochastic process $\left(\text{\gls{sto10}}\right)_{r \in \mathbb{R}_+}$ with state space $\left(\mathscr{C}^\theta,\mathcal{B}\left(\mathscr{C}^\theta\right)\right)$ such that the sample paths of $\left(\widetilde{\Xi}_{q,2}\right)_{q \in \mathbb{R}_+}$ are continuous and for every $\left(U,\varsigma\right) \in \mathbb{R}_+\times \mathbb{R}_+^*,$ $$\lim_{\epsilon\downarrow 0}\mathds{P}\left(\sup_{r \in \left[0,U\right]}\left\Vert\widetilde{\Xi}_{r,\epsilon,2}-\widetilde{\Xi}_{r,2}\right\Vert_{\mathscr{C}^\theta}>\varsigma\right)=0.$$
\end{theorem}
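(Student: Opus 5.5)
We prove Theorem \ref{theorem5} with the standard Wiener chaos argument, as in the other constructions of this section. Write everything in Fourier variables. Then $\widetilde{X}_{r,\epsilon,2}$ is a second-order Wick product of $\widetilde{X}_{r,\epsilon,1}$, whose Fourier coefficients are $\zeta(\epsilon m)\int_0^r \mathrm{e}^{-|2\pi m|^4(r-v)}\,\hat\xi(\mathrm{d}v,m)$. Consequently
$$\widetilde{Y}_{r,\epsilon,1}=\int_0^r G_{r-q}\left(\Delta\widetilde{X}_{q,\epsilon,2}\right)\mathrm{d}q$$
lies in the second chaos, and its coefficient at frequency $k=m_1+m_2$ carries the kernel $-|2\pi k|^2\int_0^r \mathrm{e}^{-|2\pi k|^4(r-q)}(\cdots)\,\mathrm{d}q$. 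The resonant product $\widetilde{Y}_{r,\epsilon,1}\varodot\widetilde{X}_{r,\epsilon,2}$ is a product of two second-chaos elements. By the product formula for multiple Wiener integrals it splits into components in chaos 4, chaos 2 (one contraction) and chaos 0 (two contractions). The two possible double contractions give exactly $2\psi_{\epsilon,4}(r)$ times the constant function. Note the sign: the $-|2\pi k|^2$ from $\Delta$ enters, so the constant must be matched carefully. The resonant cutoff $|i-j|\le1$ is harmless here because $\psi_{\epsilon,4}$ diverges like $\sum |m|^{-6}$ in $d=5$, i.e. linearly, while the finite correction coming from the cutoff is absorbed into the definition. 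So $\widetilde{\Xi}_{r,\epsilon,2}$ is precisely the sum of the chaos-4 and chaos-2 components, with no constant term. One point needs checking: if the exact constant carries the resonant cutoff indicator, then $\psi_{\epsilon,4}$ as written differs from it by a convergent constant. That difference then has a limit and is harmless.

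Next I would bound second moments of Littlewood--Paley blocks uniformly in $\epsilon$. The target is
$$\mathds{E}\left[|\delta_j\widetilde{\Xi}_{r,\epsilon,2}(x)|^2\right]\lesssim 2^{-2j\kappa'}$$
for arbitrarily small $\kappa'>0$, since the expected regularity is $(2-1-1)^-=0^-$ in $d=5$. I would also prove
$$\mathds{E}\left[|\delta_j(\widetilde{\Xi}_{r,\epsilon,2}-\widetilde{\Xi}_{r,\epsilon',2})(x)|^2\right]\lesssim (\epsilon\vee\epsilon')^{\lambda}\,2^{j(\cdots)}$$
for some $\lambda>0$, using $|\zeta(\epsilon m)-\zeta(\epsilon' m)|\lesssim (|\epsilon-\epsilon'||m|)^{\lambda}$. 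Finally I need time increments of the form $|r-r'|^{\lambda}$, which follow from $1-\mathrm{e}^{-a}\le a^\lambda$. The chaos-4 term reduces to sums of the type
$$\sum_{k_1+\cdots+k_4=k}\prod_i|k_i|^{-4}\,|k_3+k_4|^{-4}|k_3+k_4|^{4}\cdots$$
These are handled by the elementary convolution bound $\sum_{m}|m|^{-a}|k-m|^{-b}\lesssim|k|^{d-a-b}$ for $a,b<d$ and $a+b>d$.

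Gaussian hypercontractivity then upgrades these bounds to all moments $p$. Combined with the Besov embedding (Theorem \ref{embedding}), taking $p$ large, and a Kolmogorov continuity argument in time, this gives tightness and Cauchy-in-probability in $C_U\mathscr{C}^\theta$ for any $\theta<0$. The limit process $\widetilde{\Xi}_{r,2}$ is defined as this limit; it has continuous paths, and the convergence in probability follows.

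The main obstacle is the chaos-2 component, i.e. the single-contraction term. After the contraction it involves a sum of the form
$$\sum_{m}\frac{|2\pi(k_1+m)|^2\,\mathrm{e}^{\cdots}}{|m|^4\,|k_1+m|^{4}}\cdots,$$
which is only logarithmically borderline. It has to be organised carefully so that the resonant restriction and the time integrals yield a power $|k|^{-\kappa}$ rather than a divergence. In $d=5$ the naive count gives $\sum_m |m|^{-4}|k-m|^{-2}\sim |k|^{-1}$, so it should be controllable. Still, the double time integrals must be bounded using $\int_0^r \mathrm{e}^{-a(r-q)}\,\mathrm{d}q\le a^{-1}$ precisely enough not to lose the regularity. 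I would first try bounding every exponential factor by $\min(1,(a t)^{-1})$ and reducing everything to the discrete convolution lemma.
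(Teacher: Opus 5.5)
Your architecture matches the paper's. Lemma \ref{l3} splits $\delta_{m-1}\widetilde{\Xi}_{r,\epsilon,2}(x)$ into a fourth-chaos term plus $4$ times a one-contraction second-chaos term, and the counterterm removes the double contraction. The blocks are then bounded in $L^2(\Omega)$ via Parseval, explicit time integrals and Lemma \ref{lemma2}, and Theorem \ref{nelson} with Theorem \ref{kolmogorov} finishes.

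Some side corrections:
\begin{itemize}
\item Your target bound should read $\lesssim 2^{2j\kappa'}$, not $2^{-2j\kappa'}$.
\item $\psi_{\epsilon,4}$ diverges only logarithmically; in any case $\sum_{m\in\mathbb{Z}^5}|m|^{-6}$ converges.
\item Your worry about the cutoff is void. The double contraction lives on the diagonal $l=-k$, where $\sum_{|i-j|\leq 1}\rho_i(k)\rho_j(k)=1$ exactly.
\item Your sign caveat is justified. Since $\Delta$ has symbol $-|2\pi k|^2$, the double contraction equals $-2\psi_{\epsilon,4}(r)$ with $\psi_{\epsilon,4}$ as defined. So the counterterm must be $+2\psi_{\epsilon,4}(r)$ for the paper's identity $\delta_{m-1}\widetilde{\Xi}_{r,\epsilon,2}(x)=I_{r,\epsilon,1}+4I_{r,\epsilon,2}$ to hold.
\end{itemize}
The genuine gap is the step you call the main obstacle. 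You never estimate the one-contraction term, and the heuristic you offer does not reach regularity $0^-$.

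Use the paper's variables: $m_1$ is the output frequency, $m_3$ and $m_1+m_3$ are the free $\widetilde{X}_{r,\epsilon,2}$- and $\widetilde{Y}_{r,\epsilon,1}$-legs, $m_2-m_3$ and $m_3-m_4$ are the contracted loops, and $m_2,m_4$ are the resonant $\widetilde{X}_{r,\epsilon,2}$-frequencies. Suppose you bound each copy's $\alpha$-integral separately by the inverse of its decay rate. Then the prefactor $|m_1+m_2|^2|m_1+m_4|^2$ leaves $(1+|m_2|^4)^{-1/2}(1+|m_4|^4)^{-1/2}$. Your count $\sum_{m_2}(1+|m_2-m_3|^4)^{-1}(1+|m_2|^4)^{-1/2}\lesssim(1+|m_3|^4)^{-1/4}$, and the same for $m_4$, then leaves $\sum_{m_3}(1+|m_3|^4)^{-3/2}(1+|m_1+m_3|^4)^{-1}\approx(1+|m_1|^4)^{-1}$. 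This sum is dominated by $|m_3|=O(1)$, so you only get $\mathds{E}[|\delta_{m-1}\widetilde{\Xi}_{r,\epsilon,2}(x)|^2]\lesssim 2^{m}$, i.e.\ $\mathscr{C}^{-1/2-}$. The bad configuration is a low-frequency free $\widetilde{X}_{r,\epsilon,2}$-leg with the free $\widetilde{Y}_{r,\epsilon,1}$-leg carrying the output frequency.

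The paper's remedy uses the fact that this free $\widetilde{Y}_{r,\epsilon,1}$-leg is shared by the two factors of the squared kernel at different times $\alpha_1,\alpha_2$. This produces $\mathrm{e}^{-F|\alpha_1-\alpha_2|}$ with $F=|2\pi(m_1+m_3)|^4$. The identity $\int_{\mathbb{R}_+^2}\mathrm{e}^{-au-bv-F|u-v|}\,\mathrm{d}u\,\mathrm{d}v=\frac{1}{a+b}\bigl(\frac{1}{a+F}+\frac{1}{b+F}\bigr)$ gives a second bound $R\lesssim(1+F)^{-1}$ for the two-time factor $R$, prefactor included. Interpolating the two bounds (Lemma \ref{l33} with $\vartheta=\frac{3(1-\gamma)}{4}$) shifts decay from $m_2,m_4$ onto $m_1+m_3$, after which Lemma \ref{lemma2} closes at $2^{20\gamma(m-1)}$.

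Alternatively, you can use the resonant support more fully than the paper does. $\psi(m_1+m_2,m_2)\neq 0$ forces $|m_2|\gtrsim|m_1|$, and likewise $|m_4|\gtrsim|m_1|$. This upgrades the $m_2$-sum to $(1+\max(|m_1|,|m_3|))^{-1}$ and makes the $m_3$-sum $O((1+|m_1|)^{-5})$. One of these two inputs must be in the argument; without it the proof stops at $\mathscr{C}^{-1/2-}$.
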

\begin{proof}
See Appendix \ref{ppe5}
\end{proof}
For every $(
r,\epsilon) 
\in\mathbb{R}_+\times \mathbb{R}_+^*,
$ let \gls{sto13}$:\Omega\longrightarrow \mathscr{S}'$ 
be such that for every $\omega \in \Omega,\varphi \in \mathscr{S},$ 
$$\left(\widetilde{\Xi}_{r,\epsilon,3}\left(\omega\right)\right)\left(\varphi\right)=(\widetilde{Y}_{r,\epsilon,2}\left(\omega\right)\varodot\widetilde{X}_{r,\epsilon,2}\left(\omega\right))(\varphi)-6\psi_{\epsilon,4}\left(r\right)\left(\widetilde{X}_{r,\epsilon,1}\left(\omega\right)\right)\left(\varphi\right)$$
\begin{theorem}
\label{theorem6}
Let $d=5$ and $\theta \in \left]-\infty,-\frac{1}{2}\right[.$
There exists a stochastic process $\left(\text{\gls{sto12}}\right)_{r \in \mathbb{R}_+}$ with state space $\left(\mathscr{C}^\theta,\mathcal{B}\left(\mathscr{C}^\theta\right)\right)$ such that the sample paths of $\left(\widetilde{\Xi}_{q,3}\right)_{q \in \mathbb{R}_+}$ are continuous and for every $\left(U,\varsigma\right) \in \mathbb{R}_+\times \mathbb{R}_+^*,$ $$\lim_{\epsilon\downarrow 0}\mathds{P}\left(\sup_{r \in \left[0,U\right]}\left\Vert\widetilde{\Xi}_{r,\epsilon,3}-\widetilde{\Xi}_{r,3}\right\Vert_{\mathscr{C}^\theta}>\varsigma\right)=0.$$
\end{theorem}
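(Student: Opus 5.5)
We establish Theorem \ref{theorem6} with the standard Wiener chaos method used for Theorems \ref{theorem4} and \ref{theorem5}: moment bounds on Littlewood--Paley blocks, then Kolmogorov's criterion for time continuity and the Besov embedding for spatial regularity. For fixed $\epsilon>0$, $\widetilde{Y}_{r,\epsilon,2}$ lies in the third chaos and $\widetilde{X}_{r,\epsilon,2}$ in the second chaos generated by $\xi$. We expand $\widetilde{Y}_{r,\epsilon,2}\varodot\widetilde{X}_{r,\epsilon,2}$ by the product formula for multiple Wiener integrals into chaoses of order $5$, $3$ and $1$; the contraction with two pairings would produce order $1$ and no constant term arises since $3+2$ is odd. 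In Fourier variables, writing $X_{\epsilon,1}$ with modes $\hat X(q,m)$, the single contractions give order-$3$ terms, and the double contractions give an order-$1$ term proportional to $\widetilde{X}_{r,\epsilon,1}$ plus a genuinely convergent remainder. The counterterm $6\psi_{\epsilon,4}(r)\widetilde{X}_{r,\epsilon,1}$ is designed to cancel the divergent part of the first chaos: the factor $6=3\cdot 2$ counts the ways to pair two of the three legs of $Y_2$ with the two legs of $X_2$. We will check that the divergent double contraction is exactly $6\psi_{\epsilon,4}(r)\widetilde{X}_{r,\epsilon,1}$, which matches the structure of $\psi_{\epsilon,4}$ that appears in the definition of $\widetilde{\Xi}_{r,\epsilon,2}$.

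For each chaos component $\Xi^{(k)}_{r,\epsilon}$, $k\in\{5,3,1\}$, we will prove
\[
\mathds{E}\big[|\delta_j\Xi^{(k)}_{r,\epsilon}(x)|^2\big]\lesssim 2^{-2j\kappa},\qquad \mathds{E}\big[|\delta_j(\Xi^{(k)}_{r,\epsilon}-\Xi^{(k)}_{r,\epsilon'})(x)|^2\big]\lesssim \max(\epsilon,\epsilon')^{\lambda}2^{-2j(\kappa-\lambda)},
\]
and similar bounds for time increments $|r-r'|^{\lambda}$, for every $\kappa<\frac12$ and small $\lambda>0$. We then use Gaussian hypercontractivity (Nelson's estimate) to pass to $p$-th moments, Theorem \ref{embedding} to get $\mathscr{C}^\theta$ bounds with $\theta<-\frac12$ (choosing $p$ large to absorb $d/p$), and Kolmogorov's continuity criterion in $C_U\mathscr{C}^\theta$. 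Together these give Cauchy convergence in probability as $\epsilon\downarrow 0$ and the existence of a continuous limit $\widetilde{\Xi}_{r,3}$.

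The second moments reduce to sums over $(\mathbb{Z}^5)^k$ of products of kernels $|2\pi m|^2 e^{-(r-q)|2\pi m|^4}$ (from $\Delta$ and the semigroup) and covariance factors $\sim |m|^{-4}$. After integrating in time, these become lattice sums of the form $\sum_{m_1+\dots=n}\prod |m_i|^{-a_i}$. We will treat them with the usual discrete convolution lemma: $\sum_{m}|m|^{-a}|n-m|^{-b}\lesssim |n|^{d-a-b}$ whenever $a,b<d$ and $a+b>d$. The resonant restriction $|i-j|\le1$ between the blocks of $Y_2$ and $X_2$ is used to compare frequencies. Power counting goes as follows. $Y_2$ has regularity $(\frac12)^-$ and $X_2$ has regularity $(-1)^-$, so the fifth chaos has regularity $(-\frac12)^-$. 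The third chaos terms are no worse. The renormalized first chaos has regularity $X_1=(-\frac12)^-$, which explains $\theta<-\frac12$.

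The main obstacle will be the first chaos term. The double contraction produces a factor of the form $\sum_{m_1,m_2}$ (essentially $\psi_{\epsilon,4}$ but frequency-localized by the resonant cut-off and depending on the outer frequency $n$), and this factor diverges logarithmically in $d=5$. We must show that subtracting $6\psi_{\epsilon,4}(r)\widetilde{X}_{r,\epsilon,1}$ leaves a remainder whose kernel difference is bounded by $|n|^{\lambda}$ uniformly in $\epsilon$. We plan to write the difference as a sum over $m_1,m_2$ of the kernel evaluated at $m_1+m_2+n$ minus the kernel at $m_1+m_2$, plus a cut-off mismatch term. We then bound the first part by a mean-value estimate in $n$, gaining $|n|^{\lambda}|m|^{-\lambda}$, which restores summability. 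The time-integral structure, including the outer $e^{-|2\pi(m_1+m_2)|^4(r-q)}$, requires careful handling but follows the same pattern as the corresponding step for $\widetilde{\Xi}_{r,2}$ in Theorem \ref{theorem5}.
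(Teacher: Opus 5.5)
\textbf{Sign of the first-chaos counterterm.} Your route is the paper's route. You split $\delta_{m-1}\widetilde\Xi_{r,\epsilon,3}$ into chaoses of order $5,3,1$ with multiplicities $1,6,6$, and bound the block variances with Nelson's estimate and Lemma \ref{lemma2}. In the first chaos you compare the $n$-dependent resonant constant $\phi_\epsilon(r,n)$ with $\phi_\epsilon(r,0)=\psi_{\epsilon,4}(r)$ by a mean-value estimate in the outer frequency, which is Lemma \ref{lem:phi-renormalized}.

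The step you present as a check, however, fails by a sign. The symbol of $G_{r-v}\Delta$ is $-|2\pi p|^2\mathrm{e}^{-(r-v)|2\pi p|^4}$. The two pairings produce the square of $\int_0^v\zeta_\epsilon*\zeta_\epsilon*\eta_{r+v-2u}\,\mathrm{d}u$, whose Fourier coefficients are nonnegative. Freezing the free leg at $(r,x)$ therefore gives $-6\psi_{\epsilon,4}(r)\widetilde X_{r,\epsilon,1}$, with $\psi_{\epsilon,4}\geq 0$ exactly as defined.

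The definition of $\widetilde\Xi_{r,\epsilon,3}$ subtracts $6\psi_{\epsilon,4}(r)\widetilde X_{r,\epsilon,1}$, so the two logarithmically divergent pieces add instead of cancelling. Thus $\widetilde\Xi_{r,\epsilon,3}$ differs from a convergent family by $-12\psi_{\epsilon,4}(r)\widetilde X_{r,\epsilon,1}$, and $\psi_{\epsilon,4}(r)\to\infty$, as you note yourself. The statement cannot hold as written. Your argument proves it for $\widetilde Y_{r,\epsilon,2}\varodot\widetilde X_{r,\epsilon,2}+6\psi_{\epsilon,4}(r)\widetilde X_{r,\epsilon,1}$.

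The paper's proof makes the same slip: its Parseval formula for $I_{r,\epsilon,3}$ uses $+|2\pi(m_1-m_2)|^2$ for the symbol of $\Delta$. Likewise, the zeroth chaos of $\widetilde Y_{r,\epsilon,1}\varodot\widetilde X_{r,\epsilon,2}$ is $-2\psi_{\epsilon,4}(r)$. So the match with $\widetilde\Xi_{r,\epsilon,2}$ that you invoke carries the same error.

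\textbf{Time mismatch in the first chaos.} This is not a repetition of Theorem \ref{theorem5}, where the counterterm is a constant and the zeroth chaos is matched without remainder. Here the free leg propagates as $\mathds{1}_{[0,\alpha]}(u_1)\mathrm{e}^{-|2\pi n|^4(\alpha-u_1)}$ only up to the time $\alpha$ at which $\Delta\widetilde X_{\alpha,\epsilon,3}$ enters the semigroup. In $\widetilde X_{r,\epsilon,1}$ it propagates up to $r$. Moreover, the integrand $F_\epsilon(n,\alpha)$ of $\phi_\epsilon(r,n)=\int_0^rF_\epsilon(n,\alpha)\,\mathrm{d}\alpha$ is only bounded by $(r-\alpha)^{-1}$, and that singularity is where the logarithm sits.

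Your mean-value estimate in frequency does not control this mismatch term. The paper adds the bound $D_n(t)\lesssim|n|^{-2}\min\{1,|n|^2t^{1/2}\}$ on its $L^2(\mathrm{d}u_1)$-norm and then integrates, obtaining $\int_0^rt^{-1}D_n(t)\,\mathrm{d}t\lesssim(1+|n|^4)^{-1/2}(1+\log(2+|n|))$. You need this, or an equivalent gain in time, as a separate step.

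Finally, your displayed moment bounds should have $\kappa<-\frac12$, not $\kappa<\frac12$.
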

\begin{proof}
See Appendix \ref{ppe6}
\end{proof}
\section{$d \in\{1,2,3\} $}
\label{sec4}

In this section we consider $d \in \{1,2,3\}.$   

We begin with the following proposition.
\begin{proposition}
\label{proposition1}
If $d\in \left\{1,2,3\right\},\alpha \in \left]0,2-\frac{d}{2}\right[,$ and $\left(g_1,g_2\right) \in \mathscr{C}^\alpha\times \left(\bigcap_{U \in \mathbb{R}_+}C_U\mathscr{C}^\alpha\right),$ then there exists $\beta \in \left]0,+\infty\right]$ such that $$\exists ! f \in \bigcap_{U \in \left[0,\beta\right[}C_U\mathscr{C}^\alpha,\forall r \in \left[0,\beta\right[,f\left(r\right)=G_rg_1+\int_0^rG_{r-q}\left(\Delta\left(\left(f\left(q\right)+g_2\left(q\right)\right)^3-f\left(q\right)-g_2\left(q\right)\right)\right)\,\mathrm{d}q$$ and $$\beta \in \mathbb{R}_+^*\implies \limsup_{q\uparrow\beta}\left\Vert f\left(q\right)\right\Vert_{\mathscr{C}^\alpha}=+\infty.$$
\end{proposition}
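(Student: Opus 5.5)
We will use a Banach fixed point argument in $C_U\mathscr{C}^\alpha$ for small $U$, and then a standard continuation argument to build the maximal solution. For $U>0$ and $f\in C_U\mathscr{C}^\alpha$, define
$$\Phi_U(f)(r):=G_rg_1+\int_0^rG_{r-q}\left(\Delta\left(\left(f(q)+g_2(q)\right)^3-f(q)-g_2(q)\right)\right)\,\mathrm{d}q .$$
Since $\alpha>0$, Corollary \ref{product} (applied with both regularities equal to $\alpha>0$) shows that $\mathscr{C}^\alpha$ is an algebra with $\Vert gh\Vert_{\mathscr{C}^\alpha}\lesssim\Vert g\Vert_{\mathscr{C}^\alpha}\Vert h\Vert_{\mathscr{C}^\alpha}$. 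Hence $F(f):=(f+g_2)^3-f-g_2$ maps $C_U\mathscr{C}^\alpha$ into itself and is locally Lipschitz:
$$\Vert F(f)-F(h)\Vert_{C_U\mathscr{C}^\alpha}\lesssim\left(1+\Vert f\Vert^2+\Vert h\Vert^2+\Vert g_2\Vert^2_{C_U\mathscr{C}^\alpha}\right)\Vert f-h\Vert_{C_U\mathscr{C}^\alpha},$$
using $a^3-b^3=(a-b)(a^2+ab+b^2)$.

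Next we apply Theorem \ref{Schauder} with $\beta=2\in[0,4[$, so that the Duhamel term lies in $\mathscr{C}^{\alpha}$ with a factor $\max(U,U^{1/2})$. By Lemma \ref{lemma}(2), $r\mapsto G_rg_1$ is continuous into $\mathscr{C}^\alpha$ and bounded by $\mu\Vert g_1\Vert_{\mathscr{C}^\alpha}$. The only point needing care is continuity in time of the Duhamel integral. For this we split $\int_0^{r'}-\int_0^r$ into $\int_r^{r'}$ and $\int_0^r(G_{r'-q}-G_{r-q})$. The first piece is small by the same Schauder bound on a short interval. For the second, we write $G_{r'-q}-G_{r-q}=(G_{r'-r}-\mathrm{Id})G_{r-q}$ and use Lemma \ref{lemma}(3) with a small $\beta'>0$, together with Theorem \ref{Schauder} with $\beta=2+\beta'$. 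Alternatively, one can note that $\mathscr{C}^\alpha$ is defined as a little-Hölder-type closure, so the smoothing gained by $\beta'$ gives continuity.

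Fix $R:=2\mu\Vert g_1\Vert_{\mathscr{C}^\alpha}+1$. For $U$ small, depending only on $R$ and $\sup_{[0,1]}\Vert g_2\Vert_{\mathscr{C}^\alpha}$, the map $\Phi_U$ sends the closed ball of radius $R$ into itself and is a $\tfrac12$-contraction there, because the prefactor $\max(U,U^{1/2})\to0$. This yields a local solution. Uniqueness in the whole space $C_U\mathscr{C}^\alpha$, not just in the ball, follows by a Gronwall-type argument: two solutions are both bounded on $[0,U]$, and the contraction estimate on a short subinterval (whose length depends on that bound), iterated, forces them to agree.

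For the maximal solution and the blow-up alternative, we restart from time $r_0$. The shifted function $\tilde f(r):=f(r_0+r)$ satisfies the same mild equation with initial datum $f(r_0)\in\mathscr{C}^\alpha$ and forcing $g_2(r_0+\cdot)$; this follows from the semigroup property $G_{r}G_{s}=G_{r+s}$, which holds since $\eta_r*\eta_s=\eta_{r+s}$ on the Fourier side. Since the existence time depends only on an upper bound for the datum norm and on $g_2$ over a bounded window, we let $\beta$ be the supremum of existence times, with solutions glued by uniqueness. If $\beta<\infty$ and $\limsup_{q\uparrow\beta}\Vert f(q)\Vert_{\mathscr{C}^\alpha}<\infty$, then from times $r_0$ close to $\beta$ we can extend by a uniform step, which is a contradiction. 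The main technical obstacle is the continuity in time of the Duhamel term in $\mathscr{C}^\alpha$, which is needed for $\Phi_U$ to map into $C_U\mathscr{C}^\alpha$. The remaining steps are routine.
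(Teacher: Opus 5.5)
Your proposal is correct and follows essentially the same route as the paper. Both arguments run a contraction for the Duhamel map on a ball of radius $\mu\Vert h\Vert_{\mathscr{C}^\alpha}+1$ in $C_U\mathscr{C}^\alpha$, using Theorem \ref{Schauder} with exponent $2$ and the algebra property from Corollary \ref{product}, and then restart from the endpoint value; the paper fixes explicit step lengths $m_j$ tied to $\Vert\chi_j(m_j)\Vert_{\mathscr{C}^\alpha}$ and sets $\beta=\sum_j m_j$, whereas you take the supremum of existence times, and your treatment of time-continuity of the Duhamel term and of uniqueness outside the ball supplies details the paper leaves implicit.
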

\begin{proof}
For all $\left(U,h\right) \in \mathbb{R}_+\times \mathscr{C}^{\alpha},$ consider the fixed point map $\Upsilon_{U,h,g_2}:C_U\mathscr{C}^\alpha\longrightarrow C_U\mathscr{C}^\alpha$ such that for every $\varphi \in C_U\mathscr{C}^\alpha,$
\begin{align*}
\Upsilon_{U,h,g_2}\left(\varphi\right):\left[0,U\right]&\longrightarrow \mathscr{C}^\alpha\\
r&\longmapsto \left(\Upsilon_{U,h,g_2}\left(\varphi\right)\right)\left(r\right)=G_rh+\int_0^rG_{r-q}\left(\Delta\left(\left(\varphi\left(q\right)+g_2\left(q\right)\right)^3-\varphi\left(q\right)-g_2\left(q\right)\right)\right)\,\mathrm{d}q
\end{align*}
We fix $h \in \mathscr{C}^\alpha.$ We will prove that $\Upsilon_{U,h,g_2}$ is a contraction on a suitable Banach space and for suitable $U \in [0,1[$.

It follows from Theorem \ref{Schauder} that for any $U \in \mathbb{R}_+$ and all $\varphi_1\in C_U\mathscr{C}^\alpha,$ 
\begin{align*}
&\sup_{r\in[0,U]}\Vert\int_0^rG_{r-q}\left(\Delta\left(\left(\varphi_1\left(q\right)+g_2\left(q\right)\right)^3-\varphi_1\left(q\right)-g_2\left(q\right)\right)\right)\,\mathrm{d}q\Vert_{\alpha}\\
&\lesssim \max(U,U^{\frac{1}{2}})\sup_{q \in[0,U]}\Vert \left(\varphi_1\left(q\right)+g_2\left(q\right)\right)^3-\varphi_1\left(q\right)-g_2\left(q\right)\Vert_{\mathscr{C}^{\alpha}}
\end{align*}
We deduce from Corollary \ref{product} that
\begin{align*}
&\Vert\left(\varphi_1\left(q\right)+g_2\left(q\right)\right)^3-\varphi_1\left(q\right)-g_2\left(q\right)\Vert_{\mathscr{C}^{\alpha}}\\
&\lesssim \Vert\left(\varphi_1\left(q\right)+g_2\left(q\right)\right)^3\Vert_{\mathscr{C}^\alpha}+\Vert\varphi_1\left(q\right)\Vert_{\mathscr{C}^\alpha}+\Vert g_2\left(q\right)\Vert_{\mathscr{C}^{\alpha}}\\
&\lesssim \Vert\varphi_1\left(q\right)+g_2\left(q\right)\Vert^3_{\mathscr{C}^\alpha}+\Vert\varphi_1\left(q\right)\Vert_{\mathscr{C}^\alpha}+\Vert g_2\left(q\right)\Vert_{\mathscr{C}^{\alpha}}\\&\lesssim \left\Vert \varphi_1\right\Vert_{C_U\mathscr{C}^\alpha}^3+\left\Vert g_2\right\Vert^3_{\mathscr{C}^\alpha}+\left\Vert \varphi_1\right\Vert_{C_U\mathscr{C}^\alpha}+\left\Vert g_2\right\Vert_{\mathscr{C}^\alpha}
\end{align*}
Therefore there exists $\mu \in\mathbb{R}_+,$ such that $$\left\Vert\Upsilon_{U,h,g_2}\left(\varphi_1\right)\right\Vert_{C_U\mathscr{C}^\alpha}\leq \mu\left\Vert h\right\Vert_{\mathscr{C}^\alpha}+\mu\max\left(U,U^{\frac{1}{2}}\right)\left(\left\Vert \varphi_1\right\Vert_{C_U\mathscr{C}^\alpha}^3+\left\Vert g_2\right\Vert_{C_U\mathscr{C}^\alpha}^3+\left\Vert\varphi_1\right\Vert_{C_U\mathscr{C}^\alpha}+\left\Vert g_2\right\Vert_{C_U\mathscr{C}^\alpha}\right)$$
We repeat this argument for $\Upsilon_{U,h,g_2}\left(\varphi_2\right)-\Upsilon_{U,h,g_2}\left(\varphi_1\right)$ to deduce for $\left(\varphi_1,\varphi_2\right) \in \left(C_U\mathscr{C}^\alpha\right)^2$ that $$\left\Vert\Upsilon_{U,h,g_2}\left(\varphi_2\right)-\Upsilon_{U,h,g_2}\left(\varphi_1\right)\right\Vert_{C_U\mathscr{C}^\alpha}\leq \mu_2\max\left(U,U^{\frac{1}{2}}\right)\left(\left\Vert\varphi_1\right\Vert_{C_U\mathscr{C}^\alpha}^2+\left\Vert\varphi_2\right\Vert_{C_U\mathscr{C}^\alpha}^2+\left\Vert g_2\right\Vert_{C_U\mathscr{C}^\alpha}^2+1\right)\left\Vert\varphi_2-\varphi_1\right\Vert_{C_U\mathscr{C}^\alpha}.$$
Let $\theta \in \left]0,1\right]$ be such that $$8\theta^{\frac{1}{2}}\left(\mu+1\right)\left(\mu\left\Vert h\right\Vert_{\mathscr{C}^\alpha}+1\right)^3\left(\left\Vert g_2\right\Vert_{C_1\mathscr{C}^\alpha}+1\right)^3= 1.$$ For every $\left(\varphi_1,\varphi_2\right) \in \left(C_\theta\mathscr{C}^\alpha\right)^2,$ if $$\max\left(\left\Vert\varphi_1\right\Vert_{C_\theta\mathscr{C}^\alpha},\left\Vert\varphi_2\right\Vert_{C_\theta\mathscr{C}^\alpha}\right)\leq \mu\left\Vert h\right\Vert_{\mathscr{C}^\alpha}+1,$$ then $$\left\Vert\Upsilon_{\theta,h,g_2}\left(\varphi_1\right)\right\Vert_{C_\theta\mathscr{C}^\alpha}\leq \mu\left\Vert h\right\Vert_{\mathscr{C}^\alpha}+1$$ and $$\left\Vert\Upsilon_{\theta,h,g_2}\left(\varphi_2\right)-\Upsilon_{\theta,h,g_2}\left(\varphi_1\right)\right\Vert_{C_\theta\mathscr{C}^\alpha}\leq \frac{1}{2}\left\Vert\varphi_2-\varphi_1\right\Vert_{C_\theta\mathscr{C}^\alpha}.$$
Applying the Banach fixed point theorem iteratively we deduce that there exist a sequence $\left(m_q\right)_{q\in\mathbb{N}}$ in $\left]0,1\right]$ and a unique sequence $\left(\chi_r\right)_{r\in\mathbb{N}}$ in $\bigcup_{q\in\mathbb{N}}C_{m_q}\mathscr{C}^\alpha$ such that for all $j \in \mathbb{N},\chi_j \in C_{m_j}\mathscr{C}^\alpha,\Upsilon_{m_0,g_1,g_2}\left(\chi_0\right)=\chi_0,\Upsilon_{m_{j+1},\chi_{j}\left(m_{j}\right),g_2\left(\sum_{r=0}^{j}m_r+\cdot\right)}\left(\chi_{j+1}\right)=\chi_{j+1},$ and $$8m_{j+1}^{\frac{1}{2}}\left(\mu+1\right)\left(\mu\left\Vert \chi_j\left(m_j\right)\right\Vert_{\mathscr{C}^\alpha}+1\right)^3\left(\left\Vert g_2\left(\sum_{r=0}^{j}m_r+\cdot\right)\right\Vert_{C_1\mathscr{C}^\alpha}+1\right)^3= 1.$$

Let $\beta:=\sum_{q}m_q \in ]0,+\infty].$ 

Concatenating the solutions obtained before, we deduce that there exists a unique solution $f$ to our problem, defined on $[0,\beta[.$

If $\beta<+\infty,$ then $\lim_{q\to+\infty}m_q=0,$ yielding that $$\limsup_{r\uparrow\beta}\Vert f(r)\Vert_{\mathscr{C}^\alpha}\geq\limsup_{q\to+\infty}\Vert f(\sum_{r=0}^qm_r)\Vert_{\mathscr{C}^\alpha}\geq \limsup_{q\to+\infty}\Vert\chi_q(m_q)\Vert_{\mathscr{C}^\alpha}=+\infty.$$
\end{proof}
We deduce from Proposition \ref{proposition1} the following theorem.
\begin{theorem}
If $d\in \left\{1,2,3\right\},\alpha \in \left]0,2-\frac{d}{2}\right[,$ and $g \in \mathscr{C}^\alpha,$ then there exist a stopping time $\tau:\Omega\longrightarrow \left]0,+\infty\right]$ and a unique stochastic process $\left(f_r\right)_{r \in \left[0,\tau\right[}$ with state space $\left(\mathscr{C}^\alpha,\mathcal{B}\left(\mathscr{C}^\alpha\right)\right)$ such that for every $\omega \in \Omega,$ the function $\begin{aligned}[t]
\left[0,\tau\left(\omega\right)\right[&\longrightarrow \mathscr{C}^\alpha\\
r &\longmapsto f_r\left(\omega\right)
\end{aligned}$ is continuous on $\left[0,\tau\left(\omega\right)\right[$ and $$\forall U \in \left[0,\tau\left(\omega\right)\right[,f_U\left(\omega\right)=G_Ug+\widetilde{X}_{U,1}\left(\omega\right)+\int_0^UG_{U-q}\left(\Delta\left(\left(f_q\left(\omega\right)\right)^3-f_q\left(\omega\right)\right)\right)\,\mathrm{d}q,$$ and $$\tau\left(\omega\right)<+\infty\implies\limsup_{q\uparrow\tau\left(\omega\right)}\left\Vert f_q\left(\omega\right)\right\Vert_{\mathscr{C}^\alpha}=+\infty.$$
\end{theorem}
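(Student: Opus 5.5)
We apply Proposition \ref{proposition1} pathwise, via the Da Prato--Debussche decomposition $f=u+\widetilde{X}_{\cdot,1}$. Since $d\le 3$, we have $2-\frac{d}{2}>\alpha>0$.

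\emph{Regularity of the stochastic convolution.} By Theorem \ref{theorem1} with $\theta=\alpha$, there is a version $(\widetilde{X}_{r,1})_{r\in\mathbb{R}_+}$ with continuous sample paths in $\mathscr{C}^\alpha$. After modifying on a null set (completeness of $\mathcal{H}$; setting $\widetilde{X}_{\cdot,1}(\omega)=0$ there), we may assume that for every $\omega\in\Omega$ the map $g_2(\omega):=\widetilde{X}_{\cdot,1}(\omega)$ lies in $\bigcap_{U\in\mathbb{R}_+}C_U\mathscr{C}^\alpha$.

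\emph{Pathwise solution.} Fix $\omega$ and apply Proposition \ref{proposition1} with $g_1:=g$ and $g_2:=\widetilde{X}_{\cdot,1}(\omega)$. This gives $\beta(\omega)\in\,]0,+\infty]$ and a unique $u(\omega)\in\bigcap_{U<\beta}C_U\mathscr{C}^\alpha$ solving
$$u(r)=G_rg+\int_0^rG_{r-q}\Delta\left((u+g_2)^3-u-g_2\right)(q)\,\mathrm{d}q.$$
Set $\tau(\omega):=\beta(\omega)$ and $f_r(\omega):=u(r)+\widetilde{X}_{r,1}(\omega)$. Substituting $u=f-\widetilde{X}_{\cdot,1}$ gives exactly the mild equation in the statement. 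Continuity of $r\mapsto f_r(\omega)$ follows from the continuity of both summands.

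\emph{Uniqueness and blow-up.} Conversely, any continuous solution $f$ yields $u=f-\widetilde{X}_{\cdot,1}$ solving the fixed-point equation of Proposition \ref{proposition1}, so the uniqueness part of that proposition gives pathwise uniqueness. For blow-up, if $\tau(\omega)<\infty$ then $\limsup\Vert u(q)\Vert_{\mathscr{C}^\alpha}=\infty$. Since $\sup_{r\le\tau(\omega)}\Vert\widetilde{X}_{r,1}(\omega)\Vert_{\mathscr{C}^\alpha}<\infty$ by continuity on a compact interval, the triangle inequality transfers the blow-up to $f$.

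\emph{Measurability (the main technical step).} We must show that $\tau$ is a stopping time and that $f$ is adapted. The plan is to use the explicit construction in the proof of Proposition \ref{proposition1}. The Picard iterates $\Upsilon^n(0)$ are continuous functionals of the path $\widetilde{X}_{\cdot,1}(\omega)$ restricted to $[0,r]$, because $\Upsilon$ is continuous in $g_2$. The step sizes $m_j$ are determined by explicit continuous formulas in $\Vert\chi_j(m_j)\Vert$ and $\sup$-norms of $g_2$ on bounded windows. This window dependence is the one subtlety: the definition of $m_{j+1}$ uses $\Vert g_2\Vert_{C_1}$ on the window of length $1$ after $\sum_{r\le j} m_r$, which looks ahead in time.

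To avoid this, we would redefine the step sizes using the constraint $8m^{1/2}(\mu+1)(\ldots)^3(\Vert g_2\Vert_{C_m}+1)^3\le 1$ on the window of length $m$ itself, or simply use the blow-up characterization
$$\tau=\lim_{N\to\infty}\tau_N,\qquad \tau_N:=\inf\{r:\Vert f_r\Vert_{\mathscr{C}^\alpha}\ge N\}.$$
On $[0,\tau_N]$, $f$ is obtained as a limit of Picard iterates that are adapted, because $\widetilde{X}_{r,1}$ is $\mathcal{H}_r$-measurable (it depends only on $\xi$ restricted to $[0,r]$). Hence $f$ is adapted and continuous, each $\tau_N$ is a hitting time of a closed set by a continuous adapted process (hence a stopping time, using right-continuity of the filtration, which holds by the intersection in the definition of $\mathcal{H}_r$), and $\tau$ is a monotone limit of stopping times.
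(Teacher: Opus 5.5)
Your proposal is correct and takes the same approach as the paper: apply Proposition~\ref{proposition1} pathwise with $g_1=g$ and $g_2=\widetilde{X}_{\cdot,1}(\omega)$, then set $f_r=u(r)+\widetilde{X}_{r,1}$. The paper stops there and simply asserts that $\tau$ is a stopping time; your extra steps fill in details it leaves implicit without changing the argument — transferring uniqueness and blow-up, and handling the look-ahead in the step sizes of Proposition~\ref{proposition1} through $\tau=\lim_N\tau_N$ and the causality of the pathwise solution map.
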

\begin{proof}
It is sufficient to apply Proposition \ref{proposition1} with $g_1=g$ and $g_2=\widetilde{X}_1$ to deduce that there exist a stopping time $\tau:\Omega\longrightarrow \left]0,+\infty\right]$ and a unique stochastic process $\left(h_r\right)_{r \in \left[0,\tau\right[}$ with state space $\left(\mathscr{C}^\alpha,\mathcal{B}\left(\mathscr{C}^\alpha\right)\right)$ such that for every $\omega \in \Omega,$ the function $\begin{aligned}[t]
\left[0,\tau\left(\omega\right)\right[&\longrightarrow \mathscr{C}^\alpha\\
r &\longmapsto h_r\left(\omega\right)
\end{aligned}$ is continuous on $\left[0,\tau\left(\omega\right)\right[$ and $$\forall U \in \left[0,\tau\left(\omega\right)\right[,h_U\left(\omega\right)=G_Ug+\int_0^UG_{U-q}\left(\Delta\left(\left(h_q+\widetilde{X}_{q,1}\left(\omega\right)\right)^3-h_q\left(\omega\right)-\widetilde{X}_{q,1}\left(\omega\right)\right)\right)\,\mathrm{d}q$$ and $$\tau\left(\omega\right)<+\infty\implies\limsup_{q\uparrow\tau\left(\omega\right)}\left\Vert h_q\left(\omega\right)\right\Vert_{\mathscr{C}^\alpha}=+\infty.$$ We conclude the proof by letting $f_r=h_r+\widetilde{X}_{r,1}$ 
\end{proof}
\section{$d=4$}
\label{sec5}
In this section we treat $d=4.$

We study next the mild solution to a deterministic partial differential equation.
\begin{proposition}
\label{proposition2}
If $\left(d,\alpha\right) \in \left\{4\right\}\times \left]0,2\right[,\epsilon \in \left]0,\min\left(\alpha,2-\alpha\right)\right[,\left(g_1,g_2,g_3,g_4\right) \in \mathscr{C}^\alpha\times \left(\bigcap_{U \in \mathbb{R}_+}C_U\mathscr{C}^{-\epsilon}\right)^3$ 
then there exists $\beta_1 \in \left]0,+\infty\right]$ such that $$\exists !f_1 \in \bigcap_{U \in \left[0,\beta_1\right[}C_U\mathscr{C}^\alpha,\forall r \in \left[0,\beta_1\right[,f_1\left(r\right)=G_rg_1+$$$$\int_0^rG_{r-q}\left(\Delta\left(\left(f_1\left(q\right)\right)^3+3\left(f_1\left(q\right)\right)^2g_2\left(q\right)+3f_1\left(q\right)g_3\left(q\right)+g_4\left(q\right)-f_1\left(q\right)-g_2\left(q\right)\right)\right)\,\mathrm{d}q,$$ $$\beta_1 \in \mathbb{R}_+^*\implies \limsup_{q\uparrow\beta_1}\left\Vert f_1\left(q\right)\right\Vert_{\mathscr{C}^\alpha}=+\infty,$$ 
\end{proposition}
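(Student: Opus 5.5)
I will follow the proof of Proposition \ref{proposition1} closely: a Banach fixed point argument in $C_U\mathscr{C}^\alpha$ for small $U$, then iteration and concatenation to obtain a maximal solution with the blow-up alternative. For $(U,h)\in\mathbb{R}_+\times\mathscr{C}^\alpha$ I define $\Upsilon_{U,h}:C_U\mathscr{C}^\alpha\to C_U\mathscr{C}^\alpha$ by
$$\left(\Upsilon_{U,h}(\varphi)\right)(r)=G_rh+\int_0^rG_{r-q}\left(\Delta F(\varphi(q),q)\right)\,\mathrm{d}q,$$
where
$$F(\varphi,q):=\varphi^3+3\varphi^2g_2(q)+3\varphi g_3(q)+g_4(q)-\varphi-g_2(q).$$
The first term is handled by Lemma \ref{lemma}(2): $\Vert G_\cdot h\Vert_{C_U\mathscr{C}^\alpha}\lesssim\Vert h\Vert_{\mathscr{C}^\alpha}$, with continuity in time.

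\textbf{Regularity of the nonlinearity.} I will show that $F(\varphi(q),q)\in\mathscr{C}^{-\epsilon}$ with a polynomial bound. Since $\alpha>0$, Corollary \ref{product} (applied with both exponents equal to $\alpha$) gives $\Vert\varphi^2\Vert_{\mathscr{C}^\alpha}\lesssim\Vert\varphi\Vert^2_{\mathscr{C}^\alpha}$ and $\Vert\varphi^3\Vert_{\mathscr{C}^\alpha}\lesssim\Vert\varphi\Vert^3_{\mathscr{C}^\alpha}$. The mixed terms $\varphi^2g_2$ and $\varphi g_3$ pair $\mathscr{C}^\alpha$ with $\mathscr{C}^{-\epsilon}$. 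The hypothesis $\epsilon<\alpha$ gives $\alpha-\epsilon>0$, so Corollary \ref{product} applies and places these products in $\mathscr{C}^{-\epsilon}$. The remaining terms $g_4$, $\varphi$ and $g_2$ embed trivially into $\mathscr{C}^{-\epsilon}$. Hence, with $N:=\Vert g_2\Vert_{C_1\mathscr{C}^{-\epsilon}}+\Vert g_3\Vert_{C_1\mathscr{C}^{-\epsilon}}+\Vert g_4\Vert_{C_1\mathscr{C}^{-\epsilon}}+1$,
$$\sup_{q\le U}\Vert F(\varphi(q),q)\Vert_{\mathscr{C}^{-\epsilon}}\lesssim N\left(\Vert\varphi\Vert_{C_U\mathscr{C}^\alpha}+1\right)^3.$$
The difference estimate follows by factoring $a^3-b^3$ and $a^2-b^2$ and applying the same product bounds:
$$\Vert F(\varphi_1,\cdot)-F(\varphi_2,\cdot)\Vert_{C_U\mathscr{C}^{-\epsilon}}\lesssim N\left(1+\Vert\varphi_1\Vert^2+\Vert\varphi_2\Vert^2\right)\Vert\varphi_1-\varphi_2\Vert_{C_U\mathscr{C}^\alpha}.$$

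\textbf{Schauder step.} This is the step that fixes the constraint on $\alpha$. I apply Theorem \ref{Schauder} with regularity input $-\epsilon$ and gain $\beta=\alpha+\epsilon+2$, so that the output regularity is $-\epsilon+\beta-2=\alpha$. Theorem \ref{Schauder} requires $\beta<4$, which is exactly $\epsilon<2-\alpha$, the second hypothesis. This yields the small factor $\max\left(U,U^{1-\frac{\alpha+\epsilon+2}{4}}\right)$, whose exponent is positive. Continuity in time of the integral term must also be checked. I would obtain it from Lemma \ref{lemma}(3) together with the same Schauder bound on short intervals, keeping a little room in $\beta$; this is the routine point I expect needs the most care. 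It could alternatively be handled by noting that $C_U\mathscr{C}^\alpha$ uses the closure-type space $\mathscr{C}^\alpha$ and approximating $F$ by smooth functions.

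\textbf{Fixed point and iteration.} Combining the above, there is $\mu$ with
$$\Vert\Upsilon_{U,h}(\varphi)\Vert\le\mu\Vert h\Vert_{\mathscr{C}^\alpha}+\mu U^{\kappa}N\left(\Vert\varphi\Vert+1\right)^3,\qquad \kappa:=\frac{2-\alpha-\epsilon}{4},$$
valid for $U\le1$, together with the analogous Lipschitz bound. I choose $U=\theta\in]0,1]$ such that $8\theta^\kappa(\mu+1)\left(\mu\Vert h\Vert+1\right)^3\left(N+1\right)^3=1$. Then $\Upsilon_{\theta,h}$ maps the ball of radius $\mu\Vert h\Vert_{\mathscr{C}^\alpha}+1$ into itself and is a $\frac12$-contraction there. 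Uniqueness in all of $C_\theta\mathscr{C}^\alpha$, not just in the ball, follows by the standard argument of restarting on a shorter interval where any two solutions lie in a common ball. Finally I iterate exactly as in Proposition \ref{proposition1}, restarting from $f_1(m_j)$ with the shifted data $g_i\left(\sum_{r\le j}m_r+\cdot\right)$, and set $\beta_1=\sum m_q$. If $\beta_1<\infty$ then $m_q\to0$. Since $N$ stays bounded on the bounded time interval $[0,\beta_1+1]$, the defining relation for $m_q$ forces $\Vert f_1(\sum_{r\le q}m_r)\Vert_{\mathscr{C}^\alpha}\to\infty$, which gives the blow-up alternative.
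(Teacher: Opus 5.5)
Your proposal is correct and follows the paper's proof essentially step for step. It uses the same fixed-point map, the same $\mathscr{C}^{-\epsilon}$ bound on the nonlinearity via Corollary \ref{product} (which needs $\epsilon<\alpha$), and the same Schauder gain $\alpha+\epsilon+2<4$ (which needs $\epsilon<2-\alpha$), giving the factor $U^{(2-\alpha-\epsilon)/4}=U^{1-\gamma_1}$. The iteration, the concatenation and the blow-up alternative then match the paper; your remarks on time-continuity of the Duhamel term and on uniqueness beyond the ball only fill in points the paper passes over silently.
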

\begin{proof}
For all $\left(U,h_1\right) \in \mathbb{R}_+\times \mathscr{C}^\alpha,$ consider the fixed point map $\Upsilon_{U,h_1,g_2,g_3,g_4}:C_U\mathscr{C}^\alpha\longrightarrow C_U\mathscr{C}^\alpha$ such that for every $\varphi \in C_U\mathscr{C}^\alpha,$
\begin{align*}
\Upsilon_{U,h_1,g_2,g_3,g_4}\left(\varphi\right):\left[0,U\right]&\longrightarrow \mathscr{C}^\alpha\\
r&\longmapsto \left(\Upsilon_{U,h_1,g_2,g_3,g_4}\left(\varphi\right)\right)\left(r\right)=G_rh_1
\end{align*}
$$+\int_0^rG_{r-q}\left(\Delta\left(\left(\varphi\left(q\right)\right)^3+3\left(\varphi\left(q\right)\right)^2g_2\left(q\right)+3\varphi\left(q\right)g_3\left(q\right)+g_4\left(q\right)-\varphi\left(q\right)-g_2\left(q\right)\right)\right)\,\mathrm{d}q$$
Let $h_1
\in \mathscr{C}^\alpha$, 
$\gamma_1:=\frac{1}{4}\left(\alpha+\epsilon+2\right)$ 

We will prove that $\Upsilon_{U,h_1,g_2,g_3,g_4}$ is a contraction on a suitable Banach space and for suitable $U \in [0,1[$.

It follows from Theorem \ref{Schauder} that for any $U \in \mathbb{R}_+$ and all $\varphi_1 \in C_U\mathscr{C}^\alpha,$ 
\begin{align*}
&\sup_{r\in[0,U]}\Vert\int_0^rG_{r-q}\left(\Delta\left(\left(\varphi_1\left(q\right)\right)^3+3\left(\varphi_1\left(q\right)\right)^2g_2\left(q\right)+3\varphi_1\left(q\right)g_3\left(q\right)+g_4\left(q\right)-\varphi_1\left(q\right)-g_2\left(q\right)\right)\right)\,\mathrm{d}q\Vert_{\alpha}\\&=\sup_{r\in[0,U]}\Vert\int_0^rG_{r-q}\left(\Delta\left(\left(\varphi_1\left(q\right)\right)^3+3\left(\varphi_1\left(q\right)\right)^2g_2\left(q\right)+3\varphi_1\left(q\right)g_3\left(q\right)+g_4\left(q\right)-\varphi_1\left(q\right)-g_2\left(q\right)\right)\right)\,\mathrm{d}q\Vert_{4\gamma_1-\epsilon-2}\\
&\lesssim \max(U,U^{1-\gamma_1})\sup_{q \in[0,U]}\Vert \left(\varphi_1\left(q\right)\right)^3+3\left(\varphi_1\left(q\right)\right)^2g_2\left(q\right)+3\varphi_1\left(q\right)g_3\left(q\right)+g_4\left(q\right)-\varphi_1\left(q\right)-g_2\left(q\right)\Vert_{\mathscr{C}^{-\epsilon}}
\end{align*}
We deduce from Corollary \ref{product} that
\begin{align*}
&\Vert\left(\varphi_1\left(q\right)\right)^3+3\left(\varphi_1\left(q\right)\right)^2g_2\left(q\right)+3\varphi_1\left(q\right)g_3\left(q\right)+g_4\left(q\right)-\varphi_1\left(q\right)-g_2\left(q\right)\Vert_{\mathscr{C}^{-\epsilon}}\\
&\lesssim \Vert\left(\varphi_1\left(q\right)\right)^3\Vert_{\mathscr{C^\alpha}}+3\Vert\left(\varphi_1\left(q\right)\right)^2\Vert_{\mathscr{C}^\alpha}\Vert g_2\left(q\right)\Vert_{\mathscr{C}^{-\epsilon}}+3\Vert\varphi_1\left(q\right)\Vert_{\mathscr{C}^\alpha}\Vert g_3\left(q\right)\Vert_{\mathscr{C}^{-\epsilon}}+\Vert g_4\left(q\right)\Vert_{\mathscr{C}^{-\epsilon}}+\Vert\varphi_1\left(q\right)\Vert_{\mathscr{C}^\alpha}\\&+\Vert g_2\left(q\right)\Vert_{\mathscr{C}^{-\epsilon}} \\
&\lesssim \left\Vert \varphi_1\right\Vert_{C_U\mathscr{C}^\alpha}^3+\left\Vert g_2\right\Vert_{C_U\mathscr{C}^{-\epsilon}}\left(\left\Vert \varphi_1\right\Vert_{C_U\mathscr{C}^\alpha}^2+1\right)+\left\Vert \varphi_1\right\Vert_{C_U\mathscr{C}^\alpha}\left(\left\Vert g_3\right\Vert_{C_U\mathscr{C}^{-\epsilon}}+1\right)+\left\Vert g_4\right\Vert_{C_U\mathscr{C}^{-\epsilon}}
\end{align*}
Therefore there exists $\mu_1 \in\mathbb{R}_+,$ such that $$\left\Vert \Upsilon_{U,h_1,g_2,g_3,g_4}\left(\varphi_1\right)\right\Vert_{C_U\mathscr{C}^\alpha}\leq \mu_1\left\Vert h_1\right\Vert_{\mathscr{C}^\alpha}$$$$+\mu_1\max\left(U,U^{1-\gamma_1}\right)\left(\left\Vert \varphi_1\right\Vert_{C_U\mathscr{C}^\alpha}^3+\left\Vert g_2\right\Vert_{C_U\mathscr{C}^{-\epsilon}}\left(\left\Vert \varphi_1\right\Vert_{C_U\mathscr{C}^\alpha}^2+1\right)+\left\Vert \varphi_1\right\Vert_{C_U\mathscr{C}^\alpha}\left(\left\Vert g_3\right\Vert_{C_U\mathscr{C}^{-\epsilon}}+1\right)+\left\Vert g_4\right\Vert_{C_U\mathscr{C}^{-\epsilon}}\right)$$ 
We repeat this argument for $\Upsilon_{U,h_1,g_2,g_3,g_4}\left(\varphi_2\right)-\Upsilon_{U,h_1,g_2,g_3,g_4}\left(\varphi_1\right)$ to deduce for $\left(\varphi_1,\varphi_2\right) \in \left(C_U\mathscr{C}^\alpha\right)^2$ that  
$$\left\Vert\Upsilon_{U,h_1,g_2,g_3,g_4}\left(\varphi_2\right)-\Upsilon_{U,h_1,g_2,g_3,g_4}\left(\varphi_1\right)\right\Vert_{C_U\mathscr{C}^\alpha}\leq \mu_2\max\left(U,U^{1-\gamma_1}\right)\left\Vert \varphi_2-\varphi_1\right\Vert_{C_U\mathscr{C}^\alpha}$$$$\left(\left\Vert \varphi_1\right\Vert_{C_U\mathscr{C}^\alpha}^2+\left\Vert \varphi_2\right\Vert_{C_U\mathscr{C}^\alpha}^2+\left\Vert g_2\right\Vert_{C_U\mathscr{C}^{-\epsilon}}\left(\left\Vert \varphi_1\right\Vert_{C_U\mathscr{C}^\alpha}+\left\Vert \varphi_2\right\Vert_{C_U\mathscr{C}^\alpha}\right)+\left\Vert g_3\right\Vert_{C_U\mathscr{C}^{-\epsilon}}+1
\right).$$
Let $\theta \in \left]0,1\right]$ be such that $$2\theta^{1-\gamma_1}\left(\mu_1+1\right)\left(\mu_1\left\Vert h_1\right\Vert_{\mathscr{C}^\alpha}+1\right)^3\left(2\left\Vert g_2\right\Vert_{C_1\mathscr{C}^{-\epsilon}}+\left\Vert g_3\right\Vert_{C_1\mathscr{C}^{-\epsilon}}+\left\Vert g_4\right\Vert_{C_1\mathscr{C}^{-\epsilon}}+3\right)=1.$$ For every $\left(\varphi_1,\varphi_2\right) \in \left(C_\theta\mathscr{C}^\alpha\right)^2,$ if $$\max\left(\left\Vert\varphi_1\right\Vert_{C_\theta\mathscr{C}^\alpha},\left\Vert\varphi_2\right\Vert_{C_\theta\mathscr{C}^\alpha}\right)\leq\mu_1\left\Vert h_1\right\Vert_{\mathscr{C}^\alpha}+1,$$ then $$\left\Vert\Upsilon_{\theta,h_1,g_2,g_3,g_4}\left(\varphi_1\right)\right\Vert_{C_\theta\mathscr{C}^\alpha}\leq \mu_1\left\Vert h_1\right\Vert_{\mathscr{C}^\alpha}+1$$ and $$\left\Vert\Upsilon_{\theta,h_1,g_2,g_3,g_4}\left(\varphi_2\right)-\Upsilon_{\theta,h_1,g_2,g_3,g_4}\left(\varphi_1\right)\right\Vert_{C_\theta\mathscr{C}^\alpha}\leq \frac{1}{2}\left\Vert\varphi_2-\varphi_1\right\Vert_{C_\theta\mathscr{C}^\alpha}.$$
Applying the Banach fixed point theorem iteratively we deduce that there exist a sequence $\left(
m_{q,1}
\right)_{q\in\mathbb{N}}$ in $\left]0,1\right]
$ and a unique sequence $
\chi_{r,1}
$ in $\bigcup_{q\in\mathbb{N}}
C_{m_{q,1}}\mathscr{C}^\alpha
$ such that for all $j \in \mathbb{N},\chi_{j,1} \in C_{m_{j,1}}\mathscr{C}^\alpha,
\Upsilon_{m_{0,1},g_1,g_2,g_3,g_4}\left(\chi_{0,1}\right)=\chi_{0,1},
$
$\Upsilon_{m_{j+1,1},\chi_{j,1}\left(m_{j,1}\right),g_2\left(\sum_{r=0}^{j}m_{r,1}+\cdot\right),g_3\left(\sum_{r=0}^{j}m_{r,1}+\cdot\right),g_4\left(\sum_{r=0}^{j}m_{r,1}+\cdot\right)}\left(\chi_{j+1,1}\right)=\chi_{j+1,1},
$ $$2m_{j+1,1}^{1-\gamma_1}\left(\mu_1+1\right)\left(\mu_1\left\Vert \chi_{j,1}\left(m_{j,1}\right)\right\Vert_{\mathscr{C}^\alpha}+1\right)^3
$$$$\left(
2\left\Vert g_2\left(\sum_{r=0}^{j}m_{r,1}+\cdot\right)\right\Vert_{C_1\mathscr{C}^{-\epsilon}}
+\left\Vert g_3\left(\sum_{r=0}^{j}m_{r,1}+\cdot\right)\right\Vert_{C_1\mathscr{C}^{-\epsilon}}
+\left\Vert g_4\left(\sum_{r=0}^{j}m_{r,1}+\cdot\right)\right\Vert_{C_1\mathscr{C}^{-\epsilon}}
+3\right)= 1,$$ 
Let $\beta_1:=\sum_{q}m_{q,1} \in ]0,+\infty].$ 

Concatenating the solutions obtained before, we deduce that there exists a unique solution $f_1$ to our problem, defined on $[0,\beta_1[.$

If $\beta_1<+\infty,$ then $\lim_{q\to+\infty}m_{q,1}=0,$ yielding that $$\limsup_{r\uparrow\beta_1}\Vert f_1(r)\Vert_{\mathscr{C}^\alpha}\geq\limsup_{q\to+\infty}\Vert f_1(\sum_{r=0}^qm_{r,1})\Vert_{\mathscr{C}^\alpha}\geq \limsup_{q\to+\infty}\Vert\chi_{q,1}(m_{q,1})\Vert_{\mathscr{C}^\alpha}=+\infty.$$
\end{proof}

We end this section by stating and proving existence and uniqueness of a local-in-time solution to the $4$-dimensional stochastic Cahn-Hilliard equation.
\begin{theorem-definition}[Da Prato-Debussche]
\label{theoremd=4}
If $\left(d,\alpha\right) \in \left\{4\right\}\times \left]0,2\right[$ and $g \in \mathscr{C}^\alpha,$ then 
there exist a stopping time $\tau:\Omega\longrightarrow \left]0,+\infty\right]$ and a unique stochastic process $\left(h_r\right)_{r \in \left[0,\tau\right[}$ with state space $\left(\mathscr{C}^\alpha,\mathcal{B}\left(\mathscr{C}^\alpha\right)\right)$ such that for every $\omega \in \Omega,$ the function $\begin{aligned}[t]
\left[0,\tau\left(\omega\right)\right[&\longrightarrow \mathscr{C}^\alpha\\
r &\longmapsto h_r\left(\omega\right)
\end{aligned}$ is continuous on $\left[0,\tau\left(\omega\right)\right[,$ $$\forall U \in \left[0,\tau\left(\omega\right)\right[,h_U\left(\omega\right)=G_Ug+\int_0^UG_{U-q}(\Delta(\left(h_q\left(\omega\right)\right)^3+3\left(h_q\left(\omega\right)\right)^2\widetilde{X}_{q,1}\left(\omega\right)+3h_q\left(\omega\right)\widetilde{X}_{q,2}\left(\omega\right)+\widetilde{X}_{q,3}\left(\omega\right)$$$$-h_q\left(\omega\right)-\widetilde{X}_{q,1}\left(\omega\right)))\,\mathrm{d}q,$$ and $$\tau\left(\omega\right)<+\infty\implies\limsup_{q\uparrow\tau\left(\omega\right)}\left\Vert h_q\left(\omega\right)\right\Vert_{\mathscr{C}^\alpha}=+\infty.$$
The maximal local mild solution of the 4-dimensional stochastic
Cahn-Hilliard equation is defined by the processes $(h_r+\widetilde{X}_{r,1})_{r\in[0,\tau[}$
\end{theorem-definition}
\begin{proof}
Considering $g_1=g,g_2=X_{1},g_3=X_2,g_4=X_3,$ it follows from Proposition \ref{proposition2} that there exist a stopping time $\tau:\Omega\longrightarrow \left]0,+\infty\right]$ and a unique stochastic process $\left(h_r\right)_{r \in \left[0,\tau\right[}$ with state space $\left(\mathscr{C}^\alpha,\mathcal{B}\left(\mathscr{C}^\alpha\right)\right)$ such that for every $\omega \in \Omega,$ the function $\begin{aligned}[t]
\left[0,\tau\left(\omega\right)\right[&\longrightarrow \mathscr{C}^\alpha\\
r &\longmapsto h_r\left(\omega\right)
\end{aligned}$ is continuous on $\left[0,\tau\left(\omega\right)\right[,$ $$\forall U \in \left[0,\tau\left(\omega\right)\right[,h_U\left(\omega\right)=G_Ug+\int_0^UG_{U-q}(\Delta(\left(h_q\left(\omega\right)\right)^3+3\left(h_q\left(\omega\right)\right)^2\widetilde{X}_{q,1}\left(\omega\right)+3h_q\left(\omega\right)\widetilde{X}_{q,2}\left(\omega\right)+\widetilde{X}_{q,3}\left(\omega\right)$$$$-h_q\left(\omega\right)-\widetilde{X}_{q,1}\left(\omega\right)))\,\mathrm{d}q,$$ and $$\tau\left(\omega\right)<+\infty\implies\limsup_{q\uparrow\tau\left(\omega\right)}\left\Vert h_q\left(\omega\right)\right\Vert_{\mathscr{C}^\alpha}=+\infty.$$ 
\end{proof}
\section{$d=5$}
\label{sec6}

In this section we treat $d=5$. 

For simplicity of notation, we drop the tilde when referring to the stochastic objects.

In Section \ref{sec}, we derived the following system of equations (interpreted in the mild sense):
\begin{equation*}
(\partial_r+\Delta^2)v=3\Delta((v+w+Y_2)\varolessthan X_2),
\end{equation*}
\begin{equation*}
(\partial_r+\Delta^2)w=\Delta((v+w)^3+9\Psi(v+w+Y_2,Y_1,X_2)+3\Lambda(v,w)\varodot X_2+3 w\varodot X_2+3(v+w+Y_2)\varogreaterthan X_2+P(v+w)),
\end{equation*}
where $$\Lambda(v,w)(r):=3\int_0^r\mathrm{e}^{-(r-q)\Delta^2}(\Delta((v(q)+w(q)+Y_2(q))\varolessthan X_2(q)))\,\mathrm{d}q-3(v(r)+w(r)+Y_2(r))\varolessthan Y_1(r),$$ $$P(v+w):=\Theta_4+\Theta_5(v+w)+\Theta_6(v+w)^2,$$ $$\Theta_4:=Y_2^3+3X_1\varolessthan Y_2^2+3X_1\varodot(Y_2\varodot Y_2)+6Y_2\Xi_1+6\Psi(Y_2,Y_2,X_1)+3X_1\varogreaterthan Y_2^2+9Y_2\Xi_2+3\Xi_3-X_1-Y_2,$$ $$\Theta_5:=3Y_2^2+6Y_2 \varolessthan X_1+6Y_2 \varogreaterthan X_1+6\Xi_1+9\Xi_2-1$$ $$\Theta_6:=3X_1+3Y_2.$$

The objective here is to prove the existence and uniqueness of a local mild solution to this system of equations. The space $(C_U\mathscr{C}^\alpha,\Vert\cdot\Vert)$ is not enough for the construction of the solution. Therefore we introduce the following space.  
\begin{definition}
Let $\left(\alpha,U\right) \in \mathbb{R}\times \mathbb{R}_+.$ We define $$\text{\gls{pc6}}:=\left\{g \in C_U\mathscr{C}^{\max\left(\alpha,0\right)},\exists\mu\in\mathbb{R}_+,\forall\left(r,q\right)\in\left[0,U\right]^2,\left\Vert g\left(r\right)-g\left(q\right)\right\Vert_{\mathscr{C}^0}\leq\mu\left|r-q\right|^{\frac{1}{16}}\right\}$$
and
\begin{align*} 
\left\Vert\cdot\right\Vert_{\mathfrak{Y}_U^\alpha}: \mathfrak{Y}_U^\alpha &\longrightarrow \mathbb{R}_+ \\ f & \longmapsto \left\Vert f\right\Vert_{\mathfrak{Y}_U^\alpha}=\max\left(\left\Vert f\right\Vert_{C_U\mathscr{C}^{\max\left(\alpha,0\right)}},\sup_{\substack{\left(r,q\right)\in\left[0,U\right]^2\\ r\neq q}}\frac{\left\Vert f\left(r\right)-f\left(q\right)\right\Vert_{\mathscr{C}^0}}{\left|r-q\right|^{\frac{1}{16}}}\right)
\end{align*}
\end{definition}
\begin{remark}
\begin{enumerate}
\item We note that for every $\left(\alpha,U\right) \in \mathbb{R}\times \mathbb{R}_+,\left(\mathfrak{Y}_U^\alpha,\left\Vert\cdot\right\Vert_{\mathfrak{Y}_U^\alpha}\right)$ is a Banach space.
\item By construction, $\Theta_4,\Theta_5,\Theta_6$ are continuous stochastic processes with state space $(\mathscr{C}^{-1/2-\epsilon},\mathcal{B}(\mathscr{C}^{-1/2-\epsilon}))$ for $\epsilon>0.$ This follows directly from Theorem \ref{estimate} and Proposition \ref{co1}.
\end{enumerate}
\end{remark}

We begin with the following proposition.

\begin{proposition}
\label{proposition4}
For $1/4\leq\beta<2,\alpha \in [1/4-\beta,+\infty[$, there exists $\mu\in\mathbb{R}_+^*$ such that for every $U\in[0,1]$, $(r_1,r_2)\in[0,U]^2,g\in\mathscr{C}^\beta,f\in C_U\mathscr{C}^{\alpha},$ $$\Vert G_{r_2}g-G_{r_1}g\Vert_{\mathscr{C}^0}\leq \mu |r_2-r_1|^{1/16}\Vert g\Vert_{\mathscr{C}^\beta}$$ and $$\Vert \int_0^{r_2}G_{r_2-q}(\Delta(f(q)))\,dq-\int_0^{r_1}G_{r_1-q}(\Delta(f(q)))\,dq\Vert_{\mathscr{C}^0}\leq \mu U^{1-(\beta+2)/4}|r_2-r_1|^{1/16}\Vert f\Vert_{C_U\mathscr{C}^{\alpha}}$$\end{proposition}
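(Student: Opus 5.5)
The plan is to treat the two estimates separately, using the semigroup property $G_{r_2}=G_{r_1}G_{r_2-r_1}$ together with Lemma \ref{lemma} and Theorem \ref{Schauder}. Throughout, assume without loss of generality that $r_1\le r_2$ and set $h:=r_2-r_1\in[0,U]\subset[0,1]$.

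\emph{First estimate.} Write $G_{r_2}g-G_{r_1}g=G_{r_1}(G_hg-g)$.
\begin{itemize}
\item By Lemma \ref{lemma}(2), $G_{r_1}$ is bounded on $\mathscr{C}^0$ uniformly in $r_1\in[0,1]$.
\item By Lemma \ref{lemma}(3), applied with the loss of regularity equal to $\beta\in[1/4,2[\subset[0,4]$, we get $\Vert G_hg-g\Vert_{\mathscr{C}^{0}}\lesssim \min(h^{\beta/4},1)\Vert g\Vert_{\mathscr{C}^\beta}$.
\item Since $h\le 1$ and $\beta/4\ge 1/16$, we have $h^{\beta/4}\le h^{1/16}$.
\end{itemize}
This gives the first bound with a constant depending only on $\beta$.

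\emph{Second estimate.} Let $I(r):=\int_0^rG_{r-q}(\Delta f(q))\,\mathrm{d}q$ and split
\[
I(r_2)-I(r_1)=(G_h-\mathrm{Id})I(r_1)+\int_{r_1}^{r_2}G_{r_2-q}(\Delta f(q))\,\mathrm{d}q .
\]

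\emph{The first piece.} Use Lemma \ref{lemma}(3) with loss $1/4$ to bound it in $\mathscr{C}^0$ by $h^{1/16}\Vert I(r_1)\Vert_{\mathscr{C}^{1/4}}$. To estimate $\Vert I(r_1)\Vert_{\mathscr{C}^{1/4}}$, apply Theorem \ref{Schauder} with gain $\gamma:=9/4-\alpha$, so that $\alpha+\gamma-2=1/4$.
\begin{itemize}
\item The hypothesis $\alpha\ge 1/4-\beta$ gives $\gamma\le 2+\beta<4$.
\item If $\gamma<0$, i.e.\ $\alpha>9/4$, first replace $\alpha$ by $9/4$ using $\Vert\cdot\Vert_{\mathscr{C}^{9/4}}\lesssim\Vert\cdot\Vert_{\mathscr{C}^\alpha}$, so that $\gamma=0$.
\item Since $U\le1$, the Schauder factor is $\max(U,U^{1-\gamma/4})=U^{1-\gamma/4}$, and $1-\gamma/4\ge 1-(\beta+2)/4>0$ gives $U^{1-\gamma/4}\le U^{1-(\beta+2)/4}$.
\end{itemize}

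\emph{The second piece.} Use Theorem \ref{derivative} and Lemma \ref{lemma}(1) with gain $s_0:=2-\alpha$ when $\alpha<2$ (otherwise no gain is needed and the integrand is simply bounded). This yields
\[
\Big\Vert\int_{r_1}^{r_2}G_{r_2-q}(\Delta f(q))\,\mathrm{d}q\Big\Vert_{\mathscr{C}^0}\lesssim \int_{r_1}^{r_2}(r_2-q)^{-s}\,\mathrm{d}q\,\Vert f\Vert_{C_U\mathscr{C}^\alpha}\lesssim h^{1-s}\Vert f\Vert_{C_U\mathscr{C}^\alpha},
\qquad s:=\max(s_0,0)/4 .
\]
The hypothesis $\alpha\ge 1/4-\beta$ gives $s\le(\beta+7/4)/4<1$, since $\beta<2$, so the integral converges.

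\emph{Main obstacle.} The delicate point is the exponent bookkeeping needed to extract exactly $U^{1-(\beta+2)/4}h^{1/16}$ from the second piece. Write $h^{1-s}=h^{1/16}h^{15/16-s}$. Then
\[
15/16-s\ \ge\ 15/16-\beta/4-7/16\ =\ 1-(\beta+2)/4\ >\ 0,
\]
so $h\le U\le 1$ gives $h^{15/16-s}\le U^{1-(\beta+2)/4}$. The lower bound $\alpha\ge 1/4-\beta$ is precisely what makes this equality of exponents work, and it must be tracked carefully together with the degenerate cases $\alpha\ge2$ and $\alpha>9/4$, where the gain is set to zero. Combining both pieces yields the claim with a constant $\mu$ depending only on $\alpha$ and $\beta$.
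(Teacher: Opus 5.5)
Your proposal is correct and follows essentially the paper's argument: the same splitting of the increment via the semigroup property, followed by Lemma~\ref{lemma}(3) with loss $1/4$ to produce $h^{1/16}$, Theorem~\ref{Schauder} for the piece over $[0,r_1]$, and Lemma~\ref{lemma}(1) with Theorem~\ref{derivative} for the piece over $[r_1,r_2]$ (you apply $G_h-\mathrm{Id}$ to $I(r_1)$ while the paper applies it to $f(q)$ inside the integral, which is equivalent since $G_h$ commutes with $\Delta$ and with $G_{r_1-q}$). The only difference is bookkeeping: the paper uses the fixed gains $\beta+2$ and $\beta+7/4$, measures everything in $\mathscr{C}^{\alpha+\beta-1/4}$, and embeds into $\mathscr{C}^0$ at the end, which is exactly where $\alpha\ge 1/4-\beta$ enters; this avoids your case distinctions $\alpha\ge 2$ and $\alpha>9/4$.
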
\begin{proof}
The first inequality follows directly from Lemma \ref{lemma}. \\Let us prove the second inequality. \\ Writing for $r_1\leq r_2$\begin{align*}
\int_0^{r_2}G_{r_2-q}(\Delta(f(q)))\,dq-\int_0^{r_1}G_{r_1-q}(\Delta(f(q)))\,dq&=\int_0^{r_1}G_{r_1-q}(\Delta(G_{r_2-r_1}(f(q))-f(q)))\,dq\\&+\int_{r_1}^{r_2}G_{r_2-q}(\Delta (f(q)))\,dq
\end{align*}  and observing that \begin{align*}\int_0^{r_1}\Vert G_{r_1-q}(\Delta(G_{r_2-r_1}(f(q))-f(q)))\Vert_{\mathscr{C}^{\alpha+\beta-1/4}}\,dq&\stackrel{\text{Theorem \ref{Schauder}}}{\lesssim} U^{1-(\beta+2)/4}\sup_{q\in[0,U]}\Vert G_{r_2-r_1}(f(q))-f(q)\Vert_{\mathscr{C}^{\alpha-1/4}}\\&\stackrel{\text{Lemma \ref{lemma}}}{\lesssim}U^{1-(\beta+2)/4}|r_2-r_1|^{1/16}\Vert f\Vert_{C_U\mathscr{C}^{\alpha}}\end{align*} and \begin{align*}\int_{r_1}^{r_2}\Vert G_{r_2-q}(\Delta (f(q)))\Vert_{\mathscr{C}^{\alpha+\beta-1/4}}\,dq&\stackrel{\text{Theorem \ref{derivative}}}{\lesssim}\int_{r_1}^{r_2}\Vert G_{r_2-q}(f(q))\Vert_{\mathscr{C}^{\alpha+\beta+2-1/4}}\,dq\\&\stackrel{\text{Lemma \ref{lemma}}}{\lesssim}\int_{r_1}^{r_2}\max(|r_2-q|^{-(\beta+2-1/4)/4},1)\Vert f\Vert_{C_U\mathscr{C}^{\alpha}}\,dq\\&\lesssim U^{1-(\beta+2)/4}|r_2-r_1|^{1/16}\Vert f\Vert_{C_U\mathscr{C}^{\alpha}}\int_{0}^{1}|q|^{-(\beta+2-1/4)/4}\,dq\\&\lesssim U^{1-(\beta+2)/4}|r_2-r_1|^{1/16}\Vert f\Vert_{C_U\mathscr{C}^{\alpha}},
\end{align*}it follows that \begin{align*}
\Vert \int_0^{r_2}G_{r_2-q}(\Delta(f(q)))\,dq-\int_0^{r_1}G_{r_1-q}(\Delta(f(q)))\,dq\Vert_{\mathscr{C}^0}&\lesssim\Vert \int_0^{r_2}G_{r_2-q}(\Delta(f(q)))\,dq-\int_0^{r_1}G_{r_1-q}(\Delta(f(q)))\,dq\Vert_{\mathscr{C}^{\alpha+\beta-1/4}}\\&\lesssim \int_0^{r_1}\Vert G_{r_1-q}(\Delta(G_{r_2-r_1}(f(q))-f(q)))\Vert_{\mathscr{C}^{\alpha+\beta-1/4}}\,dq\\&+\int_{r_1}^{r_2}\Vert G_{r_2-q}(\Delta (f(q)))\Vert_{\mathscr{C}^{\alpha+\beta-1/4}}\,dq\\&\lesssim U^{1-(\beta+2)/4}|r_2-r_1|^{1/16}\Vert f\Vert_{C_U\mathscr{C}^{\alpha}}\end{align*}
\end{proof}\begin{remark}
For $\epsilon>0$ sufficiently small, it follows from Proposition \ref{proposition4} that $Y_2 \in\mathfrak{Y}_1^{1/2-\epsilon}.$ 
\end{remark}\begin{proposition}
\label{proposition3}
For $\epsilon>0$ sufficiently small, $\alpha\in]1,3/2[,$ there exists $\mu\in\mathbb{R}_+^*$ such that for every $(v,w)\in\mathfrak{Y}_1^{\alpha-1/2}\times\mathfrak{Y}_1^\alpha,$ $$\Vert\Lambda (v,w)\Vert_{C_1\mathscr{C}^{1+2\epsilon}}\leq \mu (\Vert v\Vert_{\mathfrak{Y}_1^{\alpha-1/2}}+\Vert w\Vert_{\mathfrak{Y}^{\alpha}_1}+\Vert Y_2\Vert_{\mathfrak{Y}_1^{1/2-\epsilon}})\Vert X_2\Vert_{C_1\mathscr{C}^{-1-\epsilon}}$$\end{proposition}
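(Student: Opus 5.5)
Write $F:=v+w+Y_2$ and note that, with $e^{-(r-q)\Delta^2}=G_{r-q}$ and $Y_1(r)=\int_0^rG_{r-q}(\Delta X_2(q))\,\mathrm{d}q$,
\begin{align*}
\frac{1}{3}\Lambda(v,w)(r)&=\int_0^r\Bigl(G_{r-q}\bigl(\Delta(F(q)\varolessthan X_2(q))\bigr)-F(q)\varolessthan G_{r-q}(\Delta X_2(q))\Bigr)\,\mathrm{d}q\\
&\quad+\int_0^r\bigl(F(q)-F(r)\bigr)\varolessthan G_{r-q}(\Delta X_2(q))\,\mathrm{d}q .
\end{align*}
This splits $\Lambda$ into a commutator part and a time-increment part. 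Both are estimated pointwise in $r\in[0,1]$ in $\mathscr{C}^{1+2\epsilon}$ and then the supremum is taken. Since $\alpha-1/2>0$, $\alpha>0$ and $1/2-\epsilon>0$, the quantity $A:=\Vert v\Vert_{\mathfrak{Y}_1^{\alpha-1/2}}+\Vert w\Vert_{\mathfrak{Y}_1^\alpha}+\Vert Y_2\Vert_{\mathfrak{Y}_1^{1/2-\epsilon}}$ controls $\sup_q\Vert F(q)\Vert_{\mathscr{C}^0}$. By definition of $\mathfrak{Y}$, it also controls the Hölder seminorm $\Vert F(r)-F(q)\Vert_{\mathscr{C}^0}\le A|r-q|^{1/16}$.

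\emph{Commutator part.} $\Delta$ does not commute exactly with $\varolessthan$. So I would first write $\Delta(F\varolessthan X_2)=F\varolessthan\Delta X_2+R$, where $R$ collects terms like $\nabla F\varolessthan\nabla X_2$ and $\Delta F\varolessthan X_2$. Alternatively, I would apply Proposition \ref{co2} with the kernel $G_{r-q}\Delta$ (the same Fourier-multiplier argument works, costing $(r-q)^{-(\gamma+2)/4}$). Applying Proposition \ref{co2} with $f=F(q)\in\mathscr{C}^0$ ($\alpha$-parameter $0$), $g=X_2(q)\in\mathscr{C}^{-1-\epsilon}$ and $\gamma$ chosen so that $0-1-\epsilon+\gamma-2=1+2\epsilon$, i.e.\ $\gamma=4+3\epsilon$, gives a singularity $(r-q)^{-(4+3\epsilon)/4}$. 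This is not integrable, so $F\in\mathscr{C}^0$ alone is not enough. Instead I would use the true spatial regularity $F(q)\in\mathscr{C}^{1/2-\epsilon}$, the minimum of $\alpha-1/2$, $\alpha$ and $1/2-\epsilon$. Since $\alpha-1/2>1/2-\epsilon$, the bottleneck is $Y_2$. Proposition \ref{co2} with $f$-regularity $a$ needs $a-1-\epsilon+\gamma-2=1+2\epsilon$, so $\gamma=4+3\epsilon-a$, and integrability requires $\gamma<4$, i.e.\ $a>3\epsilon$. With $a=1/2-\epsilon$ this is fine. The commutator part is then bounded by $\int_0^r(r-q)^{-(4+3\epsilon-a)/4}\,\mathrm{d}q\,A\,\Vert X_2\Vert_{C_1\mathscr{C}^{-1-\epsilon}}$. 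The lower-order pieces of $R$ are handled the same way, or absorbed by taking $\gamma$ as the total gain.

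\emph{Time-increment part.} Use Theorem \ref{estimate}(1) with the first argument in $\mathscr{C}^0$ (or slightly negative), giving
\[
\Vert (F(q)-F(r))\varolessthan G_{r-q}\Delta X_2(q)\Vert_{\mathscr{C}^{1+2\epsilon}}\lesssim \Vert F(q)-F(r)\Vert_{\mathscr{C}^0}\,\Vert G_{r-q}\Delta X_2(q)\Vert_{\mathscr{C}^{1+2\epsilon}} .
\]
By Theorem \ref{derivative} and Lemma \ref{lemma}, the second factor is at most $(r-q)^{-(4+3\epsilon)/4}\Vert X_2(q)\Vert_{\mathscr{C}^{-1-\epsilon}}$, while the first is at most $A|r-q|^{1/16}$. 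The exponent is then $-1-3\epsilon/4+1/16>-1$ for $\epsilon$ small, so the integral converges. This is exactly why the $1/16$-Hölder condition appears in $\mathfrak{Y}$.

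\emph{Main obstacle.} The delicate step is the commutator with $\Delta$ inside: Proposition \ref{co2} is stated for $G_r$ alone, not $G_r\Delta$. I would handle it by rewriting $G_{r-q}\Delta=\Delta G_{r-q}$. Then $\Delta$ is pulled through the paraproduct with Leibniz-type error terms, each of the form $\partial^kF\varolessthan\partial^{2-k}X_2$, estimated by Theorem \ref{estimate} using $F\in\mathscr{C}^{1/2-\epsilon}$. Alternatively, one reproves Proposition \ref{co2} for the multiplier $|k|^2e^{-r|k|^4}$, whose proof is identical, with a loss of $r^{-1/2}$.
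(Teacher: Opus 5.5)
Your proposal is correct and is essentially the paper's proof. Resolving your ``main obstacle'' by writing $G_{r-q}\Delta=\Delta G_{r-q}$ and pulling $\Delta$ through the paraproduct gives exactly the paper's four-term splitting of $\frac{1}{3}\Lambda(v,w)(r)$: $\Delta$ applied to the $G_{r-q}$-commutator (Proposition~\ref{co2} with $v+w+Y_2\in\mathscr{C}^{1/2-\epsilon}$), the Leibniz terms $(\Delta(v+w+Y_2))\varolessthan G_{r-q}X_2$ and $(\partial_i(v+w+Y_2))\varolessthan\partial_iG_{r-q}X_2$, and the time-increment term controlled by the $\frac{1}{16}$-H\"older seminorm, with the same singularities $(r-q)^{-(7/2+4\epsilon)/4}$ and $(r-q)^{1/16-1-3\epsilon/4}$ that you identify.
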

\begin{proof}
We recall the following identity, for $f,g\in\mathscr{S}',$ $$\Delta(f\varolessthan g)=(\Delta f)\varolessthan g+2\sum_i (\partial_i f)\varolessthan(\partial_i g)+f\varolessthan(\Delta g)$$ and write 
\begin{align*}
\frac{1}{3}(\Lambda(v,w))(r)&=\int_{0}^r\Delta\left(G_{r-q}(\left(v(q)+w\left(q\right)+Y_2(q)\right)\varolessthan X_2(q))-(v(q)+w\left(q\right)+Y_2(q))\varolessthan\left(G_{r-q}\left(X_2\left(q\right)\right)\right)\right)\,\mathrm{d}q\\
& \phantom{=}\ + \int_{0}^r(\Delta(v(q)+w(q)+Y_2(q)))\varolessthan\left(G_{r-q}\left(X_2\left(q\right)\right)\right)\,\mathrm{d}q\\& \phantom{=}\ + 2\sum_i\int_{0}^r(\partial_i(v(q)+w(q)+Y_2(q)))\varolessthan\left(\partial_i G_{r-q}\left(X_2\left(q\right)\right)\right)\,\mathrm{d}q\\
& \phantom{=}\ +\int_0^r\left(v\left(q\right)+w(q)+Y_2(q)-v\left(r\right)-w(r)-Y_2(r)\right)\varolessthan\left(G_{r-q}\left(\Delta\left(X_2\left(q\right)\right)\right)\right)\,\mathrm{d}q.
\end{align*}
The following estimates hold for $(v,w)\in\mathfrak{Y}_1^{\alpha-1/2}\times\mathfrak{Y}_1^\alpha,r\in[0,1],$
\begin{align*}
&\left\Vert \int_{0}^r\Delta\left(G_{r-q}(\left(v(q)+w\left(q\right)+Y_2(q)\right)\varolessthan X_2(q))-(v(q)+w\left(q\right)+Y_2(q))\varolessthan\left(G_{r-q}\left(X_2\left(q\right)\right)\right)\right)\,\mathrm{d}q\right\Vert_{\mathscr{C}^{2\epsilon+1}}\\&\leq\int_{0}^r\left\Vert \Delta\left(G_{r-q}(\left(v(q)+w\left(q\right)+Y_2(q)\right)\varolessthan X_2(q))-(v(q)+w\left(q\right)+Y_2(q))\varolessthan\left(G_{r-q}\left(X_2\left(q\right)\right)\right)\right)\right\Vert_{\mathscr{C}^{2\epsilon+1}}\,\mathrm{d}q\\
&\stackrel{\text{Theorem \ref{derivative}}}{\lesssim} \int_{0}^r\left\Vert G_{r-q}(\left(v(q)+w\left(q\right)+Y_2(q)\right)\varolessthan X_2(q))-(v(q)+w\left(q\right)+Y_2(q))\varolessthan\left(G_{r-q}\left(X_2\left(q\right)\right)\right)\right\Vert_{\mathscr{C}^{2\epsilon+3}}\,\mathrm{d}q\\
&\stackrel{\text{Proposition \ref{co2}}}{\lesssim} \int_{0}^r|r-q|^{-(3+2\epsilon-1/2+\epsilon+1+\epsilon)/4}\Vert v(q)+w\left(q\right)+Y_2(q)\Vert_{\mathscr{C}^{1/2-\epsilon}}\Vert X_2\left(q\right)\Vert_{\mathscr{C}^{-1-\epsilon}}\,\mathrm{d}q\\
&\lesssim (\Vert v\Vert_{\mathfrak{Y}_1^{\alpha-1/2}}+\Vert w\Vert_{\mathfrak{Y}^{\alpha}_1}+\Vert Y_2\Vert_{\mathfrak{Y}_1^{1/2-\epsilon}})\Vert X_2\Vert_{C_1\mathscr{C}^{-1-\epsilon}},
\end{align*}\begin{align*}
&\left\Vert \int_{0}^r(\Delta(v(q)+w(q)+Y_2(q)))\varolessthan\left(G_{r-q}\left(X_2\left(q\right)\right)\right)\,\mathrm{d}q\right\Vert_{\mathscr{C}^{2\epsilon+1}}\\&\leq\int_{0}^r\left\Vert (\Delta(v(q)+w(q)+Y_2(q)))\varolessthan\left(G_{r-q}\left(X_2\left(q\right)\right)\right)\right\Vert_{\mathscr{C}^{2\epsilon+1}}\,\mathrm{d}q\\
&\stackrel{\text{Theorem \ref{estimate}}}{\lesssim} \int_{0}^r\Vert \Delta(v(q)+w\left(q\right)+Y_2(q))\Vert_{\mathscr{C}^{-3/2-\epsilon}}\Vert G_{r-q}\left(X_2\left(q\right)\right)\Vert_{\mathscr{C}^{1+2\epsilon+3/2+\epsilon}}\,\mathrm{d}q\\
&\stackrel{\text{Theorem \ref{derivative}}}{\lesssim} \int_{0}^r\Vert v(q)+w\left(q\right)+Y_2(q)\Vert_{\mathscr{C}^{1/2-\epsilon}}\Vert G_{r-q}\left(X_2\left(q\right)\right)\Vert_{\mathscr{C}^{1+2\epsilon+3/2+\epsilon}}\,\mathrm{d}q\\
&\stackrel{\text{Lemma \ref{lemma}}}{\lesssim} \int_{0}^r|r-q|^{-(1+2\epsilon+3/2+\epsilon+1+\epsilon)/4}\Vert v(q)+w\left(q\right)+Y_2(q)\Vert_{\mathscr{C}^{1/2-\epsilon}}\Vert X_2\left(q\right)\Vert_{\mathscr{C}^{-1-\epsilon}}\,\mathrm{d}q\\&\lesssim (\Vert v\Vert_{\mathfrak{Y}_1^{\alpha-1/2}}+\Vert w\Vert_{\mathfrak{Y}^{\alpha}_1}+\Vert Y_2\Vert_{\mathfrak{Y}_1^{1/2-\epsilon}})\Vert X_2\Vert_{C_1\mathscr{C}^{-1-\epsilon}},
\end{align*}\begin{align*}
&\left\Vert \sum_i\int_{0}^r(\partial_i(v(q)+w(q)+Y_2(q)))\varolessthan\left(\partial_i G_{r-q}\left(X_2\left(q\right)\right)\right)\,\mathrm{d}q\right\Vert_{\mathscr{C}^{2\epsilon+1}}\\&\leq\sum_i\int_{0}^r\left\Vert (\partial_i(v(q)+w(q)+Y_2(q)))\varolessthan\left(\partial_i G_{r-q}\left(X_2\left(q\right)\right)\right)\right\Vert_{\mathscr{C}^{2\epsilon+1}}\,\mathrm{d}q\\
&\stackrel{\text{Theorem \ref{estimate}}}{\lesssim} \sum_i\int_{0}^r\Vert \partial_i(v(q)+w(q)+Y_2(q))\Vert_{\mathscr{C}^{-1/2-\epsilon}}\Vert \partial_iG_{r-q}\left(X_2\left(q\right)\right)\Vert_{\mathscr{C}^{2\epsilon+3/2+\epsilon}}\,\mathrm{d}q\\
&\stackrel{\text{Theorem \ref{derivative}}}{\lesssim} \int_{0}^r\Vert v(q)+w\left(q\right)+Y_2(q)\Vert_{\mathscr{C}^{1/2-\epsilon}}\Vert G_{r-q}\left(X_2\left(q\right)\right)\Vert_{\mathscr{C}^{2\epsilon+5/2+\epsilon}}\,\mathrm{d}q\\
&\stackrel{\text{Lemma \ref{lemma}}}{\lesssim} \int_{0}^r|r-q|^{-(2\epsilon+5/2+\epsilon+1+\epsilon)/4}\Vert v(q)+w\left(q\right)+Y_2(q)\Vert_{\mathscr{C}^{1/2-\epsilon}}\Vert X_2\left(q\right)\Vert_{\mathscr{C}^{-1-\epsilon}}\,\mathrm{d}q\\&\lesssim (\Vert v\Vert_{\mathfrak{Y}_1^{\alpha-1/2}}+\Vert w\Vert_{\mathfrak{Y}^{\alpha}_1}+\Vert Y_2\Vert_{\mathfrak{Y}_1^{1/2-\epsilon}})\Vert X_2\Vert_{C_1\mathscr{C}^{-1-\epsilon}},
\end{align*}and\begin{align*}
&\left\Vert \int_0^r\left(v\left(r\right)+w(r)+Y_2(r)-v\left(q\right)-w(q)-Y_2(q)\right)\varolessthan\left(G_{r-q}\left(\Delta\left(X_2\left(q\right)\right)\right)\right)\,\mathrm{d}q\right\Vert_{\mathscr{C}^{2\epsilon+1}}\\&\leq\int_{0}^r\left\Vert \left(v\left(r\right)+w(r)+Y_2(r)-v\left(q\right)-w(q)-Y_2(q)\right)\varolessthan\left(G_{r-q}\left(\Delta\left(X_2\left(q\right)\right)\right)\right)\right\Vert_{\mathscr{C}^{2\epsilon+1}}\,\mathrm{d}q\\
&\stackrel{\text{Theorem \ref{estimate}}}{\lesssim} \int_{0}^r\Vert v\left(r\right)+w(r)+Y_2(r)-v\left(q\right)-w(q)-Y_2(q)\Vert_{\mathscr{C}^{0}}\Vert G_{r-q}\left(\Delta\left(X_2\left(q\right)\right)\right)\Vert_{\mathscr{C}^{1+2\epsilon}}\,\mathrm{d}q\\
&\stackrel{\text{Theorem \ref{derivative}}}{\lesssim} (\Vert v\Vert_{\mathfrak{Y}_1^{\alpha-1/2}}+\Vert w\Vert_{\mathfrak{Y}^{\alpha}_1}+\Vert Y_2\Vert_{\mathfrak{Y}_1^{1/2-\epsilon}})\int_{0}^r|r-q|^{1/16}\Vert G_{r-q}\left(X_2\left(q\right)\right)\Vert_{\mathscr{C}^{3+2\epsilon}}\,\mathrm{d}q\\
&\stackrel{\text{Lemma \ref{lemma}}}{\lesssim} (\Vert v\Vert_{\mathfrak{Y}_1^{\alpha-1/2}}+\Vert w\Vert_{\mathfrak{Y}^{\alpha}_1}+\Vert Y_2\Vert_{\mathfrak{Y}_1^{1/2-\epsilon}})\int_{0}^r|r-q|^{1/16-(3+2\epsilon+1+\epsilon)/4}\Vert X_2\left(q\right)\Vert_{\mathscr{C}^{-1-\epsilon}}\,\mathrm{d}q\\&\lesssim (\Vert v\Vert_{\mathfrak{Y}_1^{\alpha-1/2}}+\Vert w\Vert_{\mathfrak{Y}^{\alpha}_1}+\Vert Y_2\Vert_{\mathfrak{Y}_1^{1/2-\epsilon}})\Vert X_2\Vert_{C_1\mathscr{C}^{-1-\epsilon}}.
\end{align*} We deduce from these estimates that there exists $\mu\in\mathbb{R}_+^*$ such that for every $(v,w)\in\mathfrak{Y}_1^{\alpha-1/2}\times\mathfrak{Y}_1^\alpha,$ $$\Vert\Lambda (v,w)\Vert_{C_1\mathscr{C}^{1+2\epsilon}}\leq \mu (\Vert v\Vert_{\mathfrak{Y}_1^{\alpha-1/2}}+\Vert w\Vert_{\mathfrak{Y}^{\alpha}_1}+\Vert Y_2\Vert_{\mathfrak{Y}_1^{1/2-\epsilon}})\Vert X_2\Vert_{C_1\mathscr{C}^{-1-\epsilon}}.$$
\end{proof}\begin{theorem-definition}[\cite{MR3406823,MR3719541}]
\label{theoremd=5}If $\left(d,\alpha\right) \in \left\{5\right\}\times \left]1,3/2\right[$ and $g \in \mathscr{C}^\alpha,$ then 
there exist a stopping time $\tau:\Omega\longrightarrow \left]0,1\right]$ and a unique stochastic process $\left(v,w\right)\in \mathfrak{Y}_\tau^{\alpha-1/2}\times\mathfrak{Y}_\tau^{\alpha}$ such that $$\forall U \in \left[0,\tau\right],v_U=3\int_0^UG_{U-q}(\Delta((v_q+w_q+Y_{q,2})\varolessthan X_{q,2}))\,\mathrm{d}q,$$ $$\forall U \in \left[0,\tau\right],w_U=G_Ug+\int_0^UG_{U-q}(\Delta((v_q+w_q)^3+9\Psi(v_q+w_q+Y_{q,2},Y_{q,1},X_{q,2})+3(\Lambda(v,w))(q)\varodot X_{q,2}+3 w_q\varodot X_{q,2}$$$$+3(v_q+w_q+Y_{q,2})\varogreaterthan X_{q,2}+(P(v+w))(q)))\,\mathrm{d}q,$$
The local mild solution of the 5-dimensional stochastic
Cahn-Hilliard equation is defined by the processes $(v_r+w_r+\widetilde{X}_{r,1}+\widetilde{Y}_{r,2})_{r\in[0,\tau]}$\end{theorem-definition}
\begin{proof}
For all $U \in [0,1],$ consider the fixed point map $(\Upsilon_{U},\widetilde{\Upsilon}_U):\mathfrak{Y}_U^{\alpha-1/2}\times\mathfrak{Y}_U^\alpha\longrightarrow \mathfrak{Y}_U^{\alpha-1/2}\times\mathfrak{Y}_U^\alpha$ such that for every $(v,w) \in \mathfrak{Y}_U^{\alpha-1/2}\times\mathfrak{Y}_U^\alpha,$
\begin{align*}
\Upsilon_{U}\left(v,w\right):\left[0,U\right]&\longrightarrow \mathscr{C}^{\alpha-1/2}\\
r&\longmapsto \left(\Upsilon_{U}\left(v,w\right)\right)\left(r\right)=3\int_0^rG_{r-q}(\Delta((v_q+w_q+Y_{q,2})\varolessthan X_{q,2}))\,\mathrm{d}q,
\end{align*} \begin{align*}
\widetilde{\Upsilon}_{U}\left(v,w\right):\left[0,U\right]&\longrightarrow \mathscr{C}^{\alpha}\\
r&\longmapsto \left(\widetilde{\Upsilon}_{U}\left(v,w\right)\right)\left(r\right)=G_rg+\int_0^rG_{r-q}(\Delta((v_q+w_q)^3+9\Psi(v_q+w_q+Y_{q,2},Y_{q,1},X_{q,2})
\end{align*}$$+3(\Lambda(v,w))(q)\varodot X_{q,2}+3 w_q\varodot X_{q,2}+3(v_q+w_q+Y_{q,2})\varogreaterthan X_{q,2}+(P(v+w))(q)))\,\mathrm{d}q$$
Let $\epsilon>0$ sufficiently small, $\gamma_1:=(\alpha+\epsilon+5/2)/4.$

We will prove that $(\Upsilon_{U},\widetilde{\Upsilon}_U)$ is a contraction on a suitable Banach space and for suitable $U \in [0,1[$.

It follows from Proposition \ref{proposition4} that for any $U \in [0,1]$ and all $v \in\mathfrak{Y}_U^{\alpha-1/2},w\in\mathfrak{Y}_U^\alpha,$ $$\sup_{\substack{\left(r,q\right)\in\left[0,U\right]^2\\ r\neq q}}\frac{\left\Vert (\Upsilon_{U}\left(v,w\right))\left(r\right)-(\Upsilon_{U}\left(v,w\right))\left(q\right)\right\Vert_{\mathscr{C}^0}}{\left|r-q\right|^{\frac{1}{16}}}\lesssim U^{1-\gamma_1}\sup_{q \in[0,U]}\Vert (v(q)+w(q)+Y_{q,2})\varolessthan X_{q,2}\Vert_{\mathscr{C}^{-1-\epsilon}}$$ and $$\sup_{\substack{\left(r,q\right)\in\left[0,U\right]^2\\ r\neq q}}\frac{\left\Vert (\widetilde{\Upsilon}_{U}\left(v,w\right))\left(r\right)-(\widetilde{\Upsilon}_{U}\left(v,w\right))\left(q\right)\right\Vert_{\mathscr{C}^0}}{\left|r-q\right|^{\frac{1}{16}}}\lesssim\Vert g\Vert_{\mathscr{C}^\alpha}$$$$+U^{1-\gamma_1}\sup_{q\in[0,U]}\Vert (v(q)+w(q))^3+9\Psi(v(q)+w(q)+Y_{q,2},Y_{q,1},X_{q,2})+3(\Lambda(v,w))(q)\varodot X_{q,2}+3 w(q)\varodot X_{q,2})$$$$+3(v(q)+w(q)+Y_{q,2})\varogreaterthan X_{q,2}+(P(v+w))(q)\Vert_{\mathscr{C}^{-1/2-\epsilon}}$$  
It follows from Theorem \ref{Schauder} that for any $U \in [0,1]$ and all $v \in\mathfrak{Y}_U^{\alpha-1/2},w\in\mathfrak{Y}_U^\alpha,$ 

$$\Vert\Upsilon_{U}\left(v,w\right)\Vert_{C_U\mathscr{C}^{\alpha-1/2}}\lesssim U^{1-\gamma_1}\sup_{q \in[0,U]}\Vert (v(q)+w(q)+Y_{q,2})\varolessthan X_{q,2}\Vert_{\mathscr{C}^{-1-\epsilon}}$$
and
$$\Vert\widetilde{\Upsilon}_{U}\left(v,w\right)\Vert_{C_U\mathscr{C}^{\alpha}}\lesssim\Vert g\Vert_{\mathscr{C}^\alpha}+U^{1-\gamma_1}\sup_{q\in[0,U]}\Vert (v(q)+w(q))^3+9\Psi(v(q)+w(q)+Y_{q,2},Y_{q,1},X_{q,2})$$$$+3(\Lambda(v,w))(q)\varodot X_{q,2}+3 w(q)\varodot X_{q,2})+3(v(q)+w(q)+Y_{q,2})\varogreaterthan X_{q,2}+(P(v+w))(q)\Vert_{\mathscr{C}^{-1/2-\epsilon}}$$
We deduce from Theorem \ref{estimate} that
$$\Vert (v(q)+w(q)+Y_{q,2})\varolessthan X_{q,2}\Vert_{\mathscr{C}^{-1-\epsilon}}\lesssim (\Vert v\Vert_{C_U\mathscr{C}^{\alpha-1/2}}+\Vert w\Vert_{C_U\mathscr{C}^{\alpha}}+\Vert Y_{2}\Vert_{C_U\mathscr{C}^{1/2-\epsilon}})\Vert X_2\Vert_{C_U\mathscr{C}^{-1-\epsilon}},$$ \begin{align*}
\Vert(\Lambda(v,w))(q)\varodot X_{q,2}\Vert_{\mathscr{C}^{-1/2-\epsilon}}&\lesssim\Vert(\Lambda(v,w))(q)\varodot X_{q,2}\Vert_{\mathscr{C}^{\epsilon}}\\&\lesssim\Vert(\Lambda(v,w))(q)\Vert_{\mathscr{C}^{2\epsilon+1}}\Vert X_{q,2}\Vert_{\mathscr{C}^{-1-\epsilon}}\\&\stackrel{\text{Proposition \ref{proposition3}}}{\lesssim} (\Vert v\Vert_{\mathfrak{Y}_1^{\alpha-1/2}}+\Vert w\Vert_{\mathfrak{Y}^{\alpha}_1}+\Vert Y_2\Vert_{\mathfrak{Y}_1^{1/2-\epsilon}})\Vert X_2\Vert_{C_1\mathscr{C}^{-1-\epsilon}}^2\end{align*} and \begin{align*}
\Vert (v(q)+w(q)+Y_{q,2})\varogreaterthan X_{q,2}\Vert_{\mathscr{C}^{-1/2-\epsilon}}&=\Vert X_{q,2}\varolessthan (v(q)+w(q)+Y_{q,2})\Vert_{\mathscr{C}^{-1/2-\epsilon}}\\&\lesssim (\Vert v\Vert_{C_U\mathscr{C}^{\alpha-1/2}}+\Vert w\Vert_{C_U\mathscr{C}^{\alpha}}+\Vert Y_{2}\Vert_{C_U\mathscr{C}^{1/2-\epsilon}})\Vert X_2\Vert_{C_U\mathscr{C}^{-1-\epsilon}},\end{align*} and $$\Vert w(q)\varodot X_{q,2}\Vert_{\mathscr{C}^{-1/2-\epsilon}}\lesssim\Vert X_2\Vert_{C_U\mathscr{C}^{-\epsilon-1}}\Vert w\Vert_{C_U\mathscr{C}^\alpha}.$$ It follows from Corollary \ref{product} that $$\Vert (v(q)+w(q))^3\Vert_{\mathscr{C}^{-1/2-\epsilon}}\lesssim \Vert (v(q)+w(q))^3\Vert_{\mathscr{C}^{\alpha-1/2}}\lesssim\Vert v(q)+w(q)\Vert_{\mathscr{C}^{\alpha-1/2}}^3\lesssim \Vert v\Vert^3_{C_U\mathscr{C}^{\alpha-1/2}}+\Vert w\Vert^3_{C_U\mathscr{C}^{\alpha}},$$ and $$\Vert (P(v+w))(q)\Vert_{\mathscr{C}^{-\epsilon-1/2}}\lesssim \Vert\Theta_6\Vert_{C_U\mathscr{C}^{-1/2-\epsilon}}(\Vert v\Vert_{C_{U}\mathscr{C}^{\alpha-1/2}}^2+\Vert w\Vert_{C_{U}\mathscr{C}^{\alpha}}^2)$$$$+\Vert \Theta_5\Vert_{C_{U}\mathscr{C}^{-1/2-\epsilon}}(\Vert v\Vert_{C_{U}\mathscr{C}^{\alpha-1/2}}+\Vert w\Vert_{C_{U}\mathscr{C}^{\alpha}})+\Vert \Theta_4\Vert_{C_{U}\mathscr{C}^{-1/2-\epsilon}}$$
We deduce from Proposition \ref{co1} that \begin{align*} \Vert\Psi(v(q)+w(q)+Y_{q,2},Y_{q,1},X_{q,2})\Vert_{\mathscr{C}^{-1-\epsilon}}&\lesssim \Vert v(q)+w(q)+Y_{q,2}\Vert_{\mathscr{C}^{1/2-\epsilon}}\Vert Y_{q,1}\Vert_{\mathscr{C}^{1-\epsilon}}\Vert X_{q,2}\Vert_{\mathscr{C}^{-1-\epsilon}}\\&\lesssim\Vert Y_{1}\Vert_{C_U\mathscr{C}^{1-\epsilon}}\Vert X_{2}\Vert_{C_U\mathscr{C}^{-1-\epsilon}}(\Vert v\Vert_{C_{U}\mathscr{C}^{\alpha-1/2}}+\Vert w\Vert_{C_{U}\mathscr{C}^{\alpha}}+\Vert Y_2\Vert_{C_U\mathscr{C}^{1/2-\epsilon}})\end{align*} Therefore there exists $\mu_1 \in\mathbb{R}_+,$ such that for $(v,w)\in\mathfrak{Y}_U^{\alpha-1/2}\times\mathfrak{Y}_U^{\alpha},$ $$\Vert\Upsilon_{U}\left(v,w\right)\Vert_{C_U\mathscr{C}^{\alpha-1/2}}\leq\mu_1 U^{1-\gamma_1}(\Vert v\Vert_{C_U\mathscr{C}^{\alpha-1/2}}+\Vert w\Vert_{C_U\mathscr{C}^{\alpha}}+\Vert Y_{2}\Vert_{C_U\mathscr{C}^{1/2-\epsilon}})\Vert X_2\Vert_{C_U\mathscr{C}^{-1-\epsilon}}$$ and $$\Vert\widetilde{\Upsilon}_{U}\left(v,w\right)\Vert_{C_U\mathscr{C}^{\alpha}}\leq \mu_1\Vert g\Vert_{\mathscr{C}^\alpha}+\mu_1U^{1-\gamma_1}(\Vert v\Vert^3_{C_U\mathscr{C}^{\alpha-1/2}}+\Vert w\Vert^3_{C_U\mathscr{C}^\alpha}+\Vert Y_{1}\Vert_{C_U\mathscr{C}^{1-\epsilon}}\Vert X_{2}\Vert_{C_U\mathscr{C}^{-1-\epsilon}}(\Vert v\Vert_{C_{U}\mathscr{C}^{\alpha-1/2}}$$$$+\Vert w\Vert_{C_{U}\mathscr{C}^{\alpha}}+\Vert Y_2\Vert_{C_U\mathscr{C}^{1/2-\epsilon}})+(\Vert v\Vert_{\mathfrak{Y}_1^{\alpha-1/2}}+\Vert w\Vert_{\mathfrak{Y}^{\alpha}_1}+\Vert Y_2\Vert_{\mathfrak{Y}_1^{1/2-\epsilon}})\Vert X_2\Vert_{C_1\mathscr{C}^{-1-\epsilon}}^2+\Vert X_2\Vert_{C_U\mathscr{C}^{-\epsilon-1}}\Vert w\Vert_{C_U\mathscr{C}^\alpha}+(\Vert v\Vert_{C_U\mathscr{C}^{\alpha-1/2}}$$$$+\Vert w\Vert_{C_U\mathscr{C}^{\alpha}}+\Vert Y_{2}\Vert_{C_U\mathscr{C}^{1/2-\epsilon}})\Vert X_2\Vert_{C_U\mathscr{C}^{-1-\epsilon}}+\Vert\Theta_6\Vert_{C_U\mathscr{C}^{-1/2-\epsilon}}(\Vert v\Vert_{C_{U}\mathscr{C}^{\alpha-1/2}}^2+\Vert w\Vert_{C_{U}\mathscr{C}^{\alpha}}^2)$$$$+\Vert \Theta_5\Vert_{C_{U}\mathscr{C}^{-1/2-\epsilon}}(\Vert v\Vert_{C_{U}\mathscr{C}^{\alpha-1/2}}+\Vert w\Vert_{C_{U}\mathscr{C}^{\alpha}})+\Vert \Theta_4\Vert_{C_{U}\mathscr{C}^{-1/2-\epsilon}})$$
We repeat this argument for $\Upsilon_{U}\left(v_1,w_1\right)-\Upsilon_{U}\left(v_2,w_2\right)$ and $\widetilde{\Upsilon}_{U}\left(v_1,w_1\right)-\widetilde{\Upsilon}_{U}\left(v_2,w_2\right)$ to deduce for $\left(v_1,w_1,v_2,w_2\right) \in \mathfrak{Y}_U^{\alpha-1/2}\times\mathfrak{Y}_U^{\alpha}\times\mathfrak{Y}_U^{\alpha-1/2}\times\mathfrak{Y}_U^{\alpha}$ that  
$$\left\Vert\Upsilon_{U}\left(v_1,w_1\right)-\Upsilon_{U}\left(v_2,w_2\right)\right\Vert_{\mathfrak{Y}_U^{\alpha-1/2}}\leq \mu_2U^{1-\gamma_1}(\left\Vert v_1-v_2\right\Vert_{\mathfrak{Y}_U^{\alpha-1/2}}+\Vert w_1-w_2\Vert_{\mathfrak{Y}_U^{\alpha}})$$ and $$\Vert\widetilde{\Upsilon}_{U}\left(v_1,w_1\right)-\widetilde{\Upsilon}_{U}\left(v_2,w_2\right)\Vert_{\mathfrak{Y}_U^{\alpha}}\leq\mu_2U^{1-\gamma_1}\max(\left\Vert v_1-v_2\right\Vert_{\mathfrak{Y}_U^{\alpha-1/2}},\Vert w_1-w_2\Vert_{\mathfrak{Y}_U^{\alpha}})$$$$(\Vert v_1\Vert^2_{C_U\mathscr{C}^{\alpha-1/2}}+\Vert v_2\Vert^2_{C_U\mathscr{C}^{\alpha-1/2}}+\Vert w_1\Vert^2_{C_U\mathscr{C}^\alpha}+\Vert w_2\Vert^2_{C_U\mathscr{C}^{\alpha}}+\Vert Y_{1}\Vert_{C_U\mathscr{C}^{1-\epsilon}}\Vert X_{2}\Vert_{C_U\mathscr{C}^{-1-\epsilon}}+\Vert X_2\Vert_{C_1\mathscr{C}^{-1-\epsilon}}^2+\Vert X_2\Vert_{C_U\mathscr{C}^{-1-\epsilon}}$$$$+\Vert\Theta_6\Vert_{C_U\mathscr{C}^{-1/2-\epsilon}}(\Vert v_1\Vert_{C_{U}\mathscr{C}^{\alpha-1/2}}+\Vert v_2\Vert_{C_{U}\mathscr{C}^{\alpha-1/2}}+\Vert w_1\Vert_{C_{U}\mathscr{C}^{\alpha}}+\Vert w_2\Vert_{C_{U}\mathscr{C}^{\alpha}})+\Vert \Theta_5\Vert_{C_{U}\mathscr{C}^{-1/2-\epsilon}}.$$
We can choose $U \in \left]0,1\right]$ such that for every $\left(v_1,v_2,w_1,w_2\right) \in \mathfrak{Y}_U^{\alpha-1/2}\times\mathfrak{Y}_U^{\alpha-1/2}\times\mathfrak{Y}_U^\alpha\times\mathfrak{Y}_U^\alpha,$ if 
$$
\max\left(\left\Vert v_1\right\Vert_{\mathfrak{Y}_U^{\alpha-1/2}},\left\Vert v_2\right\Vert_{\mathfrak{Y}_U^{\alpha-1/2}},\Vert w_1\Vert_{\mathfrak{Y}_U^\alpha},\Vert w_2\Vert_{\mathfrak{Y}_U^\alpha}\right)\leq\mu\left\Vert g\right\Vert_{\mathscr{C}^\alpha}+1,
$$ 
then 
$$
\max\left(\left\Vert\Upsilon_{U}\left(v_1,w_1\right)\right\Vert_{\mathfrak{Y}_U^{\alpha-1/2}},\Vert \widetilde{\Upsilon}_U(v_1,w_1)\Vert_{\mathfrak{Y}_U^\alpha}\right)\leq\mu \left\Vert g\right\Vert_{\mathscr{C}^\alpha}+1,
$$ 
$$
\left\Vert\Upsilon_{U}\left(v_2,w_2\right)-\Upsilon_{U}\left(v_1,w_1\right)\right\Vert_{\mathfrak{Y}_U^{\alpha-1/2}}\leq \frac{1}{2}\max\left(\left\Vert v_2-v_1\right\Vert_{\mathfrak{Y}_U^{\alpha-1/2}},\Vert w_2-w_1\Vert_{\mathfrak{Y}^\alpha_U}\right),
$$ 
and 
$$
\left\Vert\widetilde{\Upsilon}_{U}\left(v_2,w_2\right)-\widetilde{\Upsilon}_{U}\left(v_1,w_1\right)\right\Vert_{\mathfrak{Y}_U^\alpha}\leq \frac{1}{2}\max\left(\left\Vert v_2-v_1\right\Vert_{\mathfrak{Y}_U^{\alpha-1/2}},\Vert w_2-w_1\Vert_{\mathfrak{Y}^\alpha_U}\right).
$$
Applying the Banach fixed point theorem we deduce that there exists a unique $(v,w)\in\mathfrak{Y}_U^{\alpha-1/2}\times\mathfrak{Y}_U^\alpha$ such that $(\Upsilon_U(v,w),\widetilde{\Upsilon}_U(v,w))=(v,w).$  
\end{proof}

\chapter{Future Research Directions}
\label{chapter4}

In this thesis we proved local well-posedness of the stochastic Cahn-Hilliard equation in the full subcritical regime. My future research will extend this program along several lines. Three immediate directions related to the stochastic Cahn-Hilliard equation are: 
\begin{enumerate}
\item Piecewise linear approximation of the stochastic Cahn-Hilliard.

In this paper, a Wong-Zakai (see \cite{MR3417505,MR4056954}) type of approximation of the stochastic Cahn-Hilliard equation is considered. The proof relies on paracontrolled distributions. 
\item Lattice approximation of the stochastic Cahn-Hilliard equation:
We investigate robust discretization methods inspired by paracontrolled calculus. When the equation is well-posed, and we approximate it by replacing the differentiation operator by discretization schemes, it is speculated that the approximating equation should converge to the solution of the original equation as the discretization approaches zero. More precisely, I aim to prove that the solutions of discrete lattice systems converge to those of the original equation.
\item Global-in-time well-posedness of the stochastic Cahn-Hilliard equation:
The objective here is to rule out finite time blow-up of solutions, more precisely that the explosion time is actually infinite, and therefore the solution exists globally in time. I will study the problem using two completely different arguments: 
\begin{enumerate}
\item PDE construction relying on apriori bounds and energy estimates (see \cite{MR3719541,MR3693966,MR3865664}). 
\item employing invariant measures to rule out finite time blow-up of solutions.
\end{enumerate} 
\end{enumerate}



\bibliographystyle{plainurl}
\cleardoublepage 
\phantomsection  
\renewcommand*{\bibname}{References}

\addcontentsline{toc}{chapter}{\textbf{References}}

\bibliography{uw-ethesis.bib}

@book {MR3445609,
    AUTHOR = {Gubinelli, Massimiliano and Perkowski, Nicolas},
     TITLE = {Lectures on singular stochastic {PDE}s},
 PUBLISHER = {Sociedade Brasileira de Matem\'atica, Rio de Janeiro},
      YEAR = {2015},
}

@article {MR3846835,
    AUTHOR = {Catellier, R\'emi and Chouk, Khalil},
     TITLE = {Paracontrolled distributions and the 3-dimensional stochastic
              quantization equation},
   JOURNAL = {Ann. Probab.},
  FJOURNAL = {The Annals of Probability},
    VOLUME = {46},
      YEAR = {2018},
    NUMBER = {5},
     PAGES = {2621--2679},
}

@article {MR3406823,
    AUTHOR = {Gubinelli, Massimiliano and Imkeller, Peter and Perkowski,
              Nicolas},
     TITLE = {Paracontrolled distributions and singular {PDE}s},
   JOURNAL = {Forum Math. Pi},
  FJOURNAL = {Forum of Mathematics. Pi},
    VOLUME = {3},
      YEAR = {2015},
     PAGES = {e6, 75},
}

@article {MR3592748,
    AUTHOR = {Gubinelli, Massimiliano and Perkowski, Nicolas},
     TITLE = {K{PZ} reloaded},
   JOURNAL = {Comm. Math. Phys.},
  FJOURNAL = {Communications in Mathematical Physics},
    VOLUME = {349},
      YEAR = {2017},
    NUMBER = {1},
     PAGES = {165--269},
}

@article {MR3071506,
    AUTHOR = {Hairer, Martin},
     TITLE = {Solving the {KPZ} equation},
   JOURNAL = {Ann. of Math. (2)},
  FJOURNAL = {Annals of Mathematics. Second Series},
    VOLUME = {178},
      YEAR = {2013},
    NUMBER = {2},
     PAGES = {559--664},
}

@article {MR3274562,
    AUTHOR = {Hairer, Martin},
     TITLE = {A theory of regularity structures},
   JOURNAL = {Invent. Math.},
  FJOURNAL = {Inventiones Mathematicae},
    VOLUME = {198},
      YEAR = {2014},
    NUMBER = {2},
     PAGES = {269--504},
}

@article {MR2016604,
    AUTHOR = {Da Prato, Giuseppe and Debussche, Arnaud},
     TITLE = {Strong solutions to the stochastic quantization equations},
   JOURNAL = {Ann. Probab.},
  FJOURNAL = {The Annals of Probability},
    VOLUME = {31},
      YEAR = {2003},
    NUMBER = {4},
     PAGES = {1900--1916},
}

@book {MR2768550,
    AUTHOR = {Bahouri, Hajer and Chemin, Jean-Yves and Danchin, Rapha\"el},
     TITLE = {Fourier analysis and nonlinear partial differential equations},
 PUBLISHER = {Springer, Heidelberg},
      YEAR = {2011},
}

@book {MR2200233,
    AUTHOR = {Nualart, David},
     TITLE = {The {M}alliavin calculus and related topics},
   EDITION = {Second},
 PUBLISHER = {Springer-Verlag, Berlin},
      YEAR = {2006},
}

@inproceedings {MR3746744,
    AUTHOR = {Mourrat, Jean-Christophe and Weber, Hendrik and Xu, Weijun},
    EDITOR = {Gon{\c{c}}alves, Patr{\'i}cia and Soares, Ana Jacinta}, 
     TITLE = {Construction of {$\Phi^4_3$} diagrams for pedestrians},
 BOOKTITLE = {From particle systems to partial differential equations},
    VOLUME = {209},
     PAGES = {1--46},
 PUBLISHER = {Springer, Cham},
      YEAR = {2017},
}

@article {MR1359472,
    AUTHOR = {Da Prato, Giuseppe and Debussche, Arnaud},
     TITLE = {Stochastic {C}ahn-{H}illiard equation},
   JOURNAL = {Nonlinear Anal.},
  FJOURNAL = {Nonlinear Analysis. Theory, Methods \& Applications. An
              International Multidisciplinary Journal},
    VOLUME = {26},
      YEAR = {1996},
    NUMBER = {2},
     PAGES = {241--263},
}

@article {MR0631751,
    AUTHOR = {Bony, Jean-Michel},
     TITLE = {Calcul symbolique et propagation des singularit\'es pour les
              \'equations aux d\'eriv\'ees partielles non lin\'eaires},
   JOURNAL = {Ann. Sci. \'Ecole Norm. Sup. (4)},
  FJOURNAL = {Annales Scientifiques de l'\'Ecole Normale Sup\'erieure.
              Quatri\`eme S\'erie},
    VOLUME = {14},
      YEAR = {1981},
    NUMBER = {2},
     PAGES = {209--246},
}

@article {MR3693966,
    AUTHOR = {Mourrat, Jean-Christophe and Weber, Hendrik},
     TITLE = {Global well-posedness of the dynamic {$\Phi^4$} model in the
              plane},
   JOURNAL = {Ann. Probab.},
  FJOURNAL = {The Annals of Probability},
    VOLUME = {45},
      YEAR = {2017},
    NUMBER = {4},
     PAGES = {2398--2476},
}

@article {MR4191230,
    AUTHOR = {Inahama, Yuzuru and Sawano, Yoshihiro},
     TITLE = {Paracontrolled quasi-geostrophic equation with space-time
              white noise},
   JOURNAL = {Dissertationes Math.},
  FJOURNAL = {Dissertationes Mathematicae},
    VOLUME = {558},
      YEAR = {2020},
     PAGES = {1--81},
}

@article {MR3719541,
    AUTHOR = {Mourrat, Jean-Christophe and Weber, Hendrik},
     TITLE = {The dynamic {$\Phi^4_3$} model comes down from infinity},
   JOURNAL = {Comm. Math. Phys.},
  FJOURNAL = {Communications in Mathematical Physics},
    VOLUME = {356},
      YEAR = {2017},
    NUMBER = {3},
     PAGES = {673--753},
}

@article {MR3742401,
    AUTHOR = {Hoshino, Masato and Inahama, Yuzuru and Naganuma, Nobuaki},
     TITLE = {Stochastic complex {G}inzburg-{L}andau equation with
              space-time white noise},
   JOURNAL = {Electron. J. Probab.},
  FJOURNAL = {Electronic Journal of Probability},
    VOLUME = {22},
      YEAR = {2017},
    NUMBER = {104}, 
     PAGES = {1--68},
}

@article {MR4556829,
    AUTHOR = {Yamazaki, Kazuo},
     TITLE = {Three-dimensional magnetohydrodynamics system forced by
              space-time white noise},
   JOURNAL = {Electron. J. Probab.},
  FJOURNAL = {Electronic Journal of Probability},
    VOLUME = {28},
      YEAR = {2023},
    NUMBER = {39},
     PAGES = {1--66},
}

@article {MR4281449,
    AUTHOR = {Yamazaki, Kazuo},
     TITLE = {Strong {F}eller property of the magnetohydrodynamics system
              forced by space-time white noise},
   JOURNAL = {Nonlinearity},
  FJOURNAL = {Nonlinearity},
    VOLUME = {34},
      YEAR = {2021},
    NUMBER = {6},
     PAGES = {4373--4463},
}

@article {MR3373412,
    AUTHOR = {Zhu, Rongchan and Zhu, Xiangchan},
     TITLE = {Three-dimensional {N}avier-{S}tokes equations driven by
              space-time white noise},
   JOURNAL = {J. Differential Equations},
  FJOURNAL = {Journal of Differential Equations},
    VOLUME = {259},
      YEAR = {2015},
    NUMBER = {9},
     PAGES = {4443--4508},
}

@article {MR3825880,
    AUTHOR = {Tsatsoulis, Pavlos and Weber, Hendrik},
     TITLE = {Spectral gap for the stochastic quantization equation on the
              2-dimensional torus},
   JOURNAL = {Ann. Inst. Henri Poincar\'e{} Probab. Stat.},
  FJOURNAL = {Annales de l'Institut Henri Poincar\'e{} Probabilit\'es et
              Statistiques},
    VOLUME = {54},
      YEAR = {2018},
    NUMBER = {3},
     PAGES = {1204--1249},
}

@article {MR4908981,
    AUTHOR = {Martini, Adrian and Mayorcas, Avi},
     TITLE = {An additive-noise approximation to
              {K}eller-{S}egel-{D}ean-{K}awasaki dynamics: local
              well-posedness of paracontrolled solutions},
   JOURNAL = {Stoch. Partial Differ. Equ. Anal. Comput.},
  FJOURNAL = {Stochastics and Partial Differential Equations. Analysis and
              Computations},
    VOLUME = {13},
      YEAR = {2025},
    NUMBER = {2},
     PAGES = {956--1033}
}

@article {MR4721025,
    AUTHOR = {Gubinelli, Massimiliano and Koch, Herbert and Oh, Tadahiro},
     TITLE = {Paracontrolled approach to the three-dimensional stochastic
              nonlinear wave equation with quadratic nonlinearity},
   JOURNAL = {J. Eur. Math. Soc.},
  FJOURNAL = {Journal of the European Mathematical Society},
    VOLUME = {26},
      YEAR = {2024},
    NUMBER = {3},
     PAGES = {817--874}
}

@article {MR3841850,
    AUTHOR = {Gubinelli, Massimiliano and Koch, Herbert and Oh, Tadahiro},
     TITLE = {Renormalization of the two-dimensional stochastic nonlinear
              wave equations},
   JOURNAL = {Trans. Amer. Math. Soc.},
  FJOURNAL = {Transactions of the American Mathematical Society},
    VOLUME = {370},
      YEAR = {2018},
    NUMBER = {10},
     PAGES = {7335--7359}
}

@article {MR1941997,
    AUTHOR = {Da Prato, Giuseppe and Debussche, Arnaud},
     TITLE = {Two-dimensional {N}avier-{S}tokes equations driven by a
              space-time white noise},
   JOURNAL = {J. Funct. Anal.},
  FJOURNAL = {Journal of Functional Analysis},
    VOLUME = {196},
      YEAR = {2002},
    NUMBER = {1},
     PAGES = {180--210}
}

@article {MR3508062,
    AUTHOR = {E, Weinan and Jentzen, Arnulf and Shen, Hao},
     TITLE = {Renormalized powers of {O}rnstein-{U}hlenbeck processes and
              well-posedness of stochastic {G}inzburg-{L}andau equations},
   JOURNAL = {Nonlinear Anal.},
  FJOURNAL = {Nonlinear Analysis. Theory, Methods \& Applications. An
              International Multidisciplinary Journal},
    VOLUME = {142},
      YEAR = {2016},
     PAGES = {152--193}
}

@article{COOK1970297,
author = {Cook, Harry E. },
title = {Brownian motion in spinodal decomposition},
journal = {Acta Metall.},
volume = {18},
number = {3},
pages = {297-306},
year = {1970}
}

@article{10.1063/1.1744102,
    author = {Cahn, John W. and Hilliard, John E.},
    title = {Free Energy of a Nonuniform System. I. Interfacial Free Energy},
    journal = {J. Chem. Phys.},
    volume = {28},
    number = {2},
    pages = {258-267},
    year = {1958}
}

@book {MR4174393,
    AUTHOR = {Friz, Peter K. and Hairer, Martin},
     TITLE = {A course on rough paths},
    SERIES = {Universitext},
   EDITION = {Second},
      NOTE = {With an introduction to regularity structures},
 PUBLISHER = {Springer, Cham},
      YEAR = {2020}
}

@article {MR3865664,
    AUTHOR = {Hoshino, Masato},
     TITLE = {Global well-posedness of complex {G}inzburg-{L}andau equation
              with a space-time white noise},
   JOURNAL = {Ann. Inst. Henri Poincar\'e{} Probab. Stat.},
  FJOURNAL = {Annales de l'Institut Henri Poincar\'e{} Probabilit\'es et
              Statistiques},
    VOLUME = {54},
      YEAR = {2018},
    NUMBER = {4},
     PAGES = {1969--2001}
}

@article {MR3417505,
    AUTHOR = {Hairer, Martin and Pardoux, \'Etienne},
     TITLE = {A {W}ong-{Z}akai theorem for stochastic {PDE}s},
   JOURNAL = {J. Math. Soc. Japan},
  FJOURNAL = {Journal of the Mathematical Society of Japan},
    VOLUME = {67},
      YEAR = {2015},
    NUMBER = {4},
     PAGES = {1551--1604}
}

@article {MR4056954,
    AUTHOR = {Zhu, Rongchan and Zhu, Xiangchan},
     TITLE = {Piecewise linear approximation for the dynamical {$\Phi_3^4$}
              model},
   JOURNAL = {Sci. China Math.},
  FJOURNAL = {Science China. Mathematics},
    VOLUME = {63},
      YEAR = {2020},
    NUMBER = {2},
     PAGES = {381--410},
}

@article {MR267632,
    AUTHOR = {Garsia, A. M. and Rodemich, E. and Rumsey, Jr., H.},
     TITLE = {A real variable lemma and the continuity of paths of some
              {G}aussian processes},
   JOURNAL = {Indiana Univ. Math. J.},
  FJOURNAL = {Indiana University Mathematics Journal},
    VOLUME = {20},
      YEAR = {1970/71},
     PAGES = {565--578}
}

@book{lecture,
    AUTHOR = {Willem van Zuijlen},
     TITLE = {Theory of function and distribution spaces},
     URL = {https://e.pcloud.link/publink/show?code=kZOtmwZWFBPgBt97fQ05fCNeKzDHS9GbsxX}
}

@article {MR3935036,
    AUTHOR = {Bruned, Y. and Hairer, M. and Zambotti, L.},
     TITLE = {Algebraic renormalisation of regularity structures},
   JOURNAL = {Invent. Math.},
  FJOURNAL = {Inventiones Mathematicae},
    VOLUME = {215},
      YEAR = {2019},
    NUMBER = {3},
     PAGES = {1039--1156},
}

@article {MR4210726,
    AUTHOR = {Bruned, Y. and Chandra, A. and Chevyrev, I. and Hairer, M.},
     TITLE = {Renormalising {SPDE}s in regularity structures},
   JOURNAL = {J. Eur. Math. Soc. (JEMS)},
  FJOURNAL = {Journal of the European Mathematical Society (JEMS)},
    VOLUME = {23},
      YEAR = {2021},
    NUMBER = {3},
     PAGES = {869--947},
}

@article{chandra2016analytic,
  title={An analytic {BPHZ} theorem for regularity structures},
  author={Chandra, Ajay and Hairer, Martin},
  year={2016},
}

\nocite{*}


\appendix
\chapter*{APPENDICES}
\addcontentsline{toc}{chapter}{APPENDICES}
\chapter{}
For all $m \in \mathbb{N}^*,$ let 
\begin{align*}
\text{\gls{f8}}:L^2\left(\left(\mathbb{R}_+\times\mathbb{T}^d\right)^m\right)&\longrightarrow L^2\left(\Omega\right)\\
\varphi&\longmapsto V_m\left(\varphi\right)=\int_{\left(\mathbb{R}_+\times \mathbb{T}^d\right)^m}\varphi\left(x_1,\dots,x_m\right)\,\xi\left(\mathrm{d}x_1\right)\,\dots\,\xi\left(\mathrm{d}x_m\right)
\end{align*}
with $V_0(\varphi)=\varphi.$\\
The following estimate is needed to control high moments of Wiener-It\^o integrals.
\begin{theorem}[Nelson estimate, Proposition 4, \cite{MR3746744}]
\label{nelson}
If $\left(q,r\right)\in\mathbb{N}^*\times\mathbb{R}_+^*,$ then $$\forall \varphi \in L^2\left(\left(\mathbb{R}_+\times \mathbb{T}^d\right)^q\right),\left(\mathds{E}\left[\left|V_q\left(\varphi\right)\right|^r\right]\right)^{\frac{2}{r}}\leq \left(\max\left(r-1,1\right)\right)^q\mathds{E}\left[\left|V_q\left(\varphi\right)\right|^2\right]\leq\left(\max\left(r-1,1\right)\right)^q \Vert\varphi\Vert^2_{L^2(\left(\mathbb{R}_+\times \mathbb{T}^d\right)^q)}.$$
\end{theorem}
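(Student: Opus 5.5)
The plan is to obtain the first inequality from Gaussian hypercontractivity, using the fact that $V_q(\varphi)$ lies in the $q$-th (homogeneous) Wiener chaos of the white noise $\xi$. The second inequality will come from the Itô isometry for multiple Wiener–Itô integrals.

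\emph{Case $r\le 2$.} Here $\max(r-1,1)=1$. The first inequality is then just Lyapunov's (Jensen's) inequality, $\left(\mathds{E}\left[|V_q(\varphi)|^r\right]\right)^{1/r}\le\left(\mathds{E}\left[|V_q(\varphi)|^2\right]\right)^{1/2}$, so nothing more is needed.

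\emph{Case $r>2$.} I introduce the Ornstein–Uhlenbeck semigroup $(P_t)_{t\ge0}$ on $L^2(\Omega)$ associated with $\xi$, as in \cite{MR2200233}. Its defining property is that it acts diagonally on Wiener chaoses, so that $P_tV_q(\varphi)=\mathrm{e}^{-qt}V_q(\varphi)$. Nelson's hypercontractivity theorem states that $\Vert P_tF\Vert_{L^r(\Omega)}\le\Vert F\Vert_{L^2(\Omega)}$ whenever $\mathrm{e}^{2t}\ge r-1$. I choose $t$ with $\mathrm{e}^{2t}=r-1$ and apply this to $F=V_q(\varphi)$, which gives
\[
\mathrm{e}^{-qt}\Vert V_q(\varphi)\Vert_{L^r(\Omega)}\le\Vert V_q(\varphi)\Vert_{L^2(\Omega)}.
\]
Squaring yields $\left(\mathds{E}\left[|V_q(\varphi)|^r\right]\right)^{2/r}\le(r-1)^q\,\mathds{E}\left[V_q(\varphi)^2\right]$, as required.

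For the second inequality I use the isometry for multiple Wiener–Itô integrals. The key step is that $V_q(\varphi)=V_q(\widetilde\varphi)$, where $\widetilde\varphi$ is the symmetrization of $\varphi$. The variance is then a multiple of $\Vert\widetilde\varphi\Vert^2_{L^2}$, and symmetrization is an average of isometries, hence a contraction: $\Vert\widetilde\varphi\Vert_{L^2}\le\Vert\varphi\Vert_{L^2}$.

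The main obstacle is bookkeeping the normalizing constant. With the standard convention one has $\mathds{E}\left[V_q(\varphi)^2\right]=q!\,\Vert\widetilde\varphi\Vert^2$, and the factor $q!$ must be either absorbed into the normalization of $V_q$ or carried along. In the form stated, the bound $\le\Vert\varphi\Vert^2$ holds for the iterated (time-ordered) integral convention of \cite{MR3746744}, and I would verify that convention carefully. The hypercontractivity step itself is classical and is simply cited.
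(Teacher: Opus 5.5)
The paper gives no proof of this statement; it only cites \cite{MR3746744}. Your argument for the first inequality is correct and is the standard one behind that citation. For $0<r\le 2$ it is monotonicity of $L^r(\Omega)$-norms. For $r>2$, choose $\mathrm{e}^{2t}=r-1$ in Nelson's bound $\Vert P_tF\Vert_{L^r(\Omega)}\le\Vert F\Vert_{L^2(\Omega)}$ and combine it with $P_tV_q(\varphi)=\mathrm{e}^{-qt}V_q(\varphi)$; this gives exactly $\bigl(\mathds{E}[|V_q(\varphi)|^r]\bigr)^{2/r}\le(r-1)^q\,\mathds{E}[V_q(\varphi)^2]$.

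The gap is in the second inequality. Checking conventions cannot close it, because as stated it is false for $q\ge 2$.

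The normalization of $V_q$ is not yours to choose. $V_q$ is defined as an integral over all of $(\mathbb{R}_+\times\mathbb{T}^d)^q$, and Lemma~\ref{l3} (the product formula of \cite{MR2200233}) pins it to the standard multiple Wiener--It\^o integral. With both orders equal to $1$, Lemma~\ref{l3} gives $V_1(h)^2=V_2(h\otimes h)+\Vert h\Vert^2$. If instead $J_2$ is the integral over the time-ordered simplex, It\^o's formula gives $V_1(h)^2=2J_2(h\otimes h)+\Vert h\Vert^2$. So the time-ordered convention you appeal to is incompatible with the paper's definitions.

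Now take $\Vert h\Vert_{L^2(\mathbb{R}_+\times\mathbb{T}^d)}=1$ and $\varphi=h\otimes h$. Then $V_2(\varphi)=V_1(h)^2-1$ with $V_1(h)$ standard Gaussian. Hence $\mathds{E}[V_2(\varphi)^2]=3-2+1=2>1=\Vert\varphi\Vert^2$, and the right-hand inequality already fails for $r=2$.

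Your own computation $\mathds{E}[V_q(\varphi)^2]=q!\,\Vert\widetilde\varphi\Vert^2\le q!\,\Vert\varphi\Vert^2$ is the correct endpoint. What you should prove, and state, is $\bigl(\mathds{E}[|V_q(\varphi)|^r]\bigr)^{2/r}\le(\max(r-1,1))^q\,q!\,\Vert\varphi\Vert^2$. The extra $q!$ does no harm to the rest of the paper, since every application of this theorem in the appendices is only up to a multiplicative constant. But it has to be there.
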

We will also need the following lemma.  
\begin{lemma}[Proposition 1.1.3, \cite{MR2200233}]
\label{l3}
Let $p,q\in\mathbb{N}^*,f \in L^2((\mathbb{R}_+\times \mathbb{T}^d)^p),g \in L^2((\mathbb{R}_+\times \mathbb{T}^d)^q)$ be symmetric functions. Therefore $$V_p(f)V_q(g)=\sum_{r=0}^{\min(p,q)}r!\binom{p}{r}\binom{q}{r}V_{p+q-2r}(f\otimes_rg),$$ where $\otimes_r$ is defined by: $$(f\otimes_r g)((y_1,x_1),...,(y_{p+q-2r},x_{p+q-2r}))$$$$:=\int_{(\mathbb{R}_+\times\mathbb{T}^d)^r}f((y_1,x_{1}),...,(y_{p-r},x_{p-r}),u)g(u,(y_{p-r+1},x_{p-r+1}),...,(y_{p+q-2r},x_{p+q-2r}))\,\mathrm{d}u,$$ for $r=0,(f\otimes_0g)(y,x)=f(y)g(x).$
\end{lemma}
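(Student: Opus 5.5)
This is the product formula for multiple Wiener–Itô integrals, and I would prove it in two stages: first on elementary tensors by a direct computation, then for general symmetric kernels by density and the $L^2$ isometry.

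\textbf{Reduction to elementary functions.} For $\varphi\in L^2(\mathbb{R}_+\times\mathbb{T}^d)$ write $\xi_\varphi=V_1(\varphi)$. Multiple integrals are first defined on symmetrizations of elementary simple functions $\mathbf 1_{A_1}\otimes\cdots\otimes\mathbf 1_{A_p}$ with pairwise disjoint sets $A_i$ of finite Lebesgue measure. On these, $V_p$ is the product $\xi_{\mathbf 1_{A_1}}\cdots\xi_{\mathbf 1_{A_p}}$. These functions are dense in the symmetric subspace of $L^2((\mathbb{R}_+\times\mathbb{T}^d)^p)$, and $\mathbb{E}[V_p(f)^2]=p!\,\Vert \tilde f\Vert^2$.

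\textbf{Computation for elementary kernels.} Fix a common partition $\{A_i\}$ refining the supports of both $f$ and $g$, so that $f$ and $g$ are linear combinations of indicators of products of distinct $A_i$. Then $V_p(f)V_q(g)$ is a finite sum of products of independent Gaussians $\xi_{\mathbf 1_{A_i}}$. An index shared by the $f$-block and the $g$-block produces a factor $\xi_{\mathbf 1_{A_i}}^2$, which we rewrite as
\[
\xi_{\mathbf 1_{A_i}}^2=\bigl(\xi_{\mathbf 1_{A_i}}^2-|A_i|\bigr)+|A_i|.
\]
The first term corresponds to the diagonal, and its contribution vanishes as the partition mesh tends to $0$. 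The second term produces the contraction.

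Expanding over the subsets of shared indices, the terms with exactly $r$ pairings contribute $V_{p+q-2r}(f\otimes_r g)$. The combinatorial factor counts the choices: $\binom{p}{r}$ ways to select the paired variables of $f$, $\binom{q}{r}$ ways for $g$, and $r!$ ways to match them. Symmetry of $f$ and $g$ ensures all such choices give the same kernel $f\otimes_r g$, up to symmetrization, which $V$ does not see.

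The vanishing-diagonal step is the main obstacle. It requires an $L^2(\Omega)$ bound of order $\sum_i|A_i|^2\cdot C(f,g)$, which tends to $0$ with the mesh. I would obtain this using independence of the $\xi_{\mathbf 1_{A_i}}$ over disjoint sets.

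\textbf{Extension by density.} Both sides are continuous from $L^2\times L^2$ into $L^1(\Omega)$. For the left side, the Cauchy–Schwarz inequality and the isometry give
\[
\mathbb{E}|V_p(f)V_q(g)|\le \sqrt{p!\,q!}\,\Vert f\Vert\,\Vert g\Vert .
\]
For the right side, the contraction inequality $\Vert f\otimes_r g\Vert\le\Vert f\Vert\,\Vert g\Vert$ (Cauchy–Schwarz in the integrated variables) combined with the isometry gives the bound. Approximating $f$ and $g$ by symmetric elementary kernels and passing to the limit then proves the identity in general.
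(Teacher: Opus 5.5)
The paper gives no proof of this lemma; it quotes Nualart's Proposition~1.1.3, which obtains the formula by induction on $q$. The case $q=1$ is proved with exactly your device: refine a partition, write $\xi_{\mathbf{1}_A}^2=(\xi_{\mathbf{1}_A}^2-|A|)+|A|$, and show that the centred squares vanish in $L^2(\Omega)$. The inductive step uses density to reduce to $g=g_1\tilde\otimes g_2$ with $g_1\otimes_1 g_2=0$, so that $V_q(g)=V_{q-1}(g_1)V_1(g_2)$, and then regroups the resulting contractions.

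Your argument instead handles all orders of contraction at once, by expanding over partial matchings between the $f$-indices and the $g$-indices. This makes the coefficient $r!\binom{p}{r}\binom{q}{r}$ transparent and avoids the algebra of recombining $(f\otimes_r g_1)\otimes g_2$ and $(f\otimes_r g_1)\otimes_1 g_2$. The price is a remainder estimate for arbitrary configurations rather than for a single squared factor.

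That estimate works as you suggest, but orthogonality is the cleaner tool than independence. The products $\prod_{u\in U}\xi_{\mathbf{1}_{A_u}}\prod_{w\in W}(\xi_{\mathbf{1}_{A_w}}^2-|A_w|)$ are orthogonal for distinct pairs $(U,W)$, each with second moment $\prod_{U}|A_u|\prod_{W}2|A_w|^2$. Hence the remainder collecting the terms with $W\neq\emptyset$ has second moment $\lesssim\max_i|A_i|$. The constant depends on $\Vert f\Vert_\infty$, $\Vert g\Vert_\infty$ and the measure of the supports, which is harmless since $f$ and $g$ are fixed elementary kernels.

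One step needs tightening. For a fixed partition, the fully contracted terms with $r$ pairings do not give $V_{p+q-2r}(f\otimes_r g)$. They give $r!\binom{p}{r}\binom{q}{r}\,V_{p+q-2r}$ of the restriction of $f\otimes_r g$ to the set where no free variable of $f$ lies in the same cell as a free variable of $g$. The reason is that the contraction of two elementary kernels does not vanish on these partial diagonals, so it is not itself elementary.

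The discarded piece is bounded and supported on a set of measure $O(\max_i|A_i|)$, so by the isometry its integral tends to $0$ as the mesh shrinks. This is a second diagonal estimate, alongside the one you flagged. With it added, your density step completes the proof: both sides are bounded bilinear maps into $L^1(\Omega)$, using $\Vert f\otimes_r g\Vert\le\Vert f\Vert\,\Vert g\Vert$.
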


The following simple observation is needed to compute the regularity of the stochastic objects. 
\begin{lemma}
\label{l33}
If $0\leq x\leq \min(y,u),$ then $$\forall \gamma \in ]0,1[,x=x^{\gamma}x^{1-\gamma}\leq y^{\gamma} u^{1-\gamma}$$ 
\end{lemma}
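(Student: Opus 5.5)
The proof is a direct case analysis on the relative sizes of $x$, $y$ and $u$. We use only that all quantities are non-negative and that the exponents $\gamma$ and $1-\gamma$ lie in $]0,1[$.

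\textbf{The identity.} Fix $\gamma\in]0,1[$. Since $x\ge 0$, the powers $x^\gamma$ and $x^{1-\gamma}$ are well defined, and $x^\gamma x^{1-\gamma}=x^{\gamma+(1-\gamma)}=x$. When $x=0$ both sides vanish, with the convention $0^\gamma=0$ for $\gamma>0$.

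\textbf{The inequality.} The map $t\mapsto t^{s}$ is non-decreasing on $\mathbb{R}_+$ for every $s>0$. The hypothesis $x\le\min(y,u)$ gives $0\le x\le y$ and $0\le x\le u$, hence
\[
x^\gamma\le y^\gamma \quad\text{and}\quad x^{1-\gamma}\le u^{1-\gamma}.
\]
Both left-hand sides are non-negative, so the two inequalities can be multiplied, which yields $x^\gamma x^{1-\gamma}\le y^\gamma u^{1-\gamma}$.

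\textbf{Caveat.} The only point needing care is that multiplying inequalities requires the smaller sides to be non-negative, and this holds since $x\ge 0$. No earlier result from the paper is needed.

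In the appendix this lemma will presumably be applied with $x$, $y$, $u$ chosen as decay factors such as $\min(1,|t-s|\,|k|^4)$ or $|k|^{-a}$. It interpolates between two available bounds to obtain simultaneously the required time Hölder regularity and the summability in Fourier modes when estimating second moments of the stochastic objects.
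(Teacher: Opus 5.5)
Your proof is correct and follows the paper's own reasoning. The paper gives no separate proof: the displayed chain $x=x^{\gamma}x^{1-\gamma}\le y^{\gamma}u^{1-\gamma}$ rests, as in your argument, on the monotonicity of $t\mapsto t^{s}$ on $\mathbb{R}_+$ and on multiplying inequalities between non-negative quantities. Your opening announcement of a ``case analysis'' is superfluous, since no cases are needed.
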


The following version of Kolmogorov continuity theorem is needed for the construction of the stochastic objects in Besov spaces.
\begin{theorem}
\label{kolmogorov}
Let $\left(f_r\right)_{r \in \mathbb{R}_+}$ be a stochastic process of state space $\left(\mathscr{S}',\mathcal{B}\left(\mathscr{S}'\right)\right)$ and $\left(\alpha,\beta,\theta\right) \in \mathbb{R}^2\times \mathbb{R}_+^*$ such that $\beta<\alpha$ and $\theta>\max\left(\frac{2d}{\alpha-\beta},1\right).$ If for all $U \in \mathbb{R}_+,$ there exists $\left(\mu,\gamma\right) \in \left(\mathbb{R}_+^*\right)^2$ such that for every $\left(r_1,r_2,j,x\right) \in \left[0,U\right]^2\times \mathbb{N}\times \mathbb{T}^d,r_1\neq r_2,$ $$\left(\mathds{E}\left[\left|\left(\delta_{j-1}f_0\right)\left(x\right)\right|^\theta\right]\right)^{\frac{1}{\theta}}+\left|r_2-r_1\right|^{-\gamma-\frac{1}{\theta}}\left(\mathds{E}\left[\left|\left(\delta_{j-1}f_{r_2}\right)\left(x\right)-\left(\delta_{j-1}f_{r_1}\right)\left(x\right)\right|^\theta\right]\right)^{\frac{1}{\theta}}\leq \mu2^{-\alpha\left(j-1\right)},$$ then there exists a stochastic process $\left(g_r\right)_{r \in \mathbb{R}_+}$ of state space $\left(\mathscr{C}^\beta,\mathcal{B}\left(\mathscr{C}^\beta\right)\right)$ such that the sample paths of $\left(g_q\right)_{q \in \mathbb{R}_+}$ are continuous and for all $r \in \mathbb{R}_+$ and for $\mathds{P}$-almost every $\omega \in \Omega,g_r\left(\omega\right)=f_r\left(\omega\right).$
\end{theorem}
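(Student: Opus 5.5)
The plan is to reduce the statement to the classical Kolmogorov continuity theorem for processes with values in a separable Banach space. The natural space is a Besov space $B^{\alpha'}_{\theta,\theta}$: there the hypothesis, which is a pointwise-in-$x$ moment bound on each Littlewood--Paley block, can be integrated directly. We then pass to $\mathscr{C}^\beta$ using Theorem \ref{embedding}.

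\textbf{Choice of exponents.} Since $\theta>\frac{2d}{\alpha-\beta}$, we have $\frac{d}{\theta}<\frac{\alpha-\beta}{2}$. We can therefore fix $\alpha'$ with
$$\beta+\frac{d}{\theta}<\alpha'<\alpha,$$
for instance $\alpha'=\alpha-\frac{\alpha-\beta}{2}$.

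\textbf{Moment bounds in the Besov norm.} By Fubini,
$$\mathds{E}\left[\left\Vert f_{r_2}-f_{r_1}\right\Vert^\theta_{B^{\alpha'}_{\theta,\theta}}\right]=\sum_{j\in\mathbb{N}}2^{\alpha'\theta(j-1)}\int_{\mathbb{T}^d}\mathds{E}\left[\left|\left(\delta_{j-1}f_{r_2}\right)(x)-\left(\delta_{j-1}f_{r_1}\right)(x)\right|^\theta\right]\mathrm{d}x.$$
The hypothesis bounds the inner expectation by $\mu^\theta|r_2-r_1|^{1+\gamma\theta}2^{-\alpha\theta(j-1)}$. Because $\alpha'<\alpha$, the geometric series converges, and so
$$\mathds{E}\left[\left\Vert f_{r_2}-f_{r_1}\right\Vert^\theta_{B^{\alpha'}_{\theta,\theta}}\right]\lesssim |r_2-r_1|^{1+\gamma\theta}$$
uniformly for $r_1,r_2\in[0,U]$. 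The same computation with the $f_0$ term gives $\mathds{E}\left[\Vert f_0\Vert^\theta_{B^{\alpha'}_{\theta,\theta}}\right]<\infty$. Combining the two, each $f_r$ lies almost surely in $B^{\alpha'}_{\theta,\theta}$.

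\textbf{Continuous modification.} The exponent $1+\gamma\theta$ is strictly larger than $1$, which is exactly what the Kolmogorov (or Garsia--Rodemich--Rumsey) criterion requires for Banach-valued processes. Applying it on each $[0,U]$, with $U\in\mathbb{N}$, and gluing yields a modification $(g_r)$ with continuous paths in $B^{\alpha'}_{\theta,\theta}$. Theorem \ref{embedding} gives the continuous embedding $B^{\alpha'}_{\theta,\theta}\hookrightarrow \mathscr{C}^{\alpha'-d/\theta}$ for the norm $\Vert\cdot\Vert_{\mathscr{C}^{\alpha'-d/\theta}}$. Since $\alpha'-\frac{d}{\theta}>\beta$, we have
$$2^{\beta q}\Vert\delta_q f\Vert_{L^\infty}\leq 2^{-(\alpha'-d/\theta-\beta)q}\Vert f\Vert_{\mathscr{C}^{\alpha'-d/\theta}}\to0.$$
Hence the image lies in the closed subspace $\mathscr{C}^\beta$ as defined in Definition \ref{b}, and the embedding into it is continuous. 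Consequently the paths of $(g_r)$ are continuous in $\mathscr{C}^\beta$, and $g_r=f_r$ almost surely for every $r$.

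\textbf{Main obstacle.} The main difficulty is not the estimates but the functional-analytic setup of the Kolmogorov step. Banach-valued Kolmogorov needs a separable target, but $B^{\alpha'}_{\theta,\theta}$ with $\theta<\infty$ is separable, since trigonometric polynomials are dense. A second point is that $f_r$ is only an $\mathscr{S}'$-valued random variable. One must check that $\omega\mapsto\Vert f_r(\omega)\Vert_{B^{\alpha'}_{\theta,\theta}}$ is measurable, which holds because it is a countable supremum of limits of functionals $f\mapsto f(\varphi)$. One must also check that the modification obtained agrees with $f_r$ as an element of $\mathscr{S}'$; I would obtain this by testing against a countable dense family of $\varphi\in\mathscr{S}$.
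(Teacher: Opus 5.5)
Your proposal is correct and is essentially the paper's proof: the paper uses the same intermediate exponent $m=\frac{\alpha+\beta}{2}$ (your $\alpha'$), bounds $\mathds{E}\left[\Vert f_{r_2}-f_{r_1}\Vert^\theta_{B^{m}_{\theta,\theta}}\right]\lesssim|r_2-r_1|^{\gamma\theta+1}$ by the same Fubini and geometric-series argument, and passes to $\mathscr{C}^\beta$ with Theorem \ref{embedding}. The only difference is the order of the last two steps: the paper puts the embedding inside the expectation and applies Kolmogorov directly in $\mathscr{C}^\beta$, whereas you apply Kolmogorov in $B^{m}_{\theta,\theta}$ and then compose with the continuous embedding; your remarks on measurability, separability and the $f_0$ term fill in details the paper leaves implicit.
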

\begin{proof}
Let $U \in \mathbb{R}_+$ and $m:=\frac{\alpha+\beta}{2}.$

There exists $\left(\mu_1,\mu_2,\mu_3,\mu_4,\gamma\right)\in \left(\mathbb{R}_+^*\right)^5$ such that for every $\left(r_1,r_2\right) \in \left[0,U\right]^2,$
\begin{align*}
\mathds{E}\left[\left\Vert f_{r_2}-f_{r_1}\right\Vert^\theta_{\mathscr{C}^\beta}\right]&\leq \mu_1\mathds{E}\left[\left\Vert f_{r_2}-f_{r_1}\right\Vert^\theta_{\mathscr{C}^{m-\frac{d}{\theta}}}\right]\\
&\leq \mu_2\mathds{E}\left[\left\Vert f_{r_2}-f_{r_1}\right\Vert^\theta_{B^{m}_{\theta,\theta}}\right]\\
&\leq \mu_2\sum_{j \in \mathbb{N}}2^{m\theta\left(j-1\right)}\int_{\mathbb{T}^d}\mathds{E}\left[\left|\left(\delta_{j-1}f_{r_2}\right)\left(x\right)-\left(\delta_{j-1}f_{r_1}\right)\left(x\right)\right|^\theta\right]\,\mathrm{d}x\\
&\leq \mu_3\left|r_2-r_1\right|^{\gamma \theta+1}\sum_{j \in \mathbb{N}}2^{\theta\left(j-1\right)\left(m-\alpha\right)}\\
&\leq \mu_4\left|r_2-r_1\right|^{\gamma \theta+1}.
\end{align*}

Applying the Kolmogorov continuity theorem completes the proof.
\end{proof}

\chapter{}
The following lemma gives a bound on discrete convolutions.
\begin{lemma}[Lemma 5, \cite{MR3746744}]
\label{lemma2}
Let $\left(\alpha,\beta\right) \in \mathbb{R}^2.$ If $\max\left(\alpha,\beta\right)<d$ and $\alpha+\beta>d,$ then $$\exists \mu \in \mathbb{R}_+^*,\forall q\in \mathbb{Z}^d,\sum_{m \in \mathbb{Z}^d}\left(1+\left|q-m\right|^4\right)^{-\frac{\alpha}{4}}\left(1+\left|m\right|^{4}\right)^{-\frac{\beta}{4}}\leq \mu\left(1+\left|q\right|^4\right)^{\frac{d-\alpha-\beta}{4}}.$$
\end{lemma}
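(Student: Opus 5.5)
The statement to prove is Lemma \ref{lemma2}: for $\max(\alpha,\beta)<d$ and $\alpha+\beta>d$,
$$\sum_{m}\left(1+|q-m|^4\right)^{-\alpha/4}\left(1+|m|^4\right)^{-\beta/4}\lesssim\left(1+|q|^4\right)^{(d-\alpha-\beta)/4},$$
uniformly in $q\in\mathbb{Z}^d$. The plan is a direct dyadic splitting of $\mathbb{Z}^d$ relative to $|q|$.

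\textbf{Reduction.} Since $(1+|x|^4)^{1/4}\asymp 1+|x|$, it suffices to bound
$$S(q)=\sum_m \langle q-m\rangle^{-\alpha}\langle m\rangle^{-\beta},\qquad \langle x\rangle:=1+|x|,$$
by a constant times $\langle q\rangle^{d-\alpha-\beta}$. For $|q|\le 1$ (finitely many $q$), we have $\langle q-m\rangle\asymp\langle m\rangle$. The sum is then comparable to $\sum_m\langle m\rangle^{-\alpha-\beta}$, which converges because $\alpha+\beta>d$. So assume $|q|\ge 2$ and split $\mathbb{Z}^d$ into three regions.

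\textbf{Region A: $|m|\le |q|/2$.} Here $\langle q-m\rangle\asymp\langle q\rangle$, so the contribution is at most
$$\langle q\rangle^{-\alpha}\sum_{|m|\le|q|/2}\langle m\rangle^{-\beta}.$$
Since $\beta<d$, the lattice sum is bounded by $\langle q\rangle^{d-\beta}$, by comparison with $\int_{|y|\le R}\langle y\rangle^{-\beta}dy\asymp R^{d-\beta}$. This gives $\langle q\rangle^{d-\alpha-\beta}$. Negative $\beta$ is also fine, since the same volume bound holds.

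\textbf{Region B: $|m-q|\le|q|/2$.} This is symmetric to Region A, using $\alpha<d$, and again gives $\langle q\rangle^{d-\alpha-\beta}$.

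\textbf{Region C: the rest.} Here $|m|>|q|/2$ and $|m-q|>|q|/2$, so $\langle m-q\rangle\asymp\langle m\rangle$ up to a constant (e.g. $|m-q|\le|m|+|q|\le3|m|$ and $|m|\le 3|m-q|$). The contribution is therefore at most
$$\sum_{|m|>|q|/2}\langle m\rangle^{-\alpha-\beta}\lesssim\langle q\rangle^{d-\alpha-\beta},$$
because $\alpha+\beta>d$ makes the tail integral converge with exactly this decay.

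\textbf{Main obstacle.} The only delicate point is justifying the lattice-sum-versus-integral comparisons, which amounts to counting lattice points in dyadic shells: the number of $m$ with $2^k\le|m|<2^{k+1}$ is $\asymp 2^{kd}$. Summing the geometric series in $k$ gives Regions A and B via $\beta<d$ (respectively $\alpha<d$), where the series is dominated by its top term, and Region C via $\alpha+\beta>d$, where it is dominated by its bottom term. Care is needed only if $\alpha$ or $\beta$ is negative; the comparison $\langle q-m\rangle\asymp\langle q\rangle$ still holds in that case, so the bounds go through unchanged. Summing the three regions yields the claim with a constant $\mu$ depending only on $d,\alpha,\beta$.
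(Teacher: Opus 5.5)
Your proof is correct and is essentially the paper's argument: the paper splits into four regions ($|m|\geq 2|q|$, $|m|\leq |q|/2$, $|q-m|\leq |q|/2$, and the remainder), and you merge the first and last of these into your Region C using $\langle q-m\rangle\asymp\langle m\rangle$, while spelling out the dyadic lattice-point counting that the paper leaves implicit. (One cosmetic slip: lattice points with $1<|q|<2$ fall between your two cases, but your three-region argument in fact works for every $q\in\mathbb{Z}^d$, so nothing is lost.)
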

\begin{proof}
Let $$J_1:=\left\{\left(m_1,m_2\right) \in \left(\mathbb{Z}^d\right)^2,\left|m_1\right|\geq 2\left|m_1+m_2\right|\right\},$$ $$J_2:=\left\{\left(m_1,m_2\right) \in \left(\mathbb{Z}^d\right)^2,\left|m_1\right|\leq \frac{1}{2}\left|m_1+m_2\right|\right\},$$ $$J_3:=\left\{\left(m_1,m_2\right) \in\left(\mathbb{Z}^d\right)^2,\left|m_2\right|\leq\frac{1}{2}|m_1+m_2|\right\},$$ and $$J_4:=\left(\mathbb{Z}^d\right)^2-\left(J_1\cup J_2 \cup J_3\right).$$

There exists $\left(\mu_1,\mu_2\right) \in \left(\mathbb{R}_+^*\right)^2$ such that for all $q \in \mathbb{Z}^d,$
\begin{align*}
\sum_{m\in\mathbb{Z}^d}\left(1+\left|q-m\right|^4\right)^{-\frac{\alpha}{4}}\left(1+\left|m\right|^4\right)^{-\frac{\beta}{4}}&\leq\sum_{m\in \mathbb{Z}^d}\left(1+\left|q-m\right|^4\right)^{-\frac{\alpha}{4}}\left(1+\left|m\right|^4\right)^{-\frac{\beta}{4}}\mathds{1}_{J_1}\left(m,q-m\right)\\
& \phantom{\leq}\ +\sum_{m\in \mathbb{Z}^d}\left(1+\left|q-m\right|^4\right)^{-\frac{\alpha}{4}}\left(1+\left|m\right|^4\right)^{-\frac{\beta}{4}}\mathds{1}_{J_2}\left(m,q-m\right)\\
& \phantom{\leq}\ +\sum_{m\in \mathbb{Z}^d}\left(1+\left|q-m\right|^4\right)^{-\frac{\alpha}{4}}\left(1+\left|m\right|^4\right)^{-\frac{\beta}{4}}\mathds{1}_{J_3}\left(m,q-m\right)\\
& \phantom{\leq}\ +\sum_{m\in \mathbb{Z}^d}\left(1+\left|q-m\right|^4\right)^{-\frac{\alpha}{4}}\left(1+\left|m\right|^4\right)^{-\frac{\beta}{4}}\mathds{1}_{J_4}\left(m,q-m\right)\\
&\leq \mu_1\sum_{m\in \mathbb{Z}^d}\left(1+\left|m\right|^4\right)^{-\frac{\alpha+\beta}{4}}\mathds{1}_{J_1}\left(m,q-m\right)\\
& \phantom{\leq}\ +\mu_1\left(1+\left|q\right|^4\right)^{-\frac{\alpha}{4}}\sum_{m \in \mathbb{Z}^d}\left(1+\left|m\right|^4\right)^{-\frac{\beta}{4}}\mathds{1}_{J_2}\left(m,q-m\right)\\
& \phantom{\leq}\ +\mu_1\left(1+\left|q\right|^4\right)^{-\frac{\beta}{4}}\sum_{m \in \mathbb{Z}^d}\left(1+\left|q-m\right|^4\right)^{-\frac{\alpha}{4}}\mathds{1}_{J_3}\left(m,q-m\right)\\
& \phantom{\leq}\ +\mu_1\left(1+\left|q\right|^4\right)^{-\frac{\alpha}{4}}\sum_{m \in \mathbb{Z}^d}\left(1+\left|m\right|^4\right)^{-\frac{\beta}{4}}\mathds{1}_{J_4}\left(m,q-m\right)\\
&\leq \mu_2\left(1+\left|q\right|^4\right)^{\frac{d-\alpha-\beta}{4}}.
\end{align*}
\end{proof}

\chapter{}
\label{ppe1}
\begin{proof}[Proof of Theorem \ref{theorem1}]
\begin{enumerate}
\item 
We have
\[
\forall r \in \mathbb{R}_+,\qquad
\sum_{m \in \mathbb{Z}^d}
\frac{1}{(1+|m|)^{d+1}}
\mathds{E}\left[
\left|X_{r,1}\left(\psi_{m,1}\right)\right|
+
\left|X_{r,1}\left(\psi_{m,2}\right)\right|
\right]
<+\infty.
\]

Therefore, there exists a stochastic process
$\left(\wideparen{X}_{q,1}\right)_{q \in \mathbb{R}_+}$
of state space
$\left(\mathscr{S}',\mathcal{B}\left(\mathscr{S}'\right)\right)$
such that for every
$\left(r,\varphi\right)\in\mathbb{R}_+\times\mathscr{S}$
and for $\mathds{P}$-almost every $\omega\in\Omega$,
\[
\left(\wideparen{X}_{r,1}(\omega)\right)(\varphi)
=
\left(X_{r,1}(\varphi)\right)(\omega).
\]

We fix $(\beta,U,\gamma)
\in
\mathbb{R}_+^*\times\mathbb{R}_+\times]0,1[.$

We recall, from Definition~\ref{def}, that
\[
(\delta_{j-1}\wideparen{X}_{r,1}(\omega))(x)
=
(\wideparen{X}_{r,1}(\omega))(K_{j-1}(\cdot-x))
=
(X_{r,1}(K_{j-1}(\cdot-x)))(\omega)
\]
for $\mathds{P}$-almost every $\omega\in\Omega$.

It follows from Theorem~\ref{nelson} that there exists
$(\mu_1,\mu_2,\mu_3)\in(\mathbb{R}_+^*)^3$
such that for all
$(r_1,r_2,j,x)
\in
[0,U]^2\times\mathbb{N}\times\mathbb{T}^d,$
if $r_1\leq r_2$, then
\begin{align*}
&\left(
\mathds{E}\left[
\left|
\left(\delta_{j-1}\wideparen{X}_{r_2,1}\right)(x)
-
\left(\delta_{j-1}\wideparen{X}_{r_1,1}\right)(x)
\right|^\beta
\right]
\right)^{\frac{2}{\beta}}\\
&\stackrel{\text{Theorem \ref{nelson}}}{\leq}
\mu_1
\mathds{E}\left[
\left|
\left(\delta_{j-1}\wideparen{X}_{r_2,1}\right)(x)
-
\left(\delta_{j-1}\wideparen{X}_{r_1,1}\right)(x)
\right|^2
\right]\\
&=
\mu_1
\int_{[0,r_1]\times\mathbb{T}^d}
\left|
\int_{\mathbb{T}^d}
K_{j-1}(x-y)
\left(
\eta_{r_2-\alpha}(y-v)
-
\eta_{r_1-\alpha}(y-v)
\right)
\,\mathrm{d}y
\right|^2
\,\mathrm{d}\alpha\,\mathrm{d}v\\
&\phantom{=}\ 
+
\mu_1
\int_{[r_1,r_2]\times\mathbb{T}^d}
\left|
\int_{\mathbb{T}^d}
K_{j-1}(x-y)
\eta_{r_2-\alpha}(y-v)
\,\mathrm{d}y
\right|^2
\,\mathrm{d}\alpha\,\mathrm{d}v\\
&=
\mu_1
\int_{[0,r_1]\times\mathbb{T}^d}
\left|
(K_{j-1}*\eta_{r_2-\alpha})(y)
-
(K_{j-1}*\eta_{r_1-\alpha})(y)
\right|^2
\,\mathrm{d}\alpha\,\mathrm{d}y\\
&\phantom{=}\ 
+
\mu_1
\int_{[r_1,r_2]\times\mathbb{T}^d}
\left|
(K_{j-1}*\eta_{r_2-\alpha})(y)
\right|^2
\,\mathrm{d}\alpha\,\mathrm{d}y\\
&\stackrel{\text{Parseval}}{=}
\mu_1
\sum_{m \in \mathbb{Z}^d}
\int_{0}^{r_1}
\left|\rho_{j-1}(m)\right|^2
\left(
\mathrm{e}^{-|2\pi m|^4(r_2-\alpha)}
-
\mathrm{e}^{-|2\pi m|^4(r_1-\alpha)}
\right)^2
\,\mathrm{d}\alpha\\
&\phantom{\leq}\ 
+
\mu_1
\sum_{m \in \mathbb{Z}^d}
\int_{0}^{r_2-r_1}
\left|\rho_{j-1}(m)\right|^2
\mathrm{e}^{-2\alpha|2\pi m|^4}
\,\mathrm{d}\alpha.
\end{align*}

We now make explicit the estimate of the two time integrals. Set
\[
h:=r_2-r_1
\qquad\text{and}\qquad
\lambda_m:=|2\pi m|^4.
\]
For the first integral,
\begin{align*}
&\int_0^{r_1}
\left(
\mathrm{e}^{-\lambda_m(r_2-\alpha)}
-
\mathrm{e}^{-\lambda_m(r_1-\alpha)}
\right)^2
\,\mathrm{d}\alpha\\
&=
\left(1-\mathrm{e}^{-\lambda_m h}\right)^2
\int_0^{r_1}
\mathrm{e}^{-2\lambda_m(r_1-\alpha)}
\,\mathrm{d}\alpha.
\end{align*}
Using
\[
1-\mathrm{e}^{-x}
\lesssim x^{\gamma/2},
\qquad x\geq0,
\]
and
\[
\int_0^{r_1}
\mathrm{e}^{-2\lambda_m(r_1-\alpha)}
\,\mathrm{d}\alpha
\lesssim
(1+\lambda_m)^{-1},
\]
we obtain
\begin{align*}
&\int_0^{r_1}
\left(
\mathrm{e}^{-\lambda_m(r_2-\alpha)}
-
\mathrm{e}^{-\lambda_m(r_1-\alpha)}
\right)^2
\,\mathrm{d}\alpha\\
&\lesssim
h^\gamma
(1+\lambda_m)^{\gamma-1}\\
&\lesssim
|r_2-r_1|^\gamma
\left(1+|m|^4\right)^{\gamma-1}.
\end{align*}

For the second integral, we use
\[
\int_0^h
\mathrm{e}^{-2\lambda_m\alpha}
\,\mathrm{d}\alpha
\lesssim
\min\{h,(1+\lambda_m)^{-1}\}.
\]
Since for $a,b>0$ and $\gamma\in(0,1)$,
\[
\min\{a,b\}\leq a^\gamma b^{1-\gamma},
\]
it follows that
\begin{align*}
\int_0^h
\mathrm{e}^{-2\lambda_m\alpha}
\,\mathrm{d}\alpha
&\lesssim
h^\gamma
(1+\lambda_m)^{\gamma-1}\\
&\lesssim
|r_2-r_1|^\gamma
\left(1+|m|^4\right)^{\gamma-1}.
\end{align*}
Hence,
\begin{align*}
&\stackrel{\text{Lemma \ref{l33}}}{\leq}
\mu_2
|r_2-r_1|^\gamma
\sum_{m \in \mathbb{Z}^d}
\left|\rho_{j-1}(m)\right|^2
\left(1+|m|^4\right)^{\gamma-1}.
\end{align*}

It remains to estimate the dyadic sum. For $j\geq1$, by the support properties of
$\rho_{j-1},|m|\asymp 2^{j-1}$ whenever
$\rho_{j-1}(m)\neq0.$
Moreover, the number of lattice points in the corresponding dyadic annulus is
of order $2^{d(j-1)}.$ For $j=0$, the support of $\rho_{-1}$ is bounded,
and the corresponding estimate follows directly. Therefore,
\begin{align*}
\sum_{m \in \mathbb{Z}^d}
\left|\rho_{j-1}(m)\right|^2
\left(1+|m|^4\right)^{\gamma-1}
&\lesssim
2^{d(j-1)}
2^{4(\gamma-1)(j-1)}\\
&=
2^{(j-1)(d+4\gamma-4)}.
\end{align*}
Consequently,
\begin{align*}
&\left(
\mathds{E}\left[
\left|
\left(\delta_{j-1}\wideparen{X}_{r_2,1}\right)(x)
-
\left(\delta_{j-1}\wideparen{X}_{r_1,1}\right)(x)
\right|^\beta
\right]
\right)^{\frac{2}{\beta}}\\
&\leq
\mu_3
2^{(j-1)(d+4\gamma-4)}
|r_2-r_1|^\gamma.
\end{align*}
Here, the first equality in the computation follows from the definition of a
space-time white noise.

We conclude the proof by applying Theorem~\ref{kolmogorov}.
\item Let $\left(\beta,\gamma\right) \in \mathbb{R}_+^*\times\left]0,1\right[.$

There exists
$\left(\mu_1,\mu_2,\mu_3\right)\in\left(\mathbb{R}_+^*\right)^3$
such that for all
$\left(r_1,r_2,\epsilon,j,x\right)
\in
\left(\mathbb{R}_+\right)^2
\times\mathbb{R}_+^*
\times\mathbb{N}
\times\mathbb{T}^d,$
if $r_1\leq r_2$, then
\begin{align*}
&\left(\mathds{E}\left[\left|
\left(\delta_{j-1}\widetilde{X}_{r_2,\epsilon,1}\right)\left(x\right)
-\left(\delta_{j-1}\widetilde{X}_{r_2,1}\right)\left(x\right)
-\left(\delta_{j-1}\widetilde{X}_{r_1,\epsilon,1}\right)\left(x\right)
+\left(\delta_{j-1}\widetilde{X}_{r_1,1}\right)\left(x\right)
\right|^{\beta}\right]\right)^{\frac{2}{\beta}}\\
&\stackrel{\text{Theorem \ref{nelson}}}{\leq}
\mu_1\mathds{E}\left[\left|
\left(\delta_{j-1}\widetilde{X}_{r_2,\epsilon,1}\right)\left(x\right)
-\left(\delta_{j-1}\widetilde{X}_{r_2,1}\right)\left(x\right)
-\left(\delta_{j-1}\widetilde{X}_{r_1,\epsilon,1}\right)\left(x\right)
+\left(\delta_{j-1}\widetilde{X}_{r_1,1}\right)\left(x\right)
\right|^2\right]\\
&\leq
\mu_1
\sum_{m \in \mathbb{Z}^d}
\left|\rho_{j-1}\left(m\right)\right|^2
\left|\zeta\left(\epsilon m\right)-1\right|^2
\int_{0}^{r_1}
\left(
\mathrm{e}^{-\left|2\pi m\right|^4\left(r_2-\alpha\right)}
-
\mathrm{e}^{-\left|2\pi m\right|^4\left(r_1-\alpha\right)}
\right)^2
\,\mathrm{d}\alpha\\
&\phantom{\leq}\ 
+\mu_1
\sum_{m \in \mathbb{Z}^d}
\left|\rho_{j-1}\left(m\right)\right|^2
\left|\zeta\left(\epsilon m\right)-1\right|^2
\int_{0}^{r_2-r_1}
\mathrm{e}^{-2\alpha\left|2\pi m\right|^4}
\,\mathrm{d}\alpha.
\end{align*}

We now make the estimate of the two terms explicit. Set
\[
h:=r_2-r_1,
\qquad
\lambda_m:=|2\pi m|^4.
\]
Since $\zeta$ is smooth and $\zeta(0)=1$, for every $\gamma\in(0,1)$,
\[
|\zeta(\epsilon m)-1|
\lesssim
\min\{1,\epsilon |m|\}^{\gamma/2}.
\]
Consequently,
\begin{align*}
|\zeta(\epsilon m)-1|^2
&\lesssim
\epsilon^\gamma |m|^\gamma\\
&\lesssim
\epsilon^\gamma
\left(1+|m|^4\right)^{\gamma/2}.
\end{align*}

For the first time integral, we have
\begin{align*}
&\int_0^{r_1}
\left(
\mathrm{e}^{-\lambda_m(r_2-\alpha)}
-
\mathrm{e}^{-\lambda_m(r_1-\alpha)}
\right)^2
\,\mathrm{d}\alpha\\
&=
\left(1-\mathrm{e}^{-\lambda_m h}\right)^2
\int_0^{r_1}
\mathrm{e}^{-2\lambda_m(r_1-\alpha)}
\,\mathrm{d}\alpha.
\end{align*}
Using
\[
1-\mathrm{e}^{-x}\lesssim x^{\gamma/4},
\qquad x\geq0,
\]
we obtain
\[
\left(1-\mathrm{e}^{-\lambda_m h}\right)^2
\lesssim
h^{\gamma/2}\lambda_m^{\gamma/2}.
\]
Moreover,
\[
\int_0^{r_1}
\mathrm{e}^{-2\lambda_m(r_1-\alpha)}
\,\mathrm{d}\alpha
\lesssim
\left(1+\lambda_m\right)^{-1}.
\]
Thus,
\begin{align*}
&\int_0^{r_1}
\left(
\mathrm{e}^{-\lambda_m(r_2-\alpha)}
-
\mathrm{e}^{-\lambda_m(r_1-\alpha)}
\right)^2
\,\mathrm{d}\alpha\\
&\lesssim
h^{\gamma/2}
\left(1+\lambda_m\right)^{\gamma/2-1}\\
&\lesssim
|r_2-r_1|^{\gamma/2}
\left(1+|m|^4\right)^{\gamma/2-1}.
\end{align*}

For the second time integral,
\[
\int_0^h
\mathrm{e}^{-2\alpha\lambda_m}
\,\mathrm{d}\alpha
\lesssim
\min\left\{
h,\left(1+\lambda_m\right)^{-1}
\right\}.
\]
Using the interpolation inequality
\[
\min\{a,b\}
\leq
a^{\gamma/2}b^{1-\gamma/2},
\qquad a,b\geq0,
\]
we deduce that
\begin{align*}
\int_0^h
\mathrm{e}^{-2\alpha\lambda_m}
\,\mathrm{d}\alpha
&\lesssim
h^{\gamma/2}
\left(1+\lambda_m\right)^{\gamma/2-1}\\
&\lesssim
|r_2-r_1|^{\gamma/2}
\left(1+|m|^4\right)^{\gamma/2-1}.
\end{align*}

Combining the preceding estimates, we obtain
\begin{align*}
&|\zeta(\epsilon m)-1|^2
\int_0^{r_1}
\left(
\mathrm{e}^{-\lambda_m(r_2-\alpha)}
-
\mathrm{e}^{-\lambda_m(r_1-\alpha)}
\right)^2
\,\mathrm{d}\alpha\\
&\lesssim
\epsilon^\gamma
|r_2-r_1|^{\gamma/2}
\left(1+|m|^4\right)^{\gamma/2}
\left(1+|m|^4\right)^{\gamma/2-1}\\
&=
\epsilon^\gamma
|r_2-r_1|^{\gamma/2}
\left(1+|m|^4\right)^{\gamma-1},
\end{align*}
and similarly
\begin{align*}
&|\zeta(\epsilon m)-1|^2
\int_0^{r_2-r_1}
\mathrm{e}^{-2\alpha\lambda_m}
\,\mathrm{d}\alpha\\
&\lesssim
\epsilon^\gamma
|r_2-r_1|^{\gamma/2}
\left(1+|m|^4\right)^{\gamma-1}.
\end{align*}
Therefore,
\begin{align*}
&\stackrel{\text{Lemma \ref{l33}}}{\leq}
\mu_2
\epsilon^\gamma
\left|r_2-r_1\right|^{\frac{\gamma}{2}}
\sum_{m \in \mathbb{Z}^d}
\left|\rho_{j-1}\left(m\right)\right|^2
\left(1+\left|m\right|^{4}\right)^{\gamma-1}.
\end{align*}

It remains to estimate the dyadic sum. For $j\geq1$, by the support properties of
$\rho_{j-1},
|m|\asymp 2^{j-1}$ whenever
$\rho_{j-1}(m)\neq0.$
Moreover, the number of lattice points in the corresponding dyadic annulus
is of order $2^{d(j-1)}.$ For $j=0$, the support of $\rho_{-1}$ is bounded,
and the corresponding estimate follows directly. Hence
\begin{align*}
&\sum_{m \in \mathbb{Z}^d}
\left|\rho_{j-1}\left(m\right)\right|^2
\left(1+\left|m\right|^4\right)^{\gamma-1}\\
&\lesssim
2^{d(j-1)}
2^{4(\gamma-1)(j-1)}\\
&=
2^{(j-1)(d+4\gamma-4)}.
\end{align*}
Consequently,
\begin{align*}
&\left(\mathds{E}\left[\left|
\left(\delta_{j-1}\widetilde{X}_{r_2,\epsilon,1}\right)\left(x\right)
-\left(\delta_{j-1}\widetilde{X}_{r_2,1}\right)\left(x\right)
-\left(\delta_{j-1}\widetilde{X}_{r_1,\epsilon,1}\right)\left(x\right)
+\left(\delta_{j-1}\widetilde{X}_{r_1,1}\right)\left(x\right)
\right|^{\beta}\right]\right)^{\frac{2}{\beta}}\\
&\leq
\mu_3
\epsilon^\gamma
2^{\left(j-1\right)\left(d+4\gamma-4\right)}
\left|r_2-r_1\right|^{\frac{\gamma}{2}}.
\end{align*}

We deduce from the Garsia--Rodemich--Rumsey lemma \cite{MR267632} that for
every $U\in\mathbb{R}_+$,
\[
\lim_{\epsilon\downarrow0}
\mathds{E}\left[
\sup_{r\in[0,U]}
\left\|
\widetilde{X}_{r,\epsilon,1}
-
\widetilde{X}_{r,1}
\right\|_{\mathscr{C}^\theta}
\right]
=0.
\]
\end{enumerate}
\end{proof}
\begin{remark}
For the proof of Theorem \ref{theorem1} we used the following fact: If the sequence $(u_k)_{k \in\mathbb{Z}^d}$ grows
at most polynomially in $k$, then there exist $f \in \mathscr{S}'$  having Fourier transform $(u_k)_{k}$ (we refer to Chapter 2 of \cite{MR3445609} for more details).
\end{remark}
\chapter{}
\label{ppe2}
\begin{proof}[Proof of Theorem \ref{theorem2}]

We note that for every $\left(r,\epsilon,\varphi\right) \in \mathbb{R}_+\times \mathbb{R}_+^*\times \mathscr{S}$ and for $\mathds{P}$-almost every $\omega \in \Omega,(\widetilde{X}_{r,\epsilon,2}(\omega))(\varphi)=\left(X_{r,\epsilon,2}\left(\varphi\right)\right)\left(\omega\right).$

We fix $\left(\beta,U,\gamma\right) \in \mathbb{R}_+^*\times\mathbb{R}_+\times \left]0,1-\frac{d}{8}\right[.$

It follows from Theorem \ref{nelson} and Lemma \ref{lemma2} that there exists $\left(\mu_1,\mu_2,\mu_3,\mu_4,\mu_5\right) \in \left(\mathbb{R}_+^*\right)^5$ such that for all $\left(r_1,r_2,\epsilon_1,\epsilon_2,j,x\right) \in\left[0,U\right]^2\times(\mathbb{R}_+^*)^2\times \mathbb{N}\times \mathbb{T}^d,$ if $r_1\leq r_2,$ then

\begin{align*}
&\left(\mathds{E}\left[\left|\left(\delta_{j-1}\widetilde{X}_{r_2,\epsilon_2,2}\right)\left(x\right)-\left(\delta_{j-1}\widetilde{X}_{r_2,\epsilon_1,2}\right)\left(x\right)-\left(\delta_{j-1}\widetilde{X}_{r_1,\epsilon_2,2}\right)\left(x\right)+\left(\delta_{j-1}\widetilde{X}_{r_1,\epsilon_1,2}\right)\left(x\right)\right|^\beta\right]\right)^{\frac{2}{\beta}}\\
&\stackrel{\text{Theorem \ref{nelson}}}{\leq}\mu_1\mathds{E}\left[\left|\left(\delta_{j-1}\widetilde{X}_{r_2,\epsilon_2,2}\right)\left(x\right)-\left(\delta_{j-1}\widetilde{X}_{r_2,\epsilon_1,2}\right)\left(x\right)-\left(\delta_{j-1}\widetilde{X}_{r_1,\epsilon_2,2}\right)\left(x\right)+\left(\delta_{j-1}\widetilde{X}_{r_1,\epsilon_1,2}\right)\left(x\right)\right|^2\right]\\
&\stackrel{\text{Theorem \ref{nelson}}}{\leq}\mu_2\int_{\left[0,r_1\right]^2}\left(\int_{\left(\mathbb{T}^d\right)^2}\left|\int_{\mathbb{T}^d}K_{j-1}\left(x-y\right)\left(\prod_{q=1}^2(\zeta_{\epsilon_2}*\eta_{r_2-\alpha_q})\left(y-v_q\right)-\prod_{q=1}^2(\zeta_{\epsilon_1}*\eta_{r_2-\alpha_q})\left(y-v_q\right)\right.\right.\right.\\
&\left.\left.\left.
\hspace{4cm}
-\prod_{m=1}^2(\zeta_{\epsilon_2}*\eta_{r_1-\alpha_m})\left(y-v_m\right)+\prod_{q=1}^2(\zeta_{\epsilon_1}*\eta_{r_1-\alpha_q})\left(y-v_q\right)\right)\,\mathrm{d}y\right|^2\mathrm{d}v_1\,\mathrm{d}v_2\right)\mathrm{d}\alpha_1\,\mathrm{d}\alpha_2\\
& \phantom{=}\ +\mu_2\int_{\left[0,r_2\right]^2-\left[0,r_1\right]^2}\left(\int_{\left(\mathbb{T}^d\right)^2}\left|\int_{\mathbb{T}^d}K_{j-1}\left(x-y\right)\left(\prod_{q=1}^2(\zeta_{\epsilon_2}*\eta_{r_2-\alpha_q})\left(y-v_q\right)-\prod_{q=1}^2(\zeta_{\epsilon_1}*\eta_{r_2-\alpha_q})\left(y-v_q\right)\right)\,\mathrm{d}y\right|^2\mathrm{d}v_1\,\mathrm{d}v_2\right)\\
&\hspace{11cm}\mathrm{d}\alpha_1\,\mathrm{d}\alpha_2\\
&\stackrel{\text{Fubini}}{=}\mu_2\int_{\left[0,r_1\right]^2}\left(\int_{\left(\mathbb{T}^d\right)^4}K_{j-1}\left(y_1\right)K_{j-1}\left(y_2\right)\prod_{m=1}^2\left(\prod_{q=1}^2(\zeta_{\epsilon_2}*\eta_{r_2-\alpha_q})\left(y_m-v_q\right)-\prod_{q=1}^2(\zeta_{\epsilon_1}*\eta_{r_2-\alpha_q})\left(y_m-v_q\right)\right.\right.\\
&\left.\left.
\hspace{4cm}
-\prod_{q=1}^2(\zeta_{\epsilon_2}*\eta_{r_1-\alpha_q})\left(y_m-v_q\right)+\prod_{q=1}^2(\zeta_{\epsilon_1}*\eta_{r_1-\alpha_q})\left(y_m-v_q\right)\right)\mathrm{d}v_1\,\mathrm{d}v_2\,\mathrm{d}y_1\,\mathrm{d}y_2\right)\mathrm{d}\alpha_1\,\mathrm{d}\alpha_2\\
& \phantom{=}\ +\mu_2\int_{\left[0,r_2\right]^2-\left[0,r_1\right]^2}\left(\int_{\left(\mathbb{T}^d\right)^4}K_{j-1}\left(y_1\right)K_{j-1}(y_2)\prod_{m=1}^2\left(\prod_{q=1}^2(\zeta_{\epsilon_2}*\eta_{r_2-\alpha_q})\left(y_m-v_q\right)-\prod_{q=1}^2(\zeta_{\epsilon_1}*\eta_{r_2-\alpha_q})\left(y_m-v_q\right)\right)\right.\\
&\left.
\hspace{7cm}\mathrm{d}y_1\,\mathrm{d}y_2\,\mathrm{d}v_1\,\mathrm{d}v_2\right)\mathrm{d}\alpha_1\,\mathrm{d}\alpha_2\\
&= \mu_2\int_{\left[0,r_1\right]^2}\left(\int_{\left(\mathbb{T}^d\right)^2}K_{j-1}\left(y_1\right)K_{j-1}\left(y_2\right)\left(\prod_{m=1}^2(\zeta_{\epsilon_2}*\zeta_{\epsilon_2}*\eta_{2\left(r_2-\alpha_m\right)})\left(y_1-y_2\right)\right.\right.\\
&\left.\left.
-2\prod_{q=1}^2(\zeta_{\epsilon_1}*\zeta_{\epsilon_2}*\eta_{2(r_2-\alpha_q)})\left(y_1-y_2\right)
+\prod_{n=1}^2(\zeta_{\epsilon_1}*\zeta_{\epsilon_1}*\eta_{2\left(r_2-\alpha_n\right)})\left(y_1-y_2\right)\right.\right.\\
&\left.\left.
-2\prod_{m=1}^2(\zeta_{\epsilon_2}*\zeta_{\epsilon_2}*\eta_{r_1+r_2-2\alpha_m})\left(y_1-y_2\right)
+4\prod_{q=1}^2(\zeta_{\epsilon_1}*\zeta_{\epsilon_2}*\eta_{r_1+r_2-2\alpha_q})\left(y_1-y_2\right)\right.\right.\\
&\left.\left.
-2\prod_{n=1}^2(\zeta_{\epsilon_1}*\zeta_{\epsilon_1}*\eta_{r_1+r_2-2\alpha_n})\left(y_1-y_2\right)
+\prod_{m=1}^2(\zeta_{\epsilon_2}*\zeta_{\epsilon_2}*\eta_{2\left(r_1-\alpha_m\right)})\left(y_1-y_2\right)\right.\right.\\
&\left.\left.
-2\prod_{q=1}^2(\zeta_{\epsilon_1}*\zeta_{\epsilon_2}*\eta_{2(r_1-\alpha_q)})\left(y_1-y_2\right)
+\prod_{n=1}^2(\zeta_{\epsilon_1}*\zeta_{\epsilon_1}*\eta_{2\left(r_1-\alpha_n\right)})\left(y_1-y_2\right)\right)
\,\mathrm{d}y_1\,\mathrm{d}y_2\right)\mathrm{d}\alpha_1\,\mathrm{d}\alpha_2\\
& \phantom{=}\ +\mu_2\int_{\left[0,r_2\right]^2-\left[0,r_1\right]^2}\left(\int_{\left(\mathbb{T}^d\right)^2}K_{j-1}\left(y_1\right)K_{j-1}\left(y_2\right)\left(\prod_{m=1}^2(\zeta_{\epsilon_2}*\zeta_{\epsilon_2}*\eta_{2\left(r_2-\alpha_m\right)})\left(y_1-y_2\right)\right.\right.\\
&\left.\left.
-2\prod_{q=1}^2(\zeta_{\epsilon_1}*\zeta_{\epsilon_2}*\eta_{2(r_2-\alpha_q)})\left(y_1-y_2\right)
+\prod_{n=1}^2(\zeta_{\epsilon_1}*\zeta_{\epsilon_1}*\eta_{2\left(r_2-\alpha_n\right)})\left(y_1-y_2\right)\right)\mathrm{d}y_1\,\mathrm{d}y_2\right)\mathrm{d}\alpha_1\,\mathrm{d}\alpha_2.
\end{align*}

Using the semi-group property
\[
\eta_{q_1}*\eta_{q_2}=\eta_{q_1+q_2},
\]
before applying Parseval, we obtain
\begin{align*}
&\stackrel{\text{Parseval}}{=}\mu_2
\sum_{\left(m_1,m_2\right)\in\left(\mathbb{Z}^d\right)^2}
\left|\rho_{j-1}\left(m_1+m_2\right)\right|^2\\
&\quad\times
\left|
\zeta\left(\epsilon_2m_1\right)\zeta\left(\epsilon_2m_2\right)
-\zeta\left(\epsilon_1m_1\right)\zeta\left(\epsilon_1m_2\right)
\right|^2\\
&\quad\times
\int_{\left[0,r_1\right]^2}
\left(
\prod_{q=1}^2
\mathrm{e}^{-\left|2\pi m_q\right|^4(r_2-\alpha_q)}
-
\prod_{q=1}^2
\mathrm{e}^{-\left|2\pi m_q\right|^4(r_1-\alpha_q)}
\right)^2
\,\mathrm{d}\alpha_1\,\mathrm{d}\alpha_2\\
&\phantom{=}\ 
+\mu_2
\sum_{\left(m_1,m_2\right)\in\left(\mathbb{Z}^d\right)^2}
\left|\rho_{j-1}\left(m_1+m_2\right)\right|^2\\
&\quad\times
\left|
\zeta\left(\epsilon_2m_1\right)\zeta\left(\epsilon_2m_2\right)
-\zeta\left(\epsilon_1m_1\right)\zeta\left(\epsilon_1m_2\right)
\right|^2\\
&\quad\times
\int_{\left[0,r_2\right]^2-\left[0,r_1\right]^2}
\prod_{q=1}^2
\mathrm{e}^{-2\left|2\pi m_q\right|^4(r_2-\alpha_q)}
\,\mathrm{d}\alpha_1\,\mathrm{d}\alpha_2.
\end{align*}

We now estimate the two terms obtained after Parseval. Set
\[
h:=r_2-r_1
\]
and
\[
\lambda_q:=|2\pi m_q|^4,
\qquad q=1,2.
\]

We first estimate the mollification increment. Since $\zeta$ is smooth,
\begin{align*}
&\left|
\zeta(\epsilon_2m_1)\zeta(\epsilon_2m_2)
-\zeta(\epsilon_1m_1)\zeta(\epsilon_1m_2)
\right|\\
&\leq
\left|\zeta(\epsilon_2m_1)-\zeta(\epsilon_1m_1)\right|
\left|\zeta(\epsilon_2m_2)\right|\\
&\quad+
\left|\zeta(\epsilon_1m_1)\right|
\left|\zeta(\epsilon_2m_2)-\zeta(\epsilon_1m_2)\right|.
\end{align*}
Since $\zeta$ and its first derivatives are bounded,
\[
\left|\zeta(\epsilon_2m)-\zeta(\epsilon_1m)\right|
\lesssim
\min\left\{1,|\epsilon_2-\epsilon_1||m|\right\}.
\]
Hence, for $\gamma\in(0,1)$,
\begin{align*}
&\left|
\zeta(\epsilon_2m_1)\zeta(\epsilon_2m_2)
-\zeta(\epsilon_1m_1)\zeta(\epsilon_1m_2)
\right|^2\\
&\lesssim
|\epsilon_2-\epsilon_1|^\gamma
\left(
(1+\lambda_1)^{\gamma/4}
+
(1+\lambda_2)^{\gamma/4}
\right)\\
&\lesssim
|\epsilon_2-\epsilon_1|^\gamma
(1+\lambda_1)^{\gamma/4}
(1+\lambda_2)^{\gamma/4}.
\end{align*}

We now consider the first time integral. Since
\[
r_2-\alpha_q
=
h+r_1-\alpha_q,
\]
we have
\begin{align*}
&\prod_{q=1}^2
\mathrm{e}^{-\lambda_q(r_2-\alpha_q)}
-
\prod_{q=1}^2
\mathrm{e}^{-\lambda_q(r_1-\alpha_q)}\\
&=
\mathrm{e}^{-\lambda_1(r_1-\alpha_1)}
\mathrm{e}^{-\lambda_2(r_1-\alpha_2)}
\left(
\mathrm{e}^{-(\lambda_1+\lambda_2)h}-1
\right).
\end{align*}
Therefore,
\begin{align*}
&\int_{[0,r_1]^2}
\left(
\prod_{q=1}^2
\mathrm{e}^{-\lambda_q(r_2-\alpha_q)}
-
\prod_{q=1}^2
\mathrm{e}^{-\lambda_q(r_1-\alpha_q)}
\right)^2
\,\mathrm{d}\alpha_1\,\mathrm{d}\alpha_2\\
&=
\left(
1-\mathrm{e}^{-(\lambda_1+\lambda_2)h}
\right)^2
\prod_{q=1}^2
\int_0^{r_1}
\mathrm{e}^{-2\lambda_q(r_1-\alpha_q)}
\,\mathrm{d}\alpha_q.
\end{align*}
Using
\[
1-\mathrm{e}^{-x}\lesssim \min\{1,x\},
\]
Lemma~\ref{l33} gives
\[
\left(1-\mathrm{e}^{-x}\right)^2
\lesssim x^{\gamma/2},
\qquad x\geq0.
\]
Moreover,
\[
\int_0^{r_1}
\mathrm{e}^{-2\lambda_q(r_1-\alpha_q)}
\,\mathrm{d}\alpha_q
\lesssim
(1+\lambda_q)^{-1}.
\]
Consequently,
\begin{align*}
&\int_{[0,r_1]^2}
\left(
\prod_{q=1}^2
\mathrm{e}^{-\lambda_q(r_2-\alpha_q)}
-
\prod_{q=1}^2
\mathrm{e}^{-\lambda_q(r_1-\alpha_q)}
\right)^2
\,\mathrm{d}\alpha_1\,\mathrm{d}\alpha_2\\
&\lesssim
h^{\gamma/2}
(\lambda_1+\lambda_2)^{\gamma/2}
(1+\lambda_1)^{-1}
(1+\lambda_2)^{-1}\\
&\lesssim
h^{\gamma/2}
(1+\lambda_1)^{\gamma/2-1}
(1+\lambda_2)^{\gamma/2-1}.
\end{align*}
In the last inequality we used
\[
1+\lambda_1+\lambda_2
\leq
(1+\lambda_1)(1+\lambda_2).
\]

Multiplying this estimate by the mollification increment gives
\begin{align*}
&\left|
\zeta(\epsilon_2m_1)\zeta(\epsilon_2m_2)
-\zeta(\epsilon_1m_1)\zeta(\epsilon_1m_2)
\right|^2\\
&\quad\times
\int_{[0,r_1]^2}
\left(
\prod_{q=1}^2
\mathrm{e}^{-\lambda_q(r_2-\alpha_q)}
-
\prod_{q=1}^2
\mathrm{e}^{-\lambda_q(r_1-\alpha_q)}
\right)^2
\,\mathrm{d}\alpha_1\,\mathrm{d}\alpha_2\\
&\lesssim
|\epsilon_2-\epsilon_1|^\gamma
h^{\gamma/2}
\prod_{q=1}^2
(1+\lambda_q)^{\frac{3\gamma}{4}-1}.
\end{align*}
Since $\gamma>0$,
\[
\frac{3\gamma}{4}-1
\leq
\gamma-1,
\]
and hence
\begin{align*}
&\left|
\zeta(\epsilon_2m_1)\zeta(\epsilon_2m_2)
-\zeta(\epsilon_1m_1)\zeta(\epsilon_1m_2)
\right|^2\\
&\quad\times
\int_{[0,r_1]^2}
\left(
\prod_{q=1}^2
\mathrm{e}^{-\lambda_q(r_2-\alpha_q)}
-
\prod_{q=1}^2
\mathrm{e}^{-\lambda_q(r_1-\alpha_q)}
\right)^2
\,\mathrm{d}\alpha_1\,\mathrm{d}\alpha_2\\
&\lesssim
|\epsilon_2-\epsilon_1|^\gamma
h^{\gamma/2}
\prod_{q=1}^2
(1+\lambda_q)^{\gamma-1}.
\end{align*}

We now estimate the second time integral. We decompose
\[
[0,r_2]^2-[0,r_1]^2
\subset
\bigl([r_1,r_2]\times[0,r_2]\bigr)
\cup
\bigl([0,r_2]\times[r_1,r_2]\bigr).
\]
Therefore,
\begin{align*}
&\int_{[0,r_2]^2-[0,r_1]^2}
\mathrm{e}^{-2\lambda_1(r_2-\alpha_1)}
\mathrm{e}^{-2\lambda_2(r_2-\alpha_2)}
\,\mathrm{d}\alpha_1\,\mathrm{d}\alpha_2\\
&\lesssim
\left(
\int_{r_1}^{r_2}
\mathrm{e}^{-2\lambda_1(r_2-\alpha_1)}
\,\mathrm{d}\alpha_1
\right)
\left(
\int_0^{r_2}
\mathrm{e}^{-2\lambda_2(r_2-\alpha_2)}
\,\mathrm{d}\alpha_2
\right)\\
&\quad+
\left(
\int_0^{r_2}
\mathrm{e}^{-2\lambda_1(r_2-\alpha_1)}
\,\mathrm{d}\alpha_1
\right)
\left(
\int_{r_1}^{r_2}
\mathrm{e}^{-2\lambda_2(r_2-\alpha_2)}
\,\mathrm{d}\alpha_2
\right).
\end{align*}
For $q=1,2$,
\[
\int_0^{r_2}
\mathrm{e}^{-2\lambda_q(r_2-\alpha)}
\,\mathrm{d}\alpha
\lesssim
(1+\lambda_q)^{-1}.
\]
On the other hand,
\[
\int_{r_1}^{r_2}
\mathrm{e}^{-2\lambda_q(r_2-\alpha)}
\,\mathrm{d}\alpha
\lesssim h
\]
and
\[
\int_{r_1}^{r_2}
\mathrm{e}^{-2\lambda_q(r_2-\alpha)}
\,\mathrm{d}\alpha
\lesssim
(1+\lambda_q)^{-1}.
\]
Applying Lemma~\ref{l33} with exponent $\gamma/2$, we obtain
\[
\int_{r_1}^{r_2}
\mathrm{e}^{-2\lambda_q(r_2-\alpha)}
\,\mathrm{d}\alpha
\lesssim
h^{\gamma/2}
(1+\lambda_q)^{\frac{\gamma}{2}-1}.
\]
Consequently,
\begin{align*}
&\int_{[0,r_2]^2-[0,r_1]^2}
\mathrm{e}^{-2\lambda_1(r_2-\alpha_1)}
\mathrm{e}^{-2\lambda_2(r_2-\alpha_2)}
\,\mathrm{d}\alpha_1\,\mathrm{d}\alpha_2\\
&\lesssim
h^{\gamma/2}
\Bigl[
(1+\lambda_1)^{\frac{\gamma}{2}-1}
(1+\lambda_2)^{-1}
+
(1+\lambda_1)^{-1}
(1+\lambda_2)^{\frac{\gamma}{2}-1}
\Bigr].
\end{align*}

Multiplying by the mollification increment, we get
\begin{align*}
&\left|
\zeta(\epsilon_2m_1)\zeta(\epsilon_2m_2)
-\zeta(\epsilon_1m_1)\zeta(\epsilon_1m_2)
\right|^2\\
&\quad\times
\int_{[0,r_2]^2-[0,r_1]^2}
\mathrm{e}^{-2\lambda_1(r_2-\alpha_1)}
\mathrm{e}^{-2\lambda_2(r_2-\alpha_2)}
\,\mathrm{d}\alpha_1\,\mathrm{d}\alpha_2\\
&\lesssim
|\epsilon_2-\epsilon_1|^\gamma
h^{\gamma/2}
(1+\lambda_1)^{\gamma/4}
(1+\lambda_2)^{\gamma/4}\\
&\quad\times
\Bigl[
(1+\lambda_1)^{\frac{\gamma}{2}-1}
(1+\lambda_2)^{-1}
+
(1+\lambda_1)^{-1}
(1+\lambda_2)^{\frac{\gamma}{2}-1}
\Bigr]\\
&=
|\epsilon_2-\epsilon_1|^\gamma
h^{\gamma/2}
\Bigl[
(1+\lambda_1)^{\frac{3\gamma}{4}-1}
(1+\lambda_2)^{\frac{\gamma}{4}-1}\\
&\hspace{4cm}
+
(1+\lambda_1)^{\frac{\gamma}{4}-1}
(1+\lambda_2)^{\frac{3\gamma}{4}-1}
\Bigr].
\end{align*}
Since
\[
\frac{3\gamma}{4}\leq\gamma
\qquad\text{and}\qquad
\frac{\gamma}{4}\leq\gamma,
\]
we conclude that
\begin{align*}
&\left|
\zeta(\epsilon_2m_1)\zeta(\epsilon_2m_2)
-\zeta(\epsilon_1m_1)\zeta(\epsilon_1m_2)
\right|^2\\
&\quad\times
\int_{[0,r_2]^2-[0,r_1]^2}
\mathrm{e}^{-2\lambda_1(r_2-\alpha_1)}
\mathrm{e}^{-2\lambda_2(r_2-\alpha_2)}
\,\mathrm{d}\alpha_1\,\mathrm{d}\alpha_2\\
&\lesssim
|\epsilon_2-\epsilon_1|^\gamma
h^{\gamma/2}
\prod_{q=1}^2
(1+\lambda_q)^{\gamma-1}.
\end{align*}

Combining the estimates for the two time integrals and recalling that
\[
\lambda_q=|2\pi m_q|^4
\asymp |m_q|^4,
\]
we finally obtain
\begin{align*}
&\stackrel{\text{Lemma \ref{l33}}}{\leq}
\mu_3|\epsilon_2-\epsilon_1|^{\gamma}
\left|r_2-r_1\right|^{\frac{\gamma}{2}}
\sum_{\left(m_1,m_2\right)\in\left(\mathbb{Z}^d\right)^2}
\left|\rho_{j-1}\left(m_1+m_2\right)\right|^2
\prod_{q=1}^2
\left(1+\left|m_q\right|^4\right)^{\gamma-1}.
\end{align*}

It remains to estimate the discrete convolution. Setting
\[
n=m_1+m_2
\]
and then renaming $n$ as $m_1$, we obtain
\begin{align*}
&\sum_{\left(m_1,m_2\right) \in \left(\mathbb{Z}^d\right)^2}
\left|\rho_{j-1}\left(m_1+m_2\right)\right|^2
\left(1+\left|m_1\right|^4\right)^{\gamma-1}
\left(1+\left|m_2\right|^4\right)^{\gamma-1}\\
&=
\sum_{m_1 \in \mathbb{Z}^d}
\left|\rho_{j-1}\left(m_1\right)\right|^2
\sum_{m_2 \in \mathbb{Z}^d}
\left(1+\left|m_1-m_2\right|^4\right)^{\gamma-1}
\left(1+\left|m_2\right|^4\right)^{\gamma-1}.
\end{align*}

We now make the application of Lemma~\ref{lemma2} explicit. Since
\[
\gamma-1=-\frac{4(1-\gamma)}{4},
\]
we take
\[
\alpha=\beta=4(1-\gamma)
\]
in Lemma~\ref{lemma2}. The hypothesis
\[
\gamma<1-\frac{d}{8}
\]
implies
\[
\alpha+\beta=8(1-\gamma)>d.
\]
Moreover,
\[
\max(\alpha,\beta)=4(1-\gamma)<d
\]
in the dimensions under consideration. Hence Lemma~\ref{lemma2} gives
\begin{align*}
&\sum_{m_2 \in \mathbb{Z}^d}
\left(1+\left|m_1-m_2\right|^4\right)^{\gamma-1}
\left(1+\left|m_2\right|^4\right)^{\gamma-1}\\
&=
\sum_{m_2 \in \mathbb{Z}^d}
\left(1+\left|m_1-m_2\right|^4\right)^{-\frac{4(1-\gamma)}{4}}
\left(1+\left|m_2\right|^4\right)^{-\frac{4(1-\gamma)}{4}}\\
&\stackrel{\text{Lemma \ref{lemma2}}}{\lesssim}
\left(1+\left|m_1\right|^4\right)^{
\frac{d-4(1-\gamma)-4(1-\gamma)}{4}}\\
&=
\left(1+\left|m_1\right|^4\right)^{
\frac{d-8+8\gamma}{4}}\\
&=
\left(1+\left|m_1\right|^4\right)^{
\frac{d}{4}+2\gamma-2}.
\end{align*}
Consequently,
\begin{align*}
&\stackrel{\text{Lemma \ref{lemma2}}}{\leq}
\mu_4|\epsilon_2-\epsilon_1|^{\gamma}
\left|r_2-r_1\right|^{\frac{\gamma}{2}}
\sum_{m_1 \in \mathbb{Z}^d}
\left|\rho_{j-1}\left(m_1\right)\right|^2
\left(1+\left|m_1\right|^4\right)^{\frac{d}{4}+2\gamma-2}.
\end{align*}

Finally, for $j\geq1$, by the support properties of $\rho_{j-1}$,
$|m_1|\asymp 2^{j-1}$ whenever
$\rho_{j-1}(m_1)\neq0.$
Moreover, the number of lattice points in the corresponding dyadic annulus
is of order $2^{d(j-1)}$. For $j=0$, the support of $\rho_{-1}$ is bounded,
and the corresponding estimate follows directly. Therefore,
\begin{align*}
&\sum_{m_1 \in \mathbb{Z}^d}
\left|\rho_{j-1}\left(m_1\right)\right|^2
\left(1+\left|m_1\right|^4\right)^{\frac{d}{4}+2\gamma-2}\\
&\lesssim
2^{d(j-1)}
2^{4\left(\frac{d}{4}+2\gamma-2\right)(j-1)}\\
&=
2^{d(j-1)}
2^{\left(d+8\gamma-8\right)(j-1)}\\
&=
2^{\left(j-1\right)\left(2d+8\gamma-8\right)}.
\end{align*}
Thus,
\begin{align*}
&\left(\mathds{E}\left[\left|\left(\delta_{j-1}\widetilde{X}_{r_2,\epsilon_2,2}\right)\left(x\right)-\left(\delta_{j-1}\widetilde{X}_{r_2,\epsilon_1,2}\right)\left(x\right)-\left(\delta_{j-1}\widetilde{X}_{r_1,\epsilon_2,2}\right)\left(x\right)+\left(\delta_{j-1}\widetilde{X}_{r_1,\epsilon_1,2}\right)\left(x\right)\right|^\beta\right]\right)^{\frac{2}{\beta}}\\
&\leq
\mu_5|\epsilon_2-\epsilon_1|^{\gamma}
2^{\left(j-1\right)\left(2d+8\gamma-8\right)}
\left|r_2-r_1\right|^{\frac{\gamma}{2}}.
\end{align*}

We deduce from the Garsia--Rodemich--Rumsey lemma \cite{MR267632} that for every
$U\in\mathbb{R}_+$,
\[
\lim_{\epsilon_1\downarrow0,\epsilon_2\downarrow0}
\mathds{E}\left[
\sup_{r\in[0,U]}
\left\|
\widetilde{X}_{r,\epsilon_2,2}
-
\widetilde{X}_{r,\epsilon_1,2}
\right\|_{\mathscr{C}^{\theta}}
\right]
=0.
\]
\end{proof}
\chapter{}
\label{ppe3}
\begin{proof}[Proof of Theorem \ref{theorem3}]
We note that for every $\left(r,\epsilon,\varphi\right) \in \mathbb{R}_+\times \mathbb{R}_+^*\times \mathscr{S}$ and for $\mathds{P}$-almost every $\omega \in \Omega,(\widetilde{X}_{r,\epsilon,3}(\omega))(\varphi)=\left(X_{r,\epsilon,3}\left(\varphi\right)\right)\left(\omega\right).$

We fix $\left(\beta,U,\gamma\right) \in \mathbb{R}_+^*\times\mathbb{R}_+\times \left]0,1-\frac{d}{6}\right[.$

It follows from Theorem \ref{nelson} and Lemma \ref{lemma2} that there exists $\left(\mu_1,\mu_2,\mu_3,\mu_4,\mu_5,\mu_6\right) \in \left(\mathbb{R}_+^*\right)^6$ such that for all $\left(r_1,r_2,\epsilon_1,\epsilon_2,j,x\right) \in\left[0,U\right]^2\times(\mathbb{R}_+^*)^2\times \mathbb{N}\times \mathbb{T}^d,$ if $r_1\leq r_2,$ then

\begin{align*}
&\left(\mathds{E}\left[\left|\left(\delta_{j-1}\widetilde{X}_{r_2,\epsilon_2,3}\right)\left(x\right)-\left(\delta_{j-1}\widetilde{X}_{r_2,\epsilon_1,3}\right)\left(x\right)-\left(\delta_{j-1}\widetilde{X}_{r_1,\epsilon_2,3}\right)\left(x\right)+\left(\delta_{j-1}\widetilde{X}_{r_1,\epsilon_1,3}\right)\left(x\right)\right|^\beta\right]\right)^{\frac{2}{\beta}}\\
&\stackrel{\text{Theorem \ref{nelson}}}{\leq} 
\mu_1\mathds{E}\left[\left|\left(\delta_{j-1}\widetilde{X}_{r_2,\epsilon_2,3}\right)\left(x\right)-\left(\delta_{j-1}\widetilde{X}_{r_2,\epsilon_1,3}\right)\left(x\right)-\left(\delta_{j-1}\widetilde{X}_{r_1,\epsilon_2,3}\right)\left(x\right)+\left(\delta_{j-1}\widetilde{X}_{r_1,\epsilon_1,3}\right)\left(x\right)\right|^2\right]\\
&\stackrel{\text{Theorem \ref{nelson}}}{\leq}
\mu_2\int_{\left[0,r_1\right]^3}
\left(
\int_{\left(\mathbb{T}^d\right)^3}
\left|
\int_{\mathbb{T}^d}
K_{j-1}\left(x-y\right)
\left(
\prod_{q=1}^3(\zeta_{\epsilon_2}*\eta_{r_2-\alpha_q})\left(y-v_q\right)
-
\prod_{q=1}^3(\zeta_{\epsilon_1}*\eta_{r_2-\alpha_q})\left(y-v_q\right)
\right.\right.\right.\\
&\left.\left.\left.
\hspace{4cm}
-
\prod_{m=1}^3(\zeta_{\epsilon_2}*\eta_{r_1-\alpha_m})\left(y-v_m\right)
+
\prod_{q=1}^3(\zeta_{\epsilon_1}*\eta_{r_1-\alpha_q})\left(y-v_q\right)
\right)
\,\mathrm{d}y
\right|^2
\mathrm{d}v_1\,\mathrm{d}v_2\,\mathrm{d}v_3
\right)
\mathrm{d}\alpha_1\,\mathrm{d}\alpha_2\,\mathrm{d}\alpha_3\\
&\phantom{=}\ 
+\mu_2
\int_{\left[0,r_2\right]^3-\left[0,r_1\right]^3}
\left(
\int_{\left(\mathbb{T}^d\right)^3}
\left|
\int_{\mathbb{T}^d}
K_{j-1}\left(x-y\right)
\left(
\prod_{q=1}^3(\zeta_{\epsilon_2}*\eta_{r_2-\alpha_q})\left(y-v_q\right)
-
\prod_{q=1}^3(\zeta_{\epsilon_1}*\eta_{r_2-\alpha_q})\left(y-v_q\right)
\right)
\,\mathrm{d}y
\right|^2
\right.\\
&\left.
\hspace{8cm}
\mathrm{d}v_1\,\mathrm{d}v_2\,\mathrm{d}v_3
\right)
\,\mathrm{d}\alpha_1\,\mathrm{d}\alpha_2\,\mathrm{d}\alpha_3\\
&\stackrel{\text{Fubini}}{=}
\mu_2
\int_{\left[0,r_1\right]^3}
\left(
\int_{\left(\mathbb{T}^d\right)^5}
K_{j-1}\left(y_1\right)
K_{j-1}\left(y_2\right)
\prod_{m=1}^2
\left(
\prod_{q=1}^3(\zeta_{\epsilon_2}*\eta_{r_2-\alpha_q})\left(y_m-v_q\right)
-
\prod_{q=1}^3(\zeta_{\epsilon_1}*\eta_{r_2-\alpha_q})\left(y_m-v_q\right)
\right.\right.\\
&\left.\left.
\hspace{3cm}
-
\prod_{q=1}^3(\zeta_{\epsilon_2}*\eta_{r_1-\alpha_q})\left(y_m-v_q\right)
+
\prod_{q=1}^3(\zeta_{\epsilon_1}*\eta_{r_1-\alpha_q})\left(y_m-v_q\right)
\right)
\,\mathrm{d}v_1\,\mathrm{d}v_2\,\mathrm{d}v_3\,
\mathrm{d}y_1\,\mathrm{d}y_2
\right)
\mathrm{d}\alpha_1\,\mathrm{d}\alpha_2\,\mathrm{d}\alpha_3\\
&\phantom{=}\ 
+\mu_2
\int_{\left[0,r_2\right]^3-\left[0,r_1\right]^3}
\left(
\int_{\left(\mathbb{T}^d\right)^5}
K_{j-1}\left(y_1\right)
K_{j-1}\left(y_2\right)
\prod_{m=1}^2
\left(
\prod_{q=1}^3(\zeta_{\epsilon_2}*\eta_{r_2-\alpha_q})\left(y_m-v_q\right)
-
\prod_{q=1}^3(\zeta_{\epsilon_1}*\eta_{r_2-\alpha_q})\left(y_m-v_q\right)
\right)
\right.\\
&\left.
\hspace{8cm}
\,\mathrm{d}y_1\,\mathrm{d}y_2
\,\mathrm{d}v_1\,\mathrm{d}v_2\,\mathrm{d}v_3
\right)
\,\mathrm{d}\alpha_1\,\mathrm{d}\alpha_2\,\mathrm{d}\alpha_3\\
&=
\mu_2
\int_{\left[0,r_1\right]^3}
\left(
\int_{\left(\mathbb{T}^d\right)^2}
K_{j-1}\left(y_1\right)
K_{j-1}\left(y_2\right)
\left(
\prod_{m=1}^3
(\zeta_{\epsilon_2}*\zeta_{\epsilon_2}*
\eta_{2(r_2-\alpha_m)})
\left(y_1-y_2\right)
\right.\right.\\
&\left.\left.
\quad
-2\prod_{q=1}^3
(\zeta_{\epsilon_1}*\zeta_{\epsilon_2}*
\eta_{2(r_2-\alpha_q)})
\left(y_1-y_2\right)
+
\prod_{n=1}^3
(\zeta_{\epsilon_1}*\zeta_{\epsilon_1}*
\eta_{2(r_2-\alpha_n)})
\left(y_1-y_2\right)
\right.\right.\\
&\left.\left.
\quad
-2\prod_{m=1}^3
(\zeta_{\epsilon_2}*\zeta_{\epsilon_2}*
\eta_{r_1+r_2-2\alpha_m})
\left(y_1-y_2\right)
+
4\prod_{q=1}^3
(\zeta_{\epsilon_1}*\zeta_{\epsilon_2}*
\eta_{r_1+r_2-2\alpha_q})
\left(y_1-y_2\right)
\right.\right.\\
&\left.\left.
\quad
-2\prod_{n=1}^3
(\zeta_{\epsilon_1}*\zeta_{\epsilon_1}*
\eta_{r_1+r_2-2\alpha_n})
\left(y_1-y_2\right)
+
\prod_{m=1}^3
(\zeta_{\epsilon_2}*\zeta_{\epsilon_2}*
\eta_{2(r_1-\alpha_m)})
\left(y_1-y_2\right)
\right.\right.\\
&\left.\left.
\quad
-2\prod_{q=1}^3
(\zeta_{\epsilon_1}*\zeta_{\epsilon_2}*
\eta_{2(r_1-\alpha_q)})
\left(y_1-y_2\right)
+
\prod_{n=1}^3
(\zeta_{\epsilon_1}*\zeta_{\epsilon_1}*
\eta_{2(r_1-\alpha_n)})
\left(y_1-y_2\right)
\right)
\,\mathrm{d}y_1\,\mathrm{d}y_2
\right)
\mathrm{d}\alpha_1\,\mathrm{d}\alpha_2\,\mathrm{d}\alpha_3\\
&\phantom{=}\ 
+\mu_2
\int_{\left[0,r_2\right]^3-\left[0,r_1\right]^3}
\left(
\int_{\left(\mathbb{T}^d\right)^2}
K_{j-1}\left(y_1\right)
K_{j-1}\left(y_2\right)
\left(
\prod_{m=1}^3
(\zeta_{\epsilon_2}*\zeta_{\epsilon_2}*
\eta_{2(r_2-\alpha_m)})
\left(y_1-y_2\right)
\right.\right.\\
&\left.\left.
\quad
-2\prod_{q=1}^3
(\zeta_{\epsilon_1}*\zeta_{\epsilon_2}*
\eta_{2(r_2-\alpha_q)})
\left(y_1-y_2\right)
+
\prod_{n=1}^3
(\zeta_{\epsilon_1}*\zeta_{\epsilon_1}*
\eta_{2(r_2-\alpha_n)})
\left(y_1-y_2\right)
\right)
\,\mathrm{d}y_1\,\mathrm{d}y_2
\right)
\mathrm{d}\alpha_1\,\mathrm{d}\alpha_2\,\mathrm{d}\alpha_3.
\end{align*}

Using the semi-group property
\[
\eta_{q_1}*\eta_{q_2}=\eta_{q_1+q_2},
\]
before applying Parseval, the preceding expression becomes
\begin{align*}
&\stackrel{\text{Parseval}}{=}
\mu_2
\sum_{\left(m_1,m_2,m_3\right)\in\left(\mathbb{Z}^d\right)^3}
\left|
\rho_{j-1}\left(m_1+m_2+m_3\right)
\right|^2\\
&\quad\times
\int_{\left[0,r_1\right]^3}
\left|
\prod_{q=1}^3
\zeta\left(\epsilon_2m_q\right)
-
\prod_{q=1}^3
\zeta\left(\epsilon_1m_q\right)
\right|^2\\
&\quad\times
\left(
\prod_{q=1}^3
\mathrm{e}^{-\left|2\pi m_q\right|^4
\left(r_2-\alpha_q\right)}
-
\prod_{q=1}^3
\mathrm{e}^{-\left|2\pi m_q\right|^4
\left(r_1-\alpha_q\right)}
\right)^2
\,\mathrm{d}\alpha_1\,\mathrm{d}\alpha_2\,\mathrm{d}\alpha_3\\
&\phantom{=}\ 
+\mu_2
\sum_{\left(m_1,m_2,m_3\right)\in\left(\mathbb{Z}^d\right)^3}
\left|
\rho_{j-1}\left(m_1+m_2+m_3\right)
\right|^2\\
&\quad\times
\int_{\left[0,r_2\right]^3-\left[0,r_1\right]^3}
\left|
\prod_{q=1}^3
\zeta\left(\epsilon_2m_q\right)
-
\prod_{q=1}^3
\zeta\left(\epsilon_1m_q\right)
\right|^2
\prod_{q=1}^3
\mathrm{e}^{-2\left|2\pi m_q\right|^4
\left(r_2-\alpha_q\right)}
\,\mathrm{d}\alpha_1\,\mathrm{d}\alpha_2\,\mathrm{d}\alpha_3.
\end{align*}

We now estimate the two terms obtained after Parseval. Set
\[
h:=r_2-r_1
\]
and
\[
\lambda_q:=|2\pi m_q|^4,
\qquad q=1,2,3.
\]

We first estimate the mollification increment. By adding and subtracting
intermediate products, we have
\begin{align*}
&\prod_{q=1}^3\zeta(\epsilon_2m_q)
-
\prod_{q=1}^3\zeta(\epsilon_1m_q)\\
&=
\bigl(\zeta(\epsilon_2m_1)-\zeta(\epsilon_1m_1)\bigr)
\zeta(\epsilon_2m_2)\zeta(\epsilon_2m_3)\\
&\quad+
\zeta(\epsilon_1m_1)
\bigl(\zeta(\epsilon_2m_2)-\zeta(\epsilon_1m_2)\bigr)
\zeta(\epsilon_2m_3)\\
&\quad+
\zeta(\epsilon_1m_1)\zeta(\epsilon_1m_2)
\bigl(\zeta(\epsilon_2m_3)-\zeta(\epsilon_1m_3)\bigr).
\end{align*}
Since $\zeta$ is smooth and bounded,
\[
\left|
\zeta(\epsilon_2m)-\zeta(\epsilon_1m)
\right|
\lesssim
\min\left\{1,|\epsilon_2-\epsilon_1||m|\right\}.
\]
Consequently, for $\gamma\in(0,1)$,
\begin{align*}
&\left|
\prod_{q=1}^3\zeta(\epsilon_2m_q)
-
\prod_{q=1}^3\zeta(\epsilon_1m_q)
\right|^2\\
&\lesssim
|\epsilon_2-\epsilon_1|^\gamma
\sum_{q=1}^3
(1+\lambda_q)^{\gamma/4}\\
&\lesssim
|\epsilon_2-\epsilon_1|^\gamma
\prod_{q=1}^3
(1+\lambda_q)^{\gamma/4}.
\end{align*}

We first consider the integral over $[0,r_1]^3$. Since
\[
r_2-\alpha_q
=
h+r_1-\alpha_q,
\]
we have
\begin{align*}
&\prod_{q=1}^3
\mathrm{e}^{-\lambda_q(r_2-\alpha_q)}
-
\prod_{q=1}^3
\mathrm{e}^{-\lambda_q(r_1-\alpha_q)}\\
&=
\prod_{q=1}^3
\mathrm{e}^{-\lambda_q(r_1-\alpha_q)}
\left(
\mathrm{e}^{-h(\lambda_1+\lambda_2+\lambda_3)}-1
\right).
\end{align*}
Hence
\begin{align*}
&\int_{[0,r_1]^3}
\left(
\prod_{q=1}^3
\mathrm{e}^{-\lambda_q(r_2-\alpha_q)}
-
\prod_{q=1}^3
\mathrm{e}^{-\lambda_q(r_1-\alpha_q)}
\right)^2
\,\mathrm{d}\alpha_1\,\mathrm{d}\alpha_2\,\mathrm{d}\alpha_3\\
&=
\left(
1-\mathrm{e}^{-h(\lambda_1+\lambda_2+\lambda_3)}
\right)^2
\prod_{q=1}^3
\int_0^{r_1}
\mathrm{e}^{-2\lambda_q(r_1-\alpha_q)}
\,\mathrm{d}\alpha_q.
\end{align*}
Using
\[
1-\mathrm{e}^{-x}\lesssim\min\{1,x\},
\]
and Lemma~\ref{l33}, we obtain
\[
\left(1-\mathrm{e}^{-x}\right)^2
\lesssim
x^{\gamma/2}.
\]
Moreover,
\[
\int_0^{r_1}
\mathrm{e}^{-2\lambda_q(r_1-\alpha_q)}
\,\mathrm{d}\alpha_q
\lesssim
(1+\lambda_q)^{-1}.
\]
Therefore,
\begin{align*}
&\int_{[0,r_1]^3}
\left(
\prod_{q=1}^3
\mathrm{e}^{-\lambda_q(r_2-\alpha_q)}
-
\prod_{q=1}^3
\mathrm{e}^{-\lambda_q(r_1-\alpha_q)}
\right)^2
\,\mathrm{d}\alpha_1\,\mathrm{d}\alpha_2\,\mathrm{d}\alpha_3\\
&\lesssim
h^{\gamma/2}
(\lambda_1+\lambda_2+\lambda_3)^{\gamma/2}
\prod_{q=1}^3(1+\lambda_q)^{-1}.
\end{align*}
Since
\[
1+\lambda_1+\lambda_2+\lambda_3
\leq
(1+\lambda_1)(1+\lambda_2)(1+\lambda_3),
\]
we get
\begin{align*}
&\int_{[0,r_1]^3}
\left(
\prod_{q=1}^3
\mathrm{e}^{-\lambda_q(r_2-\alpha_q)}
-
\prod_{q=1}^3
\mathrm{e}^{-\lambda_q(r_1-\alpha_q)}
\right)^2
\,\mathrm{d}\alpha_1\,\mathrm{d}\alpha_2\,\mathrm{d}\alpha_3\\
&\lesssim
h^{\gamma/2}
\prod_{q=1}^3
(1+\lambda_q)^{\gamma/2-1}.
\end{align*}

Multiplying by the mollification increment, we obtain
\begin{align*}
&\left|
\prod_{q=1}^3\zeta(\epsilon_2m_q)
-
\prod_{q=1}^3\zeta(\epsilon_1m_q)
\right|^2\\
&\quad\times
\int_{[0,r_1]^3}
\left(
\prod_{q=1}^3
\mathrm{e}^{-\lambda_q(r_2-\alpha_q)}
-
\prod_{q=1}^3
\mathrm{e}^{-\lambda_q(r_1-\alpha_q)}
\right)^2
\,\mathrm{d}\alpha_1\,\mathrm{d}\alpha_2\,\mathrm{d}\alpha_3\\
&\lesssim
|\epsilon_2-\epsilon_1|^\gamma
h^{\gamma/2}
\prod_{q=1}^3
(1+\lambda_q)^{\frac{3\gamma}{4}-1}.
\end{align*}
Since
\[
\frac{3\gamma}{4}-1
\leq
\gamma-1,
\]
we conclude that
\begin{align*}
&\left|
\prod_{q=1}^3\zeta(\epsilon_2m_q)
-
\prod_{q=1}^3\zeta(\epsilon_1m_q)
\right|^2\\
&\quad\times
\int_{[0,r_1]^3}
\left(
\prod_{q=1}^3
\mathrm{e}^{-\lambda_q(r_2-\alpha_q)}
-
\prod_{q=1}^3
\mathrm{e}^{-\lambda_q(r_1-\alpha_q)}
\right)^2
\,\mathrm{d}\alpha_1\,\mathrm{d}\alpha_2\,\mathrm{d}\alpha_3\\
&\lesssim
|\epsilon_2-\epsilon_1|^\gamma
h^{\gamma/2}
\prod_{q=1}^3
(1+\lambda_q)^{\gamma-1}.
\end{align*}

We now turn to the second time integral. We use
\[
[0,r_2]^3-[0,r_1]^3
\subset
\bigl([r_1,r_2]\times[0,r_2]^2\bigr)
\cup
\bigl([0,r_2]\times[r_1,r_2]\times[0,r_2]\bigr)
\cup
\bigl([0,r_2]^2\times[r_1,r_2]\bigr).
\]
Therefore,
\begin{align*}
&\int_{[0,r_2]^3-[0,r_1]^3}
\prod_{q=1}^3
\mathrm{e}^{-2\lambda_q(r_2-\alpha_q)}
\,\mathrm{d}\alpha_1\,\mathrm{d}\alpha_2\,\mathrm{d}\alpha_3\\
&\lesssim
\sum_{i=1}^3
\left(
\int_{r_1}^{r_2}
\mathrm{e}^{-2\lambda_i(r_2-\alpha_i)}
\,\mathrm{d}\alpha_i
\right)
\prod_{\substack{q=1\\q\neq i}}^3
\left(
\int_0^{r_2}
\mathrm{e}^{-2\lambda_q(r_2-\alpha_q)}
\,\mathrm{d}\alpha_q
\right).
\end{align*}
For every $q\in\{1,2,3\}$,
\[
\int_0^{r_2}
\mathrm{e}^{-2\lambda_q(r_2-\alpha)}
\,\mathrm{d}\alpha
\lesssim
(1+\lambda_q)^{-1}.
\]
Moreover,
\[
\int_{r_1}^{r_2}
\mathrm{e}^{-2\lambda_q(r_2-\alpha)}
\,\mathrm{d}\alpha
\lesssim h
\]
and
\[
\int_{r_1}^{r_2}
\mathrm{e}^{-2\lambda_q(r_2-\alpha)}
\,\mathrm{d}\alpha
\lesssim
(1+\lambda_q)^{-1}.
\]
Applying Lemma~\ref{l33} with exponent $\gamma/2$ yields
\[
\int_{r_1}^{r_2}
\mathrm{e}^{-2\lambda_q(r_2-\alpha)}
\,\mathrm{d}\alpha
\lesssim
h^{\gamma/2}
(1+\lambda_q)^{\frac{\gamma}{2}-1}.
\]
Hence
\begin{align*}
&\int_{[0,r_2]^3-[0,r_1]^3}
\prod_{q=1}^3
\mathrm{e}^{-2\lambda_q(r_2-\alpha_q)}
\,\mathrm{d}\alpha_1\,\mathrm{d}\alpha_2\,\mathrm{d}\alpha_3\\
&\lesssim
h^{\gamma/2}
\sum_{i=1}^3
(1+\lambda_i)^{\frac{\gamma}{2}-1}
\prod_{\substack{q=1\\q\neq i}}^3
(1+\lambda_q)^{-1}.
\end{align*}

Multiplying by the mollification increment gives
\begin{align*}
&\left|
\prod_{q=1}^3\zeta(\epsilon_2m_q)
-
\prod_{q=1}^3\zeta(\epsilon_1m_q)
\right|^2\\
&\quad\times
\int_{[0,r_2]^3-[0,r_1]^3}
\prod_{q=1}^3
\mathrm{e}^{-2\lambda_q(r_2-\alpha_q)}
\,\mathrm{d}\alpha_1\,\mathrm{d}\alpha_2\,\mathrm{d}\alpha_3\\
&\lesssim
|\epsilon_2-\epsilon_1|^\gamma
h^{\gamma/2}
\prod_{q=1}^3(1+\lambda_q)^{\gamma/4}\\
&\quad\times
\sum_{i=1}^3
(1+\lambda_i)^{\frac{\gamma}{2}-1}
\prod_{\substack{q=1\\q\neq i}}^3
(1+\lambda_q)^{-1}.
\end{align*}
Thus,
\begin{align*}
&\left|
\prod_{q=1}^3\zeta(\epsilon_2m_q)
-
\prod_{q=1}^3\zeta(\epsilon_1m_q)
\right|^2\\
&\quad\times
\int_{[0,r_2]^3-[0,r_1]^3}
\prod_{q=1}^3
\mathrm{e}^{-2\lambda_q(r_2-\alpha_q)}
\,\mathrm{d}\alpha_1\,\mathrm{d}\alpha_2\,\mathrm{d}\alpha_3\\
&\lesssim
|\epsilon_2-\epsilon_1|^\gamma
h^{\gamma/2}
\sum_{i=1}^3
(1+\lambda_i)^{\frac{3\gamma}{4}-1}
\prod_{\substack{q=1\\q\neq i}}^3
(1+\lambda_q)^{\frac{\gamma}{4}-1}.
\end{align*}
Since
\[
\frac{3\gamma}{4}\leq\gamma
\qquad\text{and}\qquad
\frac{\gamma}{4}\leq\gamma,
\]
each term in the sum is bounded by
\[
\prod_{q=1}^3
(1+\lambda_q)^{\gamma-1}.
\]
Consequently,
\begin{align*}
&\left|
\prod_{q=1}^3\zeta(\epsilon_2m_q)
-
\prod_{q=1}^3\zeta(\epsilon_1m_q)
\right|^2\\
&\quad\times
\int_{[0,r_2]^3-[0,r_1]^3}
\prod_{q=1}^3
\mathrm{e}^{-2\lambda_q(r_2-\alpha_q)}
\,\mathrm{d}\alpha_1\,\mathrm{d}\alpha_2\,\mathrm{d}\alpha_3\\
&\lesssim
|\epsilon_2-\epsilon_1|^\gamma
h^{\gamma/2}
\prod_{q=1}^3
(1+\lambda_q)^{\gamma-1}.
\end{align*}

Combining the estimates for the two terms and recalling that
\[
\lambda_q=|2\pi m_q|^4,
\]
we obtain
\begin{align*}
&\stackrel{\text{Lemma \ref{l33}}}{\leq}
\mu_{3}
|\epsilon_2-\epsilon_1|^{\gamma}
\left|r_2-r_1\right|^{\frac{\gamma}{2}}
\sum_{\left(m_1,m_2,m_3\right)\in\left(\mathbb{Z}^d\right)^3}
\left|
\rho_{j-1}\left(m_1+m_2+m_3\right)
\right|^2
\prod_{q=1}^3
\left(1+\left|m_q\right|^4\right)^{\gamma-1}.
\end{align*}

We next make explicit the discrete convolution estimates. First, by setting
\[
n=m_2+m_3
\]
and then renaming $n$ as $m_2$, we have
\begin{align*}
&\sum_{\left(m_1,m_2,m_3\right)\in\left(\mathbb{Z}^d\right)^3}
\left|
\rho_{j-1}\left(m_1+m_2+m_3\right)
\right|^2
\prod_{q=1}^3
\left(1+\left|m_q\right|^4\right)^{\gamma-1}\\
&=
\sum_{\left(m_1,m_2\right)\in\left(\mathbb{Z}^d\right)^2}
\left|
\rho_{j-1}\left(m_1+m_2\right)
\right|^2
\left(1+\left|m_1\right|^4\right)^{\gamma-1}\\
&\qquad\times
\sum_{m_3\in\mathbb{Z}^d}
\left(1+\left|m_2-m_3\right|^4\right)^{\gamma-1}
\left(1+\left|m_3\right|^4\right)^{\gamma-1}.
\end{align*}

For the inner sum, write
\[
\left(1+\left|m_2-m_3\right|^4\right)^{\gamma-1}
=
\left(1+\left|m_2-m_3\right|^4\right)^{-\frac{4(1-\gamma)}{4}}
\]
and
\[
\left(1+\left|m_3\right|^4\right)^{\gamma-1}
=
\left(1+\left|m_3\right|^4\right)^{-\frac{4(1-\gamma)}{4}}.
\]
Thus, in Lemma~\ref{lemma2}, we take
\[
\alpha=\beta=4(1-\gamma).
\]
Consequently,
\begin{align*}
&\sum_{m_3\in\mathbb{Z}^d}
\left(1+\left|m_2-m_3\right|^4\right)^{\gamma-1}
\left(1+\left|m_3\right|^4\right)^{\gamma-1}\\
&\stackrel{\text{Lemma \ref{lemma2}}}{\lesssim}
\left(1+\left|m_2\right|^4\right)^{
\frac{d-4(1-\gamma)-4(1-\gamma)}{4}}\\
&=
\left(1+\left|m_2\right|^4\right)^{
\frac{d}{4}+2\gamma-2}.
\end{align*}
Hence
\begin{align*}
&\leq
\mu_4
|\epsilon_2-\epsilon_1|^{\gamma}
\left|r_2-r_1\right|^{\frac{\gamma}{2}}
\sum_{\left(m_1,m_2\right)\in\left(\mathbb{Z}^d\right)^2}
\left|
\rho_{j-1}\left(m_1+m_2\right)
\right|^2
\left(1+\left|m_1\right|^4\right)^{\gamma-1}
\left(1+\left|m_2\right|^4\right)^{\frac{d}{4}+2\gamma-2}.
\end{align*}

We now set
\[
n=m_1+m_2
\]
and rename $n$ as $m_1$. We obtain
\begin{align*}
&=
\mu_4
|\epsilon_2-\epsilon_1|^{\gamma}
\left|r_2-r_1\right|^{\frac{\gamma}{2}}
\sum_{m_1\in\mathbb{Z}^d}
\left|
\rho_{j-1}\left(m_1\right)
\right|^2\\
&\qquad\times
\sum_{m_2\in\mathbb{Z}^d}
\left(1+\left|m_1-m_2\right|^4\right)^{\gamma-1}
\left(1+\left|m_2\right|^4\right)^{\frac{d}{4}+2\gamma-2}.
\end{align*}

For the second application of Lemma~\ref{lemma2}, observe that
\[
\gamma-1
=
-\frac{4-4\gamma}{4}
\]
and
\[
\frac{d}{4}+2\gamma-2
=
-\frac{8-d-8\gamma}{4}.
\]
Therefore, we take
\[
\alpha=4-4\gamma,
\qquad
\beta=8-d-8\gamma.
\]
Since
\[
\alpha+\beta
=
12-d-12\gamma,
\]
the condition
\[
\gamma<1-\frac{d}{6}
\]
implies
\[
\alpha+\beta>d.
\]
Hence Lemma~\ref{lemma2} gives
\begin{align*}
&\sum_{m_2\in\mathbb{Z}^d}
\left(1+\left|m_1-m_2\right|^4\right)^{\gamma-1}
\left(1+\left|m_2\right|^4\right)^{\frac{d}{4}+2\gamma-2}\\
&\stackrel{\text{Lemma \ref{lemma2}}}{\lesssim}
\left(1+\left|m_1\right|^4\right)^{
\frac{d-(4-4\gamma)-(8-d-8\gamma)}{4}}\\
&=
\left(1+\left|m_1\right|^4\right)^{
\frac{d}{2}+3\gamma-3}.
\end{align*}
Consequently,
\begin{align*}
&\stackrel{\text{Lemma \ref{lemma2}}}{\leq}
\mu_{5}
|\epsilon_2-\epsilon_1|^{\gamma}
\left|r_2-r_1\right|^{\frac{\gamma}{2}}
\sum_{m\in\mathbb{Z}^d}
\left|\rho_{j-1}\left(m\right)\right|^2
\left(1+\left|m\right|^4\right)^{\frac{d}{2}+3\gamma-3}.
\end{align*}

Finally, for $j\geq1$, by the support properties of $\rho_{j-1}$,
$|m|\asymp 2^{j-1}$ whenever 
$\rho_{j-1}(m)\neq0.$
Moreover, the number of lattice points in the corresponding dyadic annulus is
of order $2^{d(j-1)}$. For $j=0$, the support of $\rho_{-1}$ is bounded,
and the corresponding estimate follows directly. Therefore,
\begin{align*}
&\sum_{m\in\mathbb{Z}^d}
\left|\rho_{j-1}\left(m\right)\right|^2
\left(1+\left|m\right|^4\right)^{\frac{d}{2}+3\gamma-3}\\
&\lesssim
2^{d(j-1)}
2^{4\left(\frac{d}{2}+3\gamma-3\right)(j-1)}\\
&=
2^{\left(j-1\right)\left(3d+12\gamma-12\right)}.
\end{align*}
Thus,
\begin{align*}
&\left(\mathds{E}\left[\left|\left(\delta_{j-1}\widetilde{X}_{r_2,\epsilon_2,3}\right)\left(x\right)-\left(\delta_{j-1}\widetilde{X}_{r_2,\epsilon_1,3}\right)\left(x\right)-\left(\delta_{j-1}\widetilde{X}_{r_1,\epsilon_2,3}\right)\left(x\right)+\left(\delta_{j-1}\widetilde{X}_{r_1,\epsilon_1,3}\right)\left(x\right)\right|^\beta\right]\right)^{\frac{2}{\beta}}\\
&\leq
\mu_{6}
2^{\left(j-1\right)\left(3d+12\gamma-12\right)}
|\epsilon_2-\epsilon_1|^{\gamma}
\left|r_2-r_1\right|^{\frac{\gamma}{2}}.
\end{align*}

We deduce from the Garsia--Rodemich--Rumsey lemma \cite{MR267632} that for every
$U\in\mathbb{R}_+$,
\[
\lim_{\epsilon_1\downarrow0,\epsilon_2\downarrow0}
\mathds{E}\left[
\sup_{r\in[0,U]}
\left\|
\widetilde{X}_{r,\epsilon_2,3}
-
\widetilde{X}_{r,\epsilon_1,3}
\right\|_{\mathscr{C}^{\theta}}
\right]
=0.
\]
\end{proof}

\chapter{}
\label{ppe4}
\begin{proof}[Proof of Theorem \ref{theorem4}]
As the computations are quite similar, we only show the $L^2(\Omega)$ estimates at a fixed time $r$ and not for an increment. 

Let $\gamma \in \left]0,\frac{1}{6}\right[,$
\begin{align*}
\psi:\left(\mathbb{Z}^d\right)^2&\longrightarrow \left[0,1\right]\\
\left(m_1,m_2\right)&\longmapsto\psi\left(m_1,m_2\right)=\sum_{q \in \mathbb{N}}\sum_{j=0}^{\min\left(q+1,2\right)}\rho_{j+\max\left(q-2,-1\right)}\left(m_1\right)\rho_{q-1}\left(m_2\right)
\end{align*}
The following observation is crucial for our estimation $$\psi(m_1,m_2)\neq 0 \implies \kappa_1 (1+|m_2|)\leq 1+|m_1|\leq \kappa_2 (1+|m_2|)$$ for $0<\kappa_1<\kappa_2.$

We note that $(\delta_{m-1}\Xi_{r,\epsilon,1})(x)$ is not a Wiener-Ito integral. Therefore we have to use Lemma \ref{l3} to express it as a linear combination of Wiener-Ito integrals: $$(\delta_{m-1}\Xi_{r,\epsilon,1})(x)=I_{r,\epsilon,1}+3I_{r,\epsilon,2},$$ where $$I_{r,\epsilon,1}=\int_{\mathbb{T}^d}dyK_{m-1}(x-y)\sum_{q \in \mathbb{N}}\sum_{j=0}^{\min\left(q+1,2\right)}V_{4}(f_{r,\epsilon,j,q,y}\otimes_0g_{r,\epsilon,q,y})=V_{4}(\int_{\mathbb{T}^d}dyK_{m-1}(x-y)\sum_{q \in \mathbb{N}}\sum_{j=0}^{\min\left(q+1,2\right)}f_{r,\epsilon,j,q,y}\otimes_0g_{r,\epsilon,q,y}),$$ $$I_{r,\epsilon,2}=\int_{\mathbb{T}^d}dyK_{m-1}(x-y)\sum_{q \in \mathbb{N}}\sum_{j=0}^{\min\left(q+1,2\right)}V_{2}(f_{r,\epsilon,j,q,y}\otimes_1g_{r,\epsilon,q,y})=V_{2}(\int_{\mathbb{T}^d}dyK_{m-1}(x-y)\sum_{q \in \mathbb{N}}\sum_{j=0}^{\min\left(q+1,2\right)}f_{r,\epsilon,j,q,y}\otimes_1g_{r,\epsilon,q,y}),$$ for $$f_{r,\epsilon,j,q,y}((u_1,x_1),(u_2,x_2),(u_3,x_3))=\int_0^r$$$$(K_{j+\max(q-2,-1)}*\Delta\eta_{r-v}*((\zeta_\epsilon*\eta_{v-u_1})(\cdot-x_1)(\zeta_\epsilon*\eta_{v-u_2})(\cdot-x_2)(\zeta_\epsilon*\eta_{v-u_3})(\cdot-x_3)))(y)\mathds{1}_{[0,v]^3}(u_1,u_2,u_3)dv,$$ $$g_{r,\epsilon,q,y}(u_4,x_4)=(K_{q-1}*\zeta_\epsilon*\eta_{r-u_4}(\cdot-x_4))(y)\mathds{1}_{[0,r]}(u_4).$$  

Using Theorem \ref{nelson}, we obtain

\begin{align*}
&\mathds{E}[|I_{r,\epsilon,1}|^2]\\
&\lesssim \int_{(\mathbb{R}_+\times\mathbb{T}^d)^4}du_1dx_1du_2dx_2du_3dx_3du_4dx_4|\int_{\mathbb{T}^d}dyK_{m-1}(x-y)\\
&\sum_{q \in \mathbb{N}}\sum_{j=0}^{\min\left(q+1,2\right)}f_{r,\epsilon,j,q,y}((u_1,x_1),(u_2,x_2),(u_3,x_3))g_{r,\epsilon,q,y}(u_4,x_4)|^2\\
&\stackrel{\text{Fubini}}{=}\int_{(\mathbb{R}_+\times\mathbb{T}^d)^4}du_1dx_1du_2dx_2du_3dx_3du_4dx_4\int_{(\mathbb{T}^d)^2}dy_1dy_2K_{m-1}(x-y_1)K_{m-1}(x-y_2)\\
&\sum_{(q_1,q_2) \in \mathbb{N}^2}\sum_{j_1=0}^{\min\left(q_1+1,2\right)}\sum_{j_2=0}^{\min(q_2+1,2)}f_{r,\epsilon,j_1,q_1,y_1}((u_1,x_1),(u_2,x_2),(u_3,x_3))g_{r,\epsilon,q_1,y_1}(u_4,x_4)\\
&f_{r,\epsilon,j_2,q_2,y_2}((u_1,x_1),(u_2,x_2),(u_3,x_3))g_{r,\epsilon,q_2,y_2}(u_4,x_4)\\
&=\int_{[0,r]^2}dv_1dv_2\int_{(\mathbb{R}_+\times\mathbb{T}^d)^4}du_1dx_1du_2dx_2du_3dx_3du_4dx_4\int_{(\mathbb{T}^d)^6}dy_1dy_2dy_1'dy_2'dy'_3dy_4'K_{m-1}(x-y_1)K_{m-1}(x-y_2)\\
&\sum_{(q_1,q_2) \in \mathbb{N}^2}\sum_{j_1=0}^{\min\left(q_1+1,2\right)}\sum_{j_2=0}^{\min(q_2+1,2)}(K_{j_1+\max(q_1-2,-1)}*\Delta\eta_{r-v_1})(y_1-y'_1)(K_{j_2+\max(q_2-2,-1)}*\Delta\eta_{r-v_2})(y_2-y'_3)\\
&K_{q_1-1}(y_1-y'_2)K_{q_2-1}(y_2-y'_4)(\zeta_\epsilon*\eta_{v_1-u_1})(y'_1-x_1)(\zeta_\epsilon*\eta_{v_1-u_2})(y_1'-x_2)(\zeta_\epsilon*\eta_{v_1-u_3})(y'_1-x_3)\\&(\zeta_\epsilon*\eta_{r-u_4})(y'_2-x_4)(\zeta_\epsilon*\eta_{v_2-u_1})(y'_3-x_1)
(\zeta_\epsilon*\eta_{v_2-u_2})(y_3'-x_2)(\zeta_\epsilon*\eta_{v_2-u_3})(y'_3-x_3)(\zeta_\epsilon*\eta_{r-u_4})(y_4'-x_4)\\&\mathds{1}_{[0,r]}(u_4)\mathds{1}_{[0,\min(v_1,v_2)]^3}(u_1,u_2,u_3)\\
&=\int_{[0,r]^2}dv_1dv_2\int_{[0,\min(v_1,v_2)]^3\times[0,r]}du_1du_2du_3du_4\int_{(\mathbb{T}^d)^6}dy_1dy_2dy_1'dy_2'dy'_3dy_4'K_{m-1}(y_1)K_{m-1}(y_2)\\
&\sum_{(q_1,q_2) \in \mathbb{N}^2}\sum_{j_1=0}^{\min\left(q_1+1,2\right)}\sum_{j_2=0}^{\min(q_2+1,2)}(K_{j_1+\max(q_1-2,-1)}*\Delta\eta_{r-v_1})(y_1-y'_1)(K_{j_2+\max(q_2-2,-1)}*\Delta\eta_{r-v_2})(y_2-y'_3)\\
&K_{q_1-1}(y_1-y'_2)K_{q_2-1}(y_2-y'_4)(\zeta_\epsilon*\zeta_{\epsilon}*\eta_{v_1+v_2-2u_1})(y'_1-y_3')\\&(\zeta_\epsilon*\zeta_{\epsilon}*\eta_{v_1+v_2-2u_2})(y_1'-y'_3)(\zeta_\epsilon*\zeta_{\epsilon}*\eta_{v_1+v_2-2u_3})(y'_1-y_3')(\zeta_\epsilon*\zeta_{\epsilon}*\eta_{2(r-u_4)})(y'_2-y_4')\\
&=\int_{[0,r]^2}dv_1dv_2\int_{[0,\min(v_1,v_2)]^3\times[0,r]}du_1du_2du_3du_4\int_{(\mathbb{T}^d)^2}dy_1dy_2K_{m-1}(y_1)K_{m-1}(y_2)\\
&\sum_{(q_1,q_2) \in \mathbb{N}^2}\sum_{j_1=0}^{\min\left(q_1+1,2\right)}\sum_{j_2=0}^{\min(q_2+1,2)}(K_{j_1+\max(q_1-2,-1)}*\Delta^2\eta_{2r-v_1-v_2}*K_{j_2+\max(q_2-2,-1)}\\&*((\zeta_\epsilon*\zeta_{\epsilon}*\eta_{v_1+v_2-2u_1})(\zeta_\epsilon*\zeta_{\epsilon}*\eta_{v_1+v_2-2u_2})(\zeta_\epsilon*\zeta_{\epsilon}*\eta_{v_1+v_2-2u_3})))(y_1-y_2)\\
&\times (K_{q_1-1}*K_{q_2-1}*(\zeta_\epsilon*\zeta_{\epsilon}*\eta_{2(r-u_4)}))(y_1-y_2),
\end{align*}
and \begin{align*}
&\mathds{E}[|I_{r,\epsilon,2}|^2]\\
&\lesssim \int_{(\mathbb{R}_+\times\mathbb{T}^d)^2}du_1dx_1du_2dx_2|\int_{\mathbb{T}^d}dyK_{m-1}(x-y)\\
&\sum_{q \in \mathbb{N}}\sum_{j=0}^{\min\left(q+1,2\right)}\int_{\mathbb{R}_+\times \mathbb{T}^d}du_3dx_3f_{r,\epsilon,j,q,y}((u_1,x_1),(u_2,x_2),(u_3,x_3))g_{r,\epsilon,q,y}(u_3,x_3)|^2\\
&\stackrel{\text{Fubini}}{=}\int_{(\mathbb{R}_+\times\mathbb{T}^d)^2}du_1dx_1du_2dx_2\int_{(\mathbb{T}^d)^2}dy_1dy_2K_{m-1}(x-y_1)K_{m-1}(x-y_2)\\
&\sum_{(q_1,q_2) \in \mathbb{N}^2}\sum_{j_1=0}^{\min\left(q_1+1,2\right)}\sum_{j_2=0}^{\min(q_2+1,2)}\int_{(\mathbb{R}_+\times \mathbb{T}^d)^2}du_3dx_3du_4dx_4f_{r,\epsilon,j_1,q_1,y_1}((u_1,x_1),(u_2,x_2),(u_3,x_3))g_{r,\epsilon,q_1,y_1}(u_3,x_3)\\
&f_{r,\epsilon,j_2,q_2,y_2}((u_1,x_1),(u_2,x_2),(u_4,x_4))g_{r,\epsilon,q_2,y_2}(u_4,x_4)\\
&=\int_{[0,r]^2}dv_1dv_2\int_{(\mathbb{R}_+\times\mathbb{T}^d)^4}du_1dx_1du_2dx_2du_3dx_3du_4dx_4\int_{(\mathbb{T}^d)^6}dy_1dy_2dy_1'dy_2'dy'_3dy_4'K_{m-1}(x-y_1)K_{m-1}(x-y_2)\\
&\sum_{(q_1,q_2) \in \mathbb{N}^2}\sum_{j_1=0}^{\min\left(q_1+1,2\right)}\sum_{j_2=0}^{\min(q_2+1,2)}(K_{j_1+\max(q_1-2,-1)}*\Delta\eta_{r-v_1})(y_1-y'_1)(K_{j_2+\max(q_2-2,-1)}*\Delta\eta_{r-v_2})(y_2-y'_3)\\
&K_{q_1-1}(y_1-y'_2)K_{q_2-1}(y_2-y'_4)(\zeta_\epsilon*\eta_{v_1-u_1})(y'_1-x_1)(\zeta_\epsilon*\eta_{v_1-u_2})(y_1'-x_2)(\zeta_\epsilon*\eta_{v_1-u_3})(y'_1-x_3)\\&(\zeta_\epsilon*\eta_{r-u_3})(y'_2-x_3)(\zeta_\epsilon*\eta_{v_2-u_1})(y'_3-x_1)
(\zeta_\epsilon*\eta_{v_2-u_2})(y_3'-x_2)(\zeta_\epsilon*\eta_{v_2-u_4})(y'_3-x_4)(\zeta_\epsilon*\eta_{r-u_4})(y_4'-x_4)\\&\mathds{1}_{[0,\min(v_1,v_2)]^2\times[0,v_1]\times[0,v_2]}(u_1,u_2,u_3,u_4)\\
&=\int_{[0,r]^2}dv_1dv_2\int_{[0,\min(v_1,v_2)]^2\times[0,v_1]\times[0,v_2]}du_1du_2du_3du_4\int_{(\mathbb{T}^d)^6}dy_1dy_2dy_1'dy_2'dy'_3dy_4'K_{m-1}(y_1)K_{m-1}(y_2)\\
&\sum_{(q_1,q_2) \in \mathbb{N}^2}\sum_{j_1=0}^{\min\left(q_1+1,2\right)}\sum_{j_2=0}^{\min(q_2+1,2)}(K_{j_1+\max(q_1-2,-1)}*\Delta\eta_{r-v_1})(y_1-y'_1)(K_{j_2+\max(q_2-2,-1)}*\Delta\eta_{r-v_2})(y_2-y'_3)\\
&K_{q_1-1}(y_1-y'_2)K_{q_2-1}(y_2-y'_4)(\zeta_\epsilon*\zeta_{\epsilon}*\eta_{v_1+v_2-2u_1})(y'_1-y_3')\\&(\zeta_\epsilon*\zeta_{\epsilon}*\eta_{v_1+v_2-2u_2})(y_1'-y'_3)(\zeta_\epsilon*\zeta_{\epsilon}*\eta_{r+v_1-2u_3})(y'_1-y_2')(\zeta_\epsilon*\zeta_{\epsilon}*\eta_{r+v_2-2u_4})(y'_3-y_4')\\
&=\int_{[0,r]^2}dv_1dv_2\int_{[0,\min(v_1,v_2)]^2\times[0,v_1]\times[0,v_2]}du_1du_2du_3du_4\int_{(\mathbb{T}^d)^2}dy_1dy_2K_{m-1}(y_1)K_{m-1}(y_2)\\
&\sum_{(q_1,q_2) \in \mathbb{N}^2}\sum_{j_1=0}^{\min\left(q_1+1,2\right)}\sum_{j_2=0}^{\min(q_2+1,2)}(((K_{j_2+\max(q_2-2,-1)}*\Delta\eta_{r-v_2})\times(K_{q_2-1}*\zeta_\epsilon*\zeta_{\epsilon}*\eta_{r+v_2-2u_4}))\\&*((\zeta_\epsilon*\zeta_{\epsilon}*\eta_{v_1+v_2-2u_1})(\zeta_\epsilon*\zeta_{\epsilon}*\eta_{v_1+v_2-2u_2}))*((K_{j_1+\max(q_1-2,-1)}*\Delta\eta_{r-v_1})\times(K_{q_1-1}*\zeta_\epsilon*\zeta_{\epsilon}*\eta_{r+v_1-2u_3})))(y_1-y_2)
\end{align*}
After expressing the expectation in terms of convolutions and products, we are in a position to apply Parseval. 
Let us first explain the Fourier variables appearing in the computation. Recall that
\[
\widehat{K_{m-1}}(k)=\rho_{m-1}(k),
\qquad
\widehat{\eta_t}(k)=\mathrm{e}^{-t|2\pi k|^4},
\qquad
\widehat{\zeta_\epsilon}(k)=\zeta(\epsilon k).
\]
Moreover, the application of the Laplacian gives the Fourier multiplier
$-|2\pi k|^2$. We associate the frequencies $m_1,m_2,m_3$ with the three
stochastic convolutions appearing in the first factor and the frequency $m_4$
with the remaining stochastic convolution. The spatial integrations impose
conservation of frequency, so that the frequency selected by the
Littlewood--Paley block $K_{m-1}$ is
\[
m_1+m_2+m_3+m_4.
\]
Similarly, summing over the indices appearing in the resonant product gives
\[
\psi(m_1+m_2+m_3,m_4).
\]
Finally, the two Laplacians, one from each copy appearing after taking the
square, produce the factor
\[
|m_1+m_2+m_3|^4.
\]
Therefore, after renaming the time variables, Parseval gives
\begin{align*}
&\mathds{E}\left[\left|I_{r,\epsilon,1}\right|^2\right]\\
&\lesssim
\sum_{\left(m_1,m_2,m_3,m_4\right)\in\left(\mathbb{Z}^d\right)^4}
\left|
\rho_{m-1}\left(m_1+m_2+m_3+m_4\right)
\psi\left(m_1+m_2+m_3,m_4\right)
\prod_{k=1}^4\zeta(\epsilon m_k)
\right|^2
\left|m_1+m_2+m_3\right|^4\\
&\quad\times
\int_{\left[0,r\right]^3}
\mathrm{e}^{-\left|2\pi\left(m_1+m_2+m_3\right)\right|^4
\left(2r-\alpha_1-\alpha_2\right)
-2\left|2\pi m_4\right|^4\left(r-\alpha_3\right)}
\\
&\quad\times
\left(
\int_{\left[0,\min\left(\alpha_1,\alpha_2\right)\right]^3}
\mathrm{e}^{-\left|2\pi m_1\right|^4
\left(\alpha_1+\alpha_2-2v_1\right)
-\left|2\pi m_2\right|^4
\left(\alpha_1+\alpha_2-2v_2\right)
-\left|2\pi m_3\right|^4
\left(\alpha_1+\alpha_2-2v_3\right)}
\,\mathrm{d}v_1\,\mathrm{d}v_2\,\mathrm{d}v_3
\right)
\,\mathrm{d}\alpha_1\,\mathrm{d}\alpha_2\,\mathrm{d}\alpha_3.
\end{align*}

We now estimate the time integrals in detail. Since $\zeta$ is smooth and
compactly supported, its Fourier multipliers are uniformly bounded and can
therefore be absorbed into the implicit constant. Moreover,
\[
0\leq\psi\leq1,
\]
and whenever
\[
\psi(m_1+m_2+m_3,m_4)\neq0,
\]
the frequency localization of the resonant product implies
\[
1+|m_1+m_2+m_3|\asymp1+|m_4|.
\]

Set
\[
M:=m_1+m_2+m_3,
\qquad
A:=|2\pi M|^4,
\qquad
a_k:=|2\pi m_k|^4,\quad k=1,2,3,
\qquad
b:=|2\pi m_4|^4.
\]
We first integrate with respect to $\alpha_3$. We have
\begin{align*}
\int_0^r
\mathrm{e}^{-2b(r-\alpha_3)}
\,\mathrm{d}\alpha_3
&\lesssim
(1+b)^{-1}.
\end{align*}
Indeed, if $b>0$, then
\[
\int_0^r
\mathrm{e}^{-2b(r-\alpha_3)}
\,\mathrm{d}\alpha_3
=
\frac{1-\mathrm{e}^{-2br}}{2b}
\lesssim
b^{-1},
\]
whereas, if $b=0$, the integral is equal to $r$ and is bounded on every
bounded time interval.

We next estimate the integrals with respect to $v_1,v_2,v_3$. Let
\[
s:=\min(\alpha_1,\alpha_2).
\]
For every $k\in\{1,2,3\}$,
\begin{align*}
&\int_0^s
\mathrm{e}^{-a_k(\alpha_1+\alpha_2-2v)}
\,\mathrm{d}v\\
&=
\mathrm{e}^{-a_k|\alpha_1-\alpha_2|}
\int_0^s
\mathrm{e}^{-2a_k(s-v)}
\,\mathrm{d}v\\
&\lesssim
(1+a_k)^{-1}
\mathrm{e}^{-c a_k|\alpha_1-\alpha_2|}
\end{align*}
for some $c>0$. Consequently,
\begin{align*}
&\int_{\left[0,\min(\alpha_1,\alpha_2)\right]^3}
\mathrm{e}^{-\sum_{k=1}^3
a_k(\alpha_1+\alpha_2-2v_k)}
\,\mathrm{d}v_1\,\mathrm{d}v_2\,\mathrm{d}v_3\\
&\lesssim
\prod_{k=1}^3(1+a_k)^{-1}
\mathrm{e}^{-c(a_1+a_2+a_3)|\alpha_1-\alpha_2|}.
\end{align*}

Let
\[
S:=a_1+a_2+a_3.
\]
It follows that the complete time factor is bounded by
\begin{align*}
&|M|^4
\int_{\left[0,r\right]^3}
\mathrm{e}^{-A(2r-\alpha_1-\alpha_2)-2b(r-\alpha_3)}
\\
&\quad\times
\left(
\int_{\left[0,\min(\alpha_1,\alpha_2)\right]^3}
\mathrm{e}^{-\sum_{k=1}^3
a_k(\alpha_1+\alpha_2-2v_k)}
\,\mathrm{d}v_1\,\mathrm{d}v_2\,\mathrm{d}v_3
\right)
\,\mathrm{d}\alpha_1\,\mathrm{d}\alpha_2\,\mathrm{d}\alpha_3\\
&\lesssim
(1+b)^{-1}
\prod_{k=1}^3(1+a_k)^{-1}
A
\int_{[0,r]^2}
\mathrm{e}^{-A(2r-\alpha_1-\alpha_2)}
\mathrm{e}^{-cS|\alpha_1-\alpha_2|}
\,\mathrm{d}\alpha_1\,\mathrm{d}\alpha_2,
\end{align*}
where we used $|M|^4\asymp A$.

We now estimate the remaining double integral. Making the change of variables
\[
u:=r-\alpha_1,
\qquad
v:=r-\alpha_2,
\]
we obtain
\begin{align*}
&A
\int_{[0,r]^2}
\mathrm{e}^{-A(2r-\alpha_1-\alpha_2)}
\mathrm{e}^{-cS|\alpha_1-\alpha_2|}
\,\mathrm{d}\alpha_1\,\mathrm{d}\alpha_2\\
&=
A
\int_{[0,r]^2}
\mathrm{e}^{-A(u+v)}
\mathrm{e}^{-cS|u-v|}
\,\mathrm{d}u\,\mathrm{d}v\\
&\leq
A
\int_{(\mathbb{R}_+)^2}
\mathrm{e}^{-A(u+v)}
\mathrm{e}^{-cS|u-v|}
\,\mathrm{d}u\,\mathrm{d}v.
\end{align*}
By symmetry,
\begin{align*}
&A
\int_{(\mathbb{R}_+)^2}
\mathrm{e}^{-A(u+v)}
\mathrm{e}^{-cS|u-v|}
\,\mathrm{d}u\,\mathrm{d}v\\
&=
2A
\int_{\{0\leq v\leq u\}}
\mathrm{e}^{-A(u+v)}
\mathrm{e}^{-cS(u-v)}
\,\mathrm{d}u\,\mathrm{d}v.
\end{align*}
Setting
\[
z:=u-v,
\qquad
w:=v,
\]
gives
\begin{align*}
&2A
\int_0^\infty\int_0^\infty
\mathrm{e}^{-A(z+2w)}
\mathrm{e}^{-cSz}
\,\mathrm{d}w\,\mathrm{d}z\\
&=
2A
\left(
\int_0^\infty
\mathrm{e}^{-2Aw}\,\mathrm{d}w
\right)
\left(
\int_0^\infty
\mathrm{e}^{-(A+cS)z}\,\mathrm{d}z
\right)\\
&\lesssim
(A+S)^{-1}.
\end{align*}
If $A=0$, then the original expression vanishes because of the factor
$|M|^4$. Since the frequencies are integer-valued, if $A>0$ then $A$ is
bounded away from zero. Hence, in all cases,
\[
A
\int_{[0,r]^2}
\mathrm{e}^{-A(2r-\alpha_1-\alpha_2)}
\mathrm{e}^{-cS|\alpha_1-\alpha_2|}
\,\mathrm{d}\alpha_1\,\mathrm{d}\alpha_2
\lesssim
(1+A+S)^{-1}.
\]
Consequently,
\begin{align*}
&|M|^4
\int_{\left[0,r\right]^3}
\mathrm{e}^{-A(2r-\alpha_1-\alpha_2)-2b(r-\alpha_3)}
\\
&\quad\times
\left(
\int_{\left[0,\min(\alpha_1,\alpha_2)\right]^3}
\mathrm{e}^{-\sum_{k=1}^3
a_k(\alpha_1+\alpha_2-2v_k)}
\,\mathrm{d}v_1\,\mathrm{d}v_2\,\mathrm{d}v_3
\right)
\,\mathrm{d}\alpha_1\,\mathrm{d}\alpha_2\,\mathrm{d}\alpha_3\\
&\lesssim
(1+b)^{-1}
\prod_{k=1}^3(1+a_k)^{-1}
(1+A+S)^{-1}.
\end{align*}

It remains to distribute the last factor among the four Fourier variables.
On the support of $\psi(M,m_4)$,
\[
1+|M|\asymp1+|m_4|,
\]
and therefore
\[
1+A\asymp1+b.
\]
Moreover,
\[
1+a_k\lesssim1+A+S,
\qquad k=1,2,3,
\]
and, by the preceding comparability,
\[
1+b\lesssim1+A+S.
\]
Thus
\[
\left(
(1+a_1)(1+a_2)(1+a_3)(1+b)
\right)^{1/4}
\lesssim
1+A+S.
\]
Since $\gamma\in(0,1/6)$, we have $1-5\gamma>0$. Hence
\begin{align*}
(1+A+S)^{-1}
&\leq
(1+A+S)^{-(1-5\gamma)}\\
&\lesssim
\left(
(1+a_1)(1+a_2)(1+a_3)(1+b)
\right)^{-\frac{1-5\gamma}{4}}.
\end{align*}
Combining this estimate with the factors
$(1+a_1)^{-1}$, $(1+a_2)^{-1}$,
$(1+a_3)^{-1}$ and $(1+b)^{-1}$, we obtain
\begin{align*}
&(1+b)^{-1}
\prod_{k=1}^3(1+a_k)^{-1}
(1+A+S)^{-1}\\
&\lesssim
\prod_{k=1}^3
(1+a_k)^{-1-\frac{1-5\gamma}{4}}
(1+b)^{-1-\frac{1-5\gamma}{4}}\\
&=
\prod_{k=1}^3
(1+a_k)^{\frac{5\gamma}{4}-\frac54}
(1+b)^{\frac{5\gamma}{4}-\frac54}.
\end{align*}
Since
\[
1+|2\pi m_k|^4\asymp1+|m_k|^4,
\]
we conclude that
\begin{align*}
&\left|\psi(m_1+m_2+m_3,m_4)\right|^2
\left|m_1+m_2+m_3\right|^4\\
&\quad\times
\int_{\left[0,r\right]^3}
\mathrm{e}^{-\left|2\pi(m_1+m_2+m_3)\right|^4
(2r-\alpha_1-\alpha_2)
-2|2\pi m_4|^4(r-\alpha_3)}
\\
&\quad\times
\left(
\int_{\left[0,\min(\alpha_1,\alpha_2)\right]^3}
\mathrm{e}^{-\sum_{k=1}^3
|2\pi m_k|^4(\alpha_1+\alpha_2-2v_k)}
\,\mathrm{d}v_1\,\mathrm{d}v_2\,\mathrm{d}v_3
\right)
\,\mathrm{d}\alpha_1\,\mathrm{d}\alpha_2\,\mathrm{d}\alpha_3\\
&\lesssim
\prod_{k=1}^4
\left(1+|m_k|^4\right)^{\frac{5\gamma}{4}-\frac54}.
\end{align*}
Hence,
\begin{align*}
\mathds{E}\left[\left|I_{r,\epsilon,1}\right|^2\right]
&\lesssim
\sum_{\left(m_1,m_2,m_3,m_4\right)\in(\mathbb Z^d)^4}
\left|
\rho_{m-1}\left(m_1+m_2+m_3+m_4\right)
\right|^2\\
&\quad\times
\left(1+\left|m_1\right|^4\right)^{\frac{5\gamma}{4}-\frac54}
\left(1+\left|m_2\right|^4\right)^{\frac{5\gamma}{4}-\frac54}\\
&\quad\times
\left(1+\left|m_3\right|^4\right)^{\frac{5\gamma}{4}-\frac54}
\left(1+\left|m_4\right|^4\right)^{\frac{5\gamma}{4}-\frac54}.
\end{align*}

Let
\[
a:=5(1-\gamma)
\]
and define
\[
w(k)
:=
\left(1+|k|^4\right)^{-a/4}
=
\left(1+|k|^4\right)^{\frac{5\gamma}{4}-\frac54}.
\]
If
\[
p=m_1+m_2+m_3+m_4,
\]
then the preceding sum can be rewritten as
\[
\sum_{p\in\mathbb Z^d}
|\rho_{m-1}(p)|^2\,(w*w*w*w)(p).
\]

We now apply Lemma~\ref{lemma2} successively. Since $d=5$, the first
application gives
\[
(w*w)(p)
\lesssim
\left(1+|p|^4\right)^{-\frac{5-10\gamma}{4}}.
\]
Indeed, Lemma~\ref{lemma2}, with
\[
\alpha=\beta=5(1-\gamma),
\]
gives
\[
\frac{d-\alpha-\beta}{4}
=
\frac{5-10(1-\gamma)}{4}
=
-\frac{5-10\gamma}{4}.
\]

Convolving once more with $w$, Lemma~\ref{lemma2} yields
\[
(w*w*w)(p)
\lesssim
\left(1+|p|^4\right)^{-\frac{5-15\gamma}{4}}.
\]
Indeed, the two decay exponents in Lemma~\ref{lemma2} are now
\[
5-10\gamma
\qquad\text{and}\qquad
5-5\gamma,
\]
and hence
\[
\frac{5-(5-10\gamma)-(5-5\gamma)}{4}
=
-\frac{5-15\gamma}{4}.
\]

A final application gives
\[
(w*w*w*w)(p)
\lesssim
\left(1+|p|^4\right)^{-\frac{5-20\gamma}{4}}
=
\left(1+|p|^4\right)^{5\gamma-\frac54}.
\]
Indeed,
\[
\frac{5-(5-15\gamma)-(5-5\gamma)}{4}
=
-\frac{5-20\gamma}{4}
=
5\gamma-\frac54.
\]
The assumptions of Lemma~\ref{lemma2} are satisfied at every step since
$\gamma\in(0,1/6)$. Consequently,
\begin{align*}
\mathds{E}\left[\left|I_{r,\epsilon,1}\right|^2\right]
&\stackrel{\text{Lemma \ref{lemma2}}}{\lesssim}
\sum_{m_1\in\mathbb Z^d}
\left|\rho_{m-1}(m_1)\right|^2
\left(1+|m_1|^4\right)^{5\gamma-\frac54}.
\end{align*}

Finally, for $m\geq1$, on the support of $\rho_{m-1}$ we have
\[
|m_1|\asymp2^{m-1}.
\]
Since $d=5$, the number of lattice points in the corresponding dyadic annulus
is of order $2^{5(m-1)}$. For $m=0$, the support of $\rho_{-1}$ is bounded,
and the corresponding estimate follows directly. Therefore,
\begin{align*}
\sum_{m_1\in\mathbb Z^d}
\left|\rho_{m-1}(m_1)\right|^2
\left(1+|m_1|^4\right)^{5\gamma-\frac54}
&\lesssim
2^{5(m-1)}
2^{4\left(5\gamma-\frac54\right)(m-1)}\\
&=
2^{20\gamma(m-1)}.
\end{align*}
We conclude that
\[
\boxed{
\mathds{E}\left[\left|I_{r,\epsilon,1}\right|^2\right]
\lesssim
2^{20\gamma(m-1)}.
}
\]
After estimating the first term, we turn to $I_{r,\epsilon,2}$. 
Using Parseval and the frequency constraints coming from the spatial
integrations, we obtain
\begin{align*}
&\mathds{E}\left[\left|I_{r,\epsilon,2}\right|^2\right]\\
&\lesssim 
\sum_{\left(m_1,m_2,m_3,m_4\right) \in \left(\mathbb{Z}^d\right)^4}
\left|\rho_{m-1}\left(m_1\right)\right|^2
\psi\left(m_1-m_2,m_2\right)
\psi\left(m_1-m_3,m_3\right)\\
&\quad\times
\left|m_1-m_2\right|^2
\left|m_1-m_3\right|^2
\left|
\zeta\left(\epsilon m_2\right)
\zeta\left(\epsilon m_3\right)
\zeta\left(\epsilon m_4\right)
\zeta\left(\epsilon\left(m_1-m_4\right)\right)
\right|^2\\
&\quad\times
\int_{\left[0,r\right]^2}
\mathrm{e}^{-\left|2\pi \left(m_1-m_2\right)\right|^4
\left(r-\alpha_1\right)
-\left|2\pi \left(m_1-m_3\right)\right|^4
\left(r-\alpha_2\right)}\\
&\quad\times
\int_{\left[0,\alpha_1\right]\times
\left[0,\alpha_2\right]\times
\left[0,\min\left(\alpha_1,\alpha_2\right)\right]^2}
\mathrm{e}^{-\left|2\pi m_2\right|^4
\left(r+\alpha_1-2v_1\right)
-\left|2\pi m_3\right|^4
\left(r+\alpha_2-2v_2\right)}\\
&\qquad\times
\mathrm{e}^{-\left|2\pi m_4\right|^4
\left(\alpha_1+\alpha_2-2v_3\right)
-\left|2\pi \left(m_1-m_4\right)\right|^4
\left(\alpha_1+\alpha_2-2v_4\right)}
\,\mathrm{d}v_1\,\mathrm{d}v_2\,\mathrm{d}v_3\,\mathrm{d}v_4\,
\mathrm{d}\alpha_1\,\mathrm{d}\alpha_2.
\end{align*}

We now estimate the time integrals. Since $\zeta$ is smooth and compactly
supported, its Fourier multipliers are uniformly bounded. Moreover,
\[
0\leq \psi\leq 1,
\]
and the localization property of $\psi$ gives
\[
\psi(m_1-m_2,m_2)\neq0
\quad\Longrightarrow\quad
1+|m_1-m_2|\asymp 1+|m_2|,
\]
and similarly
\[
\psi(m_1-m_3,m_3)\neq0
\quad\Longrightarrow\quad
1+|m_1-m_3|\asymp 1+|m_3|.
\]

Set
\[
A:=\left|2\pi(m_1-m_2)\right|^4,
\qquad
B:=\left|2\pi(m_1-m_3)\right|^4,
\]
and
\[
a:=|2\pi m_2|^4,
\qquad
b:=|2\pi m_3|^4,
\qquad
c:=|2\pi m_4|^4,
\qquad
d:=\left|2\pi(m_1-m_4)\right|^4.
\]
On the support of the two factors involving $\psi$, we have
\[
1+A\asymp1+a,
\qquad
1+B\asymp1+b.
\]

We first estimate the integral with respect to $v_1$. Since
\[
r+\alpha_1-2v_1
=
r-\alpha_1+2(\alpha_1-v_1),
\]
we obtain
\begin{align*}
\int_0^{\alpha_1}
\mathrm{e}^{-a(r+\alpha_1-2v_1)}
\,\mathrm{d}v_1
&=
\mathrm{e}^{-a(r-\alpha_1)}
\int_0^{\alpha_1}
\mathrm{e}^{-2a(\alpha_1-v_1)}
\,\mathrm{d}v_1\\
&\lesssim
(1+a)^{-1}
\mathrm{e}^{-a(r-\alpha_1)}.
\end{align*}
Similarly,
\begin{align*}
\int_0^{\alpha_2}
\mathrm{e}^{-b(r+\alpha_2-2v_2)}
\,\mathrm{d}v_2
&\lesssim
(1+b)^{-1}
\mathrm{e}^{-b(r-\alpha_2)}.
\end{align*}

Let
\[
s:=\min(\alpha_1,\alpha_2).
\]
Since
\[
\alpha_1+\alpha_2-2s
=
|\alpha_1-\alpha_2|,
\]
we have
\begin{align*}
\int_0^s
\mathrm{e}^{-c(\alpha_1+\alpha_2-2v_3)}
\,\mathrm{d}v_3
&=
\mathrm{e}^{-c|\alpha_1-\alpha_2|}
\int_0^s
\mathrm{e}^{-2c(s-v_3)}
\,\mathrm{d}v_3\\
&\lesssim
(1+c)^{-1}
\mathrm{e}^{-c|\alpha_1-\alpha_2|}.
\end{align*}
Likewise,
\begin{align*}
\int_0^s
\mathrm{e}^{-d(\alpha_1+\alpha_2-2v_4)}
\,\mathrm{d}v_4
&\lesssim
(1+d)^{-1}
\mathrm{e}^{-d|\alpha_1-\alpha_2|}.
\end{align*}
Therefore,
\begin{align*}
&\int_{\left[0,\alpha_1\right]\times
\left[0,\alpha_2\right]\times
\left[0,\min\left(\alpha_1,\alpha_2\right)\right]^2}
\mathrm{e}^{-a(r+\alpha_1-2v_1)-b(r+\alpha_2-2v_2)}\\
&\qquad\times
\mathrm{e}^{-c(\alpha_1+\alpha_2-2v_3)
-d(\alpha_1+\alpha_2-2v_4)}
\,\mathrm{d}v_1\,\mathrm{d}v_2\,\mathrm{d}v_3\,\mathrm{d}v_4\\
&\lesssim
(1+a)^{-1}
(1+b)^{-1}
(1+c)^{-1}
(1+d)^{-1}\\
&\qquad\times
\mathrm{e}^{-a(r-\alpha_1)}
\mathrm{e}^{-b(r-\alpha_2)}
\mathrm{e}^{-(c+d)|\alpha_1-\alpha_2|}.
\end{align*}

Consequently, the complete time integral is bounded by
\begin{align*}
&|m_1-m_2|^2|m_1-m_3|^2
(1+a)^{-1}(1+b)^{-1}(1+c)^{-1}(1+d)^{-1}\\
&\quad\times
\int_{[0,r]^2}
\mathrm{e}^{-(A+a)(r-\alpha_1)}
\mathrm{e}^{-(B+b)(r-\alpha_2)}
\mathrm{e}^{-(c+d)|\alpha_1-\alpha_2|}
\,\mathrm{d}\alpha_1\,\mathrm{d}\alpha_2.
\end{align*}
Since
\[
|m_1-m_2|^2\asymp A^{1/2},
\qquad
|m_1-m_3|^2\asymp B^{1/2},
\]
we estimate the remaining double integral by dropping the last exponential:
\begin{align*}
&A^{1/2}B^{1/2}
\int_{[0,r]^2}
\mathrm{e}^{-(A+a)(r-\alpha_1)}
\mathrm{e}^{-(B+b)(r-\alpha_2)}
\mathrm{e}^{-(c+d)|\alpha_1-\alpha_2|}
\,\mathrm{d}\alpha_1\,\mathrm{d}\alpha_2\\
&\leq
A^{1/2}B^{1/2}
\int_0^r
\mathrm{e}^{-(A+a)(r-\alpha_1)}
\,\mathrm{d}\alpha_1
\int_0^r
\mathrm{e}^{-(B+b)(r-\alpha_2)}
\,\mathrm{d}\alpha_2\\
&\lesssim
\frac{A^{1/2}}{1+A+a}
\frac{B^{1/2}}{1+B+b}.
\end{align*}
On the support of the $\psi$ factors,
\[
1+A\asymp1+a,
\qquad
1+B\asymp1+b,
\]
and therefore
\[
\frac{A^{1/2}}{1+A+a}
\lesssim
(1+A)^{-1/2},
\qquad
\frac{B^{1/2}}{1+B+b}
\lesssim
(1+B)^{-1/2}.
\]
Hence
\begin{align*}
&\left|m_1-m_2\right|^2
\left|m_1-m_3\right|^2\\
&\quad\times
\int_{\left[0,r\right]^2}
\mathrm{e}^{-\left|2\pi \left(m_1-m_2\right)\right|^4
\left(r-\alpha_1\right)
-\left|2\pi \left(m_1-m_3\right)\right|^4
\left(r-\alpha_2\right)}\\
&\quad\times
\int_{\left[0,\alpha_1\right]\times
\left[0,\alpha_2\right]\times
\left[0,\min\left(\alpha_1,\alpha_2\right)\right]^2}
\mathrm{e}^{-\left|2\pi m_2\right|^4
\left(r+\alpha_1-2v_1\right)
-\left|2\pi m_3\right|^4
\left(r+\alpha_2-2v_2\right)}\\
&\qquad\times
\mathrm{e}^{-\left|2\pi m_4\right|^4
\left(\alpha_1+\alpha_2-2v_3\right)
-\left|2\pi \left(m_1-m_4\right)\right|^4
\left(\alpha_1+\alpha_2-2v_4\right)}
\,\mathrm{d}v_1\,\mathrm{d}v_2\,\mathrm{d}v_3\,\mathrm{d}v_4\,
\mathrm{d}\alpha_1\,\mathrm{d}\alpha_2\\
&\lesssim
(1+a)^{-1}
(1+b)^{-1}
(1+c)^{-1}
(1+d)^{-1}
(1+A)^{-1/2}
(1+B)^{-1/2}.
\end{align*}

Since $\gamma>0$, we may weaken these powers according to
\[
(1+a)^{-1}\leq(1+a)^{\gamma-1},
\qquad
(1+b)^{-1}\leq(1+b)^{\gamma-1},
\]
\[
(1+c)^{-1}\leq(1+c)^{\gamma-1},
\qquad
(1+d)^{-1}\leq(1+d)^{\gamma-1},
\]
and
\[
(1+A)^{-1/2}
\leq
(1+A)^{\frac{\gamma}{2}-\frac12},
\qquad
(1+B)^{-1/2}
\leq
(1+B)^{\frac{\gamma}{2}-\frac12}.
\]
Recalling the definitions of $A,B,a,b,c,d$, we obtain
\begin{align*}
&\left|m_1-m_2\right|^2
\left|m_1-m_3\right|^2\\
&\quad\times
\int_{\left[0,r\right]^2}
\mathrm{e}^{-\left|2\pi \left(m_1-m_2\right)\right|^4
\left(r-\alpha_1\right)
-\left|2\pi \left(m_1-m_3\right)\right|^4
\left(r-\alpha_2\right)}\\
&\quad\times
\int_{\left[0,\alpha_1\right]\times
\left[0,\alpha_2\right]\times
\left[0,\min\left(\alpha_1,\alpha_2\right)\right]^2}
\mathrm{e}^{-\left|2\pi m_2\right|^4
\left(r+\alpha_1-2v_1\right)
-\left|2\pi m_3\right|^4
\left(r+\alpha_2-2v_2\right)}\\
&\qquad\times
\mathrm{e}^{-\left|2\pi m_4\right|^4
\left(\alpha_1+\alpha_2-2v_3\right)
-\left|2\pi \left(m_1-m_4\right)\right|^4
\left(\alpha_1+\alpha_2-2v_4\right)}
\,\mathrm{d}v_1\,\mathrm{d}v_2\,\mathrm{d}v_3\,\mathrm{d}v_4\,
\mathrm{d}\alpha_1\,\mathrm{d}\alpha_2\\
&\lesssim
\left(1+\left|m_2\right|^4\right)^{\gamma-1}
\left(1+\left|m_3\right|^4\right)^{\gamma-1}
\left(1+\left|m_4\right|^4\right)^{\gamma-1}\\
&\qquad\times
\left(1+\left|m_1-m_2\right|^4\right)^{\frac{\gamma}{2}-\frac12}
\left(1+\left|m_1-m_3\right|^4\right)^{\frac{\gamma}{2}-\frac12}
\left(1+\left|m_1-m_4\right|^4\right)^{\gamma-1}.
\end{align*}
Consequently,
\begin{align*}
&\mathds{E}\left[\left|I_{r,\epsilon,2}\right|^2\right]\\
&\stackrel{\text{Lemma \ref{l33}}}{\lesssim}
\sum_{\left(m_1,m_2,m_3,m_4\right)\in(\mathbb Z^d)^4}
\left|\rho_{m-1}\left(m_1\right)\right|^2
\left(1+\left|m_2\right|^4\right)^{\gamma-1}
\left(1+\left|m_3\right|^4\right)^{\gamma-1}
\left(1+\left|m_4\right|^4\right)^{\gamma-1}\\
&\qquad\times
\left(1+\left|m_1-m_2\right|^4\right)^{\frac{\gamma}{2}-\frac12}
\left(1+\left|m_1-m_3\right|^4\right)^{\frac{\gamma}{2}-\frac12}
\left(1+\left|m_1-m_4\right|^4\right)^{\gamma-1}.
\end{align*}

We now make explicit the successive applications of
Lemma~\ref{lemma2}. For fixed $m_1$, the sum over $m_2,m_3,m_4$ factorizes as
\begin{align*}
&\left(
\sum_{m_2\in\mathbb Z^d}
\left(1+|m_2|^4\right)^{\gamma-1}
\left(1+|m_1-m_2|^4\right)^{\frac{\gamma}{2}-\frac12}
\right)\\
&\quad\times
\left(
\sum_{m_3\in\mathbb Z^d}
\left(1+|m_3|^4\right)^{\gamma-1}
\left(1+|m_1-m_3|^4\right)^{\frac{\gamma}{2}-\frac12}
\right)\\
&\quad\times
\left(
\sum_{m_4\in\mathbb Z^d}
\left(1+|m_4|^4\right)^{\gamma-1}
\left(1+|m_1-m_4|^4\right)^{\gamma-1}
\right).
\end{align*}

For the first two sums we write
\[
\left(1+|m|^4\right)^{\gamma-1}
=
\left(1+|m|^4\right)^{-\frac{4(1-\gamma)}4}
\]
and
\[
\left(1+|m|^4\right)^{\frac{\gamma}{2}-\frac12}
=
\left(1+|m|^4\right)^{-\frac{2(1-\gamma)}4}.
\]
Since $d=5$ and $\gamma\in(0,1/6)$,
\[
4(1-\gamma)+2(1-\gamma)
=
6(1-\gamma)>5.
\]
Hence Lemma~\ref{lemma2} gives
\begin{align*}
&\sum_{m_2\in\mathbb Z^d}
\left(1+|m_2|^4\right)^{\gamma-1}
\left(1+|m_1-m_2|^4\right)^{\frac{\gamma}{2}-\frac12}\\
&\lesssim
\left(1+|m_1|^4\right)^{
\frac{5-4(1-\gamma)-2(1-\gamma)}4}\\
&=
\left(1+|m_1|^4\right)^{\frac{6\gamma-1}{4}}.
\end{align*}
The same estimate holds for the sum over $m_3$.

For the sum over $m_4$, both weights have exponent $\gamma-1$, and
Lemma~\ref{lemma2} gives
\begin{align*}
&\sum_{m_4\in\mathbb Z^d}
\left(1+|m_4|^4\right)^{\gamma-1}
\left(1+|m_1-m_4|^4\right)^{\gamma-1}\\
&\lesssim
\left(1+|m_1|^4\right)^{
\frac{5-8(1-\gamma)}4}\\
&=
\left(1+|m_1|^4\right)^{2\gamma-\frac34}.
\end{align*}
Combining the three bounds, we obtain
\begin{align*}
&\left(1+|m_1|^4\right)^{\frac{6\gamma-1}{4}}
\left(1+|m_1|^4\right)^{\frac{6\gamma-1}{4}}
\left(1+|m_1|^4\right)^{2\gamma-\frac34}\\
&=
\left(1+|m_1|^4\right)^{
2\left(\frac{6\gamma-1}{4}\right)
+2\gamma-\frac34}\\
&=
\left(1+|m_1|^4\right)^{5\gamma-\frac54}.
\end{align*}
Therefore,
\begin{align*}
\mathds{E}\left[\left|I_{r,\epsilon,2}\right|^2\right]
&\stackrel{\text{Lemma \ref{lemma2}}}{\lesssim}
\sum_{m_1\in\mathbb Z^d}
\left|\rho_{m-1}\left(m_1\right)\right|^2
\left(1+\left|m_1\right|^4\right)^{5\gamma-\frac54}.
\end{align*}

Finally, for $m\geq1$, on the support of $\rho_{m-1}, |m_1|\asymp 2^{m-1},$
and since $d=5$, the number of lattice points in the corresponding dyadic
annulus is of order $2^{5(m-1)}$. For $m=0$, the support of $\rho_{-1}$ is bounded,
and the corresponding estimate follows directly. Hence
\begin{align*}
\sum_{m_1\in\mathbb Z^d}
\left|\rho_{m-1}\left(m_1\right)\right|^2
\left(1+\left|m_1\right|^4\right)^{5\gamma-\frac54}
&\lesssim
2^{5(m-1)}
2^{4\left(5\gamma-\frac54\right)(m-1)}\\
&=
2^{20\gamma(m-1)}.
\end{align*}
We conclude that
\[
\boxed{
\mathds{E}\left[\left|I_{r,\epsilon,2}\right|^2\right]
\lesssim
2^{20\gamma(m-1)}.
}
\]
\end{proof}
\begin{remark}
The derivation of these bounds involves several nested integrals and series. A graphical representation would make the computations shorter and clearer. The drawback is that one must first introduce and become familiar with the corresponding notation. See \cite{MR3746744} for further details on the use of diagrams in the construction of stochastic objects.
\end{remark}

\chapter{}
\label{ppe5}
\begin{proof}[Proof of Theorem \ref{theorem5}]
We only prove the crucial $L^2(\Omega)$ estimate at a fixed time $r$.

Let $\gamma \in \left]0,\frac{1}{11}\right[,$
\begin{align*}
\psi:\left(\mathbb{Z}^d\right)^2&\longrightarrow \left[0,1\right]\\
\left(m_1,m_2\right)&\longmapsto\psi\left(m_1,m_2\right)=\sum_{q \in \mathbb{N}}\sum_{j=0}^{\min\left(q+1,2\right)}\rho_{j+\max\left(q-2,-1\right)}\left(m_1\right)\rho_{q-1}\left(m_2\right)
\end{align*}
The following observation is crucial in our estimation $$\psi(m_1,m_2)\neq 0 \implies \kappa_1 (1+|m_2|)\leq 1+|m_1|\leq \kappa_2 (1+|m_2|)$$ for $0<\kappa_1<\kappa_2.$

It follows from Lemma \ref{l3} that: $$(\delta_{m-1}\Xi_{r,\epsilon,2})(x)=I_{r,\epsilon,1}+4I_{r,\epsilon,2},$$ where $$I_{r,\epsilon,1}=\int_{\mathbb{T}^d}dyK_{m-1}(x-y)\sum_{q \in \mathbb{N}}\sum_{j=0}^{\min\left(q+1,2\right)}V_{4}(f_{r,\epsilon,j,q,y}\otimes_0g_{r,\epsilon,q,y})=V_{4}(\int_{\mathbb{T}^d}dyK_{m-1}(x-y)\sum_{q \in \mathbb{N}}\sum_{j=0}^{\min\left(q+1,2\right)}f_{r,\epsilon,j,q,y}\otimes_0g_{r,\epsilon,q,y}),$$ $$I_{r,\epsilon,2}=\int_{\mathbb{T}^d}dyK_{m-1}(x-y)\sum_{q \in \mathbb{N}}\sum_{j=0}^{\min\left(q+1,2\right)}V_{2}(f_{r,\epsilon,j,q,y}\otimes_1g_{r,\epsilon,q,y})=V_{2}(\int_{\mathbb{T}^d}dyK_{m-1}(x-y)\sum_{q \in \mathbb{N}}\sum_{j=0}^{\min\left(q+1,2\right)}f_{r,\epsilon,j,q,y}\otimes_1g_{r,\epsilon,q,y}),$$ where $$f_{r,\epsilon,j,q,y}((u_1,x_1),(u_2,x_2))=\int_0^rdv(K_{j+\max(q-2,-1)}*\Delta\eta_{r-v}*((\zeta_\epsilon*\eta_{v-u_1})(\cdot-x_1)(\zeta_\epsilon*\eta_{v-u_2})(\cdot-x_2)))(y)$$$$\mathds{1}_{[0,v]^2}(u_1,u_2),$$ $$g_{r,\epsilon,q,y}((u_3,x_3),(u_4,x_4))=(K_{q-1}*((\zeta_\epsilon*\eta_{r-u_3})(\cdot-x_3)(\zeta_\epsilon*\eta_{r-u_4})(\cdot-x_4)))(y)\mathds{1}_{[0,r]^2}(u_3,u_4).$$  
Using Theorem \ref{nelson}, we obtain

\begin{align*}
&\mathds{E}[|I_{r,\epsilon,1}|^2]\\
&\lesssim\int_{(\mathbb{R}_+\times\mathbb{T}^d)^4}du_1dx_1du_2dx_2du_3dx_3du_4dx_4|\int_{\mathbb{T}^d}dyK_{m-1}(x-y)\\
&\sum_{q \in \mathbb{N}}\sum_{j=0}^{\min\left(q+1,2\right)}f_{r,\epsilon,j,q,y}((u_1,x_1),(u_2,x_2))g_{r,\epsilon,q,y}((u_3,x_3),(u_4,x_4))|^2\\
&\stackrel{\text{Fubini}}{=}\int_{(\mathbb{R}_+\times\mathbb{T}^d)^4}du_1dx_1du_2dx_2du_3dx_3du_4dx_4\int_{(\mathbb{T}^d)^2}dy_1dy_2K_{m-1}(x-y_1)K_{m-1}(x-y_2)\\
&\sum_{(q_1,q_2) \in \mathbb{N}^2}\sum_{j_1=0}^{\min\left(q_1+1,2\right)}\sum_{j_2=0}^{\min(q_2+1,2)}f_{r,\epsilon,j_1,q_1,y_1}((u_1,x_1),(u_2,x_2))g_{r,\epsilon,q_1,y_1}((u_3,x_3),(u_4,x_4))\\
&f_{r,\epsilon,j_2,q_2,y_2}((u_1,x_1),(u_2,x_2))g_{r,\epsilon,q_2,y_2}((u_3,x_3),(u_4,x_4))\\
&=\int_{[0,r]^2}dv_1dv_2\int_{(\mathbb{R}_+\times\mathbb{T}^d)^4}du_1dx_1du_2dx_2du_3dx_3du_4dx_4\int_{(\mathbb{T}^d)^6}dy_1dy_2dy_1'dy_2'dy'_3dy_4'K_{m-1}(x-y_1)K_{m-1}(x-y_2)\\
&\sum_{(q_1,q_2) \in \mathbb{N}^2}\sum_{j_1=0}^{\min\left(q_1+1,2\right)}\sum_{j_2=0}^{\min(q_2+1,2)}(K_{j_1+\max(q_1-2,-1)}*\Delta\eta_{r-v_1})(y_1-y'_1)(K_{j_2+\max(q_2-2,-1)}*\Delta\eta_{r-v_2})(y_2-y'_3)\\
&K_{q_1-1}(y_1-y'_2)K_{q_2-1}(y_2-y'_4)(\zeta_\epsilon*\eta_{v_1-u_1})(y'_1-x_1)(\zeta_\epsilon*\eta_{v_1-u_2})(y_1'-x_2)(\zeta_\epsilon*\eta_{r-u_3})(y'_2-x_3)\\
&(\zeta_\epsilon*\eta_{r-u_4})(y'_2-x_4)(\zeta_\epsilon*\eta_{v_2-u_1})(y'_3-x_1)
(\zeta_\epsilon*\eta_{v_2-u_2})(y_3'-x_2)(\zeta_\epsilon*\eta_{r-u_3})(y'_4-x_3)(\zeta_\epsilon*\eta_{r-u_4})(y_4'-x_4)\\
&\mathds{1}_{[0,r]^2}(u_3,u_4)\mathds{1}_{[0,\min(v_1,v_2)]^2}(u_1,u_2)\\
&=\int_{[0,r]^2}dv_1dv_2\int_{[0,\min(v_1,v_2)]^2\times[0,r]^2}du_1du_2du_3du_4\int_{(\mathbb{T}^d)^6}dy_1dy_2dy_1'dy_2'dy'_3dy_4'K_{m-1}(y_1)K_{m-1}(y_2)\\
&\sum_{(q_1,q_2) \in \mathbb{N}^2}\sum_{j_1=0}^{\min\left(q_1+1,2\right)}\sum_{j_2=0}^{\min(q_2+1,2)}(K_{j_1+\max(q_1-2,-1)}*\Delta\eta_{r-v_1})(y_1-y'_1)(K_{j_2+\max(q_2-2,-1)}*\Delta\eta_{r-v_2})(y_2-y'_3)\\
&K_{q_1-1}(y_1-y'_2)K_{q_2-1}(y_2-y'_4)(\zeta_{\epsilon}*\zeta_{\epsilon}*\eta_{v_1+v_2-2u_1})(y'_1-y_3')(\zeta_{\epsilon}*\zeta_{\epsilon}*\eta_{v_1+v_2-2u_2})(y_1'-y'_3)(\zeta_{\epsilon}*\zeta_{\epsilon}*\eta_{2(r-u_3)})(y'_2-y_4')\\
&(\zeta_{\epsilon}*\zeta_{\epsilon}*\eta_{2(r-u_4)})(y'_2-y_4')\\
&=\int_{[0,r]^2}dv_1dv_2\int_{[0,\min(v_1,v_2)]^2\times[0,r]^2}du_1du_2du_3du_4\int_{(\mathbb{T}^d)^2}dy_1dy_2K_{m-1}(y_1)K_{m-1}(y_2)\sum_{(q_1,q_2) \in \mathbb{N}^2}\sum_{j_1=0}^{\min\left(q_1+1,2\right)}\sum_{j_2=0}^{\min(q_2+1,2)}\\
&(K_{j_1+\max(q_1-2,-1)}*\Delta^2\eta_{2r-v_1-v_2}*K_{j_2+\max(q_2-2,-1)}*((\zeta_{\epsilon} * \zeta_{\epsilon} * \eta_{v_1+v_2-2u_1})(\zeta_{\epsilon} * \zeta_{\epsilon} * \eta_{v_1+v_2-2u_2})))(y_1-y_2)\\
&\times (K_{q_1-1}*K_{q_2-1}*((\zeta_{\epsilon}*\zeta_{\epsilon}*\eta_{2(r-u_3)})(\zeta_{\epsilon}*\zeta_{\epsilon}*\eta_{2(r-u_4)})))(y_1-y_2)
\end{align*} and \begin{align*}
&\mathds{E}[|I_{r,\epsilon,2}|^2]\\
&\lesssim\int_{(\mathbb{R}_+\times\mathbb{T}^d)^2}du_1dx_1du_2dx_2|\int_{\mathbb{T}^d}dyK_{m-1}(x-y)\\
&\sum_{q \in \mathbb{N}}\sum_{j=0}^{\min\left(q+1,2\right)}\int_{\mathbb{R}_+\times\mathbb{T}^d}du_3dx_3f_{r,\epsilon,j,q,y}((u_1,x_1),(u_3,x_3))g_{r,\epsilon,q,y}((u_3,x_3),(u_2,x_2))|^2\\
&\stackrel{\text{Fubini}}{=}\int_{(\mathbb{R}_+\times\mathbb{T}^d)^4}du_1dx_1du_2dx_2du_3dx_3du_4dx_4\int_{(\mathbb{T}^d)^2}dy_1dy_2K_{m-1}(x-y_1)K_{m-1}(x-y_2)\\
&\sum_{(q_1,q_2) \in \mathbb{N}^2}\sum_{j_1=0}^{\min\left(q_1+1,2\right)}\sum_{j_2=0}^{\min(q_2+1,2)}f_{r,\epsilon,j_1,q_1,y_1}((u_1,x_1),(u_3,x_3))g_{r,\epsilon,q_1,y_1}((u_3,x_3),(u_2,x_2))\\
&f_{r,\epsilon,j_2,q_2,y_2}((u_1,x_1),(u_4,x_4))g_{r,\epsilon,q_2,y_2}((u_4,x_4),(u_2,x_2))\\
&=\int_{[0,r]^2}dv_1dv_2\int_{(\mathbb{R}_+\times\mathbb{T}^d)^4}du_1dx_1du_2dx_2du_3dx_3du_4dx_4\int_{(\mathbb{T}^d)^6}dy_1dy_2dy_1'dy_2'dy'_3dy_4'K_{m-1}(x-y_1)K_{m-1}(x-y_2)\\
&\sum_{(q_1,q_2) \in \mathbb{N}^2}\sum_{j_1=0}^{\min\left(q_1+1,2\right)}\sum_{j_2=0}^{\min(q_2+1,2)}(K_{j_1+\max(q_1-2,-1)}*\Delta\eta_{r-v_1})(y_1-y'_1)(K_{j_2+\max(q_2-2,-1)}*\Delta\eta_{r-v_2})(y_2-y'_3)\\
&K_{q_1-1}(y_1-y'_2)K_{q_2-1}(y_2-y'_4)(\zeta_\epsilon*\eta_{v_1-u_1})(y'_1-x_1)(\zeta_\epsilon*\eta_{v_1-u_3})(y_1'-x_3)(\zeta_\epsilon*\eta_{r-u_3})(y'_2-x_3)\\
&(\zeta_\epsilon*\eta_{r-u_2})(y'_2-x_2)(\zeta_\epsilon*\eta_{v_2-u_1})(y'_3-x_1)
(\zeta_\epsilon*\eta_{v_2-u_4})(y_3'-x_4)(\zeta_\epsilon*\eta_{r-u_4})(y'_4-x_4)(\zeta_\epsilon*\eta_{r-u_2})(y_4'-x_2)\\
&\mathds{1}_{[0,\min(v_1,v_2)]\times[0,r]\times[0,v_1]\times[0,v_2]}(u_1,u_2,u_3,u_4)\\
&=\int_{[0,r]^2}dv_1dv_2\int_{[0,\min(v_1,v_2)]\times[0,r]\times[0,v_1]\times[0,v_2]}du_1du_2du_3du_4\int_{(\mathbb{T}^d)^6}dy_1dy_2dy_1'dy_2'dy'_3dy_4'K_{m-1}(y_1)K_{m-1}(y_2)\\
&\sum_{(q_1,q_2) \in \mathbb{N}^2}\sum_{j_1=0}^{\min\left(q_1+1,2\right)}\sum_{j_2=0}^{\min(q_2+1,2)}(K_{j_1+\max(q_1-2,-1)}*\Delta\eta_{r-v_1})(y_1-y'_1)(K_{j_2+\max(q_2-2,-1)}*\Delta\eta_{r-v_2})(y_2-y'_3)\\
&K_{q_1-1}(y_1-y'_2)K_{q_2-1}(y_2-y'_4)(\zeta_{\epsilon}*\zeta_{\epsilon}*\eta_{v_1+v_2-2u_1})(y'_1-y_3')(\zeta_{\epsilon}*\zeta_{\epsilon}*\eta_{2(r-u_2)})(y_2'-y'_4)(\zeta_{\epsilon}*\zeta_{\epsilon}*\eta_{r+v_1-2u_3})(y'_1-y_2')\\
&(\zeta_{\epsilon}*\zeta_{\epsilon}*\eta_{r+v_2-2u_4})(y'_3-y_4')\\
&=\int_{[0,r]^2}dv_1dv_2\int_{[0,\min(v_1,v_2)]\times[0,r]\times[0,v_1]\times[0,v_2]}du_1du_2du_3du_4\int_{(\mathbb{T}^d)^6}dy_1dy_2dy_1'dy_2'dy'_3dy_4'K_{m-1}(y_1)K_{m-1}(y_2)\\
&\sum_{(q_1,q_2) \in \mathbb{N}^2}\sum_{j_1=0}^{\min\left(q_1+1,2\right)}\sum_{j_2=0}^{\min(q_2+1,2)}(K_{j_1+\max(q_1-2,-1)}*\Delta\eta_{r-v_1})(y_1-y_2-y'_1)(K_{j_2+\max(q_2-2,-1)}*\Delta\eta_{r-v_2})(y'_3)\\
&K_{q_1-1}(y_1-y_2-y'_2)K_{q_2-1}(y'_4)(\zeta_{\epsilon}*\zeta_{\epsilon}*\eta_{v_1+v_2-2u_1})(y'_1-y_3')(\zeta_{\epsilon}*\zeta_{\epsilon}*\eta_{2(r-u_2)})(y_2'-y'_4)(\zeta_{\epsilon}*\zeta_{\epsilon}*\eta_{r+v_1-2u_3})(y'_1-y_2')\\
&(\zeta_{\epsilon}*\zeta_{\epsilon}*\eta_{r+v_2-2u_4})(y'_3-y_4')
\end{align*}
We are in a position to apply Parseval to obtain the following bound for
$\mathds{E}\left[\left|I_{r,\epsilon,1}\right|^2\right]$:
\begin{align*}
&\mathds{E}\left[\left|I_{r,\epsilon,1}\right|^2\right]\\
&\lesssim
\sum_{\left(m_1,m_2,m_3,m_4\right)\in\left(\mathbb Z^d\right)^4}
\left|
\rho_{m-1}\left(m_1+m_2+m_3+m_4\right)
\psi\left(m_1+m_2,m_3+m_4\right)
\prod_{k=1}^4\zeta(\epsilon m_k)
\right|^2
\left|m_1+m_2\right|^4\\
&\quad\times
\int_{\left[0,r\right]^4}
\mathrm{e}^{-\left|2\pi(m_1+m_2)\right|^4
\left(2r-\alpha_1-\alpha_2\right)
-2\left|2\pi m_3\right|^4\left(r-\alpha_3\right)
-2\left|2\pi m_4\right|^4\left(r-\alpha_4\right)}
\\
&\quad\times
\left(
\int_{\left[0,\min(\alpha_1,\alpha_2)\right]^2}
\mathrm{e}^{-\left|2\pi m_1\right|^4
\left(\alpha_1+\alpha_2-2v_1\right)
-\left|2\pi m_2\right|^4
\left(\alpha_1+\alpha_2-2v_2\right)}
\,\mathrm{d}v_1\,\mathrm{d}v_2
\right)
\,\mathrm{d}\alpha_1\,\mathrm{d}\alpha_2\,
\mathrm{d}\alpha_3\,\mathrm{d}\alpha_4.
\end{align*}

We now estimate the time integrals in detail. Since $\zeta$ is bounded, the
factors $\zeta(\epsilon m_k)$ can be absorbed into the implicit constant.
Moreover,
\[
0\leq\psi\leq1,
\]
and
\[
\psi(m_1+m_2,m_3+m_4)\neq0
\quad\Longrightarrow\quad
1+|m_1+m_2|\asymp1+|m_3+m_4|.
\]

Set
\[
n:=m_1+m_2,
\qquad
q:=m_3+m_4,
\]
and
\[
A:=|2\pi n|^4,
\qquad
a:=|2\pi m_1|^4,
\qquad
b:=|2\pi m_2|^4,
\qquad
c:=|2\pi m_3|^4,
\qquad
d:=|2\pi m_4|^4.
\]

We first integrate with respect to $\alpha_3$ and $\alpha_4$. We have
\[
\int_0^r
\mathrm{e}^{-2c(r-\alpha_3)}
\,\mathrm{d}\alpha_3
\lesssim
(1+c)^{-1},
\]
and similarly
\[
\int_0^r
\mathrm{e}^{-2d(r-\alpha_4)}
\,\mathrm{d}\alpha_4
\lesssim
(1+d)^{-1}.
\]

We next consider the integrals with respect to $v_1$ and $v_2$. Let
\[
s:=\min(\alpha_1,\alpha_2).
\]
Since
\[
\alpha_1+\alpha_2-2s
=
|\alpha_1-\alpha_2|,
\]
we have
\begin{align*}
\int_0^s
\mathrm{e}^{-a(\alpha_1+\alpha_2-2v_1)}
\,\mathrm{d}v_1
&=
\mathrm{e}^{-a|\alpha_1-\alpha_2|}
\int_0^s
\mathrm{e}^{-2a(s-v_1)}
\,\mathrm{d}v_1\\
&\lesssim
(1+a)^{-1}
\mathrm{e}^{-ca|\alpha_1-\alpha_2|}
\end{align*}
for some constant $c>0$, and similarly
\begin{align*}
\int_0^s
\mathrm{e}^{-b(\alpha_1+\alpha_2-2v_2)}
\,\mathrm{d}v_2
&\lesssim
(1+b)^{-1}
\mathrm{e}^{-cb|\alpha_1-\alpha_2|}.
\end{align*}
Consequently,
\begin{align*}
&\int_{\left[0,\min(\alpha_1,\alpha_2)\right]^2}
\mathrm{e}^{-a(\alpha_1+\alpha_2-2v_1)
-b(\alpha_1+\alpha_2-2v_2)}
\,\mathrm{d}v_1\,\mathrm{d}v_2\\
&\lesssim
(1+a)^{-1}(1+b)^{-1}
\mathrm{e}^{-c(a+b)|\alpha_1-\alpha_2|}.
\end{align*}

Therefore the complete time factor is bounded by
\begin{align*}
&(1+a)^{-1}(1+b)^{-1}(1+c)^{-1}(1+d)^{-1}\\
&\quad\times
|n|^4
\int_{[0,r]^2}
\mathrm{e}^{-A(2r-\alpha_1-\alpha_2)}
\mathrm{e}^{-c(a+b)|\alpha_1-\alpha_2|}
\,\mathrm{d}\alpha_1\,\mathrm{d}\alpha_2.
\end{align*}

Making the change of variables
\[
u:=r-\alpha_1,
\qquad
v:=r-\alpha_2,
\]
and using $|n|^4\asymp A$, we obtain
\begin{align*}
&A
\int_{[0,r]^2}
\mathrm{e}^{-A(2r-\alpha_1-\alpha_2)}
\mathrm{e}^{-c(a+b)|\alpha_1-\alpha_2|}
\,\mathrm{d}\alpha_1\,\mathrm{d}\alpha_2\\
&=
A
\int_{[0,r]^2}
\mathrm{e}^{-A(u+v)}
\mathrm{e}^{-c(a+b)|u-v|}
\,\mathrm{d}u\,\mathrm{d}v\\
&\leq
A
\int_{\mathbb R_+^2}
\mathrm{e}^{-A(u+v)}
\mathrm{e}^{-c(a+b)|u-v|}
\,\mathrm{d}u\,\mathrm{d}v.
\end{align*}
By symmetry,
\begin{align*}
&A
\int_{\mathbb R_+^2}
\mathrm{e}^{-A(u+v)}
\mathrm{e}^{-c(a+b)|u-v|}
\,\mathrm{d}u\,\mathrm{d}v\\
&=
2A
\int_{\{0\leq v\leq u\}}
\mathrm{e}^{-A(u+v)}
\mathrm{e}^{-c(a+b)(u-v)}
\,\mathrm{d}u\,\mathrm{d}v.
\end{align*}
Setting
\[
z:=u-v,
\qquad
w:=v,
\]
gives
\begin{align*}
&2A
\int_0^\infty\int_0^\infty
\mathrm{e}^{-A(z+2w)}
\mathrm{e}^{-c(a+b)z}
\,\mathrm{d}w\,\mathrm{d}z\\
&=
2A
\left(
\int_0^\infty
\mathrm{e}^{-2Aw}\,\mathrm{d}w
\right)
\left(
\int_0^\infty
\mathrm{e}^{-(A+c(a+b))z}\,\mathrm{d}z
\right)\\
&\lesssim
(1+A+a+b)^{-1}.
\end{align*}
If $A=0$, the original expression vanishes because of the factor $|n|^4$.

Hence
\begin{align*}
&|n|^4
\int_{\left[0,r\right]^4}
\mathrm{e}^{-A(2r-\alpha_1-\alpha_2)
-2c(r-\alpha_3)-2d(r-\alpha_4)}\\
&\quad\times
\left(
\int_{\left[0,\min(\alpha_1,\alpha_2)\right]^2}
\mathrm{e}^{-a(\alpha_1+\alpha_2-2v_1)
-b(\alpha_1+\alpha_2-2v_2)}
\,\mathrm{d}v_1\,\mathrm{d}v_2
\right)
\,\mathrm{d}\alpha_1\,\mathrm{d}\alpha_2\,
\mathrm{d}\alpha_3\,\mathrm{d}\alpha_4\\
&\lesssim
(1+a)^{-1}(1+b)^{-1}
(1+c)^{-1}(1+d)^{-1}
(1+A+a+b)^{-1}.
\end{align*}

Consequently,
\begin{align*}
&\mathds{E}\left[\left|I_{r,\epsilon,1}\right|^2\right]\\
&\lesssim
\sum_{\left(m_1,m_2,m_3,m_4\right)\in(\mathbb Z^d)^4}
\left|\rho_{m-1}\left(m_1+m_2+m_3+m_4\right)\right|^2
\left|\psi\left(m_1+m_2,m_3+m_4\right)\right|^2\\
&\quad\times
\left(1+|m_1|^4\right)^{-1}
\left(1+|m_2|^4\right)^{-1}
\left(1+|m_3|^4\right)^{-1}
\left(1+|m_4|^4\right)^{-1}\\
&\quad\times
\left(
1+|m_1+m_2|^4+|m_1|^4+|m_2|^4
\right)^{-1}.
\end{align*}

We now sum the internal frequencies. For fixed
\[
n=m_1+m_2,
\]
we first use
\[
\left(
1+|n|^4+|m_1|^4+|m_2|^4
\right)^{-1}
\leq
\left(1+|n|^4\right)^{-1}.
\]
Hence, by Lemma~\ref{lemma2}, with $d=5$ and
\[
\alpha=\beta=4,
\]
we obtain
\begin{align*}
&\sum_{m_1\in\mathbb Z^d}
\left(1+|m_1|^4\right)^{-1}
\left(1+|n-m_1|^4\right)^{-1}
\left(1+|n|^4\right)^{-1}\\
&\lesssim
\left(1+|n|^4\right)^{-\frac34}
\left(1+|n|^4\right)^{-1}\\
&=
\left(1+|n|^4\right)^{-\frac74}.
\end{align*}

Similarly, for fixed
\[
q=m_3+m_4,
\]
Lemma~\ref{lemma2}, again with $\alpha=\beta=4$, gives
\begin{align*}
&\sum_{m_3\in\mathbb Z^d}
\left(1+|m_3|^4\right)^{-1}
\left(1+|q-m_3|^4\right)^{-1}\\
&\lesssim
\left(1+|q|^4\right)^{-\frac34}.
\end{align*}

It follows that
\begin{align*}
\mathds{E}\left[\left|I_{r,\epsilon,1}\right|^2\right]
&\lesssim
\sum_{n,q\in\mathbb Z^d}
\left|\rho_{m-1}(n+q)\right|^2
|\psi(n,q)|^2\\
&\qquad\times
\left(1+|n|^4\right)^{-\frac74}
\left(1+|q|^4\right)^{-\frac34}.
\end{align*}

Now, on the support of $\psi$,
\[
1+|n|\asymp1+|q|,
\]
and therefore
\[
1+|n|^4\asymp1+|q|^4.
\]
Consequently, for every $\gamma>0$,
\begin{align*}
&\left(1+|n|^4\right)^{-\frac74}
\left(1+|q|^4\right)^{-\frac34}\\
&\lesssim
\left(1+|n|^4\right)^{\frac{5\gamma}{2}-\frac54}
\left(1+|q|^4\right)^{\frac{5\gamma}{2}-\frac54}.
\end{align*}
Indeed, because $1+|n|^4\asymp1+|q|^4$, the left-hand side has total
exponent
\[
-\frac74-\frac34=-\frac52,
\]
whereas the right-hand side has total exponent
\[
2\left(\frac{5\gamma}{2}-\frac54\right)
=
5\gamma-\frac52,
\]
which is larger since $\gamma>0$.

Hence
\begin{align*}
\mathds{E}\left[\left|I_{r,\epsilon,1}\right|^2\right]
&\lesssim
\sum_{n,q\in\mathbb Z^d}
\left|\rho_{m-1}(n+q)\right|^2
\left(1+|n|^4\right)^{\frac{5\gamma}{2}-\frac54}
\left(1+|q|^4\right)^{\frac{5\gamma}{2}-\frac54}.
\end{align*}

Applying Lemma~\ref{lemma2} once more and setting
\[
p=n+q,
\]
we obtain
\begin{align*}
&\sum_{n\in\mathbb Z^d}
\left(1+|n|^4\right)^{\frac{5\gamma}{2}-\frac54}
\left(1+|p-n|^4\right)^{\frac{5\gamma}{2}-\frac54}\\
&\lesssim
\left(1+|p|^4\right)^{5\gamma-\frac54}.
\end{align*}
Indeed,
\[
\frac{5\gamma}{2}-\frac54
=
-\frac{5-10\gamma}{4},
\]
so that in Lemma~\ref{lemma2} we take
\[
\alpha=\beta=5-10\gamma.
\]
Since $\gamma\in(0,1/11)$,
\[
0<5-10\gamma<5
\]
and
\[
2(5-10\gamma)>5.
\]
Thus all the assumptions of Lemma~\ref{lemma2} are satisfied, and
\[
\frac{5-2(5-10\gamma)}4
=
5\gamma-\frac54.
\]
Consequently,
\begin{align*}
\mathds{E}\left[\left|I_{r,\epsilon,1}\right|^2\right]
&\stackrel{\text{Lemma \ref{lemma2}}}{\lesssim}
\sum_{p\in\mathbb Z^d}
\left|\rho_{m-1}(p)\right|^2
\left(1+|p|^4\right)^{5\gamma-\frac54}.
\end{align*}

Finally, for $m\geq1$, on the support of $\rho_{m-1},|p|\asymp2^{m-1}.$
Since $d=5$, the number of lattice points in the corresponding dyadic annulus
is of order $2^{5(m-1)}$. For $m=0$, the support of $\rho_{-1}$ is bounded,
and the corresponding estimate follows directly. Therefore,
\begin{align*}
\sum_{p\in\mathbb Z^d}
\left|\rho_{m-1}(p)\right|^2
\left(1+|p|^4\right)^{5\gamma-\frac54}
&\lesssim
2^{5(m-1)}
2^{4\left(5\gamma-\frac54\right)(m-1)}\\
&=
2^{20\gamma(m-1)}.
\end{align*}
We conclude that
\[
\boxed{
\mathds{E}\left[\left|I_{r,\epsilon,1}\right|^2\right]
\lesssim
2^{20\gamma(m-1)}.
}
\]
To bound $\mathds{E}\left[\left|I_{r,\epsilon,2}\right|^2\right]$, we cannot simply use Parseval. Instead, we expand each of the ten factors appearing in the integrand into its Fourier series. For each factor, we introduce a Fourier variable and use the fact that all the kernels involved are even. After imposing the frequency constraints using the orthogonality of the Fourier basis on $\mathbb T^d$ and integrating with respect to
\[
y_1,\ y_2,\ y_1',\ y_2',\ y_3',\ y_4',
\]
we may choose four independent frequencies, denoted by
$m_1,m_2,m_3,m_4\in\mathbb{Z}^d$. With this choice, the frequencies associated with the different factors are
\[
\begin{array}{c|c}
\text{factor} & \text{Fourier frequency}\\
\hline
K_{m-1}(y_1)
& m_1\\[1mm]
K_{m-1}(y_2)
& -m_1\\[1mm]
(K_{j_1+\max(q_1-2,-1)}*\Delta\eta_{r-v_1})(y_1-y_2-y_1')
& -(m_1+m_2)\\[1mm]
(K_{j_2+\max(q_2-2,-1)}*\Delta\eta_{r-v_2})(y_3')
& -(m_1+m_4)\\[1mm]
K_{q_1-1}(y_1-y_2-y_2')
& m_2\\[1mm]
K_{q_2-1}(y_4')
& m_4\\[1mm]
(\zeta_\epsilon*\zeta_\epsilon*
\eta_{v_1+v_2-2u_1})(y_1'-y_3')
& -(m_1+m_3)\\[1mm]
(\zeta_\epsilon*\zeta_\epsilon*
\eta_{2(r-u_2)})(y_2'-y_4')
& m_3\\[1mm]
(\zeta_\epsilon*\zeta_\epsilon*
\eta_{r+v_1-2u_3})(y_1'-y_2')
& m_3-m_2\\[1mm]
(\zeta_\epsilon*\zeta_\epsilon*
\eta_{r+v_2-2u_4})(y_3'-y_4')
& m_4-m_3.
\end{array}
\]
This yields the following bound:
\begin{align*}
&\mathds{E}\left[\left|I_{r,\epsilon,2}\right|^2\right]\\
&\lesssim
\sum_{\left(m_1,m_2,m_3,m_4\right)\in\left(\mathbb Z^d\right)^4}
\left|\rho_{m-1}\left(m_1\right)\right|^2
\psi\left(m_1+m_2,m_2\right)
\psi\left(m_1+m_4,m_4\right)
\left|m_1+m_2\right|^2
\left|m_1+m_4\right|^2\\
&\quad\times
\left|
\zeta\left(\epsilon m_3\right)
\zeta\left(\epsilon\left(m_1+m_3\right)\right)
\zeta\left(\epsilon\left(m_2-m_3\right)\right)
\zeta\left(\epsilon\left(m_3-m_4\right)\right)
\right|^2\\
&\quad\times
\int_{\left[0,r\right]^3}
\mathrm{e}^{-\left|2\pi(m_1+m_4)\right|^4(r-\alpha_1)
-\left|2\pi(m_1+m_2)\right|^4(r-\alpha_2)
-2\left|2\pi m_3\right|^4(r-\alpha_3)}
\\
&\quad\times
\left(
\int_{\left[0,\alpha_1\right]\times
\left[0,\alpha_2\right]\times
\left[0,\min(\alpha_1,\alpha_2)\right]}
\mathrm{e}^{-\left|2\pi(m_3-m_4)\right|^4(r+\alpha_1-2v_1)
-\left|2\pi(m_2-m_3)\right|^4(r+\alpha_2-2v_2)}
\right.\\
&\hspace{7cm}\left.
\times
\mathrm{e}^{-\left|2\pi(m_1+m_3)\right|^4
(\alpha_1+\alpha_2-2v_3)}
\,\mathrm{d}v_1\,\mathrm{d}v_2\,\mathrm{d}v_3
\right)
\,\mathrm{d}\alpha_1\,\mathrm{d}\alpha_2\,\mathrm{d}\alpha_3.
\end{align*}

We now estimate the time integrals in detail. Since $\zeta$ is bounded, all
the factors involving $\zeta$ can be absorbed into the implicit constant.
Moreover, by the localization property of $\psi$,
\[
\psi(m_1+m_2,m_2)\neq0
\quad\Longrightarrow\quad
1+|m_1+m_2|\asymp1+|m_2|,
\]
and
\[
\psi(m_1+m_4,m_4)\neq0
\quad\Longrightarrow\quad
1+|m_1+m_4|\asymp1+|m_4|.
\]

Set
\[
A:=|2\pi(m_1+m_4)|^4,
\qquad
B:=|2\pi(m_1+m_2)|^4,
\qquad
C:=|2\pi m_3|^4,
\]
and
\[
D:=|2\pi(m_3-m_4)|^4,
\qquad
E:=|2\pi(m_2-m_3)|^4,
\qquad
F:=|2\pi(m_1+m_3)|^4.
\]
By the preceding localization properties,
\[
1+A\asymp1+|m_4|^4,
\qquad
1+B\asymp1+|m_2|^4.
\]

We first integrate with respect to $\alpha_3$. We have
\begin{align*}
\int_0^r
\mathrm{e}^{-2C(r-\alpha_3)}
\,\mathrm{d}\alpha_3
\lesssim
(1+C)^{-1}.
\end{align*}

We next integrate with respect to $v_1$. Since
\[
r+\alpha_1-2v_1
=
(r-\alpha_1)+2(\alpha_1-v_1),
\]
we obtain
\begin{align*}
\int_0^{\alpha_1}
\mathrm{e}^{-D(r+\alpha_1-2v_1)}
\,\mathrm{d}v_1
&=
\mathrm{e}^{-D(r-\alpha_1)}
\int_0^{\alpha_1}
\mathrm{e}^{-2D(\alpha_1-v_1)}
\,\mathrm{d}v_1\\
&\lesssim
(1+D)^{-1}
\mathrm{e}^{-D(r-\alpha_1)}.
\end{align*}
Similarly,
\begin{align*}
\int_0^{\alpha_2}
\mathrm{e}^{-E(r+\alpha_2-2v_2)}
\,\mathrm{d}v_2
&\lesssim
(1+E)^{-1}
\mathrm{e}^{-E(r-\alpha_2)}.
\end{align*}

Let
\[
s:=\min(\alpha_1,\alpha_2).
\]
Since
\[
\alpha_1+\alpha_2-2s
=
|\alpha_1-\alpha_2|,
\]
we have
\begin{align*}
\int_0^s
\mathrm{e}^{-F(\alpha_1+\alpha_2-2v_3)}
\,\mathrm{d}v_3
&=
\mathrm{e}^{-F|\alpha_1-\alpha_2|}
\int_0^s
\mathrm{e}^{-2F(s-v_3)}
\,\mathrm{d}v_3\\
&\lesssim
(1+F)^{-1}
\mathrm{e}^{-F|\alpha_1-\alpha_2|}.
\end{align*}

Consequently, after integrating with respect to
$v_1,v_2,v_3,\alpha_3$, the remaining time factor is bounded by
\begin{align*}
&(1+C)^{-1}(1+D)^{-1}(1+E)^{-1}(1+F)^{-1}\\
&\quad\times
|m_1+m_2|^2|m_1+m_4|^2
\int_{[0,r]^2}
\mathrm{e}^{-(A+D)(r-\alpha_1)}
\mathrm{e}^{-(B+E)(r-\alpha_2)}
\mathrm{e}^{-F|\alpha_1-\alpha_2|}
\,\mathrm{d}\alpha_1\,\mathrm{d}\alpha_2.
\end{align*}

Set
\[
R:=
|m_1+m_2|^2|m_1+m_4|^2
\int_{[0,r]^2}
\mathrm{e}^{-(A+D)(r-\alpha_1)}
\mathrm{e}^{-(B+E)(r-\alpha_2)}
\mathrm{e}^{-F|\alpha_1-\alpha_2|}
\,\mathrm{d}\alpha_1\,\mathrm{d}\alpha_2.
\]
Since
\[
|m_1+m_2|^2\asymp B^{1/2},
\qquad
|m_1+m_4|^2\asymp A^{1/2},
\]
we first obtain, by discarding the last exponential,
\begin{align*}
R
&\lesssim
A^{1/2}B^{1/2}
\int_0^r
\mathrm{e}^{-(A+D)(r-\alpha_1)}
\,\mathrm{d}\alpha_1
\int_0^r
\mathrm{e}^{-(B+E)(r-\alpha_2)}
\,\mathrm{d}\alpha_2\\
&\lesssim
(1+A)^{-1/2}(1+B)^{-1/2}.
\end{align*}

We also derive a second estimate which keeps the decay coming from
$F$. Making the change of variables
\[
u:=r-\alpha_1,
\qquad
v:=r-\alpha_2,
\]
and setting
\[
X:=A+D,
\qquad
Y:=B+E,
\]
we have
\begin{align*}
R
&\lesssim
A^{1/2}B^{1/2}
\int_{\mathbb R_+^2}
\mathrm{e}^{-Xu-Yv-F|u-v|}
\,\mathrm{d}u\,\mathrm{d}v.
\end{align*}
Splitting the domain into $\{u\geq v\}$ and $\{v\geq u\}$ gives
\begin{align*}
&\int_{\mathbb R_+^2}
\mathrm{e}^{-Xu-Yv-F|u-v|}
\,\mathrm{d}u\,\mathrm{d}v\\
&=
\frac{1}{X+Y}
\left(
\frac{1}{X+F}
+
\frac{1}{Y+F}
\right).
\end{align*}
Since
\[
A^{1/2}B^{1/2}
\leq
X^{1/2}Y^{1/2}
\]
and
\[
\frac{X^{1/2}Y^{1/2}}{X+Y}
\leq\frac12,
\]
we obtain
\[
R\lesssim(1+F)^{-1}.
\]

Thus
\[
R
\lesssim
(1+A)^{-1/2}(1+B)^{-1/2}
\]
and
\[
R
\lesssim
(1+F)^{-1}.
\]
We now apply Lemma~\ref{l33} to these two bounds. Set
\[
\vartheta:=\frac{3(1-\gamma)}4.
\]
Since $\gamma\in(0,1/11)$, we have $\vartheta\in(0,1)$. Hence
\begin{align*}
R
&\lesssim
\left(
(1+A)^{-1/2}(1+B)^{-1/2}
\right)^{\vartheta}
\left(
(1+F)^{-1}
\right)^{1-\vartheta}\\
&=
(1+A)^{-\frac{3(1-\gamma)}8}
(1+B)^{-\frac{3(1-\gamma)}8}
(1+F)^{-\frac{1+3\gamma}{4}}\\
&=
(1+A)^{\frac{3\gamma}{8}-\frac38}
(1+B)^{\frac{3\gamma}{8}-\frac38}
(1+F)^{-\frac{1+3\gamma}{4}}.
\end{align*}

Multiplying this estimate by the factor $(1+F)^{-1}$ already obtained from
the $v_3$-integration gives
\[
(1+F)^{-1-\frac{1+3\gamma}{4}}
=
(1+F)^{-\frac{5+3\gamma}{4}}.
\]
Since $\gamma>0$,
\[
-\frac{5+3\gamma}{4}
\leq
-\frac{5-5\gamma}{4}
=
\frac{5\gamma}{4}-\frac54,
\]
and therefore
\[
(1+F)^{-\frac{5+3\gamma}{4}}
\leq
(1+F)^{\frac{5\gamma}{4}-\frac54}.
\]
Likewise,
\[
(1+C)^{-1}\leq(1+C)^{\gamma-1},
\]
\[
(1+D)^{-1}\leq(1+D)^{\gamma-1},
\qquad
(1+E)^{-1}\leq(1+E)^{\gamma-1}.
\]
Using also
\[
1+A\asymp1+|m_4|^4,
\qquad
1+B\asymp1+|m_2|^4,
\]
we conclude that
\begin{align*}
&\mathds{E}\left[\left|I_{r,\epsilon,2}\right|^2\right]\\
&\stackrel{\text{Lemma \ref{l33}}}{\lesssim}
\sum_{\left(m_1,m_2,m_3,m_4\right)\in\left(\mathbb Z^d\right)^4}
\left|\rho_{m-1}\left(m_1\right)\right|^2
\left(1+\left|m_3\right|^4\right)^{\gamma-1}
\left(1+\left|m_2-m_3\right|^4\right)^{\gamma-1}
\left(1+\left|m_3-m_4\right|^4\right)^{\gamma-1}\\
&\quad\times
\left(1+\left|m_2\right|^4\right)^{\frac{3\gamma}{8}-\frac38}
\left(1+\left|m_4\right|^4\right)^{\frac{3\gamma}{8}-\frac38}
\left(1+\left|m_1+m_3\right|^4\right)^{\frac{5\gamma}{4}-\frac54}.
\end{align*}

We now make explicit the successive applications of Lemma~\ref{lemma2}. For fixed $m_1$ and $m_3$, consider first the sum over $m_2$. We write
\[
\left(1+|m_2-m_3|^4\right)^{\gamma-1}
=
\left(1+|m_2-m_3|^4\right)^{-\frac{4(1-\gamma)}4}
\]
and
\[
\left(1+|m_2|^4\right)^{\frac{3\gamma}{8}-\frac38}
=
\left(1+|m_2|^4\right)^{-\frac{\frac32(1-\gamma)}4}.
\]
Since $d=5$ and $\gamma\in(0,1/11)$,
\[
4(1-\gamma)+\frac32(1-\gamma)
=
\frac{11}{2}(1-\gamma)>5.
\]
Hence Lemma~\ref{lemma2} gives
\begin{align*}
&\sum_{m_2\in\mathbb Z^d}
\left(1+|m_2-m_3|^4\right)^{\gamma-1}
\left(1+|m_2|^4\right)^{\frac{3\gamma}{8}-\frac38}\\
&\lesssim
\left(1+|m_3|^4\right)^{
\frac{5-4(1-\gamma)-\frac32(1-\gamma)}4}\\
&=
\left(1+|m_3|^4\right)^{\frac{11\gamma}{8}-\frac18}.
\end{align*}
Exactly the same argument applied to the sum over $m_4$ yields
\begin{align*}
&\sum_{m_4\in\mathbb Z^d}
\left(1+|m_3-m_4|^4\right)^{\gamma-1}
\left(1+|m_4|^4\right)^{\frac{3\gamma}{8}-\frac38}\\
&\lesssim
\left(1+|m_3|^4\right)^{\frac{11\gamma}{8}-\frac18}.
\end{align*}
Combining these two estimates with the factor
$\left(1+|m_3|^4\right)^{\gamma-1}$ gives
\begin{align*}
&\left(1+|m_3|^4\right)^{\gamma-1}
\left(1+|m_3|^4\right)^{\frac{11\gamma}{8}-\frac18}
\left(1+|m_3|^4\right)^{\frac{11\gamma}{8}-\frac18}\\
&=
\left(1+|m_3|^4\right)^{
\gamma-1+\frac{11\gamma}{4}-\frac14}\\
&=
\left(1+|m_3|^4\right)^{\frac{15\gamma}{4}-\frac54}.
\end{align*}
We are therefore left with
\[
\sum_{m_3\in\mathbb Z^d}
\left(1+|m_3|^4\right)^{\frac{15\gamma}{4}-\frac54}
\left(1+|m_1+m_3|^4\right)^{\frac{5\gamma}{4}-\frac54}.
\]
Writing
\[
\frac{15\gamma}{4}-\frac54
=
-\frac{5-15\gamma}{4},
\qquad
\frac{5\gamma}{4}-\frac54
=
-\frac{5-5\gamma}{4},
\]
we have
\[
(5-15\gamma)+(5-5\gamma)
=
10-20\gamma>5.
\]
Moreover, since $\gamma>0$,
\[
5-15\gamma<5,
\qquad
5-5\gamma<5.
\]
Thus Lemma~\ref{lemma2} applies once more and yields
\begin{align*}
&\sum_{m_3\in\mathbb Z^d}
\left(1+|m_3|^4\right)^{\frac{15\gamma}{4}-\frac54}
\left(1+|m_1+m_3|^4\right)^{\frac{5\gamma}{4}-\frac54}\\
&\lesssim
\left(1+|m_1|^4\right)^{
\frac{5-(5-15\gamma)-(5-5\gamma)}4}\\
&=
\left(1+|m_1|^4\right)^{5\gamma-\frac54}.
\end{align*}
Consequently,
\begin{align*}
\mathds{E}\left[\left|I_{r,\epsilon,2}\right|^2\right]
&\stackrel{\text{Lemma \ref{lemma2}}}{\lesssim}
\sum_{m_1\in\mathbb Z^d}
\left|\rho_{m-1}\left(m_1\right)\right|^2
\left(1+\left|m_1\right|^4\right)^{5\gamma-\frac54}.
\end{align*}

Finally, for $m\geq1$, on the support of $\rho_{m-1}, |m_1|\asymp 2^{m-1},$
Since $d=5$, the number of lattice points in the corresponding dyadic annulus is of order $2^{5(m-1)}$. For $m=0$, the support of $\rho_{-1}$ is bounded,
and the corresponding estimate follows directly. Therefore,
\begin{align*}
\sum_{m_1\in\mathbb Z^d}
\left|\rho_{m-1}\left(m_1\right)\right|^2
\left(1+\left|m_1\right|^4\right)^{5\gamma-\frac54}
&\lesssim
2^{5(m-1)}
2^{4\left(5\gamma-\frac54\right)(m-1)}\\
&=
2^{20\gamma(m-1)}.
\end{align*}
We conclude that
\[
\mathds{E}\left[\left|I_{r,\epsilon,2}\right|^2\right]
\lesssim
2^{20\gamma(m-1)}.
\]
\end{proof}

\chapter{}
\label{ppe6}
\begin{proof}[Proof of Theorem \ref{theorem6}]
Here again we only give the crucial $L^2(\Omega)$ bound at a fixed time $r$.

Let $\gamma \in \left]0,\frac{1}{11}\right[,$ 
\begin{align*}
\psi:\left(\mathbb{Z}^d\right)^2&\longrightarrow \left[0,1\right]\\
\left(m_1,m_2\right)&\longmapsto\psi\left(m_1,m_2\right)=\sum_{q \in \mathbb{N}}\sum_{j=0}^{\min\left(q+1,2\right)}\rho_{j+\max\left(q-2,-1\right)}\left(m_1\right)\rho_{q-1}\left(m_2\right)
\end{align*}
The following observation is crucial for our estimation $$\psi(m_1,m_2)\neq 0 \implies \kappa_1 (1+|m_2|)\leq 1+|m_1|\leq \kappa_2 (1+|m_2|)$$ for $0<\kappa_1<\kappa_2.$
  
It follows from Lemma \ref{l3} that: $$(\delta_{m-1}\Xi_{r,\epsilon,3})(x)=I_{r,\epsilon,1}+6I_{r,\epsilon,2}+6(I_{r,\epsilon,3}-\psi_{\epsilon,4}(r)X_{r,\epsilon,1}),$$ where $$I_{r,\epsilon,1}=\int_{\mathbb{T}^d}dyK_{m-1}(x-y)\sum_{q \in \mathbb{N}}\sum_{j=0}^{\min\left(q+1,2\right)}V_{5}(f_{r,\epsilon,j,q,y}\otimes_0g_{r,\epsilon,q,y})=V_{5}(\int_{\mathbb{T}^d}dyK_{m-1}(x-y)\sum_{q \in \mathbb{N}}\sum_{j=0}^{\min\left(q+1,2\right)}f_{r,\epsilon,j,q,y}\otimes_0g_{r,\epsilon,q,y}),$$ $$I_{r,\epsilon,2}=\int_{\mathbb{T}^d}dyK_{m-1}(x-y)\sum_{q \in \mathbb{N}}\sum_{j=0}^{\min\left(q+1,2\right)}V_{3}(f_{r,\epsilon,j,q,y}\otimes_1g_{r,\epsilon,q,y})=V_{3}(\int_{\mathbb{T}^d}dyK_{m-1}(x-y)\sum_{q \in \mathbb{N}}\sum_{j=0}^{\min\left(q+1,2\right)}f_{r,\epsilon,j,q,y}\otimes_1g_{r,\epsilon,q,y}),$$ $$I_{r,\epsilon,3}=\int_{\mathbb{T}^d}dyK_{m-1}(x-y)\sum_{q \in \mathbb{N}}\sum_{j=0}^{\min\left(q+1,2\right)}V_{1}(f_{r,\epsilon,j,q,y}\otimes_2g_{r,\epsilon,q,y})=V_{1}(\int_{\mathbb{T}^d}dyK_{m-1}(x-y)\sum_{q \in \mathbb{N}}\sum_{j=0}^{\min\left(q+1,2\right)}f_{r,\epsilon,j,q,y}\otimes_2g_{r,\epsilon,q,y}),$$ where $$f_{r,\epsilon,j,q,y}((u_1,x_1),(u_2,x_2),(u_3,x_3))=\int_0^r$$$$(K_{j+\max(q-2,-1)}*\Delta\eta_{r-v}*((\zeta_\epsilon*\eta_{v-u_1})(\cdot-x_1)(\zeta_\epsilon*\eta_{v-u_2})(\cdot-x_2)(\zeta_\epsilon*\eta_{v-u_3})(\cdot-x_3)))(y)\mathds{1}_{[0,v]^3}(u_1,u_2,u_3)dv,$$ $$g_{r,\epsilon,q,y}((u_4,x_4),(u_5,x_5))=(K_{q-1}*((\zeta_\epsilon*\eta_{r-u_4}(\cdot-x_4)(\zeta_\epsilon*\eta_{r-u_5})(\cdot-x_5)))(y)\mathds{1}_{[0,r]^2}(u_4,u_5).$$
Using Theorem \ref{nelson}, we obtain

\begin{align*}
&\mathds{E}[|I_{r,\epsilon,1}|^2]\\
&\lesssim \int_{(\mathbb{R}_+\times\mathbb{T}^d)^5}du_1dx_1du_2dx_2du_3dx_3du_4dx_4du_5dx_5|\int_{\mathbb{T}^d}dyK_{m-1}(x-y)\\
&\sum_{q \in \mathbb{N}}\sum_{j=0}^{\min\left(q+1,2\right)}f_{r,\epsilon,j,q,y}((u_1,x_1),(u_2,x_2),(u_3,x_3))g_{r,\epsilon,q,y}((u_4,x_4),(u_5,x_5))|^2\\
&\stackrel{\text{Fubini}}{=}\int_{(\mathbb{R}_+\times\mathbb{T}^d)^5}du_1dx_1du_2dx_2du_3dx_3du_4dx_4du_5dx_5\int_{(\mathbb{T}^d)^2}dy_1dy_2K_{m-1}(x-y_1)K_{m-1}(x-y_2)\\
&\sum_{(q_1,q_2) \in \mathbb{N}^2}\sum_{j_1=0}^{\min\left(q_1+1,2\right)}\sum_{j_2=0}^{\min(q_2+1,2)}f_{r,\epsilon,j_1,q_1,y_1}((u_1,x_1),(u_2,x_2),(u_3,x_3))g_{r,\epsilon,q_1,y_1}((u_4,x_4),(u_5,x_5))\\
&f_{r,\epsilon,j_2,q_2,y_2}((u_1,x_1),(u_2,x_2),(u_3,x_3))g_{r,\epsilon,q_2,y_2}((u_4,x_4),(u_5,x_5))\\
&=\int_{[0,r]^2}dv_1dv_2\int_{(\mathbb{R}_+\times\mathbb{T}^d)^5}du_1dx_1du_2dx_2du_3dx_3du_4dx_4du_5dx_5\int_{(\mathbb{T}^d)^6}dy_1dy_2dy_1'dy_2'dy'_3dy_4'K_{m-1}(x-y_1)\\
&K_{m-1}(x-y_2)\sum_{(q_1,q_2) \in \mathbb{N}^2}\sum_{j_1=0}^{\min\left(q_1+1,2\right)}\sum_{j_2=0}^{\min(q_2+1,2)}(K_{j_1+\max(q_1-2,-1)}*\Delta\eta_{r-v_1})(y_1-y'_1)\\
&(K_{j_2+\max(q_2-2,-1)}*\Delta\eta_{r-v_2})(y_2-y'_3)K_{q_1-1}(y_1-y'_2)K_{q_2-1}(y_2-y'_4)(\zeta_\epsilon*\eta_{v_1-u_1})(y'_1-x_1)(\zeta_\epsilon*\eta_{v_1-u_2})(y_1'-x_2)\\
&(\zeta_\epsilon*\eta_{v_1-u_3})(y'_1-x_3)(\zeta_\epsilon*\eta_{r-u_4})(y'_2-x_4)(\zeta_\epsilon*\eta_{r-u_5})(y'_2-x_5)(\zeta_\epsilon*\eta_{v_2-u_1})(y'_3-x_1)(\zeta_\epsilon*\eta_{v_2-u_2})(y_3'-x_2)\\
&(\zeta_\epsilon*\eta_{v_2-u_3})(y_3'-x_3)(\zeta_\epsilon*\eta_{r-u_4})(y'_4-x_4)(\zeta_\epsilon*\eta_{r-u_5})(y_4'-x_5)\mathds{1}_{[0,r]^2}(u_4,u_5)\mathds{1}_{[0,\min(v_1,v_2)]^3}(u_1,u_2,u_3)\\
&=\int_{[0,r]^2}dv_1dv_2\int_{[0,\min(v_1,v_2)]^3\times[0,r]^2}du_1du_2du_3du_4du_5\int_{(\mathbb{T}^d)^6}dy_1dy_2dy_1'dy_2'dy'_3dy_4'K_{m-1}(x-y_1)K_{m-1}(x-y_2)\\
&\sum_{(q_1,q_2) \in \mathbb{N}^2}\sum_{j_1=0}^{\min\left(q_1+1,2\right)}\sum_{j_2=0}^{\min(q_2+1,2)}(K_{j_1+\max(q_1-2,-1)}*\Delta\eta_{r-v_1})(y_1-y'_1)(K_{j_2+\max(q_2-2,-1)}*\Delta\eta_{r-v_2})(y_2-y'_3)\\
&K_{q_1-1}(y_1-y'_2)K_{q_2-1}(y_2-y'_4)(\zeta_\epsilon*\zeta_{\epsilon}*\eta_{v_1+v_2-2u_1})(y'_1-y_3')(\zeta_\epsilon*\zeta_{\epsilon}*\eta_{v_1+v_2-2u_2})(y_1'-y'_3)\\
&(\zeta_\epsilon*\zeta_{\epsilon}*\eta_{v_1+v_2-2u_3})(y_1'-y_3')(\zeta_\epsilon*\zeta_{\epsilon}*\eta_{2(r-u_4)})(y'_2-y_4')(\zeta_\epsilon*\zeta_{\epsilon}*\eta_{2(r-u_5)})(y'_2-y_4')\\
&=\int_{[0,r]^2}dv_1dv_2\int_{[0,\min(v_1,v_2)]^3\times[0,r]^2}du_1du_2du_3du_4du_5\int_{(\mathbb{T}^d)^2}dy_1dy_2K_{m-1}(y_1)K_{m-1}(y_2)\\
&\sum_{(q_1,q_2) \in \mathbb{N}^2}\sum_{j_1=0}^{\min\left(q_1+1,2\right)}\sum_{j_2=0}^{\min(q_2+1,2)}(K_{j_1+\max(q_1-2,-1)}*\Delta^2\eta_{2r-v_1-v_2}*K_{j_2+\max(q_2-2,-1)}\\
&*((\zeta_\epsilon*\zeta_{\epsilon}*\eta_{v_1+v_2-2u_1})(\zeta_\epsilon*\zeta_{\epsilon}*\eta_{v_1+v_2-2u_2})(\zeta_\epsilon*\zeta_{\epsilon}*\eta_{v_1+v_2-2u_3})))(y_1-y_2)\\
&\times (K_{q_1-1}*K_{q_2-1}*((\zeta_\epsilon*\zeta_{\epsilon}*\eta_{2(r-u_4)})(\zeta_\epsilon*\zeta_{\epsilon}*\eta_{2(r-u_5)})))(y_1-y_2)
\end{align*} \begin{align*}
&\mathds{E}[|I_{r,\epsilon,2}|^2]\\
&\lesssim \int_{(\mathbb{R}_+\times\mathbb{T}^d)^3}du_1dx_1du_2dx_2du_3dx_3|\int_{\mathbb{T}^d}dyK_{m-1}(x-y)\\
&\sum_{q \in \mathbb{N}}\sum_{j=0}^{\min\left(q+1,2\right)}\int_{\mathbb{R}_+\times\mathbb{T}^d}du_4dx_4f_{r,\epsilon,j,q,y}((u_1,x_1),(u_2,x_2),(u_4,x_4))g_{r,\epsilon,q,y}((u_4,x_4),(u_3,x_3))|^2\\
&\stackrel{\text{Fubini}}{=}\int_{(\mathbb{R}_+\times\mathbb{T}^d)^5}du_1dx_1du_2dx_2du_3dx_3du_4dx_4du_5dx_5\int_{(\mathbb{T}^d)^2}dy_1dy_2K_{m-1}(x-y_1)K_{m-1}(x-y_2)\\
&\sum_{(q_1,q_2) \in \mathbb{N}^2}\sum_{j_1=0}^{\min\left(q_1+1,2\right)}\sum_{j_2=0}^{\min(q_2+1,2)}f_{r,\epsilon,j_1,q_1,y_1}((u_1,x_1),(u_2,x_2),(u_4,x_4))g_{r,\epsilon,q_1,y_1}((u_4,x_4),(u_3,x_3))\\
&f_{r,\epsilon,j_2,q_2,y_2}((u_1,x_1),(u_2,x_2),(u_5,x_5))g_{r,\epsilon,q_2,y_2}((u_5,x_5),(u_3,x_3))\\
&=\int_{[0,r]^2}dv_1dv_2\int_{(\mathbb{R}_+\times\mathbb{T}^d)^5}du_1dx_1du_2dx_2du_3dx_3du_4dx_4du_5dx_5\int_{(\mathbb{T}^d)^6}dy_1dy_2dy_1'dy_2'dy'_3dy_4'K_{m-1}(x-y_1)\\
&K_{m-1}(x-y_2)\sum_{(q_1,q_2) \in \mathbb{N}^2}\sum_{j_1=0}^{\min\left(q_1+1,2\right)}\sum_{j_2=0}^{\min(q_2+1,2)}(K_{j_1+\max(q_1-2,-1)}*\Delta\eta_{r-v_1})(y_1-y'_1)\\
&(K_{j_2+\max(q_2-2,-1)}*\Delta\eta_{r-v_2})(y_2-y'_3)K_{q_1-1}(y_1-y'_2)K_{q_2-1}(y_2-y'_4)(\zeta_\epsilon*\eta_{v_1-u_1})(y'_1-x_1)(\zeta_\epsilon*\eta_{v_1-u_2})(y_1'-x_2)\\
&(\zeta_\epsilon*\eta_{v_1-u_4})(y'_1-x_4)(\zeta_\epsilon*\eta_{r-u_4})(y'_2-x_4)(\zeta_\epsilon*\eta_{r-u_3})(y'_2-x_3)(\zeta_\epsilon*\eta_{v_2-u_1})(y'_3-x_1)(\zeta_\epsilon*\eta_{v_2-u_2})(y_3'-x_2)\\
&(\zeta_\epsilon*\eta_{v_2-u_5})(y_3'-x_5)(\zeta_\epsilon*\eta_{r-u_5})(y'_4-x_5)(\zeta_\epsilon*\eta_{r-u_3})(y_4'-x_3)\mathds{1}_{[0,r]}(u_3)\mathds{1}_{[0,\min(v_1,v_2)]^2\times[0,v_1]\times[0,v_2]}(u_1,u_2,u_4,u_5)\\
&=\int_{[0,r]^2}dv_1dv_2\int_{[0,\min(v_1,v_2)]^2\times[0,r]\times[0,v_1]\times [0,v_2]}du_1du_2du_3du_4du_5\int_{(\mathbb{T}^d)^6}dy_1dy_2dy_1'dy_2'dy'_3dy_4'K_{m-1}(y_1)K_{m-1}(y_2)\\
&\sum_{(q_1,q_2) \in \mathbb{N}^2}\sum_{j_1=0}^{\min\left(q_1+1,2\right)}\sum_{j_2=0}^{\min(q_2+1,2)}(K_{j_1+\max(q_1-2,-1)}*\Delta\eta_{r-v_1})(y_1-y'_1)(K_{j_2+\max(q_2-2,-1)}*\Delta\eta_{r-v_2})(y_2-y'_3)\\
&K_{q_1-1}(y_1-y'_2)K_{q_2-1}(y_2-y'_4)(\zeta_\epsilon*\zeta_{\epsilon}*\eta_{v_1+v_2-2u_1})(y'_1-y_3')(\zeta_\epsilon*\zeta_{\epsilon}*\eta_{v_1+v_2-2u_2})(y_1'-y'_3)\\
&(\zeta_\epsilon*\zeta_{\epsilon}*\eta_{2(r-u_3)})(y_2'-y_4')(\zeta_\epsilon*\zeta_{\epsilon}*\eta_{r+v_1-2u_4})(y'_1-y_2')(\zeta_\epsilon*\zeta_{\epsilon}*\eta_{r+v_2-2u_5})(y'_3-y_4')\\
&=\int_{[0,r]^2}dv_1dv_2\int_{[0,\min(v_1,v_2)]^2\times[0,r]\times[0,v_1]\times [0,v_2]}du_1du_2du_3du_4du_5\int_{(\mathbb{T}^d)^6}dy_1dy_2dy_1'dy_2'dy'_3dy_4'K_{m-1}(y_1)K_{m-1}(y_2)\\
&\sum_{(q_1,q_2) \in \mathbb{N}^2}\sum_{j_1=0}^{\min\left(q_1+1,2\right)}\sum_{j_2=0}^{\min(q_2+1,2)}(K_{j_1+\max(q_1-2,-1)}*\Delta\eta_{r-v_1})(y_1-y_2-y'_1)(K_{j_2+\max(q_2-2,-1)}*\Delta\eta_{r-v_2})(y'_3)\\
&K_{q_1-1}(y_1-y_2-y'_2)K_{q_2-1}(y'_4)(\zeta_\epsilon*\zeta_{\epsilon}*\eta_{v_1+v_2-2u_1})(y'_1-y_3')(\zeta_\epsilon*\zeta_{\epsilon}*\eta_{v_1+v_2-2u_2})(y_1'-y'_3)\\
&(\zeta_\epsilon*\zeta_{\epsilon}*\eta_{2(r-u_3)})(y_2'-y_4')(\zeta_\epsilon*\zeta_{\epsilon}*\eta_{r+v_1-2u_4})(y'_1-y_2')(\zeta_\epsilon*\zeta_{\epsilon}*\eta_{r+v_2-2u_5})(y'_3-y_4')\\
\end{align*}
\begin{align*}
&\mathds{E}\left[\left|I_{r,\epsilon,3}
-\psi_{\epsilon,4}(r)\left(\delta_{m-1}\widetilde{X}_{r,\epsilon,1}\right)(x)\right|^2\right]\\
&\lesssim \int_{\mathbb{R}_+\times\mathbb{T}^d}du_1dx_1|\int_{\mathbb{T}^d}dyK_{m-1}(x-y)(-\psi_{\epsilon,4}(r)(\zeta_\epsilon*\eta_{r-u_1})(y-x_1)\mathds{1}_{[0,r]}(u_1)\\
&+\sum_{q \in \mathbb{N}}\sum_{j=0}^{\min\left(q+1,2\right)}\int_{(\mathbb{R}_+\times\mathbb{T}^d)^2}du_2dx_2du_3dx_3f_{r,\epsilon,j,q,y}((u_1,x_1),(u_2,x_2),(u_3,x_3))g_{r,\epsilon,q,y}((u_2,x_2),(u_3,x_3)))|^2\\
&=\int_{\mathbb{R}_+\times\mathbb{T}^d}du_1dx_1|\int_{\mathbb{T}^d}dyK_{m-1}(x-y)(-\psi_{\epsilon,4}(r)(\zeta_\epsilon*\eta_{r-u_1})(y-x_1)\mathds{1}_{[0,r]}(u_1)\\
&+\sum_{q \in \mathbb{N}}\sum_{j=0}^{\min\left(q+1,2\right)}\int_{(\mathbb{R}_+\times\mathbb{T}^d)^2}du_2dx_2du_3dx_3\int_{[0,r]\times(\mathbb{T}^d)^2}dvdy'_1dy'_2(K_{j+\max(q-2,-1)}*\Delta\eta_{r-v})(y-y'_1)\\
&K_{q-1}(y-y'_2)(\zeta_\epsilon*\eta_{v-u_1})(y'_1-x_1)(\zeta_\epsilon*\eta_{v-u_2})(y'_1-x_2)(\zeta_\epsilon*\eta_{v-u_3})(y'_1-x_3)(\zeta_\epsilon*\eta_{r-u_2})(y'_2-x_2)\\
&(\zeta_\epsilon*\eta_{r-u_3})(y'_2-x_3)\mathds{1}_{[0,v]^3}(u_1,u_2,u_3))|^2\\
&=\int_{[0,r]\times\mathbb{T}^d}du_1dx_1|\int_{\mathbb{T}^d}dyK_{m-1}(x-y)(-\psi_{\epsilon,4}(r)(\zeta_\epsilon*\eta_{r-u_1})(y-x_1)\\
&+\sum_{q \in \mathbb{N}}\sum_{j=0}^{\min\left(q+1,2\right)}\int_{[u_1,r]\times(\mathbb{T}^d)^2}dvdy'_1dy'_2\int_{[0,v]^2}du_2du_3(K_{j+\max(q-2,-1)}*\Delta\eta_{r-v})(y-y'_1)K_{q-1}(y-y'_2)\\
&(\zeta_\epsilon*\eta_{v-u_1})(y'_1-x_1)(\zeta_\epsilon *\zeta_\epsilon*\eta_{r+v-2u_2})(y'_1-y'_2)(\zeta_\epsilon*\zeta_\epsilon*\eta_{r+v-2u_3})(y'_1-y'_2))|^2\\
&=\int_{[0,r]\times\mathbb{T}^d}du_1dx_1|(-\psi_{\epsilon,4}(r)(K_{m-1}*\zeta_\epsilon*\eta_{r-u_1})(x_1)\\
&+\sum_{q \in \mathbb{N}}\sum_{j=0}^{\min\left(q+1,2\right)}\int_{[u_1,r]}dv\int_{[0,v]^2}du_2du_3(K_{m-1}*\zeta_\epsilon*\eta_{v-u_1}*((K_{j+\max(q-2,-1)}*\Delta\eta_{r-v})\\
&\times(K_{q-1}*((\zeta_\epsilon *\zeta_\epsilon*\eta_{r+v-2u_2})(\zeta_\epsilon*\zeta_\epsilon*\eta_{r+v-2u_3})))))(x_1))|^2
\end{align*}

\begin{enumerate}
\item \textbf{Estimation of $\mathds{E}\left[\left|I_{r,\epsilon,1}\right|^2\right]$:} We are in a position to apply Parseval to obtain the following bound:
\begin{align*}
&\mathds{E}\left[\left|I_{r,\epsilon,1}\right|^2\right]\\
&\lesssim
\sum_{\left(m_1,m_2,m_3,m_4,m_5\right)\in\left(\mathbb Z^d\right)^5}
\left|
\rho_{m-1}\left(m_1+m_2+m_3+m_4+m_5\right)
\psi\left(m_1+m_2+m_3,m_4+m_5\right)
\prod_{k=1}^5\zeta(\epsilon m_k)
\right|^2\\
&\quad\times
\left|m_1+m_2+m_3\right|^4
\int_{\left[0,r\right]^4}
\mathrm{e}^{-\left|2\pi(m_1+m_2+m_3)\right|^4
\left(2r-\alpha_1-\alpha_2\right)
-2\left|2\pi m_4\right|^4\left(r-\alpha_3\right)
-2\left|2\pi m_5\right|^4\left(r-\alpha_4\right)}
\\
&\quad\times
\left(
\int_{\left[0,\min\left(\alpha_1,\alpha_2\right)\right]^3}
\mathrm{e}^{-\left|2\pi m_1\right|^4
\left(\alpha_1+\alpha_2-2v_1\right)
-\left|2\pi m_2\right|^4
\left(\alpha_1+\alpha_2-2v_2\right)
-\left|2\pi m_3\right|^4
\left(\alpha_1+\alpha_2-2v_3\right)}
\,\mathrm{d}v_1\,\mathrm{d}v_2\,\mathrm{d}v_3
\right)\\
&\quad\times
\mathrm{d}\alpha_1\,\mathrm{d}\alpha_2\,
\mathrm{d}\alpha_3\,\mathrm{d}\alpha_4.
\end{align*}

We now estimate the time integrals in detail. Since $\zeta$ is bounded, the
factors involving $\zeta$ can be absorbed into the implicit constant.
Moreover,
\[
0\leq\psi\leq1,
\]
and the localization property of $\psi$ gives
\[
\psi(m_1+m_2+m_3,m_4+m_5)\neq0
\quad\Longrightarrow\quad
1+|m_1+m_2+m_3|\asymp1+|m_4+m_5|.
\]

Set
\[
n:=m_1+m_2,
\qquad
p:=m_1+m_2+m_3=n+m_3,
\qquad
q:=m_4+m_5,
\]
and
\[
A:=|2\pi p|^4,
\qquad
a_k:=|2\pi m_k|^4,\quad k=1,\ldots,5.
\]
We first integrate with respect to $\alpha_3$ and $\alpha_4$. We have
\[
\int_0^r
\mathrm{e}^{-2a_4(r-\alpha_3)}
\,\mathrm{d}\alpha_3
\lesssim
(1+a_4)^{-1},
\]
and similarly
\[
\int_0^r
\mathrm{e}^{-2a_5(r-\alpha_4)}
\,\mathrm{d}\alpha_4
\lesssim
(1+a_5)^{-1}.
\]

We next estimate the integrals with respect to $v_1,v_2,v_3$. Let
\[
s:=\min(\alpha_1,\alpha_2).
\]
For each $k\in\{1,2,3\}$,
\begin{align*}
\int_0^s
\mathrm{e}^{-a_k(\alpha_1+\alpha_2-2v)}
\,\mathrm{d}v
&=
\mathrm{e}^{-a_k|\alpha_1-\alpha_2|}
\int_0^s
\mathrm{e}^{-2a_k(s-v)}
\,\mathrm{d}v\\
&\lesssim
(1+a_k)^{-1}
\mathrm{e}^{-c a_k|\alpha_1-\alpha_2|}
\end{align*}
for some $c>0$. Consequently,
\begin{align*}
&\int_{\left[0,\min(\alpha_1,\alpha_2)\right]^3}
\mathrm{e}^{-\sum_{k=1}^3
a_k(\alpha_1+\alpha_2-2v_k)}
\,\mathrm{d}v_1\,\mathrm{d}v_2\,\mathrm{d}v_3\\
&\lesssim
\prod_{k=1}^3(1+a_k)^{-1}
\mathrm{e}^{-c(a_1+a_2+a_3)|\alpha_1-\alpha_2|}.
\end{align*}

Let
\[
S:=a_1+a_2+a_3.
\]
Using $|p|^4\asymp A$, the remaining $\alpha_1,\alpha_2$ integral is bounded by
\begin{align*}
&A
\int_{[0,r]^2}
\mathrm{e}^{-A(2r-\alpha_1-\alpha_2)}
\mathrm{e}^{-cS|\alpha_1-\alpha_2|}
\,\mathrm{d}\alpha_1\,\mathrm{d}\alpha_2.
\end{align*}
Making the change of variables
\[
u:=r-\alpha_1,
\qquad
v:=r-\alpha_2,
\]
we obtain
\begin{align*}
&A
\int_{[0,r]^2}
\mathrm{e}^{-A(2r-\alpha_1-\alpha_2)}
\mathrm{e}^{-cS|\alpha_1-\alpha_2|}
\,\mathrm{d}\alpha_1\,\mathrm{d}\alpha_2\\
&=
A
\int_{[0,r]^2}
\mathrm{e}^{-A(u+v)}
\mathrm{e}^{-cS|u-v|}
\,\mathrm{d}u\,\mathrm{d}v\\
&\leq
A
\int_{\mathbb R_+^2}
\mathrm{e}^{-A(u+v)}
\mathrm{e}^{-cS|u-v|}
\,\mathrm{d}u\,\mathrm{d}v.
\end{align*}
By symmetry,
\begin{align*}
&A
\int_{\mathbb R_+^2}
\mathrm{e}^{-A(u+v)}
\mathrm{e}^{-cS|u-v|}
\,\mathrm{d}u\,\mathrm{d}v\\
&=
2A
\int_{\{0\leq v\leq u\}}
\mathrm{e}^{-A(u+v)}
\mathrm{e}^{-cS(u-v)}
\,\mathrm{d}u\,\mathrm{d}v.
\end{align*}
Setting
\[
z:=u-v,
\qquad
w:=v,
\]
we get
\begin{align*}
&2A
\int_0^\infty\int_0^\infty
\mathrm{e}^{-A(z+2w)}
\mathrm{e}^{-cSz}
\,\mathrm{d}w\,\mathrm{d}z\\
&=
2A
\left(
\int_0^\infty
\mathrm{e}^{-2Aw}\,\mathrm{d}w
\right)
\left(
\int_0^\infty
\mathrm{e}^{-(A+cS)z}\,\mathrm{d}z
\right)\\
&\lesssim
(1+A+S)^{-1}.
\end{align*}
If $A=0$, the original expression vanishes because of the factor $|p|^4$.

Consequently, the complete time factor is bounded by
\begin{align*}
&\prod_{k=1}^5(1+a_k)^{-1}
(1+A+S)^{-1}.
\end{align*}

We now distribute part of the last factor among the frequencies. Since
\[
|n|^4
=
|m_1+m_2|^4
\lesssim
|m_1|^4+|m_2|^4,
\]
we have
\[
1+|n|^4\lesssim1+A+S.
\]
Clearly,
\[
1+|m_3|^4\lesssim1+A+S,
\qquad
1+|p|^4\lesssim1+A+S.
\]
Set
\[
\vartheta_1:=\frac{1-\gamma}{4},
\qquad
\vartheta_2:=\frac{1-\gamma}{4},
\qquad
\vartheta_3:=\frac{1-5\gamma}{2}.
\]
Since $\gamma\in(0,1/11)$,
\[
\vartheta_1+\vartheta_2+\vartheta_3
=
1-3\gamma<1.
\]
Therefore,
\begin{align*}
(1+A+S)^{-1}
&\leq
(1+A+S)^{-(1-3\gamma)}\\
&\lesssim
\left(1+|n|^4\right)^{-\frac{1-\gamma}{4}}
\left(1+|m_3|^4\right)^{-\frac{1-\gamma}{4}}
\left(1+|p|^4\right)^{-\frac{1-5\gamma}{2}}.
\end{align*}
Moreover,
\[
(1+|m_1|^4)^{-1}
\leq
(1+|m_1|^4)^{\gamma-1},
\]
\[
(1+|m_2|^4)^{-1}
\leq
(1+|m_2|^4)^{\gamma-1},
\]
and
\begin{align*}
&(1+|m_3|^4)^{-1}
(1+|m_3|^4)^{-\frac{1-\gamma}{4}}\\
&=
(1+|m_3|^4)^{-\frac54+\frac{\gamma}{4}}\\
&\leq
(1+|m_3|^4)^{\frac{5\gamma}{4}-\frac54}.
\end{align*}
Thus,
\begin{align*}
&\mathds{E}\left[\left|I_{r,\epsilon,1}\right|^2\right]\\
&\lesssim
\sum_{\left(m_1,m_2,m_3,m_4,m_5\right)\in\left(\mathbb Z^d\right)^5}
\left|
\rho_{m-1}\left(m_1+m_2+m_3+m_4+m_5\right)
\right|^2\\
&\quad\times
\left(1+\left|m_1\right|^4\right)^{\gamma-1}
\left(1+\left|m_2\right|^4\right)^{\gamma-1}
\left(1+\left|m_1+m_2\right|^4\right)^{\frac{\gamma}{4}-\frac14}\\
&\quad\times
\left(1+\left|m_3\right|^4\right)^{\frac{5\gamma}{4}-\frac54}
\left(1+\left|m_4\right|^4\right)^{-1}
\left(1+\left|m_5\right|^4\right)^{-1}\\
&\quad\times
\left(1+\left|m_1+m_2+m_3\right|^4\right)^{-\frac{1-5\gamma}{2}}
\left|
\psi\left(m_1+m_2+m_3,m_4+m_5\right)
\right|^2.
\end{align*}

We now make explicit the successive applications of
Lemma~\ref{lemma2}. Set
\[
n:=m_1+m_2.
\]
For fixed $n$, we first sum with respect to $m_1$. Since $d=5$,
\begin{align*}
&\sum_{m_1\in\mathbb Z^d}
\left(1+|m_1|^4\right)^{\gamma-1}
\left(1+|n-m_1|^4\right)^{\gamma-1}\\
&=
\sum_{m_1\in\mathbb Z^d}
\left(1+|m_1|^4\right)^{-\frac{4(1-\gamma)}4}
\left(1+|n-m_1|^4\right)^{-\frac{4(1-\gamma)}4}\\
&\stackrel{\text{Lemma \ref{lemma2}}}{\lesssim}
\left(1+|n|^4\right)^{
\frac{5-8(1-\gamma)}4}\\
&=
\left(1+|n|^4\right)^{2\gamma-\frac34}.
\end{align*}
Multiplying by the remaining factor depending on $n$ gives
\begin{align*}
\left(1+|n|^4\right)^{2\gamma-\frac34}
\left(1+|n|^4\right)^{\frac{\gamma}{4}-\frac14}
&=
\left(1+|n|^4\right)^{\frac{9\gamma}{4}-1}.
\end{align*}

We next sum this weight against the factor involving $m_3$. Let
\[
p:=n+m_3=m_1+m_2+m_3.
\]
We have
\[
\frac{9\gamma}{4}-1
=
-\frac{4-9\gamma}{4},
\qquad
\frac{5\gamma}{4}-\frac54
=
-\frac{5-5\gamma}{4}.
\]
Hence Lemma~\ref{lemma2} gives
\begin{align*}
&\sum_{n\in\mathbb Z^d}
\left(1+|n|^4\right)^{\frac{9\gamma}{4}-1}
\left(1+|p-n|^4\right)^{\frac{5\gamma}{4}-\frac54}\\
&\lesssim
\left(1+|p|^4\right)^{
\frac{5-(4-9\gamma)-(5-5\gamma)}4}\\
&=
\left(1+|p|^4\right)^{\frac{7\gamma}{2}-1}.
\end{align*}
Thus the contribution of the frequencies $m_1,m_2,m_3$ is bounded by
\[
\left(1+|p|^4\right)^{\frac{7\gamma}{2}-1}.
\]

We now treat $m_4$ and $m_5$. Let
\[
q:=m_4+m_5.
\]
Lemma~\ref{lemma2}, applied with $\alpha=\beta=4$ and $d=5$, gives
\begin{align*}
&\sum_{m_4\in\mathbb Z^d}
\left(1+|m_4|^4\right)^{-1}
\left(1+|q-m_4|^4\right)^{-1}\\
&\lesssim
\left(1+|q|^4\right)^{-\frac34}.
\end{align*}
Moreover, on the support of $\psi(p,q)$,
\[
1+|p|\asymp1+|q|,
\]
and hence
\[
1+|p|^4\asymp1+|q|^4.
\]
Therefore,
\begin{align*}
&\left(1+|p|^4\right)^{-\frac{1-5\gamma}{2}}
\left(1+|q|^4\right)^{-\frac34}\\
&\lesssim
\left(1+|q|^4\right)^{
-\frac{1-5\gamma}{2}-\frac34}\\
&=
\left(1+|q|^4\right)^{\frac{5\gamma}{2}-\frac54}.
\end{align*}

Consequently, writing
\[
p=m_1+m_2+m_3,
\qquad
q=m_4+m_5,
\]
the fivefold sum is bounded by
\[
\sum_{p,q\in\mathbb Z^d}
\left|\rho_{m-1}(p+q)\right|^2
\left(1+|p|^4\right)^{\frac{7\gamma}{2}-1}
\left(1+|q|^4\right)^{\frac{5\gamma}{2}-\frac54}.
\]

Finally, we apply Lemma~\ref{lemma2} once more. Since
\[
\frac{7\gamma}{2}-1
=
-\frac{4-14\gamma}{4},
\qquad
\frac{5\gamma}{2}-\frac54
=
-\frac{5-10\gamma}{4},
\]
we obtain, for
\[
\ell:=p+q,
\]
that
\begin{align*}
&\sum_{p\in\mathbb Z^d}
\left(1+|p|^4\right)^{\frac{7\gamma}{2}-1}
\left(1+|\ell-p|^4\right)^{\frac{5\gamma}{2}-\frac54}\\
&\lesssim
\left(1+|\ell|^4\right)^{
\frac{5-(4-14\gamma)-(5-10\gamma)}4}\\
&=
\left(1+|\ell|^4\right)^{6\gamma-1}.
\end{align*}
All the assumptions of Lemma~\ref{lemma2} in the preceding applications are
satisfied since $\gamma\in(0,1/11)$. Therefore,
\begin{align*}
\mathds{E}\left[\left|I_{r,\epsilon,1}\right|^2\right]
&\stackrel{\text{Lemma \ref{lemma2}}}{\lesssim}
\sum_{\ell\in\mathbb Z^d}
\left|\rho_{m-1}\left(\ell\right)\right|^2
\left(1+\left|\ell\right|^4\right)^{6\gamma-1}.
\end{align*}

Finally, for $m\geq1$, on the support of $\rho_{m-1}$,
\[
|\ell|\asymp2^{m-1}.
\]
Since $d=5$, the number of lattice points in the corresponding dyadic annulus
is of order $2^{5(m-1)}$. For $m=0$, the support of $\rho_{-1}$ is bounded,
and the corresponding estimate follows directly. Hence
\begin{align*}
\sum_{\ell\in\mathbb Z^d}
\left|\rho_{m-1}\left(\ell\right)\right|^2
\left(1+\left|\ell\right|^4\right)^{6\gamma-1}
&\lesssim
2^{5(m-1)}
2^{4(6\gamma-1)(m-1)}\\
&=
2^{(24\gamma+1)(m-1)}.
\end{align*}
We conclude that
\[
\boxed{
\mathds{E}\left[\left|I_{r,\epsilon,1}\right|^2\right]
\lesssim
2^{(24\gamma+1)(m-1)}.
}
\]
\item \textbf{Estimation of $\mathds{E}\left[\left|I_{r,\epsilon,2}\right|^2\right]$:} To bound $\mathds{E}\left[\left|I_{r,\epsilon,2}\right|^2\right]$, we cannot simply use Parseval. Instead, we expand each of the eleven factors appearing in the integrand into its Fourier series. For each factor, we introduce a Fourier variable and use the fact that all the kernels involved are even. After imposing the frequency constraints using the orthogonality of the Fourier basis on $\mathbb T^d$ and integrating with respect to
\[
y_1,\ y_2,\ y_1',\ y_2',\ y_3',\ y_4',
\]
we may choose five independent frequencies, denoted by
$m_1,m_2,m_3,m_4,m_5\in\mathbb{Z}^d$. With this choice, the frequencies associated with the different factors are
\[
\begin{array}{c|c}
\text{factor} & \text{Fourier frequency}\\
\hline
K_{m-1}(y_1)
& m_1\\[1mm]
K_{m-1}(y_2)
& -m_1\\[1mm]
(K_{j_1+\max(q_1-2,-1)}*\Delta\eta_{r-v_1})(y_1-y_2-y_1')
& -(m_1+m_2)\\[1mm]
(K_{j_2+\max(q_2-2,-1)}*\Delta\eta_{r-v_2})(y_3')
& -(m_1+m_4)\\[1mm]
K_{q_1-1}(y_1-y_2-y_2')
& m_2\\[1mm]
K_{q_2-1}(y_4')
& m_4\\[1mm]
(\zeta_\epsilon*\zeta_\epsilon*
\eta_{v_1+v_2-2u_1})(y_1'-y_3')
& -m_5\\[1mm]
(\zeta_\epsilon*\zeta_\epsilon*
\eta_{v_1+v_2-2u_2})(y_1'-y_3')
& -(m_1+m_3-m_5)\\[1mm]
(\zeta_\epsilon*\zeta_\epsilon*
\eta_{2(r-u_3)})(y_2'-y_4')
& m_3\\[1mm]
(\zeta_\epsilon*\zeta_\epsilon*
\eta_{r+v_1-2u_4})(y_1'-y_2')
& m_3-m_2\\[1mm]
(\zeta_\epsilon*\zeta_\epsilon*
\eta_{r+v_2-2u_5})(y_3'-y_4')
& m_4-m_3.
\end{array}
\]
This yields the following bound:
\begin{align*}
&\mathds{E}\left[\left|I_{r,\epsilon,2}\right|^2\right]\\
&\lesssim
\sum_{\left(m_1,m_2,m_3,m_4,m_5\right)\in\left(\mathbb Z^d\right)^5}
\left|\rho_{m-1}\left(m_1\right)\right|^2
\psi\left(m_1+m_2,m_2\right)
\psi\left(m_1+m_4,m_4\right)
\left|m_1+m_2\right|^2
\left|m_1+m_4\right|^2\\
&\quad\times
\left|
\zeta\left(\epsilon m_3\right)
\zeta\left(\epsilon m_5\right)
\zeta\left(\epsilon(m_2-m_3)\right)
\zeta\left(\epsilon(m_3-m_4)\right)
\zeta\left(\epsilon(m_1+m_3-m_5)\right)
\right|^2\\
&\quad\times
\int_{\left[0,r\right]^3}
\mathrm{e}^{-\left|2\pi(m_1+m_4)\right|^4(r-\alpha_1)
-\left|2\pi(m_1+m_2)\right|^4(r-\alpha_2)
-2\left|2\pi m_3\right|^4(r-\alpha_3)}
\\
&\quad\times
\left(
\int_{\left[0,\alpha_1\right]\times
\left[0,\alpha_2\right]\times
\left[0,\min(\alpha_1,\alpha_2)\right]^2}
\mathrm{e}^{-\left|2\pi(m_3-m_4)\right|^4(r+\alpha_1-2v_1)
-\left|2\pi(m_2-m_3)\right|^4(r+\alpha_2-2v_2)}
\right.\\
&\hspace{5.5cm}\left.
\times
\mathrm{e}^{-\left|2\pi m_5\right|^4(\alpha_1+\alpha_2-2v_3)
-\left|2\pi(m_1+m_3-m_5)\right|^4(\alpha_1+\alpha_2-2v_4)}
\,\mathrm{d}v_1\,\mathrm{d}v_2\,\mathrm{d}v_3\,\mathrm{d}v_4
\right)\\
&\quad\times
\mathrm{d}\alpha_1\,\mathrm{d}\alpha_2\,\mathrm{d}\alpha_3.
\end{align*}

We now estimate the time integrals in detail. Since $\zeta$ is bounded, all
the factors involving $\zeta$ can be absorbed into the implicit constant.
Moreover, by the localization property of $\psi$,
\[
\psi(m_1+m_2,m_2)\neq0
\quad\Longrightarrow\quad
1+|m_1+m_2|\asymp 1+|m_2|,
\]
and
\[
\psi(m_1+m_4,m_4)\neq0
\quad\Longrightarrow\quad
1+|m_1+m_4|\asymp 1+|m_4|.
\]

Set
\[
A:=|2\pi(m_1+m_4)|^4,
\qquad
B:=|2\pi(m_1+m_2)|^4,
\qquad
C:=|2\pi m_3|^4,
\]
and
\[
D:=|2\pi(m_3-m_4)|^4,
\qquad
E:=|2\pi(m_2-m_3)|^4,
\]
\[
F:=|2\pi m_5|^4,
\qquad
G:=|2\pi(m_1+m_3-m_5)|^4.
\]
On the support of the two factors involving $\psi$,
\[
1+A\asymp1+|m_4|^4,
\qquad
1+B\asymp1+|m_2|^4.
\]

We first integrate with respect to $\alpha_3$. We have
\begin{align*}
\int_0^r
\mathrm{e}^{-2C(r-\alpha_3)}
\,\mathrm{d}\alpha_3
\lesssim
(1+C)^{-1}.
\end{align*}

We next integrate with respect to $v_1$ and $v_2$. Since
\[
r+\alpha_1-2v_1
=
(r-\alpha_1)+2(\alpha_1-v_1),
\]
we obtain
\begin{align*}
\int_0^{\alpha_1}
\mathrm{e}^{-D(r+\alpha_1-2v_1)}
\,\mathrm{d}v_1
&=
\mathrm{e}^{-D(r-\alpha_1)}
\int_0^{\alpha_1}
\mathrm{e}^{-2D(\alpha_1-v_1)}
\,\mathrm{d}v_1\\
&\lesssim
(1+D)^{-1}
\mathrm{e}^{-D(r-\alpha_1)}.
\end{align*}
Similarly,
\begin{align*}
\int_0^{\alpha_2}
\mathrm{e}^{-E(r+\alpha_2-2v_2)}
\,\mathrm{d}v_2
&\lesssim
(1+E)^{-1}
\mathrm{e}^{-E(r-\alpha_2)}.
\end{align*}

Let
\[
s:=\min(\alpha_1,\alpha_2).
\]
Since
\[
\alpha_1+\alpha_2-2s
=
|\alpha_1-\alpha_2|,
\]
we obtain
\begin{align*}
\int_0^s
\mathrm{e}^{-F(\alpha_1+\alpha_2-2v_3)}
\,\mathrm{d}v_3
&=
\mathrm{e}^{-F|\alpha_1-\alpha_2|}
\int_0^s
\mathrm{e}^{-2F(s-v_3)}
\,\mathrm{d}v_3\\
&\lesssim
(1+F)^{-1}
\mathrm{e}^{-F|\alpha_1-\alpha_2|},
\end{align*}
and likewise
\begin{align*}
\int_0^s
\mathrm{e}^{-G(\alpha_1+\alpha_2-2v_4)}
\,\mathrm{d}v_4
&\lesssim
(1+G)^{-1}
\mathrm{e}^{-G|\alpha_1-\alpha_2|}.
\end{align*}

Consequently, after integrating with respect to
$v_1,v_2,v_3,v_4$ and $\alpha_3$, the remaining time factor is bounded by
\begin{align*}
&(1+C)^{-1}(1+D)^{-1}(1+E)^{-1}
(1+F)^{-1}(1+G)^{-1}\\
&\quad\times
|m_1+m_2|^2|m_1+m_4|^2
\int_{[0,r]^2}
\mathrm{e}^{-(A+D)(r-\alpha_1)}
\mathrm{e}^{-(B+E)(r-\alpha_2)}
\mathrm{e}^{-(F+G)|\alpha_1-\alpha_2|}
\,\mathrm{d}\alpha_1\,\mathrm{d}\alpha_2.
\end{align*}

Set
\[
R:=
|m_1+m_2|^2|m_1+m_4|^2
\int_{[0,r]^2}
\mathrm{e}^{-(A+D)(r-\alpha_1)}
\mathrm{e}^{-(B+E)(r-\alpha_2)}
\mathrm{e}^{-(F+G)|\alpha_1-\alpha_2|}
\,\mathrm{d}\alpha_1\,\mathrm{d}\alpha_2.
\]

We derive two bounds for $R$. First, by discarding the last exponential,
\begin{align*}
R
&\lesssim
A^{1/2}B^{1/2}
\int_0^r
\mathrm{e}^{-(A+D)(r-\alpha_1)}
\,\mathrm{d}\alpha_1
\int_0^r
\mathrm{e}^{-(B+E)(r-\alpha_2)}
\,\mathrm{d}\alpha_2\\
&\lesssim
(1+A)^{-1/2}(1+B)^{-1/2}.
\end{align*}

We also retain the decay coming from $F+G$. Making the change of variables
\[
u:=r-\alpha_1,
\qquad
v:=r-\alpha_2,
\]
and setting
\[
X:=A+D,
\qquad
Y:=B+E,
\qquad
H:=F+G,
\]
we obtain
\[
R
\lesssim
A^{1/2}B^{1/2}
\int_{\mathbb R_+^2}
\mathrm{e}^{-Xu-Yv-H|u-v|}
\,\mathrm{d}u\,\mathrm{d}v.
\]
Splitting the domain into $\{u\geq v\}$ and $\{v\geq u\}$ gives
\begin{align*}
&\int_{\mathbb R_+^2}
\mathrm{e}^{-Xu-Yv-H|u-v|}
\,\mathrm{d}u\,\mathrm{d}v\\
&=
\frac{1}{X+Y}
\left(
\frac{1}{X+H}
+
\frac{1}{Y+H}
\right).
\end{align*}
Since
\[
A^{1/2}B^{1/2}
\leq X^{1/2}Y^{1/2}
\]
and
\[
\frac{X^{1/2}Y^{1/2}}{X+Y}
\leq\frac12,
\]
we conclude that
\[
R\lesssim(1+F+G)^{-1}.
\]

Thus,
\[
R
\lesssim
(1+A)^{-1/2}(1+B)^{-1/2},
\]
and
\[
R
\lesssim
(1+F+G)^{-1}.
\]
We now apply Lemma~\ref{l33} to these two estimates. Set
\[
\vartheta:=\frac{3(1-\gamma)}4.
\]
Since $\gamma\in(0,1/11)$, we have $\vartheta\in(0,1)$. Hence
\begin{align*}
R
&\lesssim
\left(
(1+A)^{-1/2}(1+B)^{-1/2}
\right)^{\vartheta}
\left(
(1+F+G)^{-1}
\right)^{1-\vartheta}\\
&=
(1+A)^{-\frac{3(1-\gamma)}8}
(1+B)^{-\frac{3(1-\gamma)}8}
(1+F+G)^{-\frac{1+3\gamma}{4}}\\
&=
(1+A)^{\frac{3\gamma}{8}-\frac38}
(1+B)^{\frac{3\gamma}{8}-\frac38}
(1+F+G)^{-\frac{1+3\gamma}{4}}.
\end{align*}

Since
\[
1+F+G\geq1+G,
\]
we have
\[
(1+F+G)^{-\frac{1+3\gamma}{4}}
\leq
(1+G)^{-\frac{1+3\gamma}{4}}.
\]
Multiplying by the factor $(1+G)^{-1}$ already obtained from the
$v_4$-integration gives
\begin{align*}
(1+G)^{-1}
(1+G)^{-\frac{1+3\gamma}{4}}
&=
(1+G)^{-\frac{5+3\gamma}{4}}.
\end{align*}
Since $\gamma>0$,
\[
-\frac{5+3\gamma}{4}
\leq
-\frac{5-5\gamma}{4}
=
\frac{5\gamma}{4}-\frac54,
\]
and therefore
\[
(1+G)^{-\frac{5+3\gamma}{4}}
\leq
(1+G)^{\frac{5\gamma}{4}-\frac54}.
\]

For the remaining factors we simply weaken the decay:
\[
(1+C)^{-1}
\leq
(1+C)^{\gamma-1},
\]
\[
(1+D)^{-1}
\leq
(1+D)^{\gamma-1},
\qquad
(1+E)^{-1}
\leq
(1+E)^{\gamma-1},
\]
and
\[
(1+F)^{-1}
\leq
(1+F)^{\gamma-1}.
\]
Recalling
\[
1+A\asymp1+|m_4|^4,
\qquad
1+B\asymp1+|m_2|^4,
\]
we obtain
\begin{align*}
&\mathds{E}\left[\left|I_{r,\epsilon,2}\right|^2\right]\\
&\stackrel{\text{Lemma \ref{l33}}}{\lesssim}
\sum_{\left(m_1,m_2,m_3,m_4,m_5\right)\in\left(\mathbb Z^d\right)^5}
\left|\rho_{m-1}\left(m_1\right)\right|^2
\left(1+\left|m_3\right|^4\right)^{\gamma-1}
\left(1+\left|m_5\right|^4\right)^{\gamma-1}\\
&\quad\times
\left(1+\left|m_2-m_3\right|^4\right)^{\gamma-1}
\left(1+\left|m_3-m_4\right|^4\right)^{\gamma-1}
\left(1+\left|m_2\right|^4\right)^{\frac{3\gamma}{8}-\frac38}\\
&\quad\times
\left(1+\left|m_4\right|^4\right)^{\frac{3\gamma}{8}-\frac38}
\left(1+\left|m_1+m_3-m_5\right|^4\right)^{\frac{5\gamma}{4}-\frac54}.
\end{align*}

We now make explicit the successive applications of Lemma~\ref{lemma2}. For fixed $m_1$, $m_3$, and $m_5$, we first sum with respect to $m_2$. Since
\[
\gamma-1=-\frac{4(1-\gamma)}{4}
\]
and
\[
\frac{3\gamma}{8}-\frac38
=
-\frac{\frac32(1-\gamma)}{4},
\]
Lemma~\ref{lemma2} gives
\begin{align*}
&\sum_{m_2\in\mathbb Z^d}
\left(1+|m_2-m_3|^4\right)^{\gamma-1}
\left(1+|m_2|^4\right)^{\frac{3\gamma}{8}-\frac38}\\
&\lesssim
\left(1+|m_3|^4\right)^{
\frac{5-4(1-\gamma)-\frac32(1-\gamma)}4}\\
&=
\left(1+|m_3|^4\right)^{\frac{11\gamma}{8}-\frac18}.
\end{align*}
Indeed,
\[
4(1-\gamma)+\frac32(1-\gamma)
=
\frac{11}{2}(1-\gamma)>5
\]
since $\gamma\in(0,1/11)$.

Exactly the same argument for $m_4$ gives
\begin{align*}
&\sum_{m_4\in\mathbb Z^d}
\left(1+|m_3-m_4|^4\right)^{\gamma-1}
\left(1+|m_4|^4\right)^{\frac{3\gamma}{8}-\frac38}\\
&\lesssim
\left(1+|m_3|^4\right)^{\frac{11\gamma}{8}-\frac18}.
\end{align*}
Combining these two bounds with the original factor
$\left(1+|m_3|^4\right)^{\gamma-1}$, we obtain
\begin{align*}
&\left(1+|m_3|^4\right)^{\gamma-1}
\left(1+|m_3|^4\right)^{\frac{11\gamma}{8}-\frac18}
\left(1+|m_3|^4\right)^{\frac{11\gamma}{8}-\frac18}\\
&=
\left(1+|m_3|^4\right)^{
\gamma-1+\frac{11\gamma}{4}-\frac14}\\
&=
\left(1+|m_3|^4\right)^{\frac{15\gamma}{4}-\frac54}.
\end{align*}
Thus it remains to estimate
\begin{align*}
&\sum_{m_3,m_5\in\mathbb Z^d}
\left(1+|m_3|^4\right)^{\frac{15\gamma}{4}-\frac54}
\left(1+|m_5|^4\right)^{\gamma-1}\\
&\hspace{4cm}\times
\left(1+|m_1+m_3-m_5|^4\right)^{\frac{5\gamma}{4}-\frac54}.
\end{align*}

We first perform the sum over $m_5$. Since
\[
\gamma-1=-\frac{4-4\gamma}{4},
\qquad
\frac{5\gamma}{4}-\frac54
=
-\frac{5-5\gamma}{4},
\]
Lemma~\ref{lemma2} yields
\begin{align*}
&\sum_{m_5\in\mathbb Z^d}
\left(1+|m_5|^4\right)^{\gamma-1}
\left(1+|m_1+m_3-m_5|^4\right)^{\frac{5\gamma}{4}-\frac54}\\
&\lesssim
\left(1+|m_1+m_3|^4\right)^{
\frac{5-(4-4\gamma)-(5-5\gamma)}4}\\
&=
\left(1+|m_1+m_3|^4\right)^{\frac{9\gamma}{4}-1}.
\end{align*}
Consequently, we are left with
\[
\sum_{m_3\in\mathbb Z^d}
\left(1+|m_3|^4\right)^{\frac{15\gamma}{4}-\frac54}
\left(1+|m_1+m_3|^4\right)^{\frac{9\gamma}{4}-1}.
\]
Now
\[
\frac{15\gamma}{4}-\frac54
=
-\frac{5-15\gamma}{4},
\qquad
\frac{9\gamma}{4}-1
=
-\frac{4-9\gamma}{4}.
\]
Since
\[
(5-15\gamma)+(4-9\gamma)
=
9-24\gamma>5
\]
for $\gamma\in(0,1/11)$, Lemma~\ref{lemma2} applies once more and gives
\begin{align*}
&\sum_{m_3\in\mathbb Z^d}
\left(1+|m_3|^4\right)^{\frac{15\gamma}{4}-\frac54}
\left(1+|m_1+m_3|^4\right)^{\frac{9\gamma}{4}-1}\\
&\lesssim
\left(1+|m_1|^4\right)^{
\frac{5-(5-15\gamma)-(4-9\gamma)}4}\\
&=
\left(1+|m_1|^4\right)^{6\gamma-1}.
\end{align*}
Therefore,
\begin{align*}
\mathds{E}\left[\left|I_{r,\epsilon,2}\right|^2\right]
&\stackrel{\text{Lemma \ref{lemma2}}}{\lesssim}
\sum_{m_1\in\mathbb Z^d}
\left|\rho_{m-1}\left(m_1\right)\right|^2
\left(1+\left|m_1\right|^4\right)^{6\gamma-1}.
\end{align*}

Finally, for $m\geq1$, on the support of $\rho_{m-1}, |m_1|\asymp 2^{m-1},$
Since $d=5$, the number of lattice points in the corresponding dyadic annulus is of order $2^{5(m-1)}$. For $m=0$, the support of $\rho_{-1}$ is bounded,
and the corresponding estimate follows directly. Hence
\begin{align*}
\sum_{m_1\in\mathbb Z^d}
\left|\rho_{m-1}\left(m_1\right)\right|^2
\left(1+\left|m_1\right|^4\right)^{6\gamma-1}
&\lesssim
2^{5(m-1)}
2^{4(6\gamma-1)(m-1)}\\
&=
2^{(24\gamma+1)(m-1)}.
\end{align*}
We conclude that
\[
\mathds{E}\left[\left|I_{r,\epsilon,2}\right|^2\right]
\lesssim
2^{(24\gamma+1)(m-1)}.
\]
\item \textbf{Estimation of $\mathds{E}\left[
\left|
I_{r,\epsilon,3}
-\psi_{\epsilon,4}(r)
\left(\delta_{m-1}\widetilde X_{r,\epsilon,1}\right)(x)
\right|^2
\right]$:} We are in a position to apply Parseval to obtain the following bound.
\begin{align*}
& \mathds{E}\left[
\left|
I_{r,\epsilon,3}
-\psi_{\epsilon,4}(r)
\left(\delta_{m-1}\widetilde X_{r,\epsilon,1}\right)(x)
\right|^2
\right]\\&\lesssim\sum_{m_1\in\mathbb{Z}^d}\int_0^rdu_1|-\rho_{m-1}(m_1)\psi_{\epsilon,4}(r)\zeta(\epsilon m_1)\mathrm{e}^{-|2\pi m_1|^4(r-u_1)}\\&+\rho_{m-1}(m_1)\zeta(\epsilon m_1)\sum_{\left(m_2,m_3\right) \in \left(\mathbb{Z}^5\right)^2}\left|2\pi\left(m_1-m_2\right)\right|^2\psi\left(m_1-m_2,m_2\right)\left|\zeta\left(\epsilon \left(m_2-m_3\right)\right)\right|^2\left|\zeta\left(\epsilon m_3\right)\right|^2\\&\int_{0}^r\mathds{1}_{[0,\alpha]}(u_1)\mathrm{e}^{-|2\pi m_1|^4(\alpha-u_1)}\mathrm{e}^{-\left|2\pi\left(m_1-m_2\right)\right|^4\left(r-\alpha\right)}\left(\int_{\left[0,\alpha\right]^2}\mathrm{e}^{-\left|2\pi \left(m_2-m_3\right)\right|^4\left(r+\alpha-2v_1\right)-\left|2\pi m_3\right|^4 \left(r+\alpha-2v_2\right)}\,\mathrm{d}v_1\,\mathrm{d}v_2\right)\,\mathrm{d}\alpha|^2.
\end{align*} To estimate the preceding term, for $\epsilon>0$ we introduce
\[
\phi_\epsilon:\mathbb{R}_+\times\mathbb{Z}^5\longrightarrow\mathbb{R}
\]
defined by
\begin{align*}
\phi_\epsilon(q,n)
:=
\sum_{(m_2,m_3)\in(\mathbb{Z}^5)^2}
&|2\pi(n-m_2)|^2
\psi(n-m_2,m_2)
|\zeta(\epsilon(m_2-m_3))|^2
|\zeta(\epsilon m_3)|^2\\
&\times
\int_0^q
\mathrm{e}^{-|2\pi(n-m_2)|^4(q-\alpha)}
\left(
\int_{[0,\alpha]^2}
\mathrm{e}^{-|2\pi(m_2-m_3)|^4(q+\alpha-2v_1)
-|2\pi m_3|^4(q+\alpha-2v_2)}
\,dv_1\,dv_2
\right)
\,d\alpha .
\end{align*}
In particular,
\[
\phi_\epsilon(r,0)=\psi_{\epsilon,4}(r).
\]

\begin{lemma}\label{lem:phi-renormalized}
For every $\gamma>0$,
\[
\left|
\phi_\epsilon(r,n)-\psi_{\epsilon,4}(r)
\right|
\lesssim
(1+|n|^4)^{\gamma/2},
\]
uniformly in $\epsilon>0$ and $r$ in a bounded interval.
\end{lemma}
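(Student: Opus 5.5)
We compare $\phi_\epsilon(r,n)$ with $\phi_\epsilon(r,0)=\psi_{\epsilon,4}(r)$ term by term in the $(m_2,m_3)$-sum. Every difference comes from replacing $n-m_2$ by $-m_2$: in the multiplier $|2\pi(n-m_2)|^2$, in the localizer $\psi(n-m_2,m_2)$, and in the exponential $\mathrm{e}^{-|2\pi(n-m_2)|^4(q-\alpha)}$. First we integrate out the inner variables exactly as in the previous estimates. Writing $a=|2\pi(m_2-m_3)|^4$, $b=|2\pi m_3|^4$ and using $q+\alpha-2v=(q-\alpha)+2(\alpha-v)$, the $v_1,v_2$ integrals are bounded by $(1+a)^{-1}(1+b)^{-1}\mathrm{e}^{-(a+b)(q-\alpha)}$. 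The $m_3$-sum is then handled by Lemma \ref{lemma2} with $\alpha=\beta=4$, $d=5$, which gives a weight $(1+|m_2|^4)^{-3/4}$. We keep the factor $\mathrm{e}^{-c|m_2|^4(q-\alpha)}$, since $a+b\gtrsim |m_2|^4$.

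Next we split the $m_2$-sum into a high region $|m_2|\geq 2|n|$ and a low region $|m_2|<2|n|$. In the low region we bound each of $\phi_\epsilon(r,n)$ and $\psi_{\epsilon,4}(r)$ separately. On the support of $\psi$ we have $|n-m_2|\asymp|m_2|$, so the $\alpha$-integral of $|n-m_2|^2\mathrm{e}^{-c|m_2|^4(q-\alpha)}$ is $\lesssim (1+|m_2|^4)^{-1/2}$. Each summand is therefore $\lesssim (1+|m_2|^4)^{-5/4}$, and summing over $|m_2|\lesssim|n|$ in $d=5$ gives $\lesssim 1+\log(1+|n|)\lesssim(1+|n|^4)^{\gamma/2}$. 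This is exactly the logarithmic divergence that the renormalization removes.

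In the high region $|m_2|\geq 2|n|$ we estimate the difference. There $|n-m_2|\asymp|m_2|$, and by the mean value theorem
\[
\bigl||n-m_2|^2\mathrm{e}^{-|2\pi(n-m_2)|^4 t}-|m_2|^2\mathrm{e}^{-|2\pi m_2|^4t}\bigr|\lesssim |n|\,|m_2|\,(1+|m_2|^4 t)\,\mathrm{e}^{-c|m_2|^4t}.
\]
After integrating in $t=q-\alpha$, this gains a factor $|n|/|m_2|$ over the previous bound. The localizer difference $\psi(n-m_2,m_2)-\psi(-m_2,m_2)$ is nonzero only when $n-m_2$ and $-m_2$ fall in different dyadic blocks relative to $m_2$. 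For $|m_2|\gg|n|$ this cannot happen, because $\psi$ is built from the smooth $\rho_j$ and hence $|\psi(n-m_2,m_2)-\psi(-m_2,m_2)|\lesssim |n|/|m_2|$. The difference summand is thus $\lesssim |n|\,(1+|m_2|)^{-6}$, and summing over $|m_2|\geq 2|n|$ gives $\lesssim |n|\cdot|n|^{-1}=O(1)$. The bounds are uniform in $\epsilon$ because $|\zeta|\le 1$ and the $\zeta$-factors are identical in the two terms. They are uniform in $r$ on bounded intervals because every time integral was extended to $\mathbb{R}_+$.

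The main obstacle is the high-frequency difference, in particular controlling the $\psi$-localizer mismatch and verifying that the $\alpha$-dependence coming through $\mathrm{e}^{-(a+b)(q-\alpha)}$ does not destroy the gain $|n|/|m_2|$. If the mean-value bound on $\psi$ turns out to be awkward, I would fall back on interpolating with Lemma \ref{l33}. That route trades the gain $|n|/|m_2|$ for $(|n|/|m_2|)^{\gamma'}$, which still suffices because the crude summand decays like $|m_2|^{-5}$.
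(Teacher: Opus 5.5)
Your proposal is correct and follows essentially the same route as the paper, which packages the $m_3$-sum and the $\alpha$-integral into a function $H_{\epsilon,r}(p,s)$. The paper proves $|H_{\epsilon,r}(p,s)|\lesssim(1+|s|^4)^{-5/4}$ and $|\nabla_pH_{\epsilon,r}(p,s)|\lesssim(1+|s|)^{-1}(1+|s|^4)^{-5/4}$, then splits at $|s|=2|n|$ exactly as you do, obtaining $1+\log(2+|n|)$ from the low region and $O(1)$ from the mean value theorem in the high region. Your worry about the $\alpha$-dependence does not arise: the factor produced by the $v_1,v_2$-integrals and the $m_3$-sum is nonnegative, is independent of $n$, and is bounded by $(1+|m_2|^4)^{-3/4}$, so it simply multiplies the difference (and the $\zeta$-factors only need to be bounded, not $\le 1$).
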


\begin{proof}
Since
\[
\phi_\epsilon(r,0)=\psi_{\epsilon,4}(r),
\]
it is enough to estimate
\[
\phi_\epsilon(r,n)-\phi_\epsilon(r,0).
\]
For $p,s\in\mathbb{Z}^5$, set
\begin{align*}
H_{\epsilon,r}(p,s)
:={}&
|2\pi p|^2\psi(p,s)
\sum_{\ell\in\mathbb{Z}^5}
|\zeta(\epsilon(s-\ell))|^2
|\zeta(\epsilon\ell)|^2\\
&\times
\int_0^r
\mathrm{e}^{-|2\pi p|^4(r-\alpha)}
J_{s-\ell}(r,\alpha)
J_\ell(r,\alpha)
\,d\alpha,
\end{align*}
where
\[
J_k(r,\alpha)
:=
\int_0^\alpha
\mathrm{e}^{-|2\pi k|^4(r+\alpha-2v)}
\,dv.
\]
Then
\[
\phi_\epsilon(r,n)
=
\sum_{s\in\mathbb{Z}^5}
H_{\epsilon,r}(n-s,s)
\]
and
\[
\phi_\epsilon(r,0)
=
\sum_{s\in\mathbb{Z}^5}
H_{\epsilon,r}(-s,s).
\]
Consequently,
\begin{equation}\label{eq:phi-difference}
\phi_\epsilon(r,n)-\phi_\epsilon(r,0)
=
\sum_{s\in\mathbb{Z}^5}
\left(
H_{\epsilon,r}(n-s,s)
-
H_{\epsilon,r}(-s,s)
\right).
\end{equation}

For $0\leq\alpha\leq r$, we have
\begin{equation}\label{eq:J-bound}
J_k(r,\alpha)
\lesssim
\frac{
\mathrm{e}^{-c|k|^4(r-\alpha)}
}{
1+|k|^4
}.
\end{equation}
Indeed, for $k\neq0$,
\begin{align*}
J_k(r,\alpha)
&=
\mathrm{e}^{-|2\pi k|^4(r-\alpha)}
\int_0^\alpha
\mathrm{e}^{-2|2\pi k|^4(\alpha-v)}
\,dv\\
&=
\mathrm{e}^{-|2\pi k|^4(r-\alpha)}
\frac{
1-\mathrm{e}^{-2|2\pi k|^4\alpha}
}{
2|2\pi k|^4
},
\end{align*}
while, for $k=0$, the estimate follows from
\[
J_0(r,\alpha)=\alpha.
\]

Using \eqref{eq:J-bound}, the boundedness of $\zeta$, and integrating with
respect to $\alpha$, we obtain
\begin{align}
|H_{\epsilon,r}(p,s)|
\lesssim
|\psi(p,s)|
\sum_{\ell\in\mathbb{Z}^5}
\frac{|p|^2}{
(1+|s-\ell|^4)(1+|\ell|^4)
}
\frac{1}{
1+|p|^4+|s-\ell|^4+|\ell|^4
}.
\label{eq:H-first-bound}
\end{align}

By the support property of $\psi$,
\[
\psi(p,s)\neq0
\quad\Longrightarrow\quad
1+|p|\asymp 1+|s|.
\]
Hence
\[
\frac{|p|^2}{
1+|p|^4+|s-\ell|^4+|\ell|^4
}
\lesssim
(1+|s|^4)^{-1/2}.
\]
Moreover, Lemma~\ref{lemma2}, applied with $\alpha=\beta=4$ and $d=5$,
yields
\[
\sum_{\ell\in\mathbb{Z}^5}
(1+|s-\ell|^4)^{-1}
(1+|\ell|^4)^{-1}
\lesssim
(1+|s|^4)^{-3/4}.
\]
Combining the preceding estimates gives
\begin{equation}\label{eq:H-critical-bound}
|H_{\epsilon,r}(p,s)|
\lesssim
(1+|s|^4)^{-5/4}.
\end{equation}

We next estimate the variation of $H_{\epsilon,r}$ with respect to its
first frequency. We use the smooth extension of the Littlewood--Paley
multipliers to $\mathbb{R}^5$. The derivative estimates for the
Littlewood--Paley functions give, whenever $1+|p|\asymp1+|s|$,
\[
|\nabla_p\psi(p,s)|
\lesssim
(1+|s|)^{-1}.
\]
Moreover, for every $t\geq0$,
\[
\left|
\nabla_p
\left(
|p|^2\mathrm{e}^{-c|p|^4t}
\right)
\right|
\lesssim
|p|\mathrm{e}^{-c'|p|^4t}.
\]
Repeating the argument leading to \eqref{eq:H-critical-bound}, with one
derivative with respect to $p$, gives
\begin{equation}\label{eq:H-derivative-bound}
|\nabla_pH_{\epsilon,r}(p,s)|
\lesssim
(1+|s|)^{-1}
(1+|s|^4)^{-5/4},
\qquad 1+|p|\asymp1+|s|.
\end{equation}

We now split the sum in \eqref{eq:phi-difference} into the two regions
\[
|s|\leq2|n|
\qquad\text{and}\qquad
|s|>2|n|.
\]
For the first region, by \eqref{eq:H-critical-bound},
\begin{align*}
&\sum_{|s|\leq2|n|}
\left|
H_{\epsilon,r}(n-s,s)
-
H_{\epsilon,r}(-s,s)
\right|\\
&\qquad\lesssim
\sum_{|s|\leq2|n|}
(1+|s|^4)^{-5/4}.
\end{align*}
Since $d=5$, a dyadic shell
\[
2^j\leq|s|<2^{j+1}
\]
contains at most $C2^{5j}$ lattice points, whereas
\[
(1+|s|^4)^{-5/4}
\lesssim2^{-5j}.
\]
Thus every dyadic shell contributes a bounded quantity and
\begin{equation}\label{eq:phi-low-frequencies}
\sum_{|s|\leq2|n|}
\left|
H_{\epsilon,r}(n-s,s)
-
H_{\epsilon,r}(-s,s)
\right|
\lesssim
1+\log(2+|n|).
\end{equation}

Consider now $|s|>2|n|$. For $\theta\in[0,1]$, set
\[
p_\theta=-s+\theta n.
\]
Then
\[
\frac12|s|
\leq
|p_\theta|
\leq
\frac32|s|,
\]
so that $|p_\theta|\asymp|s|$. By the mean value theorem and
\eqref{eq:H-derivative-bound},
\begin{align*}
&
\left|
H_{\epsilon,r}(n-s,s)
-
H_{\epsilon,r}(-s,s)
\right|\\
&\qquad\leq
|n|
\sup_{\theta\in[0,1]}
|\nabla_pH_{\epsilon,r}(p_\theta,s)|\\
&\qquad\lesssim
|n|
(1+|s|)^{-1}
(1+|s|^4)^{-5/4}.
\end{align*}
Therefore
\begin{align*}
&\sum_{|s|>2|n|}
\left|
H_{\epsilon,r}(n-s,s)
-
H_{\epsilon,r}(-s,s)
\right|\\
&\qquad\lesssim
|n|
\sum_{|s|>2|n|}
(1+|s|)^{-1}
(1+|s|^4)^{-5/4}.
\end{align*}
On a dyadic shell of size $2^j$, the last expression is bounded by
\[
C|n|2^{5j}2^{-j}2^{-5j}
=
C|n|2^{-j}.
\]
Summing over $2^j\gtrsim|n|$, we obtain
\begin{equation}\label{eq:phi-high-frequencies}
\sum_{|s|>2|n|}
\left|
H_{\epsilon,r}(n-s,s)
-
H_{\epsilon,r}(-s,s)
\right|
\lesssim1.
\end{equation}

Combining \eqref{eq:phi-low-frequencies} and
\eqref{eq:phi-high-frequencies}, we conclude that
\[
\left|
\phi_\epsilon(r,n)-\phi_\epsilon(r,0)
\right|
\lesssim
1+\log(2+|n|).
\]
Since
\[
\phi_\epsilon(r,0)=\psi_{\epsilon,4}(r),
\]
we obtain
\[
\left|
\phi_\epsilon(r,n)-\psi_{\epsilon,4}(r)
\right|
\lesssim
1+\log(2+|n|).
\]
Finally, for every $\gamma>0$,
\[
1+\log(2+|n|)
\lesssim
(1+|n|^4)^{\gamma/2},
\]
and therefore
\[
\left|
\phi_\epsilon(r,n)-\psi_{\epsilon,4}(r)
\right|
\lesssim
(1+|n|^4)^{\gamma/2}.
\]
\end{proof}

For $n\in\mathbb{Z}^5$ and $\alpha\in[0,r]$, set
\begin{align*}
F_{\epsilon}(n,\alpha)
:=
\sum_{(m_2,m_3)\in(\mathbb{Z}^5)^2}
&|2\pi(n-m_2)|^2
\psi(n-m_2,m_2)
|\zeta(\epsilon(m_2-m_3))|^2
|\zeta(\epsilon m_3)|^2\\
&\times
\mathrm{e}^{-|2\pi(n-m_2)|^4(r-\alpha)}
\left(
\int_{[0,\alpha]^2}
\mathrm{e}^{-|2\pi(m_2-m_3)|^4(r+\alpha-2v_1)
-|2\pi m_3|^4(r+\alpha-2v_2)}
\,dv_1\,dv_2
\right).
\end{align*}
Then
\[
\phi_\epsilon(r,n)=\int_0^r F_\epsilon(n,\alpha)\,d\alpha.
\]
We have
\begin{align*}
&-\psi_{\epsilon,4}(r)
\mathrm{e}^{-|2\pi n|^4(r-u_1)}
+
\int_0^r
\mathds{1}_{[0,\alpha]}(u_1)
\mathrm{e}^{-|2\pi n|^4(\alpha-u_1)}
F_\epsilon(n,\alpha)\,d\alpha\\
&=
\mathrm{e}^{-|2\pi n|^4(r-u_1)}
\left(
\phi_\epsilon(r,n)-\psi_{\epsilon,4}(r)
\right)\\
&\quad+
\int_0^r
\left(
\mathds{1}_{[0,\alpha]}(u_1)
\mathrm{e}^{-|2\pi n|^4(\alpha-u_1)}
-
\mathrm{e}^{-|2\pi n|^4(r-u_1)}
\right)
F_\epsilon(n,\alpha)\,d\alpha.
\end{align*}
Therefore, by the triangle inequality and Minkowski's inequality,
\begin{align}
&\mathds{E}\left[
\left|
I_{r,\epsilon,3}
-\psi_{\epsilon,4}(r)
\left(\delta_{m-1}\widetilde X_{r,\epsilon,1}\right)(x)
\right|^2
\right]\nonumber\\
&\lesssim
\sum_{n\in\mathbb{Z}^5}
|\rho_{m-1}(n)\zeta(\epsilon n)|^2
|\phi_\epsilon(r,n)-\psi_{\epsilon,4}(r)|^2
\int_0^r
\mathrm{e}^{-2|2\pi n|^4(r-u_1)}
\,du_1\nonumber\\
&\quad+
\sum_{n\in\mathbb{Z}^5}
|\rho_{m-1}(n)\zeta(\epsilon n)|^2
\left(
\int_0^r
F_\epsilon(n,\alpha)
D_n(r-\alpha)\,d\alpha
\right)^2,
\label{eq:I3-decomposition}
\end{align}
where
\[
D_n(r-\alpha)
:=
\left(
\int_0^r
\left(
\mathrm{e}^{-|2\pi n|^4(r-u_1)}
-
\mathds{1}_{[0,\alpha]}(u_1)
\mathrm{e}^{-|2\pi n|^4(\alpha-u_1)}
\right)^2
\,du_1
\right)^{1/2}.
\]

We first estimate the first term in \eqref{eq:I3-decomposition}. Using Lemma \ref{lem:phi-renormalized}\[
|\phi_\epsilon(r,n)-\psi_{\epsilon,4}(r)|
\lesssim
(1+|n|^4)^{\gamma/2}
\]
and
\[
\int_0^r
\mathrm{e}^{-2|2\pi n|^4(r-u_1)}
\,du_1
\lesssim
(1+|n|^4)^{-1},
\]
we obtain
\begin{align}
&|\phi_\epsilon(r,n)-\psi_{\epsilon,4}(r)|^2
\int_0^r
\mathrm{e}^{-2|2\pi n|^4(r-u_1)}
\,du_1\nonumber\\
&\qquad\lesssim
(1+|n|^4)^{\gamma-1}.
\label{eq:I3-first-term}
\end{align}

It remains to estimate the second term in
\eqref{eq:I3-decomposition}. Let
\[
t=r-\alpha.
\]
We have \begin{align}
\forall \alpha \in[0,r] \int_0^\alpha
\mathrm{e}^{-|2\pi k|^4(r+\alpha-2v)}
\,dv
&\lesssim
\frac{\mathrm{e}^{-c|k|^4(r-\alpha)}}
{1+|k|^4},c>0.
\label{eq:heat-time-bound}
\end{align}

Using \eqref{eq:heat-time-bound} twice and the boundedness of $\zeta$,
we obtain
\begin{align*}
F_\epsilon(n,\alpha)
\lesssim
\sum_{m_2,m_3\in\mathbb{Z}^5}
&|n-m_2|^2
\psi(n-m_2,m_2)
\mathrm{e}^{-c|n-m_2|^4t}\\
&\times
\frac{\mathrm{e}^{-c|m_2-m_3|^4t}}
{1+|m_2-m_3|^4}
\frac{\mathrm{e}^{-c|m_3|^4t}}
{1+|m_3|^4}.
\end{align*}
Since
\[
|m_2-m_3|^4+|m_3|^4
\gtrsim |m_2|^4,
\]
Lemma~\ref{lemma2}, applied with $\alpha=\beta=4$ and $d=5$, yields
\begin{align*}
&\sum_{m_3\in\mathbb{Z}^5}
\frac{\mathrm{e}^{-c|m_2-m_3|^4t}}
{1+|m_2-m_3|^4}
\frac{\mathrm{e}^{-c|m_3|^4t}}
{1+|m_3|^4}\\
&\qquad\lesssim
\mathrm{e}^{-c'|m_2|^4t}
(1+|m_2|^4)^{-3/4}.
\end{align*}
Moreover, by the support property of $\psi$,
\[
\psi(n-m_2,m_2)\neq0
\quad\Longrightarrow\quad
1+|n-m_2|\lesssim 1+|m_2|.
\]
Consequently,
\begin{align*}
F_\epsilon(n,\alpha)
&\lesssim
\sum_{m_2\in\mathbb{Z}^5}
\mathrm{e}^{-c|m_2|^4t}
(1+|m_2|^4)^{-1/4}.
\end{align*}
Since $d=5$,
\begin{align*}
\sum_{m_2\in\mathbb{Z}^5}
\mathrm{e}^{-c|m_2|^4t}
(1+|m_2|^4)^{-1/4}
&\lesssim t^{-1},
\end{align*}
and hence
\begin{equation}
F_\epsilon(n,\alpha)
\lesssim
(r-\alpha)^{-1}.
\label{eq:F-bound}
\end{equation}

We next estimate $D_n$. For $n\neq0$, setting
\[
\lambda_n=|2\pi n|^4,
\]
a direct computation gives
\begin{align*}
D_n(t)^2
&=
(1-\mathrm{e}^{-\lambda_n t})^2
\frac{1-\mathrm{e}^{-2\lambda_n\alpha}}
{2\lambda_n}
+
\frac{1-\mathrm{e}^{-2\lambda_n t}}
{2\lambda_n}\\
&\lesssim
\lambda_n^{-1}\min\{1,\lambda_n t\}.
\end{align*}
Therefore
\begin{equation}
D_n(t)
\lesssim
|n|^{-2}
\min\{1,|n|^2t^{1/2}\}.
\label{eq:D-bound}
\end{equation}

Combining \eqref{eq:F-bound} and \eqref{eq:D-bound}, we obtain
\begin{align*}
\int_0^r
F_\epsilon(n,\alpha)D_n(r-\alpha)\,d\alpha
&\lesssim
\int_0^r t^{-1}D_n(t)\,dt.
\end{align*}
For $n\neq0$, we split the last integral at $t=|n|^{-4}$. On
$0<t\leq |n|^{-4}$,
\[
D_n(t)\lesssim t^{1/2},
\]
and therefore
\[
\int_0^{|n|^{-4}}
t^{-1}D_n(t)\,dt
\lesssim |n|^{-2}.
\]
On the other hand, for $t\geq |n|^{-4}$,
\[
D_n(t)\lesssim |n|^{-2},
\]
so that
\[
\int_{|n|^{-4}}^r
t^{-1}D_n(t)\,dt
\lesssim
|n|^{-2}\left(1+\log(2+|n|)\right).
\]
Thus
\begin{equation}
\int_0^r
F_\epsilon(n,\alpha)D_n(r-\alpha)\,d\alpha
\lesssim
(1+|n|^4)^{-1/2}
\left(1+\log(2+|n|)\right).
\label{eq:FD-bound}
\end{equation}
For $n=0$, one has
\[
D_0(t)=t^{1/2},
\]
and the same bound follows directly from \eqref{eq:F-bound}.

Since, for every $\gamma>0$,
\[
\left(1+\log(2+|n|)\right)^2
\lesssim
(1+|n|^4)^\gamma,
\]
it follows from \eqref{eq:FD-bound} that
\begin{equation}
\left(
\int_0^r
F_\epsilon(n,\alpha)D_n(r-\alpha)\,d\alpha
\right)^2
\lesssim
(1+|n|^4)^{\gamma-1}.
\label{eq:I3-second-term}
\end{equation}

Combining \eqref{eq:I3-first-term} and
\eqref{eq:I3-second-term} in \eqref{eq:I3-decomposition}, and using the
boundedness of $\zeta$, we deduce that
\begin{align*}
&\mathds{E}\left[
\left|
I_{r,\epsilon,3}
-\psi_{\epsilon,4}(r)
\left(\delta_{m-1}\widetilde X_{r,\epsilon,1}\right)(x)
\right|^2
\right]\\
&\qquad\lesssim
\sum_{n\in\mathbb{Z}^5}
|\rho_{m-1}(n)|^2
(1+|n|^4)^{\gamma-1}.
\end{align*}
Finally
\begin{align*}
\sum_{n\in\mathbb{Z}^5}
|\rho_{m-1}(n)|^2
(1+|n|^4)^{\gamma-1}
&\lesssim
2^{5(m-1)}
2^{4(\gamma-1)(m-1)}\\
&=
2^{(4\gamma+1)(m-1)}.
\end{align*}
Hence
\[
\mathds{E}\left[
\left|
I_{r,\epsilon,3}
-\psi_{\epsilon,4}(r)
\left(\delta_{m-1}\widetilde X_{r,\epsilon,1}\right)(x)
\right|^2
\right]
\lesssim
2^{(4\gamma+1)(m-1)}.
\]\end{enumerate}
\end{proof}


\end{document}